\newtheorem{theo}{Theorem}[chapter]
\newtheorem{coro}{Corollary}[chapter]
\newtheorem{prop}{Proposition}[chapter]
\newtheorem{conj}{Conjecture}[chapter]
\theoremstyle{remark}
\newtheorem{rem}{Remark}[chapter]
\theoremstyle{definition}
\newtheorem{defi}{Definition}[chapter]
\newtheorem{exam}{Example}[chapter]
\begin{document}

\def\U{{\mathcal U}}   
\def\res{{\rm res}}
\def\cf{r}   

\def\Nn{{\sf n}}
\def\Am{{\sf A}}

\def\Z{\mathbb{Z}}                   
\def\Q{\mathbb{Q}}                   
\def\C{\mathbb{C}}                   
\def\N{\mathbb{N}}                   
\def\Ff{\mathbb{F}}                  
\def\uhp{{\mathbb H}}                
\def\A{\mathbb{A}}                   
\def\dR{{\rm dR}}                    
\def\F{{\mathcal F}}                     
\def\Sp{{\rm Sp}}                    
\def\Gm{\mathbb{G}_m}                 
\def\Ga{\mathbb{G}_a}                 
\def\Tr{{\rm Tr}}                      
\def\tr{{{\mathsf t}{\mathsf r}}}                 
\def\spec{{\rm Spec}}            
\def\proj{{\rm Proj}}
\def\ker{{\rm ker}}              
\def\GL{{\rm GL}}                

\def\k{{\sf k}}                     
\def\ring{{\sf R}}                   
\def\sk{{\mathfrak k }}             
\def\sring{{\mathfrak R }}          

\def\X{{\sf X}}                      
\def\T{{\sf T}}                      
\def\V{{    V}}                   

\def\Ts{{\sf S}}
\def\cmv{{\sf M}}                    
\def\BG{{\sf G}}                       
\def\podu{{\sf pd}}                   
\def\ped{{\sf U}}                    
\def\per{{\sf  P}}                   
\def\gm{{\sf  A}}                    
\def\gma{{\sf  B}}                   
\def\ben{{\sf b}}                    

\def\Rav{{\mathfrak M }}                     
\def\Ram{{\mathcal C}}                         
\def\Rap{{i(\Lie(\BG))}}                    

\def\nov{{  n}}                    
\def\mov{{  m}}                    
\def\Yuk{{\sf Y}}                     
\def\Ra{{\sf R}}                      

\def\Da{{\sf D}}                      

\def\hn{{\sf h}}                      
\def\cpe{{\sf C}}                     
\def\g{{\sf g}}                       
\def\t{{   t}}                       
\def\v{{   v}}                       

\def\pedo{{\sf  \Pi}}                  

\def\Der{{\rm Der}}                   
\def\MMF{{\sf MF}}                    
\def\codim{{\rm codim}}                
\def\dim{{\rm    dim}}                
\def\Lie{{\rm Lie}}                   
\def\gg{{\mathfrak  g}}                

\def\u{{\sf u}}                       

\def\imh{{  \Psi}}                 
\def\imc{{  \Phi }}                  
\def\stab{{\rm Stab }}               
\def\Vec{{\Theta}}                 
\def\prim{{0}}                  
\def\Zero{{\rm Zero}}                  

\def\Fg{{\sf F}}     
\def\hol{{\rm hol}}  
\def\non{{\rm non}}  
\def\alg{{\rm alg}}  
\def\an{{\rm an}}   
\def\for{{\rm for}}  

\def\bcov{{\rm \O_\T}}       

\def\leaves{{\mathcal L}}        

\def\Hse{{\rm HS}}        
\def\Hpo{{\rm HP}}        
\def\Hfu{{\rm HF}}        
\def\Hsc{{\rm Hilb}}     

\def\TS{\mathlarger{{\bf T}}}                
\def\IS{\mathlarger{{\mathcal I}}}                

\def\vf{{\sf v}}                      
\def\wf{{\sf w}}                      

\def\red{{\rm red}}                           

\def\Ua{{   L}}                      
\def\plc{{ Z_\infty}}    

\def\gru{\mu} 
\def\pg{{ \sf S}}               
\def\group{{ G}}            

\def\GM{{\rm GM}}

\def\perr{{\sf q}}        
\def\perdo{{\mathcal K}}   
\def\sfl{{\mathrm F}} 
\def\sp{{\mathbb S}}  

\newcommand\diff[1]{\frac{d #1}{dz}} 
\def\End{{\rm End}}              

\def\sing{{\rm Sing}}            
\def\cha{{\rm char}}             
\def\Gal{{\rm Gal}}              
\def\jacob{{\rm jacob}}          
\def\tjurina{{\rm tjurina}}      
\newcommand\Pn[1]{\mathbb{P}^{#1}}   
\def\P{\mathbb{P}}                   
\def\Ff{\mathbb{F}}                  

\def\O{{\mathcal O}}                     
\def\as{\mathbb{U}}                  
\def\ring{{\mathsf R}}                         
\def\R{\mathbb{R}}                   

\newcommand\ep[1]{e^{\frac{2\pi i}{#1}}}
\newcommand\HH[2]{H^{#2}(#1)}        
\def\Mat{{\rm Mat}}              
\newcommand{\mat}[4]{
     \begin{bmatrix}
            #1 & #2 \\
            #3 & #4
       \end{bmatrix}
    }                                
\newcommand{\matt}[2]{
     \begin{bmatrix}                 
            #1   \\
            #2
       \end{bmatrix}
    }
\def\cl{{\rm cl}}                

\def\hc{{\mathsf H}}                 
\def\Hb{{\mathcal H}}                    
\def\pese{{\sf P}}                  

\def\PP{\tilde{\mathcal P}}              
\def\K{{\mathbb K}}                  

\def\M{{\mathcal M}}
\def\RR{{\mathcal R}}
\newcommand\Hi[1]{\mathbb{P}^{#1}_\infty}
\def\pt{\mathbb{C}[t]}               
\def\W{{\mathcal W}}                     
\def\gr{{\rm Gr}}                
\def\Im{{\rm Im}}                
\def\Re{{\rm Re}}                
\def\depth{{\rm depth}}
\newcommand\SL[2]{{\rm SL}(#1, #2)}    
\def\sl{{\rm SL}}                    
\newcommand\PSL[2]{{\rm PSL}(#1, #2)}  
\def\Resi{{\rm Resi}}              

\def\L{{\mathcal L}}                     
\def\Aut{{\rm Aut}}              
\def\any{R}                          
\newcommand\ovl[1]{\overline{#1}}    

\newcommand\mf[2]{{M}^{#1}_{#2}}     
\newcommand\mfn[2]{{\tilde M}^{#1}_{#2}}     

\newcommand\bn[2]{\binom{#1}{#2}}    
\def\ja{{\rm j}}                 
\def\Sc{\mathsf{S}}                  
\newcommand\es[1]{g_{#1}}            
\newcommand\WW{{\mathsf W}}          
\newcommand\Ss{{\mathcal O}}             
\def\rank{{\rm rank}}                
\def\Dif{{\mathcal D}}                   
\def\gcd{{\rm gcd}}                  
\def\zedi{{\rm ZD}}                  
\def\BM{{\mathsf H}}                 
\def\plf{{\sf pl}}                             
\def\sgn{{\rm sgn}}                      
\def\diag{{\rm diag}}                   
\def\hodge{{\rm Hodge}}
\def\HF{{\sf F}}                                
\def\WF{{\sf W}}                               
\def\HV{{\sf HV}}                                
\def\pol{{\rm pole}}                               
\def\bafi{{\sf r}}
\def\Id{{\rm Id}}                               
\def\gms{{\sf M}}                           
\def\Iso{{\rm Iso}}                           

\def\hl{{\rm L}}    
\def\imF{{\rm F}}
\def\imG{{\rm G}}

\def\HL{{\rm Ho}}     
\def\NLL{{\rm NL}}   

\def\RG{{\bf G}}          
\def\rg{{\bf g}}     
\def\rbullet{{\cdot}}
\def\Ld{{\mathcal L}}      
\def\Ro{{\rm R}}     
\def\ZS{{\rm ZeSc}}     
\def\ZI{{\rm ZeId}}     
 \def\integ{{\rm Int}}  

\def\tmap{{\sf t}}

\def\ivhs{{\rm IVHS}}    
\def\ivhsmaps{{{\Delta}_{}}}   
\def\sch{{\rm Sch}}   
\def\mk{{\mathfrak  m}}   
\def\pk{{\mathfrak  p}}   
\def\qk{{\mathfrak  q}}   

\newcommand\licy[1]{{\mathbb P}^{#1}} 

\begin{center}
{\LARGE\bf Leaf schemes and Hodge loci\footnote{\today}
}
\\
\vspace{.1in} {\large {\sc Hossein Movasati}}
\footnote{
Instituto de Matem\'atica Pura e Aplicada, IMPA, Estrada Dona Castorina, 110, 22460-320, Rio de Janeiro, RJ, Brazil,
{\tt \href{http://w3.impa.br/~hossein/}{www.impa.br/$\sim$hossein}, hossein@impa.br.}}
\end{center}
\newpage

\begin{figure}[t]
\begin{center}
\includegraphics[width=0.6\textwidth]{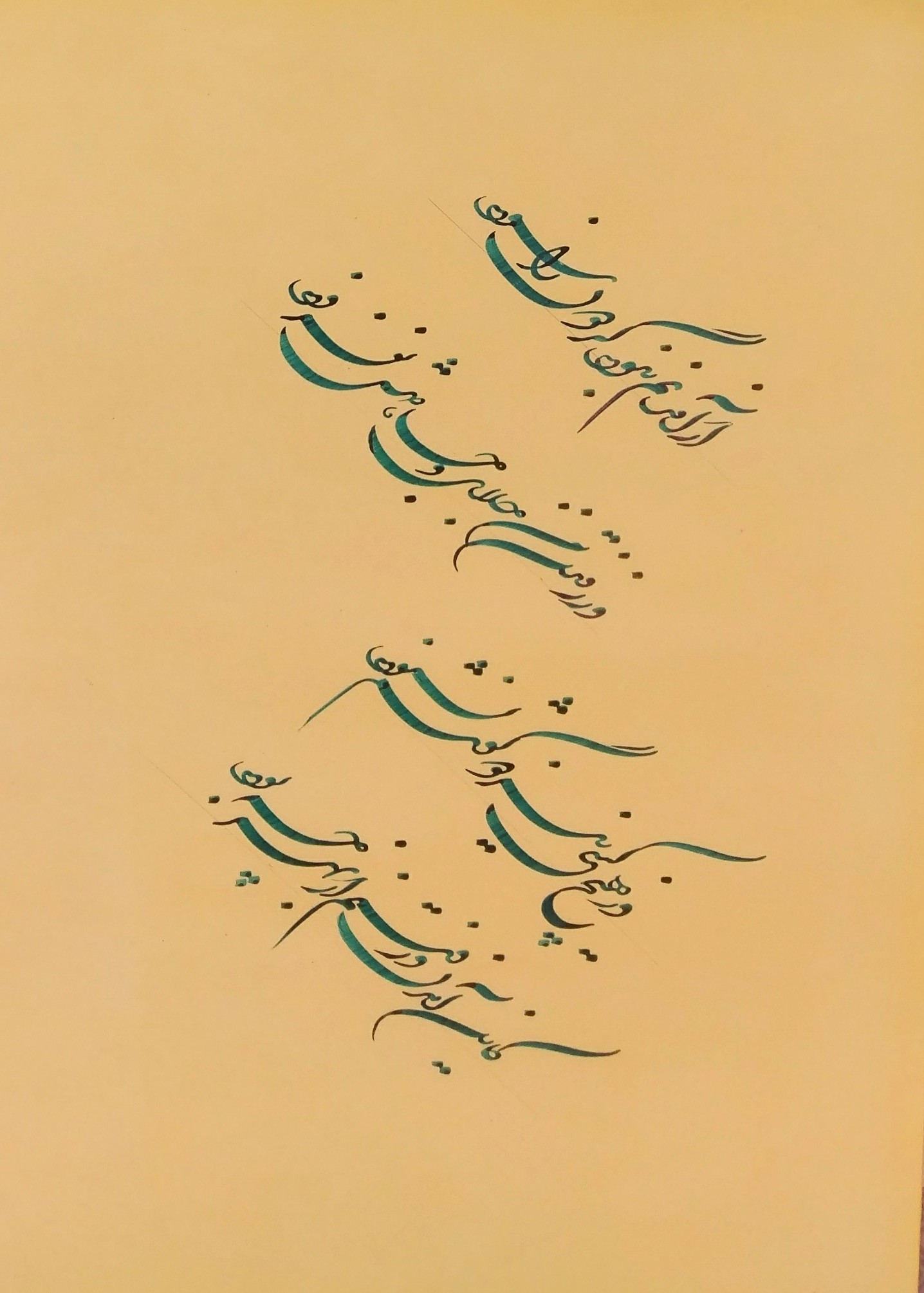}
\end{center}
\end{figure}

{
Omar Khayyam,  Robaiyat 60.
Calligraphy:  Tahereh Aladpoosh \\
There was no use of my coming to the world,\\
neither it was glorified from my going,\\
I never heared sombody saying, \\
for what reason I came and went.  \\
(Author's translation)
}

\newpage

{\it Preface:}
This is a collection of articles, written as chapters, on arithmetic properties of differential equations, holomorphic foliations, Gauss-Manin connections, and Hodge loci.
Each chapter is independent from the others and it  has its own abstract and introduction, and the reader might get an insight to the text by reading the introduction of each chapter.
The main connection between them is through comments in the footnotes. 
Our major aim is to develop a theory of leaf schemes over finitely generated subrings of complex numbers, such that the leaves are also equipped with a scheme structure. 
We also aim to formulate a local-global conjecture for leaf schemes.
\cref{18112023restraintinghappiness} and \cref{10082025sharr} are of an expository nature, although the author was unable to find many materials in these chapters elsewhere. \cref{29082024omidint} is purely computational and experimental and it can be understood with a basic knowledge of polynomial manipulations modulo primes.  \cref{Ramanujan27062024} is mixed expository and research level. The reader with interests on elliptic curves can go directly to this chapter.   \cref{29082024tanha} explains the current status of few conjectures developed in the author's book \cite{ho13}. \cref{21112023bihude} is a research level text which aims to put the content of \cref{29082024tanha} in the framework of local-global principles in arithmetic algebraic geometry.

\chapter*{Introduction}
The analogies  and relations between complex and arithmetic algebraic geometry have occupied the attention of mathematicians for more than a century. The most concrete example of this is Deligne's theory of mixed Hodge structures in \cite{de71-0} mainly inspired by results in $p$-adic cohomologies. In the same time, linear differential equations under the modern name of connections on vector bundles, have been investigated  as a branch of algebraic geometry, and such analogies and relations have been enlarged to this context. The most famous example is the Grothendieck-Katz $p$-curvature conjecture  for algebraicity of solutions of linear differential equations and local systems. It was suggested by  Grothendieck and popularized by Katz in \cite{Katz1970, Katz1972}. Whereas Deligne's theory is based upon analogies between cohomology theories, and consequently there is no concrete bridge   between  complex and $p$-adic Hodge theories,  Grothendieck-Katz conjecture is a concrete statement about the modulo primes properties of linear differential equations and its consequence on their solutions over complex numbers. One of the main goals of the present text is to put the Grothendieck-Katz conjecture into a larger framework of leaf schemes  and Hodge loci and investigate a local-global principle for such objects. Our study aims to provide  a new approach to the   Cattani-Deligne-Kaplan theorem in \cite{cadeka} on the algebraicity of Hodge loci by a local analysis of Taylor series of periods, whereas the original proof uses the global analysis of the monodromy group.
We aim to introduce modulo primes tools for proving  some conjectures in \cite[Chapter 19]{ho13} and \cite{ho2019}. The first one predicts the existance of certain Hodge loci for  cubic hypersurfaces of dimension $6$ and $8$ and the second one provides a conjectural counter example to a conjecture of  J. Harris for special components of Noether-Lefschetz loci in the case of octic surfaces.


Beyond linear differential equations, we have  the theory of holomorphic foliations in complex manifolds which has its origin  in the second part of Hilbert's 16th problem on the uniform boundedness of number of limit cycles. In the last decades, it has also been investigated as a branch of algebraic geometry. Despite this,  considering foliations on non-reduced schemes or with non-reduced leaves might seem abstract non-sense.
In the present  text we develop a theory in which a holomorphic foliation  is identified with its module of differential $1$-forms (which may not be saturated) and so it is not just the underlying geometric object. Moreover, its leaves might be non-reduced with different codimenions for which we use the name leaf scheme.
The main examples of leaf schemes are Hodge loci.
The origin of  the theory of leaf schemes goes back to \cite{GMCD-NL} and \cite[Chapter 5,6]{ho2020} and we follow and further develop these texts.
For experts in holomorphic foliations the theory of holomorphic foliations in which leaves are replaced with leaf schemes might seem an abstract non-sense. This is mainly due to the lack of motivation and simple examples. We are mainly motivated by applications to Hodge loci, even though the corresponding foliations cannot be written explicitly.

\section*{Summary of the text}
Each chapter is independent from the others and it  has its own abstract and introduction. The best way to
get an insight to the text is by reading the introduction of each chapter. In order to achieve this we have allowed ourselves to repeat definitions and notations if this makes the reading smoother.
The connection between chapters  is through comments in footnotes. In the following I will give a summary of the present text based on the author's way of thinking and logical dependence. There is no mathematical reason for the order of chapters.  The current order reflects the timeline of each chapter and  the fact that once the author decided to name a chapter by a number, no changes were done afterwards. 

The birth of the present text started from \cref{29082024tanha} and the desire to introduce local-global principles for Hodge loci. In particular, the starting goal was to analyse the following conjectures thorough modulo prime reductions:
\begin{conj}
\label{03112025bimsa-1}
Let $T$ be the parameter space of smooth hypersurfaces of degree $3$ in ${\mathbb P}^{7}_{\mathbb C}$. Let also $L$ be the locus of hypersurfaces containing two linear spaces $Z_1\cong {\mathbb P}^3$ and $Z_2\cong {\mathbb P}^3$ such that $Z_1\cap Z_2$ is a point. We can verify easily that the codimension of $L$ is $8$. Fix $r\in\Q$ with $r\not=0$. The codimension of the Hodge locus containing $L$ and parametrizing deformations of hypersurfaces $X$ in $L$ such that the cohomology class of $Z_1+rZ_2$ remains a Hodge class (that is, it remains of type $(3,3)$) is $7$.
\end{conj}
\begin{conj}
\label{03112025bimsa-2}
Let $T$ be the parameter space of smooth hypersurfaces of degree $3$ in ${\mathbb P}^{9}_{\mathbb C}$. Let also $L$ be the locus of hypersurfaces containing two linear spaces $Z_1\cong {\mathbb P}^4$ and $Z_2\cong {\mathbb P}^4$ such that $Z_1\cap Z_2$ is a line. We can verify easily that the codimension of $L$ is $20$. Fix $r\in\Q$ with $r\not=0$. The codimension of the Hodge locus containing $L$ and parametrizing deformations of hypersurfaces $X$ in $L$ such that the cohomology class of $Z_1+rZ_2$ remains a Hodge class (that is, it remains of type $(4,4)$) is $19$.
\end{conj}

\begin{conj}
\label{03112025bimsa-3}
Let $T$ be the parameter space of smooth surfaces of degree $8$ in ${\mathbb P}^{3}_{\mathbb C}$. Let also $L$ be the locus of surfaces containing two curves $Z_1$ and $Z_2$ such that $Z_1$ is a complete intersection of type $(3,3)$ in ${\mathbb P}^{3}_{\mathbb C}$ and $Z_2$ is a line and $Z_1\cap Z_2=\emptyset$.
The Noether-Lefschetz loci attached to the cohomology classes of $Z_1+r Z_2$ for $r\in\Q,\ r\not=0$ are distinct $31$ codimensional subvarieties intersecting 
each other  in a $32$ codimensional subvariety of $T$.  
\end{conj}
The main goal of \cref{29082024tanha} is to gather all the evidence that \cref{03112025bimsa-1} and \cref{03112025bimsa-2} must be true. \cref{03112025bimsa-3} is mainly discussed in \cite{ho2019}. The main common feature of these conjectures is that they are supported not only by some theoretical arguments but also by heavy computer calculations  for many days and even a month. 
The main ingredient has been the computation of the Taylor series  of the elements of the ideal of a Hodge locus with coefficients in $\Q(\zeta_{2d})$, where $d$ is the degree of the underlying hypersurface. For the expression of such Taylor series see \cref{InLabelNadasht?}. Using computer implementation of these Taylor series  we can prove that \cref{03112025bimsa-1}, \cref{03112025bimsa-2} and  \cref{03112025bimsa-3} are true up to $14$-order, $6$-th order and $5$-order approximation of the underlying Hodge locus, respectively.  When this chapter was written more evidences came out of Kloosterman's work \cite{Kloosterman2023}. 

In general we know that the Taylor series of periods have coefficients in a finitely generated subring $\sring\subset \C$ tensored with rational numbers, see \cref{05082024retaliation}.  The appearance of primes in the denominator of coefficients is very slow, however, a priori all primes might appear in the denominators of such coefficients soon or later, see  \cref{05aug2024delaram}. The situation for the periods defining a Hodge locus is different. By Cattani-Deligne-Kaplan theorem, we know that a Hodge locus is algebraic. After considering a Hodge locus in an enlarged parameter space, see \cref{26062024omidbita}, we observe that for the ideal defining a Hodge locus we only need to invert  finitely many primes, see \cref{21june2024baran}. This takes us to consider reduction modulo $p$ of Hodge loci. 

In \cref{08082024ls} we define the notion of a leaf scheme and in \cref{04dec2023khar} we realize a Hodge locus as a leaf scheme of a foliation. In this way, the study of Hodge loci and its algebraicity can be studied in the general framework of foliations. A leaf scheme is a concept more general than the classical notion of a leaf in the literature. For instance, its underlying analytic variety can be a subset of the singular set of the foliation.  The content of \cref{21112023bihude} is the highlight of the present text. In this chapter, we investigate a local-global principle for leaf schemes, see \cref{02112023mainconj}. The main motivation behind this, is actually the integrality of coefficients of a leaf scheme discussed in \cref{19062024kontsevich}. We can rewrite this conjecture in the framework of classical leaves of foliations.  Even in the case of  foliations in $\P^2_\C$, this kind of statements is widely open, and we only mention the following particular case.  
\begin{conj}
\label{04112025dodokhtarbimsa}
Let us consider the differential equation 
 $$
 \frac{\partial y}{\partial x}=\frac{P(x,y)}{Q(x,y)},\ \ P,Q\in\Z[x,y],\ 
Q(0,0)\not=0,\ 
 $$
 If the Taylor series of its solution $y(x)=\sum _{n=0}y_nx^n$ through $(0,0)$ is defined over $\Z[\frac{1}{N}]$ for some $N\in\N$, that is all coefficients $y_n$ are in $\Z[\frac{1}{N}]$,  then $y(x)$ is algebraic, that is, there is a polynomial $P\in\C[X,Y]$ such that $P(x,y(x))=0$. 
\end{conj}
We may even improve the hypothesis of this conjecture saying that  that there is $N\in\N$ such that $N^ny_n\in\Z$ for all $n\in\N_0$. When the the present text was  being written, the author knew of \cite{LamLitt2025}, in which the authors announce  a similar conjecture and prove it for isomonodromic foliations of Gauss-Manin connections. The algebraicity criterion \cref{19062024kontsevich} seems to be the most general criterion available as it also includes the algebraicity of Hodge loci, and in order to make advertisement for it we have reproduced it in many different parts of the present text: \cref{04112025dodokhtarbimsa} for foliations in $\P^2_\C$, for linear differential equations over $\P^1_\C$  which is the converse of \cref{19102023chinagain-2} and it is stated  in comments after this theorem, for vector fields in \cref{11july2024autoescola} and \cref{11072024graphofscience}. From all these, one can derive local-global conjectures, among which the Grothendieck-Katz conjecture is the most famous one.    
In a similar way we have written the most general local-global principle studied in this article. This is namely \cref{02112023mainconj}  which is stated for leaf schemes. In  the case of linear differential equations or connections on vector bundles this reduces to   the Grothendieck-Katz $p$-curvature conjecture.

As the integrality of solutions of differential equations is much stronger than the vanishing of $p$-curvature, in \cref{18112023restraintinghappiness} we raise some doubts about the $p$-curvature conjecture. \cref{19102023chinagain}, \cref{19102023chinagain-2} say that if a solution of a linear differential equation is algebraic then  certain quantities derived from this solution divided by factorials still live in some finitely generated  rings, and hence all but a finite number of primes cancel from the denominator. We formulate this property in terms of the vanishing of the so called $m_{p,k}$-curvature. For $m_{p,1}$ we recover the classical vanishing of $p$-curvature. Converses of \cref{19102023chinagain}, \cref{19102023chinagain-2} are conjectures, see for instance \cref{05112023tina} and the notes after \cref{19102023chinagain-2}, and we explain them for the following simple class of linear differential equations: 
\begin{equation}
\label{07112025jinmohamedlucas}
\frac{\partial Y}{\partial z}=\left( \frac{A_0}{z}+\frac{A_1}{z-1}+\frac{A_t}{z-t}\right)Y, 
\end{equation}
where $t\in\Z,\ t\not=0,1$ and $A_0,A_1,A_t$'s are $n\times n$ matrices with entries in $\Z$. Let $\Am(z)$ be the matrix in parentheses. We define $\Am_n$ recursively by $\Am_1=\Am,\ \ \Am_{n+1}=\frac{\partial \Am_n}{\partial z}+\Am_n\Am$.  The matrix $z^n(z-1)^n(z-t)^n\Am_n$ has entries in $\Z[z]$, and \cref{19102023chinagain} says that if all the solutions of \eqref{07112025jinmohamedlucas} are algebraic then there is some $N\in\N$ such that for all $n\in\N$ the matrix $z^n(z-1)^n(z-t)^nN^n\frac{\Am_n}{n!}$ has entries in $\Z[z]$, that is, all except a finite number of primes are canceled from the denominator of $\frac{\Am_n}{n!}$.  \cref{05112023tina} is just the converse of this statement. The Eisenstein criterion for algebraicity of holomorphic functions in many variables is stated in \cref{29062024ajib} and it seems to appear for the first time in print.

In order to find an algebraicity criterion beyond solutions of linear differential equations, we introduce the notion of an integral function and in \cref{09122023sonymuseum} we prove that algebraic functions are integral. The converse statement seems to be related to Grothendieck-Katz conjecture, or its generalizations, but we were not able to make a direct link.

The content of \cref{18112023restraintinghappiness} leads us to believe that the Grothendieck-Katz conjecture might be false. It does not matter whether this belief is based upon tiny evidences. What matters is that this belief forces us to start computing $p$-curvature of many interesting differential equations, where we might be able to find a counterexample. After analyzing carefully the last status of well-known results on $p$-curvature conjecture, in \cref{29082024omidint}  we study the $p$-curvature for  Lamé equations. In \cref{26062024kamar-2},  we find Lamé equations for which for a single prime the $p$-curvature is zero but the $m_{p,k}$-curvature is not zero.  The main outcome of this chapter is a bunch of Lamé equations for which the number of primes $p$  for which $p$-curvature is zero seems to be more than non-zero $p$-curvatures. This leads us to introduce the $p$-curvature density for differential equations, for which there is no literature.     

Linear differential equations in the name of connections on vector bundles are well-studied in the framework of algebraic geometry, however, vector fields $\vf$ in algebraic varieties $\T$ are not studied as ordinary  differential equations as they are mainly defined as sections of tangent bundles. For this reason, in  \cref{10082025sharr} we have collected basic properties of vector fields as differential equations.  In this chapter we have collected many similar topics as in \cref{18112023restraintinghappiness}. We first prove the fundamental theorem of ordinary differential equations, namely \cref{5jan2015}. The algebraicity of a solution of a vector a field $\vf$ manifests itself as cancelation of all but a finite number of primes in $\frac{\vf^m}{m!}$. This is stated in \cref{12112023rik-2} and its converse \cref{11072024graphofscience} is a generalization of \cref{04112025dodokhtarbimsa}.  The local-global principle  for vector fields in \cref{02112023mainconj-vv} is the natural output of all these discussions.  
After discussing the relation of few classical  concepts such as isogeny of elliptic curves, Hasse-Witt invariant and p-curvature with vector fields, we introduce  Poincar\'e linearization theorem for vector fields in \cref{30102025shanghai}.

One of the chapters which the author enjoyed so much writing it, as he learned many classical mathematics, rediscovered  some of them and even discovered some new statements, is \cref{Ramanujan27062024}. It is about a single vector field called Ramanujan vector field. Its rich properties is due to the fact that it lives on a moduli space of elliptic curves $\T$ which is called  "ibiporanga", see \cref{21may2024raisimord}. The reader who is  annoyed with the mathematical nonsense of \cref{21112023bihude}, should  first read this chapter as its content contains many classical and new things related to elliptic curves. The most striking phenomena in this chapter is the loci of parameters for which $\vf^p=\vf$ for a fixed prime. Although this loci only exists in characteristic $p$, it has very useful consequences in the integrality of the coefficients of modular forms. The main goal of this chapter to prepare the ground for such integrality properties for Calabi-Yau modular forms introduced in \cite{GMCD-MQCY3, HosseinMurad}. Another important  novelty in this chapter is the characterization of CM elliptic curves using the Ramanujan vector field in \cref{22102025mauricio}.

\section*{Acknowledgement:}
Since the first messy draft of the present text was written in 2023, it was in the author's webpage for sharing ideas. Throughout the text, there appears the name of many mathematicians who have made comments regarding parts of the text.  Email, online and personal communications with them have improved the text a lot.
The reader after a PDF search of their names can find the place of their contributions.
This includes:  João Pedro dos Santos, Jorge V. Pereira, Yves Andr\'e, H\'el\`ene Esnault, Wadim Zudilin, Stefan Reiter, Frits Beukers, 
Daniel Litt, Joshua Lam, Nick Katz, David Urbanik, Felipe Voloch, Frederico Bianchini,  Pierre Deligne, Remke Kloosterman, Roberto Villaflor, Florian Fürnsinn.  
My sincere thanks go to all of them.



\tableofcontents

\def\sinone{{ W}}
\def\Smat{{\sf S}}
\def\hn{{\sf h}} 
\def\IS{\mathlarger{{\mathcal I}}}      


\chapter{Grothendieck-Katz conjecture}
\label{18112023restraintinghappiness}

{\it
Man sollte weniger danach streben, die Grenzen der mathematischen Wissenschaften zu erweitern, als vielmehr danach,
den bereits vorhandenen Stoff aus umfassenderen Gesichtspunkten zu betrachten, \href{https://openscience.ub.uni-mainz.de/bitstream/20.500.12030/3484/1/930.pdf}{E. Study}, (see the preface of \cite{Edwards1990}).
}
\\
\\

{\it Abstract:} In this article we prove that linear differential equations with only algebraic solutions
have zero $m$-curvature modulo $p^k$ for all except a finite number of primes $p$ and all $k,m\in\N$ with
${\rm ord}_pm!\geq  k$. This provides us with a reformulation of Grothendieck-Katz conjecture  with stronger hypothesis.

\section{Introduction}
\label{27012024tanhaperu}
Let us consider a linear differential equation:\index{$\Am$, matrix of linear differential equation}
\begin{equation}
\label{06122023dingxin}
\frac{\partial y}{\partial z}=\Am(z) y,
\end{equation}
where $\Am=\Am(z)$ is an $\Nn\times \Nn$ matrix with entries which are rational functions in $z$ and coefficients in $\C$. We are looking for holomorphic solutions of this differential equation. These are $\Nn\times 1$ matrices $y$ whose entries are holomorphic functions in an open set $U$ in $\C$. Let $\Delta\in\C[z]$ be the common denominator of the entries of $\Am$, and so the entries of $\Delta\Am$ are in $\C[z]$. We take  $\sring\subset\C$ any finitely generated  $\Z$-algebra generated by the coefficients of $\Delta$ and $\Delta\Am$.\index{$\sring$, a finitely generated subring of $\C$}
It is easy to see that $y^{(n)}=\Am_ny$, where $\Am_n$ are recursively computed by
$$
\Am_1=\Am,\ \ \Am_{n+1}=\frac{\partial \Am_n}{\partial z}+\Am_n\Am.
$$
We can easily check that $\Delta^n\Am_n$ has entries in $\sring[z]$.\index{$\Am_n$, a matrix obtained from $\Am$}
\begin{theo}
\label{19102023chinagain}
If all the entries of solutions of $\frac{\partial y}{\partial z}=\Am y$ are algebraic functions over $\C(z)$, that is, they are in the algebraic closure $\overline{\C(z)}$ of $\C(z)$ then there is $N\in\sring$ such that
\begin{equation}
\label{06032024bitavisa}
\hbox{ entries of } \frac{(N\Delta)^m\cdot \Am_m}{m!}\  \in\  \sring[z],\ \ \forall m\in \N.
\end{equation}
In particular, for all primes $p$ and $k,m\in\N$ with ${\rm ord}_pm!\geq  k$ 
we have  $\Am_m\equiv_{p^k} 0$, that is, $\Am_m$ is zero in the ring $\sring[z, \frac{1}{N\Delta}]/p^k\sring[z,\frac{1}{N\Delta}]$.
\end{theo}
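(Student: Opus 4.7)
The plan is to package the matrices $\Am_m/m!$ into the Taylor coefficients in $t$ of a single algebraic matrix $F(z,t)$, and then apply an Eisenstein-type bounded-denominator theorem for algebraic power series.

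\textbf{Step 1 (generating matrix).} Let $Y=Y(z)$ be a fundamental matrix of solutions of $\partial y/\partial z = \Am y$; by hypothesis every entry of $Y$, hence of $Y^{-1}$, lies in $\overline{\C(z)}$. The identity $Y^{(m)} = \Am_m Y$ gives $\Am_m = Y^{(m)}Y^{-1}$. Define
\begin{equation*}
F(z,t) := Y(z+t)\, Y(z)^{-1}
\end{equation*}
and expand as a Taylor series in $t$ at $t=0$:
\begin{equation*}
F(z,t) \;=\; \sum_{m\ge 0}\frac{Y^{(m)}(z)}{m!}\, Y(z)^{-1}\, t^m \;=\; \sum_{m\ge 0}\frac{\Am_m(z)}{m!}\, t^m.
\end{equation*}
The entries of $F(z,t)$ are algebraic over $\C(z,t)$; on the other hand, by the recursion $\Am_{m+1} = \partial \Am_m/\partial z + \Am_m\Am$, each Taylor coefficient already lies in $\C(z)^{\Nn\times\Nn}$, so the algebraic factors coming from individual entries of $Y$ cancel inside the $m$-th coefficient.

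\textbf{Steps 2--3 (integral equation and Eisenstein bound).} After enlarging $\sring$ to a still finitely generated $\Z$-algebra in $\C$, each entry $f$ of $F(z,t)$ satisfies a polynomial equation with coefficients in $\sring[z,t]$ whose leading coefficient in $f$ divides a power of $\Delta(z)$ (any singularity of $f$ in $z$ being a singularity of the ODE), and $f(z,0)\in\{0,1\}\subset\sring$. Apply the ring-theoretic version of Eisenstein's theorem on bounded denominators of algebraic power series: the Hensel-type recursion for the Taylor coefficients $c_m(z)$ of $f$ at $t=0$ introduces at each step only the leading coefficient of the defining polynomial, hence only a power of $\Delta$, as a $z$-denominator. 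One concludes
\begin{equation*}
(N\Delta)^m\, c_m(z) \in \sring[z], \qquad \forall m \in \N,
\end{equation*}
for a suitable $N\in\sring$ absorbing the finitely many primes needed in the enlargement above. Applied entrywise to $F(z,t)$, this is precisely (\ref{06032024bitavisa}); and the $p$-adic consequence follows at once, since for every prime $p$ not dividing $N$ the $p$-adic valuation of $\Am_m$ in $\sring[z,1/(N\Delta)]$ is then at least ${\rm ord}_p(m!)$.

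\textbf{Main obstacle.} The key technical point is Step~3: identifying the denominator $D$ in the classical Eisenstein bound $D^m c_m\in\sring[z]$ with $N\Delta$ for a single universal $N\in\sring$. Classical Eisenstein produces some $D$ depending on the discriminant and the leading coefficient of the minimal polynomial of $f$, and the work is to verify that these are controlled by the denominator $\Delta$ of the original ODE. A natural route is to bypass the minimal polynomial entirely and run the Hensel recursion directly on the matrix equation $\partial_t F = \Am(z+t)\, F$ with $F(z,0)=I$: each iteration introduces at most one factor of $\Delta(z+t)$, and algebraicity of $Y$ is precisely what prevents the factorials $m!$ from generating new denominators. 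This also suggests that the finite set of excluded primes in Step~4 consists exactly of those appearing in $N$, which is consistent with the local-global framework pursued in \cref{18112023restraintinghappiness}.
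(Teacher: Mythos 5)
Your construction of the matrix $F(z,t):=Y(z+t)\,Y(z)^{-1}$, whose $t$-Taylor coefficients are exactly $\Am_m(z)/m!$ and which is algebraic over $\C(z,t)$, is a genuinely different idea from the paper's argument and is quite attractive. The paper does not introduce any two-variable or ``parametric'' Eisenstein statement; it fixes a point $z_0\in\sring$, applies the one-variable \cref{26012024margetifi?-2} to each entry of $Y$ to place the Taylor coefficients in $\check\sring=\sring[\tfrac{1}{N}]$, observes that differentiating such a series $m$ times produces a factor of $m!$ in every coefficient, hence $Y^{(m)}\equiv 0$ in $(\check\sring/m!\check\sring)[[z-z_0]]$, divides by $Y$ (which is invertible modulo $m!$ since $Y(z_0)=I$) to get $\Am_m\equiv 0$ there as a formal power series, and finally uses that $\Delta^m\Am_m$ is a matrix of polynomials to upgrade the formal statement to the polynomial statement \eqref{06032024bitavisa}. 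Your route instead tries to apply Eisenstein in the $t$-variable directly over the ring $\sring[z,\tfrac{1}{\Delta}]$.

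This is where there is a real gap, and you correctly flag its location but do not close it. When you run the Hensel recursion on the minimal polynomial $P(z,t,f)$ of an entry $f$ of $F$, the denominator introduced at each step is $\tfrac{\partial P}{\partial f}(z,0,c_0(z))$, not the leading coefficient $p_d(z,t)$; your remark that $p_d$ divides a power of $\Delta$ is correct (because $F$ is holomorphic away from $\Delta(z)\Delta(z+t)=0$), but it addresses the wrong quantity. The partial derivative $\tfrac{\partial P}{\partial f}$ can vanish at ``apparent singularities'', i.e.\ at points $z=a$ with $\Delta(a)\ne 0$ where two distinct conjugates of $f$ coincide without branching, and nothing forces those zeros to lie on $\Delta=0$. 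So the Eisenstein denominator you obtain from this recursion is a priori $N\cdot D(z)$ for some polynomial $D$, not $N\Delta$; you cannot identify $D$ with $\Delta$ without a further argument (and knowing separately that $\Delta^m\Am_m\in\sring[z]$ does not immediately let you trade $D$ for $\Delta$, since the two factors need not be coprime in a useful way). The ``alternative route'' you suggest in the final paragraph --- running the recursion on the ODE $\partial_t F=\Am(z+t)F$ --- does not fix this: that recursion reads $mF_m=\sum_i\gma_iF_{m-1-i}$, which reintroduces the factor $m$ at each step; it uses no input from algebraicity, so it cannot by itself produce the cancellation of $m!$ that the theorem asserts. To repair your argument in your own framework you would need either (i) a justification that the apparent-singularity locus is empty (it need not be), or (ii) to descend to a single point $z_0\in\sring$ as the paper does, apply Eisenstein there, and recover the polynomial statement from the fact that $\Delta^m\Am_m$ is a matrix of polynomials.
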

If $\sring$ is a ring of integers of a number field, in the above theorem we can take $N\in\N$, see \cref{27042024frankfurt}. \index{$N$, a natural number containg bad primes}
This is natural as in the left hand side of \eqref{06032024bitavisa} only integers are inverted and not 
an element of $\sring-\Z$.  The only reason that we assume that $\sring$ is the ring of integers of a number field is \cref{30012023espreafico}. 
The conclusion in the second part of \cref{19102023chinagain} is equivalent to $\Delta^m\Am_m\equiv_{p^k}0$  which means $\Delta^m\Am_m=0$ in the ring
$\sring[z,\frac{1}{N}]/p^k\sring[z,\frac{1}{N} ]$.
We may expect that the converse of \cref{19102023chinagain} holds.
\begin{conj}
\label{05112023tina}
For a linear differential equation \eqref{06122023dingxin} if there is $N\in\sring$ such that for all $m\in\N$ the entries of
$\frac{(N\Delta)^m\Am_m}{m!}$ are in $\sring[z]$
then  its solutions are algebraic functions over $\C(z)$.
\end{conj}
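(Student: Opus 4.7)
The plan is to view this conjecture as a Dwork-style $p$-adic integrality problem refining Grothendieck-Katz. First observe that the hypothesis is equivalent to a uniform local statement at every $\sring$-valued point: if $Y(z)$ denotes the fundamental solution matrix normalized by $Y(z_0)=\mathrm{Id}$ at a regular point $z_0$, its Taylor expansion
\[
Y(z) = \sum_{n\geq 0} \frac{\Am_n(z_0)}{n!}\,(z-z_0)^n
\]
has $n$-th coefficient with denominator dividing $(N\Delta(z_0))^n$ in $\sring$. Hence, for every prime $p$ not dividing $N\Delta(z_0)$, the matrix $Y$ already lies in $\sring_{(p)}[[z-z_0]]^{\Nn\times\Nn}$. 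This is an Eisenstein-type integrality stronger than the usual $G$-operator property: taking $m=p$ one recovers the vanishing of the classical $p$-curvature $\Am_p\equiv 0 \pmod p$, so the conjecture sits beneath Grothendieck-Katz, while the general relation $\Am_m\equiv 0\pmod{p^{\mathrm{ord}_p m!}}$ asserts the vanishing of a whole tower of higher divided $p$-curvatures.

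Next I would exploit this tower of congruences prime by prime using Dwork's trick. The vanishing of all divided $p$-curvatures should be reinterpreted as saying that the formal connection over $\sring_{(p)}[[z-z_0]]$ admits a complete basis of horizontal sections integral at $p$, i.e.\ becomes trivial after $p$-adic completion at $z_0$. Carrying this triviality from point to point by Taylor re-expansion (which is legitimate because the entries of $(N\Delta)^m\Am_m/m!$ are polynomial in $z$), one would build a Frobenius structure on the $p$-adic differential module: a horizontal isomorphism between the connection and its pullback along the $p$-power Frobenius. By the Katz correspondence between Frobenius structures and finite local monodromy, this forces the mod-$p$ solutions to be algebraic of bounded degree, uniformly in $p$.

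The last step would be to globalize uniform mod-$p$ algebraicity to algebraicity over $\C(z)$. The Eisenstein integrality from the first step combined with a degree bound holding for almost every $p$ should force a genuine algebraic relation over $\sring$ by a pigeonhole / interpolation argument on the coefficients of the putative minimal polynomial of a solution; this is essentially the pattern in Katz's proof of Grothendieck-Katz for the Picard-Fuchs case, transported to the present abstract setting.

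The hard part is clearly the production of the Frobenius structure from divided-curvature vanishing alone. This is in effect an unconditional form of the Grothendieck-Katz conjecture, so no general mechanism is expected to fall out for free; realistic progress would come either by restricting to classes where Grothendieck-Katz is known (rank one, solvable monodromy, or Gauss-Manin connections), or by genuinely using the all-$m$ congruences rather than just $m=p$, hoping that the extra information forces the formal solutions to be algebraic without ever passing through a global monodromy analysis.
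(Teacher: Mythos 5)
The statement you are trying to prove is explicitly a \emph{conjecture} in the paper (\cref{05112023tina}); the paper offers no proof of it, and your own final paragraph candidly concedes that the decisive step of your plan would amount to an unconditional proof of Grothendieck–Katz in disguise, so you are not really claiming a proof either. The honest reading is that you have reproduced the author's motivational picture of why the conjecture \emph{should} be true, not a proof.

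Two concrete remarks on the gaps in the sketch. First, the bridge you invoke — ``vanishing of all divided $p$-curvatures $\Rightarrow$ Frobenius structure on the $p$-adic differential module $\Rightarrow$ finite local monodromy by the Katz correspondence'' — is precisely the place where nothing is known. What the hypothesis actually delivers is an iterative differential module structure at each good $p$ (this is \cref{24092024velinho}); passing from ``iterative differential module exists for almost all $p$'' to ``finite differential Galois group'' is the open conjecture of Matzat–van der Put \cite{MatzatPut2003}, which the paper says is implied by \cref{05112023tina}, not used to prove it. The Katz correspondence you want needs Frobenius structures (horizontal isomorphisms $F^*M \cong M$), and building those from the $m!$-divisibility is not available. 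Second, your interpolation/pigeonhole endgame is borrowed from Katz's Picard–Fuchs argument, but that argument crucially exploits that Gauss–Manin factors sit in a finitely generated Tannakian subcategory with known global monodromy; in the abstract setting here there is no substitute, and the paper's examples (e.g.\ \cref{26062024kamar}, where $\Am_5\equiv_5 0$ but $\Am_{25}\not\equiv_{5^6}0$) show that the single-prime information is genuinely weaker and can behave erratically, so any argument must really mix primes — a point the author flags explicitly.

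Where your sketch \emph{does} add something worth keeping is the clean observation that the hypothesis of \cref{05112023tina} is an Eisenstein-type integrality for the local fundamental solution at each point, strictly stronger than $p$-curvature vanishing; the paper phrases this only through $m_{p,k}$. This makes transparent why \cref{05112023tina} is a priori logically weaker than Grothendieck–Katz (stronger hypothesis, same conclusion), matching \cref{10022024carnaval}.
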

\href{https://math.stackexchange.com/questions/1735391/majoration-of-the-p-adic-valuation-of-a-factorial}{It is easy to verify that ${\rm ord}_pm!\leq \frac{m}{p-1}$ and so $m$ in the above conjecture satisfies $m\geq (p-1)k$.}
For a given prime and $k\in\N$ let $m_{p,k}$ be the smallest $m\in\N$ such that ${\rm ord}_pm!\geq  k$.  We have $m_{p,1}=p,\ m_{p,2}=2p$ and  $(p-1)k\leq m_{p,k}\leq pk$ and it is not hard to see that the conclusion of \cref{19102023chinagain}, and hence the hypothesis of \cref{05112023tina}, is equivalent to the same statement with $m=m_{p,k}$.

For a prime number $p$, the matrix $\Am_p$
considered as a matrix with entries in $\sring[z,\frac{1}{\Delta}]/p\sring[z,\frac{1}{\Delta}]$ is usually called the $p$-curvature of $\Am$ (there is no $N$ in the denominator).\index{$p$-curvature}
The statement in the second part of \cref{19102023chinagain} for $k=1$ and $m=p$ and with $N\in\N$ (see \cref{27042024frankfurt}) says that the $p$-curvature vanishes for all except a finite number of primes. Its converse, which is a stronger version of \cref{05112023tina}, is known as Grothendieck-Katz (p-curvature) conjecture.
 It was suggested by A. Grothendieck and popularized by N. Katz in \cite{Katz1970, Katz1972}.
Note that there is a finite number of primes which are invertible in $\sring[\frac{1}{N}]$ and the second statement
in the main theorem becomes empty for these primes. Therefore, the Grothendieck-Katz conjecture is: If for all but a finite number of primes the $p$-curvature of $y'=\Am y$ vanishes then all its solutions are algebraic. 
\begin{prop}
\label{10022024carnaval}
If Grothendieck-Katz conjecture is true for $y'=\Am y$ then the vanishing of $p$-curvature
 for all except a finite number of  primes $p$ implies the vanishing of $\Am_{m_{p,k}}$  modulo $p^k$ for all except a finite number of primes $p$ and all $k\in\N$.
\end{prop}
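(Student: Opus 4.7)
The plan is to obtain the conclusion by chaining our standing hypothesis (the Grothendieck-Katz conjecture for $\frac{\partial y}{\partial z}=\Am y$) with \cref{19102023chinagain}. First I would apply the Grothendieck-Katz conjecture itself: the assumption that the $p$-curvature $\Am_p\bmod p$ vanishes for all but finitely many primes yields, by the conjecture, that every entry of every solution of $\frac{\partial y}{\partial z}=\Am y$ is algebraic over $\C(z)$.

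Second I would feed this algebraicity into \cref{19102023chinagain}. That theorem produces an $N\in\sring$ such that $\frac{(N\Delta)^m\Am_m}{m!}\in\sring[z]$ for every $m\in\N$, and equivalently that $\Am_m$ vanishes in $\sring[z,\frac{1}{N\Delta}]/p^k\sring[z,\frac{1}{N\Delta}]$ whenever $\mathrm{ord}_p m!\geq k$. Specialising $m=m_{p,k}$ (which by definition is the smallest $m$ with $\mathrm{ord}_pm!\geq k$) gives exactly the desired congruence $\Am_{m_{p,k}}\equiv_{p^k}0$.

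The only bookkeeping step is to track the finite set of exceptional primes that survives in the conclusion. The localisation $\sring[z,\frac{1}{N\Delta}]/p^k$ is the zero ring precisely when $p$ is already a unit in $\sring[\frac{1}{N}]$, i.e.\ when $p$ divides $N$; since $N$ is a single element of $\sring$, only finitely many primes are excluded on this account. Taking the union with the finitely many exceptional primes from the Grothendieck-Katz hypothesis still leaves a finite set, and this set is independent of $k$, so the statement holds as required for all $k\in\N$ simultaneously.

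There is no genuine obstacle here: all the substantive work lives in \cref{19102023chinagain}, and the proposition is essentially tautological once that theorem is in hand. \cref{19102023chinagain} already strengthens the (classical, easy) direction \emph{algebraic solutions $\Rightarrow$ vanishing $p$-curvature} all the way up to vanishing modulo $p^k$; the Grothendieck-Katz conjecture is precisely what is needed to feed vanishing $p$-curvature back into algebraicity, and composing the two gives the stronger modulo $p^k$ vanishing for free.
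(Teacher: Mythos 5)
Your proof is correct and is the argument the paper intends: the proposition is left without an explicit proof precisely because it is the immediate composition of the Grothendieck--Katz hypothesis (vanishing $p$-curvature $\Rightarrow$ algebraic solutions) with \cref{19102023chinagain} (algebraic solutions $\Rightarrow$ $\Am_m\equiv_{p^k}0$ whenever $\mathrm{ord}_p m!\geq k$), specialised to $m=m_{p,k}$. The only minor imprecision is that ``$p$ is a unit in $\sring[\frac1N]$'' need not coincide with ``$p\mid N$'' in a general finitely generated $\Z$-algebra; what matters, and what does hold, is that only finitely many rational primes become units in $\sring[\frac1N]$.
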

A random search, without no strategy, for finding a counterexample to the conclusion of \cref{10022024carnaval} for single prime is also not easy. That is, for for many differential equations the conclusion of \eqref{10022024carnaval} is true. However, we were able to find: 
\begin{prop}
\label{26062024kamar}
For the  Lamé differential equation
$$
(4z^3-1) \frac{d^2y}{dz^2}+6z^2\frac{dy}{dz}-\frac{7}{36}zy=0
$$
we have  $\Am_5\equiv_50$, however, $\Am_{25}\not \equiv_{5^6}0$.
\footnote{For the computer code which proves this proposition see \cref{26062024kamar-2}.}
\end{prop}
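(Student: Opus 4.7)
The plan is a direct symbolic computation. Converting the scalar Lam\'e equation to first-order form by setting $y_1=y$, $y_2=y'$ yields $y'=\Am y$ with
\[
\Am=\frac{1}{\Delta}A_0,\qquad A_0=\mat{0}{\Delta}{7z}{-216 z^2},\qquad \Delta=36(4z^3-1)=144z^3-36,
\]
so $\sring=\Z$ and $\Delta\Am=A_0$ has entries in $\Z[z]$ as required. Since the leading coefficient and the constant term of $\Delta$ are $144$ and $-36$, both coprime to $5$, multiplication by $\Delta$ is injective on $(\Z/5^k\Z)[z]$ for every $k$. One may therefore take $N=1$, and the congruence $\Am_m\equiv_{5^k}0$ is equivalent to the coefficient-wise congruence $M_m\equiv 0\pmod{5^k}$ for $M_m:=\Delta^m\Am_m\in\Z[z]^{2\times 2}$.

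Substituting $\Am_m=\Delta^{-m}M_m$ into $\Am_{m+1}=\partial_z\Am_m+\Am_m\Am$ produces the polynomial recursion
\[
M_1=A_0,\qquad M_{m+1}=\Delta\cdot\partial_z M_m-m\Delta'\cdot M_m+M_m\cdot A_0,\qquad \Delta'=432 z^2,
\]
which I would iterate in the ring $(\Z/5^6\Z)[z]$, reducing each coefficient modulo $5^6$ after every step so the entries stay bounded. The two assertions of the proposition then reduce to two finite inspections: every coefficient of every entry of $M_5$ should be divisible by $5$, and some coefficient of some entry of $M_{25}$ should fail to be divisible by $5^6$.

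The degree of $M_m$ grows linearly in $m$, so $M_{25}$ is a $2\times 2$ matrix of polynomials of degree roughly $51$ over the finite ring $\Z/5^6\Z$. The arithmetic is elementary but voluminous, and the only genuine obstacle is the sheer size of the computation: performing twenty-five steps of the recursion by hand is infeasible, which is precisely why the claim is verified by the computer code referenced in \cref{26062024kamar-2}. I see no conceptual shortcut---statement (a), vanishing of the ordinary $5$-curvature, could plausibly be extracted from a Hasse-type formula for second-order equations, but statement (b), the failure exactly at level $m=25=5\cdot m_{5,1}$ and modulus $5^6$, reflects no structural feature of the Lam\'e family at this parameter that I can see, and must be witnessed by the computation itself.
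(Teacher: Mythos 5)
Your proposal is correct and follows essentially the same route as the paper: the paper's entire proof is the {\sc Singular} invocation of {\tt BadPrD} with $(n,B,g_2,g_3)=(1/6,0,0,1)$, which internally performs exactly the polynomial recursion you write down (iterating $\Am_{m+1}=\partial_z\Am_m+\Am_m\Am$ in cleared-denominator form and testing divisibility of the resulting integer coefficients). The one thing you spell out that the paper leaves implicit is the justification that the check may be done coefficient-wise in $(\Z/5^k\Z)[z]$ because the leading coefficient of $\Delta$ is a unit modulo $5$; that observation is correct and worth having on record.
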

Therefore, any proof of the conclusion of \cref{10022024carnaval} without assuming the Grothendieck-Katz conjecture,  must use a mixed prime argument.
João Pedro dos Santos took my attention to the articles \cite{MatzatPut2003} in which the authors right at the end of the article makes the following conjecture: Let $F$ be a number field and $M$ be a differential module over $F(z)$.
If for all but a finite number of primes $p$, the iterative differential module with respect to $p$ exists and has a finite differential Galois group $G$, then the differential Galois group of $M$ is isomorphic to $G$. The property \eqref{06032024bitavisa} implies the existence of such an iterative differential module and so \cref{05112023tina} implies this conjecture.  A similar conjecture has been stated in \cite{KisinEsnault2018} using the language of $D$-modules: Let $X/\C$ be a smooth projective variety over $\C$ and $M=(E,\nabla)$ be  a vector bundle with an integrable connection on $X$.  If there is a dense open subscheme
$U\subset {\rm Spec}(\sring)$ such that for all closed points $s\in U$, $M_s$ underlies a $D_{X_s}$-module then $M$ has finite monodromy. Another similar conjecture has been advertised by M. Kontsevich in \cite{Kontsevich2023}.  Hopefully the reader will be convinced that the hypothesis of conjectures in  \cite{MatzatPut2003,  KisinEsnault2018, Kontsevich2023} are hard to check whereas \eqref{06032024bitavisa} can be even implemented in a computer and it is characteristic zero statement.   Moreover, we would like to emphasize that more divisibility properties are missing in the hypothesis of Grothendieck-Katz conjecture. Y. André in a personal communication took my attention to the works \cite{BombieriSperber1982, dw94, Andre2004}. The following is written based on his comments.
The $p$-adic radius of convergence of a full set of solutions of a linear differential equation is $\geq p^{-1/(p-1)}$, see \cite{BombieriSperber1982},  and if this is strict, for instance if the $p$-curvature is nilpotent,  then $\frac{\Am_m(z)}{m!}$  becomes  divisible by very high powers of $p$ for large $m$.
The vanishing of $p$-curvatures for a fixed $p$ implies that the generic radius $R_p$ of convergence satisfies $R_p> p^{-1/(p-1)}$ and the $\tau$-invariant is $0$ in this case, see \cite[Section 5]{Andre2004}.
H. Esnault informed me about \cite[Proposition 8.1, 8.2]{Esnault2020} in which the authors verify the Grothendieck-Katz conjecture for two partial cases. W. Zudilin has called phenomenon like \eqref{06032024bitavisa} the cancellation of factorials, see \cite{Zudilin2001}. He took my attention to \cite{Delaygue-Rivoal} in which the interest in algebraic solutions of rank one systems is attributed to Abel.

\section{Algebraic functions}
The following proposition for $\sring=\Z$  is due to G. Eisenstein, see \cite{Landau1904} and the references therein.
In the following $y(z)$ is a holomorphic function $y:U\to\C$ defined in some connected open subset $U$ of $\C$. \index{Eisenstein theorem}
\begin{prop}[G. Eisenstein]
\label{26012024margetifi?-2}
 Let $y(z)$ be an algebraic function and $z_0$ be in its definition domain. We have 
 \begin{enumerate}
 \item
The Taylor series of $y$ at $z_0\in U $ has  coefficients in a finitely generated  $\Z$-subalgebra of $\C$.
\item
More precisely, if the polynomial equation $P(z,y(z))=0$ is defined over a ring $\sring$ and $z_0,y(z_0)\in\sring$ then there is $N\in\sring$ such that the Taylor series of $y(Nz+z_0)$ at $z=0$  has coefficients in $\sring$, and hence $y(z)\in \sring[\frac{1}{N}][[z-z_0]]$.
\item
For $\sring, z_0,y(z_0)$ as in the previous item there is $N\in\sring$ such that
$$
\frac{N^my^{(m)}(z_0)}{m!}\in\sring,\ \ \ \forall m\in\N.
$$
\end{enumerate}
\end{prop}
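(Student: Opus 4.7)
The plan is to observe that parts~1 and~3 of the proposition are corollaries of part~2, and then to prove part~2 by extracting a recursion from $P(z,y(z))=0$ and inductively bounding the denominators of the Taylor coefficients. Part~3 is literally equivalent to part~2 because the $n$-th Taylor coefficient of $y(Nz+z_{0})$ at $z=0$ is $N^{n}y^{(n)}(z_{0})/n!$, so $N^{n}a_{n}\in\sring$ for all $n$ says exactly the same thing as $N^{m}y^{(m)}(z_{0})/m!\in\sring$ for all $m$. Part~1 then follows by applying part~2 with $\sring$ chosen to be the $\Z$-subalgebra of $\C$ generated by the coefficients of any polynomial $P(z,y)\in\C[z,y]$ annihilating $y$, together with $z_{0}$, $y(z_{0})$, and finally $1/N$.

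For part~2 I translate $z\mapsto z+z_{0}$ and $y\mapsto y+y(z_{0})$, which is allowed because $z_{0},y(z_{0})\in\sring$, and reduce to $z_{0}=0$, $y(0)=0$. Let $P(z,y)=\sum_{i,j\geq 0}p_{ij}z^{i}y^{j}\in\sring[z,y]$ with $p_{00}=0$ and write $y(z)=\sum_{n\geq 1}a_{n}z^{n}$. Substituting into $P(z,y(z))=0$ and collecting the coefficient of $z^{n}$ yields
\[
p_{n0}+p_{01}a_{n}+\sum_{i\geq 1}p_{i1}a_{n-i}+\sum_{\substack{i\geq 0,\ j\geq 2\\ n_{1}+\cdots+n_{j}=n-i,\ n_{l}\geq 1}}p_{ij}\,a_{n_{1}}\cdots a_{n_{j}}=0.
\]
The key combinatorial point is that $a_{n}$ appears on the left only in the isolated linear term $p_{01}a_{n}$: the constraints $n_{l}\geq 1$ and $j\geq 2$ force every $n_{l}\leq n-i-(j-1)\leq n-1$ in the higher-degree terms, while the inner sum with $j=1$ is restricted to $i\geq 1$.

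In the nondegenerate case $e:=p_{01}\neq 0$, this relation recursively solves for $a_{n}$ in $\sring[1/e]$ in terms of $a_{1},\ldots,a_{n-1}$. I induct on $n$ to show that the minimal integer $\nu_{n}$ with $e^{\nu_{n}}a_{n}\in\sring$ satisfies $\nu_{n}\leq 2n-1$. The base case is $\nu_{1}=1$, from $a_{1}=-p_{10}/e$. For the inductive step, the worst contribution on the right comes from the product terms with $i=0$, $j\geq 2$: inductively $\sum\nu_{n_{k}}\leq\sum(2n_{k}-1)=2(n-i)-j\leq 2n-2$, and dividing once more by $e$ gives $\nu_{n}\leq 1+(2n-2)=2n-1$. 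Thus $e^{2n}a_{n}\in\sring$, and part~2 holds with $N=e^{2}$, or indeed with any $N\in\sring$ that is a multiple of $e^{2}$.

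The main obstacle is the degenerate case $p_{01}=P_{y}(0,0)=0$, in which $(0,0)$ is a singular point of the curve $P=0$ and the above recursion no longer isolates $a_{n}$. Since $y(z)$ is by hypothesis a convergent power series (a smooth branch through the singular point), I would reduce to the nondegenerate case by a Newton-polygon style desingularization: after possibly inverting finitely many elements of $\sring$, substitute $y=\sum_{k=1}^{r}a_{k}z^{k}+z^{r}u$ with $r$ large enough, so that the polynomial $\tilde{P}(z,u)$ annihilating $u(z)=\sum_{k>r}a_{k}z^{k-r}$ satisfies $\tilde{P}_{u}(0,u(0))\neq 0$. The nondegenerate argument above then applies to $u$, and unwinding the polynomial substitution transfers the denominator bound from the coefficients of $u$ to those of $y$ at the cost of enlarging $N$. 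Verifying that the intermediate $a_{1},\ldots,a_{r}$ lie in $\sring[1/N_{0}]$ for some $N_{0}\in\sring$ and that $\tilde{P}$ is defined over such a ring is the delicate bookkeeping in this reduction.
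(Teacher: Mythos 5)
Your argument for the case $\partial P/\partial y(z_0,y_0)\neq 0$ is essentially the paper's: the same recursion in the Taylor coefficients, the same inductive bound $2n-1$ on the pole order of the $n$-th coefficient at $\Delta:=\partial P/\partial y(z_0,y_0)$ (your $e=p_{01}$ after the translation to $z_0=0$, $y(z_0)=0$), and the same conclusion $N=\Delta^2$; the equivalence of parts~2 and~3, and the reduction of part~1 to part~2, are also handled identically.

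The genuine departure is that you flag the case $\Delta=0$. The paper introduces $\Delta$, writes a recursion expressing $\Delta\,y_n$ as a polynomial in $y_1,\ldots,y_{n-1}$ over $\sring$, and concludes in $\sring[1/\Delta]$, without observing that this determines $y_n$ only when $\Delta\neq 0$. You are right to worry: a holomorphic algebraic branch can pass through a singular point of its minimal curve. For instance $y(z)=z\sqrt{1+z}$ at $z_0=0$ has minimal polynomial $P=y^2-z^2-z^3$ with $\partial P/\partial y(0,0)=0$, and every polynomial over $\C(z)$ annihilating $y$ is a multiple of $P$, so no choice of $P$ avoids this. Your Newton--polygon desingularization is the standard remedy and would close the gap, but as written it is only a sketch: one still has to show that the finitely many leading coefficients $a_1,\ldots,a_r$ and the coefficients of the resulting $\tilde{P}$ all lie in $\sring[1/N_0]$ for a single $N_0\in\sring$, and that bookkeeping is the whole content of the degenerate case. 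So your proof matches the paper's where the paper's proof is complete, and correctly exposes --- without fully closing --- a case the paper does not discuss.
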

\begin{proof}
It is clear that 2 and 3 are equivalent and that 1 is a particular case of 2. Therefore, we only prove 2.
 We write the Taylor series $y(z)=\sum_{i=0}^\infty y_i\cdot (z-z_0)^i$  of $y(z)$ at $z=z_0$, where $y_i$'s are unknown coefficients and substitute in $P(z,y(z))=0$. Let $\sring$ be the $\Z$-algebra generated by coefficients of $P$, $z_0$ and $y_0:=y(z_0)$. Let also $\Delta:=\frac{\partial P}{\partial y}(z_0,y_0)$.
 Computing the coefficient of $(z-z_0)^n$ we get a recursion of type
 $$
 \Delta y_n=\hbox{ a polynomial of degree $\leq n$ in $y_i,\ 1\leq i<n$,  with coefficients in $\sring$},
 $$
 for instance $\Delta y_1=-\frac{\partial P}{\partial z}(z_0,y_0)\in\sring$.
 This implies that $y_n$ lies in the finitely generated $\Z$-algebra $\sring[\frac{1}{\Delta}]$. More precisely, by induction we can show that $y_n$ has a pole order at most $2n-1$ at $\Delta$. For $n=1$ this is easy.  If this is true for all $m<n$ then $\Delta y_n$ is a sum of monomials $y_{i_1}y_{i_2}\cdots y_{i_k}$ with $i_1+i_2+\cdots+i_k\leq n$ and $1\leq i_1,i_2,\ldots,i_k<n$ and coefficients in $\sring$. If $k=1$ then $2i_1-1\leq 2n-2$ and we are done. If $k\geq 2$ then  $2i_1-1+2i_2-1\cdots+2i_k-1\leq 2n-k\leq 2n-2$ and we are done again.
For the second part of the proposition we put $N:=\Delta^2$.
\end{proof}
\begin{rem}\rm
\label{27jan2024toop}
If for a ring $\sring\subset\C$, the Taylor series $y(z)=\sum_{i=0}^\infty y_i\cdot (z-z_0)^i\in\sring[[z-z_0]]$ is algebraic over $\C(z)$ then
there is a polynomial $P\in\sring[z,y]$ such that $P(z,y(z))=0$. In order to see this we first take  $P\in\C[z,y]$ and regard the coefficients $P_i$ of $P$ as unknowns. The equalities derived from  $P(z, \sum_{i=0}^\infty y_i\cdot (z-z_0)^i)=0$ are linear equations in $P_i$ and with coefficients in $\sring$. They have solutions in $\C$ and so they have solutions in $\sring$.
\end{rem}
Let us now consider the differential equation $y'=\Am y$.
\begin{prop}
\label{17dec2023handcrafts}
 Let $\check\sring$ be a finitely generated $\Z$-subalgebra of $\C$ containing  the coefficients
 $\Am$, $z_0\in\C$, $\frac{1}{\Delta(z_0)}$ and the entries of $y_0=[y_{0,1},y_{0,2},\cdots,y_{0,n}]^\tr\in\C^\Nn$.
 A solution of $y'=\Am y$ with the initial condition $y(z_0)=y_0$ is a formal power series with
 coefficients in $\check\sring_\Q:=\check\sring\otimes_\Z\Q$:
 $$
 y(z)\in\check\sring_\Q[[(z-z_0)]].
 $$
\end{prop}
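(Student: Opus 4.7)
The plan is to compute the Taylor coefficients of $y$ at $z_0$ directly, by iterating the differential equation and invoking the recursion for $\Am_n$ recorded right before \cref{19102023chinagain}. Since $\Delta(z_0)\neq 0$, the initial value problem $y'=\Am y$, $y(z_0)=y_0$ admits a unique solution holomorphic on a disk around $z_0$, whose Taylor expansion reads
\[
y(z)=\sum_{n\geq 0}\frac{y^{(n)}(z_0)}{n!}(z-z_0)^n.
\]
The claim therefore reduces to showing $y^{(n)}(z_0)\in\check\sring^\Nn$ for every $n\in\N$, since then division by $n!$ places each coefficient in $\check\sring_\Q^\Nn$.

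Recall from the preamble to \cref{19102023chinagain} that iterating $y'=\Am y$ gives $y^{(n)}=\Am_n y$, where $\Am_1=\Am$, $\Am_{n+1}=\frac{\partial \Am_n}{\partial z}+\Am_n\Am$, and $\Delta^n\Am_n$ has entries in $\sring[z]$. Since $\check\sring$ contains the coefficients of $\Am$ — hence, in particular, the coefficients of $\Delta$ and of $\Delta\Am$, which generate $\sring$ — we have $\sring\subset\check\sring$, and so $\Delta^n\Am_n\in\Mat(\check\sring[z])$.

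Specializing at $z_0\in\check\sring$ yields $\Delta(z_0)^n\Am_n(z_0)\in\Mat(\check\sring)$. Using the hypothesis $\frac{1}{\Delta(z_0)}\in\check\sring$ we invert $\Delta(z_0)^n$ to obtain $\Am_n(z_0)\in\Mat(\check\sring)$. Combined with $y_0\in\check\sring^\Nn$, this gives $y^{(n)}(z_0)=\Am_n(z_0)\,y_0\in\check\sring^\Nn$, which finishes the argument.

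No real obstacle appears; the proof is a direct assembly of the $\Am_n$ recursion with the arithmetic hypotheses on $\check\sring$. The only point worth flagging is that the denominator $n!$ cannot in general be absorbed into $\check\sring$ itself — the sharper statement allowing this, after inverting one further element $N$, is precisely the content of \cref{19102023chinagain} and genuinely uses the algebraicity assumption on the solutions.
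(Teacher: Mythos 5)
Your proof is correct, and it takes a genuinely different route from the one in the text. The paper expands $\Am$ in a Taylor series at $z_0$, $\Am=\sum_i\Am_i(z-z_0)^i$, using the geometric‐series identity
\[
\frac{P(z)}{\Delta(z)}=\frac{P(z)}{\Delta(z_0)}\sum_{i\ge 0}\Bigl(1-\frac{\Delta(z)}{\Delta(z_0)}\Bigr)^i
\]
to show that each $\Am_i$ has entries in $\check\sring$, and then determines the coefficients of $y$ recursively via the Cauchy convolution $n\,y_n=\sum_{i+j=n-1}\Am_i y_j$, which manifestly lives in $\check\sring_\Q$. You bypass this by working with the derivative matrices $\Am_n$ already introduced before \cref{19102023chinagain}: the identity $y^{(n)}=\Am_n y$ together with $\Delta^n\Am_n\in\Mat(\sring[z])\subseteq\Mat(\check\sring[z])$ and the inclusion $\tfrac{1}{\Delta(z_0)}\in\check\sring$ gives $\Am_n(z_0)\in\Mat(\check\sring)$ directly, so $y^{(n)}(z_0)\in\check\sring^\Nn$ and the $n$‐th Taylor coefficient $y^{(n)}(z_0)/n!$ lies in $\check\sring_\Q^\Nn$. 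Both arguments prove the same statement; yours is slightly more economical because it reuses the $\Am_n$ recursion already set up for \cref{19102023chinagain} and dovetails with the way $\frac{\Am_m}{m!}$ is used there, while the paper's version builds the formal solution from scratch without presupposing existence of a holomorphic solution (you invoke it, but you could equally well read your coefficients off as a formal definition and verify $y'=\Am y$ formally, so this is only a stylistic difference).
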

\begin{proof}
 We
can find recursively a unique formal solution $y=\sum_{i=0}^\infty y_n(z-z_0)^n\in\check\sring_\Q[[(z-z_0)]]^ \Nn$. The recursion is given by
$$
ny_{n-1}=\sum_{i=0}^{n-1}\Am_{n-1-i} y_i,
$$
where we have written the Taylor series $\Am=\sum_{i=0}^\infty \Am_i(z-z_0)^i$. Since the entries of $\Am$ are rational functions with coefficients in $\check\sring$, the entries of $\Am_i$ are in $\check\sring$. This follows from the fact that for a polynomial $P\in\check\sring[z]$ we have:
$$
\frac{P(z)}{\Delta(z)}=\frac{P(z)}{\Delta(z_0)}\frac{1}{1-(1-\frac{\Delta(z)}{\Delta(z_0)})}=
\frac{P(z)}{\Delta(z_0)}\sum_{i=0}^\infty \left(1-\frac{\Delta(z)}{\Delta(z_0)}\right )^i.
$$
\end{proof}

\begin{proof}[Proof of \cref{19102023chinagain}]
Let $\sring$ be as \cref{27012024tanhaperu}, that is, $\sring$ is a finitely generated $\Z$-subalgebra of $\C$ containing  the coefficients of $\Delta$ and the coefficients of  the polynomials in $\Delta(z)\Am$. By \cref{17dec2023handcrafts} we have a $\Nn\times\Nn$ matrix $Y(z)$ whose entries
 are formal power series in $z-z_0$ and with coefficients in the ring $\sring_\Q[z_0, \frac{1}{\Delta(z_0)}]$, $Y(z_0)$ is the identity matrix and
 $dY=\Am Y$. This is called the fundamental system at $z=z_0$.
 From another side we know that the entries of $Y$ are algebraic, and so by \cref{27jan2024toop} we can assume that the corresponding polynomials are defined over $\sring_\Q[z_0,\frac{1}{\Delta(z_0)}]$. A multiplication by a power of $\Delta(z_0)$ and possibly an integer,
 we can assume that such polynomials are defined over $\sring[z_0]$.  Now by \cref{26012024margetifi?-2} we know that the  coefficients of the Taylor series of $Y$ are in $\check\sring:=\sring[z_0,\frac{1}{N}]$ for some $N\in\sring[z_0]$. This $N$ is the product of all $N$'s attached to the polynomial equation of each entry of $Y$.
Since for any  $f=(z-z_0)^n$  
 we have
 $$
 f^{(m)}=\binom{n}{m} m!(z-z_0)^{n-m},\
 $$
 we conclude that $Y^{(m)}=0$ in  $(\check\sring/m!\check\sring)[[z-z_0]]$, and hence
 \begin{equation}
 \label{09022024airbnb}
\Am_mY=0 \hbox{  in } (\check\sring/m!\check\sring)[[z-z_0]].
\end{equation}
From now on take $z_0\in\sring$ and hence $\check\sring=\sring[\frac{1}{N}]$.
The number  $N$ in the announcement of theorem is exactly the number $N$ that we have obtained from Eisenstein theorem. We have $\check\sring/m!\check\sring=0$ if and only if
 \begin{equation}
 \label{06032024ss}
  m!| N^d,\hbox{ in $\sring$  for some  $d\in\N$}.
 \end{equation}
In this case \eqref{06032024bitavisa} is automatic.   
Therefore, we assume that $\check\sring/m!\check\sring\not=0$, and in particular $1\not=0$ in this ring.  In this ring $Y=I_{\Nn\times \Nn}+\sum_{i=1}^\infty y_i(z-z_0)^ i$ is invertible, that is the entries of $Y^{-1}$ are formal power series
in $(z-z_0)$ and coefficients in $\check\sring/m!\check\sring$. This together with \eqref{09022024airbnb} imply that  the Taylor series of $\Am_m(z)$, and hence $\Delta(z)^m\Am_m(z)$, at $z=z_0$,
 is zero in $\check \sring/m!\check\sring[[z-z_0]]$.  But $R(z):=\Delta(z)^m\Am_m(z)$ is a matrix of polynomials, and hence, its entries are in
 $m!\check\sring[z]$. This finishes the proof of the fact that the entries of $\frac{\Delta^m\cdot \Am_m}{m!}$ are $\sring[z,\frac{1}{N}]$, for all $m\in \N$ which is slightly weaker than \eqref{06032024bitavisa}. The proof of this stronger statement is similar.
 \end{proof}

 \section{Bad primes}
 The number $N\in\sring$ in \cref{19102023chinagain} is the main responsible for the finite number of exceptional primes in the Grothendieck-Katz conjecture. In this section we describe these primes.
 \footnote{For examples of bad primes of Lam\'e equations see \cref{29082024omidint}.}

\begin{prop}
\label{30012023espreafico}
 Let $\sring$ be a ring of integers of a number field and $N\in\sring$.
 There is a finite number of primes $p_1,p_2,\ldots,p_s$ (depending only on $\sring$ and $N$) such that the following property holds: for all $m\in\N$ coprime with all $p_i$'s  and
 all $M\in\sring$ and $d\in\N$ if $m|N^dM$ in $\sring$  then $m|M$.
 In particular, such a property hold for all except a finite number of prime numbers $m$.
\end{prop}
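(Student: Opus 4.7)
The plan is to reduce the divisibility question entirely to ideal arithmetic in the Dedekind domain $\sring$, where unique factorization of ideals is available. First, I would factor the principal ideal $(N)$ into prime ideals of $\sring$:
\[
(N)=\prod_{i=1}^{r}\mathfrak{q}_i^{e_i}.
\]
Let $p_1,\ldots,p_s$ be the (finitely many) rational primes lying under the $\mathfrak{q}_i$, that is, the primes $p$ for which $\mathfrak{q}_i\cap\Z=(p)$ for some $i$. This list depends only on $\sring$ and $N$, as required.

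Now suppose $m\in\N$ is coprime to every $p_i$. I would factor the ideal $(m)$ in $\sring$ as $(m)=\prod_j \mathfrak{p}_j^{f_j}$. Every prime ideal $\mathfrak{p}_j$ contracts to some rational prime $(q)$ with $q\mid m$, and by hypothesis $q$ is distinct from each $p_i$. Consequently $\mathfrak{p}_j\neq \mathfrak{q}_i$ for every pair $(i,j)$, so the ideals $(m)$ and $(N)$ share no common prime factor and are coprime in $\sring$. In particular $(m)$ is coprime to $(N)^d$ for every $d\in\N$.

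The divisibility hypothesis $m\mid N^d M$ in $\sring$ translates into the ideal containment $(m)\supseteq (N^d M)=(N)^d(M)$. Combining this with the coprimality of $(m)$ and $(N)^d$, the unique factorization of ideals in the Dedekind domain $\sring$ forces $(m)\supseteq (M)$, which is exactly $m\mid M$ in $\sring$. The final ``in particular'' clause follows instantly: any rational prime $m\notin\{p_1,\ldots,p_s\}$ is coprime to all the $p_i$, so the property applies to all but finitely many primes.

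I do not anticipate a real obstacle; the only point requiring a moment of care is recognizing that the containment $(m)\supseteq (N)^d(M)$ decouples once $(m)$ and $(N)$ are seen to be coprime as ideals — a step which itself rests on the elementary observation that a prime ideal of $\sring$ dividing $(m)$ must lie above a rational prime dividing $m$, and hence, under our coprimality assumption on $m$, cannot divide $(N)$.
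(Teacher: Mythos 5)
Your proof is correct and follows essentially the same approach as the paper: identify the bad primes $p_1,\ldots,p_s$ as the rational primes below the prime ideals dividing $(N)$, observe that for $m$ coprime to these the ideals $(m)$ and $(N)^d$ are coprime in $\sring$, and then cancel. The only cosmetic difference is in the last step — where you invoke unique factorization of ideals to cancel $(N)^d$ from the containment $(m)\supseteq (N)^d(M)$, the paper writes the Bezout relation $ma+N^db=1$ coming from $(m)+(N)^d=\sring$ and multiplies through by $M$; these are two equivalent ways to finish once coprimality of the ideals is established.
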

\begin{proof}
 If $\sring=\Z$ then this proposition follows from the unique factorization. The bad primes are those dividing $N$. Let us now  consider a number field $\sk$ and its ring of integers $\sring=\O_\sk$. By unique factorization theorem for ideals in $\O_\sk$, we know that there   are only a finite number of prime ideals in $\O_\sk$ containing $N\O_\sk$. Let $p_1,p_2,\ldots,p_s\in\Z$ be the list of characteristics of the corresponding residue fields. Take an arbitrary $m\in\Z$ coprime with $p_i$'s and such that $m|N^dM$ in $\sring$.
 By construction $m$ and $N^d$ are coprime, that is, there is no prime ideal containing both $m\O_\sk$ and $N^d\O_\sk$. Therefore, $m\O_\sk+N^d\O_\sk=\O_\sk$. In particular, there is $a,b\in\O_\sk$ such that $ma+N^db=1$. Multiplying this equality with $M$ we conclude that $M\in m\O_\sk$.
\end{proof}

\begin{theo}
 \label{27042024frankfurt}
Let $\sring$ be a ring of integers of a number field. In \cref{19102023chinagain} we can take $N\in\Z$. 
\end{theo}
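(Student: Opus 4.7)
The plan is to start from a witness $N\in\sring$ provided by \cref{19102023chinagain} and replace it by a suitable rational integer without losing property \eqref{06032024bitavisa}. The key preliminary observation is that \eqref{06032024bitavisa} is stable under multiplication of $N$ by any element $c\in\sring$: indeed,
\[
\frac{((Nc)\Delta)^m\Am_m}{m!} \;=\; c^m\cdot \frac{(N\Delta)^m\Am_m}{m!}\ \in\ \sring[z],
\]
so if $N$ works then so does $Nc$. Consequently it is enough to produce some $c\in\sring$ for which $Nc$ is a rational integer; equivalently, to exhibit any nonzero $N'\in\Z$ that is a multiple of $N$ inside $\sring=\O_\sk$.

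For this step I would use the hypothesis that $\sring$ is a ring of integers. Let $m_N(X)=X^d+a_{d-1}X^{d-1}+\cdots+a_1 X+a_0\in\Z[X]$ be the minimal polynomial of $N$ over $\Q$, whose coefficients lie in $\Z$ because $N\in\O_\sk$. The relation $m_N(N)=0$ rearranges as
\[
N\cdot\bigl(N^{d-1}+a_{d-1}N^{d-2}+\cdots+a_1\bigr)\;=\;-a_0,
\]
displaying $a_0\in\Z$ (which is $\pm\mathrm{Nm}_{\Q(N)/\Q}(N)$ up to sign) as an explicit $\sring$-multiple of $N$. Setting $N':=-a_0\in\Z$ and combining with the previous paragraph, I would conclude that \eqref{06032024bitavisa} holds with $N'$ in place of $N$.

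I do not expect any substantive obstacle here: the whole argument reduces to the standard fact that every element of $\O_\sk$ divides, inside $\O_\sk$, a nonzero rational integer, namely the constant term of its minimal polynomial over $\Q$. This is also precisely where the assumption $\sring=\O_\sk$ is essential: for a more general finitely generated $\Z$-subalgebra of $\C$ there is no analogous norm to $\Z$ available, which is the same reason \cref{30012023espreafico} is stated under the ring-of-integers hypothesis.
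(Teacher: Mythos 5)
Your argument is correct, and it is genuinely different from — and in fact cleaner than — the route taken in the paper. The two key facts you combine, namely that \eqref{06032024bitavisa} is preserved under replacing $N$ by $Nc$ for any $c\in\sring$ (because $\frac{((Nc)\Delta)^m\Am_m}{m!}=c^m\cdot\frac{(N\Delta)^m\Am_m}{m!}$), and that any $N\in\O_\sk$ divides in $\O_\sk$ the nonzero rational integer $\pm a_0$ coming from its monic minimal polynomial, reduce the theorem to one line of elementary algebra. The paper instead invokes \cref{30012023espreafico}: it takes $\tilde N := p_1\cdots p_s$, the product of the residue characteristics of the primes of $\O_\sk$ containing $N$, and then re-runs the divisibility argument coefficient-by-coefficient — given a coefficient $M$ of $\Delta^m\Am_m$ with $m!\mid MN^d$, it factors $m!=N_1N_2$ into its $\{p_i\}$-part and the complementary part and applies \cref{30012023espreafico} to conclude $N_2\mid M$. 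That argument lands on a squarefree $\tilde N$ and stays tightly coupled to the unique factorization of ideals in $\O_\sk$; your norm argument lands on the (generally non-squarefree) $\pm\mathrm{Nm}(N)$ but entirely bypasses \cref{30012023espreafico}, works verbatim in any order (any finitely generated subring of $\C$ whose elements are algebraic integers), and also directly preserves the stronger form of the conclusion — $\frac{(N'\Delta)^m\Am_m}{m!}\in\sring[z]$ rather than merely $\frac{\Delta^m\Am_m}{m!}\in\sring[z,\tfrac{1}{\tilde N}]$ — whereas the paper's proof is written for the latter. The only cosmetic point: $N'=-a_0$ can be negative, but since $(-1)^m$ preserves $\sring[z]$ one may simply pass to $|N'|$ to get $N'\in\N$ as claimed in the remark after \cref{19102023chinagain}.
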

\begin{proof}
We would like to replace $N$ with another one $\tilde N$ in $\N$. 
Let $p_1,p_2,\ldots, p_s$ be the list of primes in \cref{30012023espreafico} attached to $\sring$ and $N$ and define $\tilde N:=p_1p_2\cdots p_s$.  In general, the invertiblity of $N$ in $\sring[\frac{1}{N}]$ does not imply the invertiblity of $p_i$'s. We claim that for all $m\in\N$ the entries of $\frac{\Am_m}{m!}$ are in
$\sring[z, \frac{1}{\tilde N\Delta}]$. 
 A coefficient $M\in\sring$ of $\Delta^m\Am_m$ satisfies $M=m!\frac{S}{N^d}$ for some $S\in\sring$ and
 $d\in\N$, and so $m!\mid MN^n$ in $\sring$. We write $m!=N_1N_2$, where $N_1$ contains only the primes $p_i$, and $N_2$ is free of $p_i$'s.   By 
 \cref{30012023espreafico} we have $N_2| M$ in $\sring$, and hence, $\frac{M}{m!}\in\sring[\frac{1}{N_1}]\subset \sring[\frac{1}{\tilde N}]$.  

\end{proof}

\section{Algebraicity of a single solution}
In the theory of holomorphic foliations one is also interested in the algebraicity of the leaf passing through a given point. In the framework of linear differential equations this is:

\begin{theo}
\label{19102023chinagain-2}
Assume that $\sring$ is a finitely generated $\Z$-subalgebra of $\C$, $z_0\in\sring,\ \Delta(z_0)\not=0$ and $y_0\in\sring^n$.
If a solution of $\frac{\partial y}{\partial z}=\Am y$ with the initial value $y(z_0)=y_0$ is algebraic then there is $N\in\sring$ such that
\begin{equation}
\label{06032024bitavisa-2}
\hbox{ entries of } \frac{N^m\Am_m(z_0)y_0}{m!}\  \in\  \sring,\ \ \forall m\in \N.
\end{equation}
In other words, for all primes $p$ and $k,m\in\N$ with ${\rm ord}_pm!\geq  k$ 
we have  $\Am_m(z_0)y_0\equiv_{p^k} 0$, that is, $\Am_m(z_0)y_0$ is zero in the ring $\sring[\frac{1}{N}]/p^k\sring[\frac{1}{N}]$.
\end{theo}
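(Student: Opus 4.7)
The plan is to mimic the proof of \cref{19102023chinagain}, but since we are tracking only a single solution $y$ rather than a full fundamental system, we can apply Eisenstein's theorem directly to the entries of $y$ without needing to invert a fundamental matrix in a quotient ring. The only role of $N$ in the conclusion is to bound the denominators appearing in the Taylor expansion of $y$ itself at $z_0$.

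First, I would invoke \cref{17dec2023handcrafts} to guarantee that the unique formal solution $y$ with $y(z_0)=y_0$ lies in $\sring_\Q[\frac{1}{\Delta(z_0)}][[z-z_0]]^\Nn$. Since each entry $y_i$ of $y$ is by hypothesis algebraic over $\C(z)$, \cref{27jan2024toop} produces a polynomial equation $P_i(z,y_i(z))=0$ with coefficients in $\sring_\Q[\frac{1}{\Delta(z_0)}]$. After multiplying $P_i$ by a positive integer and by a suitable power of $\Delta(z_0)\in\sring$ to clear the denominators, we may assume $P_i\in\sring[z,y]$ while still having $P_i(z,y_i(z))=0$.

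Next, I would apply item 3 of \cref{26012024margetifi?-2} (Eisenstein's theorem) to each $y_i$ separately: since $z_0$ and $y_{0,i}$ both lie in $\sring$ and $P_i\in\sring[z,y]$, there exists $N_i\in\sring$ with $\frac{N_i^m y_i^{(m)}(z_0)}{m!}\in\sring$ for every $m\in\N$. Setting $N:=N_1 N_2\cdots N_\Nn$ then yields $\frac{N^m y^{(m)}(z_0)}{m!}\in\sring^\Nn$, and the identity $y^{(m)}=\Am_m y$ gives $y^{(m)}(z_0)=\Am_m(z_0)y_0$, which is exactly \eqref{06032024bitavisa-2}. The $p^k$-divisibility statement is then immediate: if ${\rm ord}_p m!\geq k$ then $p^k\mid m!$ in $\Z$, so $\Am_m(z_0)y_0\equiv_{p^k}0$ in $\sring[\frac{1}{N}]$.

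The only real obstacle is the bookkeeping required to push the polynomial equations $P_i$ from $\sring_\Q[\frac{1}{\Delta(z_0)}][z,y]$ down into $\sring[z,y]$, which is a routine denominator-clearing argument based on \cref{27jan2024toop}. No arithmetic input beyond Eisenstein's theorem and the recursion defining $\Am_m$ is needed; indeed the proof should be strictly easier than that of \cref{19102023chinagain}, precisely because there is no fundamental matrix whose inverse must be controlled in a quotient ring modulo $m!$.
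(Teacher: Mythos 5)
Your proposal is correct and follows essentially the same approach as the paper's (terse) proof: both apply Eisenstein's theorem (\cref{26012024margetifi?-2}) to the algebraic entries of $y$ at $z_0$ and then use the identity $y^{(m)}(z_0)=\Am_m(z_0)y_0$. If anything you are slightly more careful than the paper's sketch, which tracks only $y^{(m)}\equiv 0\pmod{m!}$ over $\sring[\tfrac{1}{N\Delta(z_0)}]$ (the weaker conclusion); by invoking item 3 of Eisenstein directly you deliver the sharper bound $N^m y^{(m)}(z_0)/m!\in\sring$ stated in \eqref{06032024bitavisa-2}.
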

The proof is the same as the proof of \cref{19102023chinagain}. In the last step \eqref{09022024airbnb}, instead of $Y$ we use $y$ itself and evaluate it in $z=z_0$. We have also $\Delta(z_0)\in\sring$ which is absorbed by $N$, that is, instead of $\sring[\frac{1}{N\Delta(z_0)}]$ we have written  $\sring[\frac{1}{N}]$.
It is already evident what is the analog of \cref{05112023tina}  in this framework: If for a solution $y(z)$  of
$\frac{\partial y}{\partial z}=\Am y$  with $y(z_0)=y_0$ we have \eqref{06032024bitavisa-2} then $y(z)$ must be algebraic. 
\footnote{Daniel Litt and Joshua Lam kindly reminded me that this conjecture is related to a conjecture of 
Y. Andr\'e and G. Christol in \cite[Remarque 5.3.2]{Andre2004}. In \cite[Conjecture 1.12]{BostanCarusoRoques2024} this is actually attributed to them.}
The analog of
Grothendieck-Katz conjecture is: if for all but a finite number of  primes $p$,  $\Am_p(z_0)y_0$ vanishes modulo $p$ then $y(z)$ is algebraic.
\footnote{This statement is false. See \cref{07092024omidviolento}.}

\section{Integral functions}
\label{15122023chikarmikoni}
Our main goal in the present section is to define integral functions, and prove that algebraic functions are integral (\cref{09122023sonymuseum}) and conjecture that this is an if and only if statement. In the following, for $a\in\C$ and $n\in\N_0$ we will use the notation
$$
(a)_n:=a(a-1)\cdots (a-n+1),\ \  [a]_n:=\frac{(a)_n}{n!}.
$$
Moreover, for a holomorphic function $y:U\to\C$ defined in an open set $U$ of $\C$ with the coordinate $z$, by $y^{(n)}$ we mean its $n$-th derivative with respect to $z$ and
$$
y^{[n]}:=\frac{y^{(n)}(z)}{n!}.
$$
With this notation we have
\begin{eqnarray}
\label{26nov2023zendegi-1}
(y_1y_1)^{[n]} &=& \sum_{i=0}^ny_1^{[i]}y_2^{[n-i]},\\  \label{26nov2023zendegi-1.5}
   (y^{-1})^{[n]} &=& -y^{-1}\sum_{i=0}^{n-1} y^{[n-i]}(y^{-1})^{[i]}, \\ \label{26nov2023zendegi-2}
(y^m)^{[n]} &=& \sum_{i_1+i_2+\cdots+i_m=n} y^{[i_1]}y^{[i_2]}\cdots y^{[i_m]},\\ \label{26nov2023zendegi-3}
(y^{[n]})^{[m]} &=& \binom{n+m}{n}y^{[n+m]}, \\
(z^{a})^{[n]} &=& [a]_nz^{a-n},\ \ \hbox{In particular } (z^{-1})^{[n]} = (-1)^nz^{-n-1}.
\end{eqnarray}
\begin{defi}\rm
 A finitely generated $\Z$-algebra $\Z[x]=\Z[x_1,x_2,\ldots,x_N]$ generated by holomorphic functions $x_i: U\to \C$ is called integral if it is closed under
 \begin{equation}
 \label{06022024guapa}
\Z[x]\to\Z[x],\ \ P\mapsto P^{[n]},
\end{equation}
for all $n\in\N_0$.  A holomorphic function $y:U\to \C$ is called integral (of length $\leq N$) if it belongs to some integral $\Z$-algebra $\Z[x]$ generated by at most $N$ elements. We call $\Z[x]$  the associated $\Z$-algebra.
\end{defi}
\begin{rem}\rm
\label{15122023hossein}
 The operator \eqref{06022024guapa} is $\Z$-linear, and so in order to show that $\Z[x]$ is closed under  \eqref{06022024guapa}, we need to show it  for a  monomial in $x$.
From \eqref{26nov2023zendegi-1} it follows that in order to verify that $\Z[x]$ is an integral algebra it is enough to prove that $x_i^{[n]}\in\Z[x]$ for all $n\in\N_0$ and for a set of generators $x=(x_1,x_2,\ldots,x_N)$ of $\Z[x]$.
\end{rem}
\begin{prop}
\label{09122023asab}
 The set of integral functions is a field.
\end{prop}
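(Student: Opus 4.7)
My plan is to verify the field axioms one by one, reducing each to a check of closure of an enlarged $\Z$-algebra under the operators $P\mapsto P^{[n]}$, which by Remark~\ref{15122023hossein} only needs to be done on a generating set.

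For closure under $+$, $-$, and $\cdot$, I would take two integral functions $y_1\in\Z[x_1,\ldots,x_N]$ and $y_2\in\Z[x'_1,\ldots,x'_M]$ (after passing to a common domain, or equivalently to germs at a fixed base point) and form the amalgamated algebra $\Z[x_1,\ldots,x_N,x'_1,\ldots,x'_M]$, which contains $y_1\pm y_2$ and $y_1y_2$. Each generator $x_i$ or $x'_j$ already has all of its $[n]$-derivatives in the smaller algebra it originates from, so by the cited remark the combined algebra is integral. Since $0,1\in\Z$ are trivially integral, the integral functions form a commutative subring.

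The substantive step is multiplicative inverses. Given a nonvanishing integral $y$ in an integral algebra $\Z[x_1,\ldots,x_N]$, I would claim that $\Z[x_1,\ldots,x_N,y^{-1}]$ is again integral. The key input is the recursion~\eqref{26nov2023zendegi-1.5},
\[
(y^{-1})^{[n]} = -y^{-1}\sum_{i=0}^{n-1} y^{[n-i]}(y^{-1})^{[i]}.
\]
Because $y$ lies in the original integral algebra, every $y^{[n-i]}$ does too, so a straightforward induction on $n$ places each $(y^{-1})^{[n]}$ in $\Z[x_1,\ldots,x_N,y^{-1}]$. Combined with the fact that $x_i^{[n]}\in\Z[x_1,\ldots,x_N]$, Remark~\ref{15122023hossein} concludes that the enlarged algebra is integral, so $y^{-1}$ is integral.

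The only genuine difficulty I foresee is not algebraic but a matter of interpretation: $y^{-1}$ is only holomorphic away from the zero set of $y$, so the statement ``the set of integral functions is a field'' has to be read with flexible domains (or, equivalently, at the level of germs at a fixed point). Once that convention is fixed, the argument above is entirely forced by the recursion~\eqref{26nov2023zendegi-1.5}, and no further input is needed.
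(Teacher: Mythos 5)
Your proof is correct and follows essentially the same route as the paper: amalgamation of the two generating sets handles sums and products, and the recursion \eqref{26nov2023zendegi-1.5} together with \cref{15122023hossein} handles inverses by adjoining $y^{-1}$ as a new generator and checking closure on generators only. The paper states the inverse step more tersely; your explicit induction on $n$ and your remark about interpreting the statement at the level of germs (or with shrinking domains) are sensible elaborations of the same argument, not deviations from it.
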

\begin{proof}
If $\Z[x]$ and $\Z[\tilde x]$ are integral $\Z$-algebra, then by \cref{15122023hossein}, $\Z[x,\tilde x]$ is also integral. This implies that if  $y$ and $\tilde y$ are two integral functions with the associated $\Z$-algebras $\Z[x]$ and $\Z[\tilde x]$ then
 $y+\tilde y, \ y\tilde y$ are also integral with the associated $\Z$-algebra $\Z[x,\tilde x]$.
 For an integral function $y\in\Z[x]$, $\Z[y^{-1},x]$ is also an integral algebra. This follows from \eqref{26nov2023zendegi-1.5} and \cref{15122023hossein}.
\end{proof}

\begin{prop}
\label{09122023sonymuseum}
 Algebraic functions are integrals.
\end{prop}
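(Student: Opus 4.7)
The plan is to show that every algebraic holomorphic function $y\colon U\to\C$ lies in the finitely generated $\Z$-algebra
\[
R \;:=\; \Z[c_1,\ldots,c_s,\, z,\, y,\, \Delta^{-1}]
\]
of holomorphic functions on a suitably chosen open set $U\subset\C$, and that this $R$ is integral. Here $P(z,y)=\sum c_{ij}z^iy^j\in\C[z,y]$ is a polynomial annihilating $y$ (for instance its minimal polynomial over $\C(z)$, cleared of denominators), $c_1,\ldots,c_s$ are its coefficients, and $\Delta:=\tfrac{\partial P}{\partial y}(z,y)$; we shrink $U$ so that $y$ is single-valued holomorphic and $\Delta$ is nonvanishing there. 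By \cref{15122023hossein}, it is enough to verify $(c_i)^{[n]},\,z^{[n]},\,y^{[n]},\,(\Delta^{-1})^{[n]}\in R$ for every $n\in\N_0$.

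The generators $c_i$ are constants, so $(c_i)^{[n]}=0$ for $n\geq 1$; and $z^{[1]}=1$, $z^{[n]}=0$ for $n\geq 2$. For $y^{[n]}$ I would invoke the Eisenstein-type recursion derived in the proof of \cref{26012024margetifi?-2}: substituting the Taylor expansion $y(z_0+t)=\sum_{k\geq 0} y_k t^k$, with $y_k=y^{[k]}(z_0)$, into $P(z_0+t,y(z_0+t))=0$ and collecting the coefficient of $t^n$ produces an identity
\[
\Delta\cdot y_n \;=\; Q_n\!\left(z_0,y_0,y_1,\ldots,y_{n-1}\right),
\]
where $Q_n$ is a polynomial with coefficients in $\Z[c_1,\ldots,c_s]$. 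Because this relation holds at every point $(z_0,y_0)$ of the affine curve $\{P=0\}$, it is actually an identity of rational functions on that curve. An induction on $n$, starting from $y^{[0]}=y\in R$ and $y^{[1]}=-\Delta^{-1}\tfrac{\partial P}{\partial z}(z,y)\in R$, then yields $y^{[n]}\in R$ for all $n$.

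For $(\Delta^{-1})^{[n]}$, the identity \eqref{26nov2023zendegi-1.5} applied to $\Delta$ gives
\[
(\Delta^{-1})^{[n]} \;=\; -\Delta^{-1}\sum_{i=1}^{n}\Delta^{[i]}\,(\Delta^{-1})^{[n-i]},
\]
so it suffices to observe that $\Delta^{[i]}\in R$. But $\Delta=\tfrac{\partial P}{\partial y}(z,y)$ is a $\Z[c_1,\ldots,c_s]$-polynomial in $z$ and $y$, and iterated application of \eqref{26nov2023zendegi-1}--\eqref{26nov2023zendegi-2} expresses $\Delta^{[i]}$ as a polynomial with $\Z[c_1,\ldots,c_s]$-coefficients in the $z^{[a]}$ and $y^{[b]}$, which we have already placed in $R$. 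A second induction on $n$ then puts $(\Delta^{-1})^{[n]}$ into $R$, completing the verification that $R$ is integral and $y$ integral.

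The only delicate step is the first nontrivial one, namely that the polynomial $Q_n$ in the Eisenstein recursion has $\Z[c_1,\ldots,c_s]$-coefficients and not merely $\Q[c_1,\ldots,c_s]$-ones. The mechanism is that the apparent denominators $\tfrac{1}{i!\,j!}\partial^{i+j}P/\partial z^i\partial y^j$ in the Taylor expansion of $P$ about $(z_0,y_0)$ are cancelled by the binomial coefficients $\binom{i}{k}\binom{j}{m}$ produced when expanding $(z_0+t)^i$ and $(y_0+\eta)^j$; this combinatorial cancellation is exactly what absorbs the factor $1/n!$ appearing in $y^{[n]}=y^{(n)}/n!$ into the integral ring $R$, and it is the essential content of the proof beyond the routine closure of $R$ under $d/dz$.
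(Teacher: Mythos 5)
Your proof is correct, and the core mechanism is the same as the paper's: differentiate $P(z,y(z))\equiv 0$ repeatedly, use the normalized-derivative calculus \eqref{26nov2023zendegi-1}--\eqref{26nov2023zendegi-2} to guarantee that the resulting recursion has $\Z[\mathrm{coef}(P)]$-coefficients rather than rational ones, and isolate the top derivative against $\partial P/\partial y$. The one place you genuinely diverge is the choice of generating set. The paper takes $\Z\bigl[\bigl(\tfrac{\partial P}{\partial y}\bigr)^{-1},\,z,\,\mathrm{coef}(P),\,y^{[0]},\ldots,y^{[d]}\bigr]$ and runs the recursion only for $k\geq d+1$, which has two notational payoffs: for such $k$ the constant-coefficient term $p_0$ drops out of the identity entirely, and closure of the extra generators $y^{[1]},\ldots,y^{[d]}$ under the operator is handled by the clean formula $(y^{[n]})^{[m]}=\binom{n+m}{n}y^{[n+m]}$ of \eqref{26nov2023zendegi-3} together with \eqref{08122023}. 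You instead keep the smaller ring $\Z[\mathrm{coef}(P),z,y,\Delta^{-1}]$, run the recursion from $k=1$, and feed every $y^{[n]}$ back in by induction; this requires checking that the Eisenstein recursion already has integral coefficients in the low range $k\leq d$ (which it does, by the same binomial-coefficient argument you sketch), and it yields a slightly better ``length'' bound than the $d+3+(\text{number of monomials of }P)-1$ recorded in the remark following the proposition. Either version is sound; yours is a mild streamlining of the same proof.
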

\begin{proof}
Let $y(z)$ be an algebraic function, that is,  there is a polynomial $P(z,y)=\sum_{i=0}^m p_i(z)y^i\in\C[z,y]$ such $P(z,y(z))=0$. Let $d:={\rm max}\{\deg(p_i),\ i=0,1,\ldots,m\}$. We take the $k$-th derivative of $P(z,y(z))=0$ with $k\geq d+1$ and we have
\begin{eqnarray*}
0 &=& \sum_{i=0}^m\sum_{j=0}^k p_i^{[j]}(y^i)^{[k-j]} \\
&=&   \sum_{i=1}^m\sum_{j=0}^d p_i^{[j]}(y^i)^{[k-j]}\\
&=&  \sum_{i=1}^m\sum_{j=0}^d\sum_{ i_1+i_2+\cdots+i_i=k-j} p_i^{[j]}y^{[i_1]}y^{[i_2]}\cdots y^{[i_i]}. 
\end{eqnarray*}
The largest derivative in this equality is in the term $\frac{\partial P}{\partial y}y^{[k]}$. This means that if we invert $\frac{\partial P}{\partial y}$ then we can write $y^{[k]}$ as linear combination of the previous derivatives. The conclusion is
\begin{equation}
\label{08122023}
y^{[k]}\in\Z[x]:=\Z\left [ \left(\frac{\partial P}{\partial y}\right)^{-1},\ z, {\rm coef}(P),\  y^{[i]},\ \ \ i=0,1,\ldots,d \right ],\ \ k\geq d+1,
\end{equation}
where ${\rm coef}(P)$ is the list of coefficients of $P$ (they might be transcendendtal numbers).
By \cref{15122023hossein} we need to show $P^{[n]}\in \Z[x]$ for generators of $\Z[x]$.
For $P=\left(\frac{\partial P}{\partial y}\right)^{-1}$ this follow from \eqref{26nov2023zendegi-1.5}. For $P=y^{[n]}$ this  follows from  \eqref{26nov2023zendegi-3} and \eqref{08122023}.
\end{proof}

\begin{rem}\rm
 It is clear from \eqref{08122023} the the length of an algebraic function $y$ is less that or equal
 $$
 N:=d+3+\hbox{number of monomials in $P$ with non-zero coefficients}-1.
 $$
 If $P$ is defined over $\Z$ then $N=d+3$. Precise formulas can be obtained by using the ring of definition of $P$.
\end{rem}
\begin{rem}
\label{26012024margetifi?}
Let $y(z)$ be an integral function and $z_0$ be in its domain of definition.
The Taylor series of $y$ at $z_0 $ has  coefficients in a finitely generated ring $\sring$, that is,
$y(z)\in \sring [[(z-z_0)]]$.  A weaker version of \cref{26012024margetifi?-2} is this statement
for algebraic functions.
\end{rem}

We believe that the converse of \cref{09122023sonymuseum} is also true.
\begin{conj}
\label{15july2024impatech}
 Integral functions are algebraic.
\end{conj}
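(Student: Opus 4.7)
The plan is to translate closure under divided powers into an integrality statement on the formal flow of a polynomial vector field, and then invoke a nonlinear analog of the Grothendieck--Katz principle.

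Let $y\in R:=\Z[x_1,\ldots,x_N]$ be integral. Closure of $R$ under $(\cdot)^{[1]}=\frac{d}{dz}$ forces $\frac{dx_i}{dz}=P_i(x_1,\ldots,x_N)$ for some $P_i\in\Z[X_1,\ldots,X_N]$, so that $\Phi:U\to\A^N_\Z$, $z\mapsto(x_1(z),\ldots,x_N(z))$, is an integral curve of the polynomial vector field $v=\sum_i P_i\partial_{X_i}$, whose (relative) Zariski closure is the affine subvariety $V:=\spec R\subset\A^N_\Z$. As $y$ lies in the function field of $V$, to prove $y$ algebraic over $\C(z)$ it suffices to show $\dim_\C V\le 1$.

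Closure under all divided powers, together with the identities \eqref{26nov2023zendegi-1}--\eqref{26nov2023zendegi-2}, says exactly that
\[
\phi:R\longrightarrow R[[T]],\qquad r\longmapsto \sum_{n\ge 0} r^{[n]}T^n,
\]
is a ring homomorphism, which one should read as the co-action of the formal additive group $\widehat{\Ga}$ on $V$ induced by the time-$T$ flow of $v$. That $\phi$ lands in $R[[T]]$, and not only in $R\otimes\Q[[T]]$, is precisely the nonlinear analog of the integrality condition \eqref{06032024bitavisa} of \cref{19102023chinagain} for the system $x'=P(x)$. Upon reduction modulo any prime $p$ avoiding the finitely many bad primes attached to $R$ (compare \cref{30012023espreafico}), $\phi$ endows $R/pR$ with an iterative derivation in the sense of Matzat--van der Put whose $p$-fold iterate vanishes; equivalently, the $p$-curvature of $v$ on $V\bmod p$ is zero.

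The concluding step is to invoke a nonlinear analog of \cref{05112023tina}: if the $p$-curvature of the polynomial vector field $v$ vanishes on $V\bmod p$ for almost all $p$, then every orbit of $v$ on $V$ is algebraic, so $\dim V\le 1$ and $y$ is algebraic over $\C(z)$. The main obstacle is, unsurprisingly, this last step: even for linear systems the corresponding local--global principle is the open Grothendieck--Katz conjecture, and the nonlinear version is at least as delicate. Our hypothesis is however strictly stronger than a mere $p$-curvature condition---we have $v^n/n!$ integral for every $n$, not only $v^p$ modulo $p$---and this extra divisibility should force the generic $p$-adic radius of convergence of $\phi(r)$ to exceed $p^{-1/(p-1)}$. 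One should therefore be able to bypass Grothendieck--Katz altogether and conclude algebraicity by a direct $p$-adic analytic argument in the spirit of Bombieri--Sperber \cite{BombieriSperber1982} and Andr\'e \cite{Andre2004}; constructing such an argument is, in my view, the crux of any proof of the conjecture.
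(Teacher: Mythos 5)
This is stated as a conjecture, so there is no paper proof to compare against; what the paper does, in the proposition immediately following the conjecture, is reduce it to \cref{11072024graphofscience}, and your proposal is essentially that same reduction. You and the paper both observe that if $\Z[x]$ is an integral $\Z$-algebra then $x'=\vf(x)$ with $\vf\in\Z[x]^N$ a polynomial vector field, and that closure under divided powers is exactly the integrality $\frac{\vf^m}{m!}(\O_\T)\subset\O_\T$ required as hypothesis in \cref{11072024graphofscience}. Your phrasing---that $\phi: r\mapsto\sum_{n\geq 0} r^{[n]}T^n$ is a ring homomorphism into $R[[T]]$, hence an $R$-integral co-action of $\widehat{\Ga}$, hence reduces mod $p$ to an iterative derivation with vanishing $p$-curvature---is a tidy Hopf-algebraic repackaging of the same content, and your ``concluding step'' is the same open nonlinear local--global principle that the paper defers to, so neither account closes the gap. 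Two small inaccuracies are worth flagging. First, an integral $\Z$-algebra is by definition closed under divided powers over $\Z$ itself, with no auxiliary denominator $N$ to invert, so there are in fact no bad primes to avoid and the appeal to \cref{30012023espreafico} is superfluous. Second, the inference ``$\dim_\C V\le 1$ implies $y$ algebraic over $\C(z)$'' is not automatic as written---a one-dimensional orbit of $\vf$ need not be algebraically parameterized by the transcendental coordinate $z$---and this is precisely the subtlety the paper records in the remark after \cref{11072024graphofscience}, where it notes that the hypothesis does not assume a coordinate $z\in\O_\T$ with $\vf(z)=1$ but is believed to force the existence of one algebraic over $\O_\T$.
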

For a linear differential equations with only algebraic solutions, \cref{19102023chinagain} gives us an integral algebra for its solutions of smaller length than the one given by \cref{09122023sonymuseum}. For an algebraic solution $y=[y_1,y_2,\ldots, y_n]^\tr$ of $y'=\Am(z)y$, we have
$\frac{y^{(m)}}{m!}=\frac{\Am_m}{m!}y$, and so
\begin{equation}
\label{06032024bitaufrj}
\hbox{ entries of } \frac{y^{(m)}}{m!}\  \in\  \sring[z, \frac{1}{N\Delta},y_1,y_2,\ldots, y_n],\ \ \forall m\in \N.
\end{equation}
 If the  Grothendieck-Katz conjecture is true then similar to \cref{10022024carnaval} we must have:
\begin{conj}
\label{22062024privatizar}
If the $p$-curvature of $y'=\Am(z) y$ is zero for almost all prime then  the entries of $y$ are integral.
\end{conj}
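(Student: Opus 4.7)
The plan is to split Conjecture~\ref{22062024privatizar} into a conditional part (easy, given Grothendieck--Katz) and an unconditional part (hard, and the real interest of the statement). Granting the Grothendieck--Katz conjecture, the hypothesis that the $p$-curvature of $y' = \Am(z)y$ vanishes for all but finitely many primes $p$ immediately yields that every entry of a fundamental solution matrix $Y$, and hence of any solution $y = [y_1,\ldots,y_\Nn]^\tr$, is algebraic over $\C(z)$. Applying Proposition~\ref{09122023sonymuseum} (algebraic $\Rightarrow$ integral) then finishes the proof and shows that Conjecture~\ref{22062024privatizar} is formally weaker than Grothendieck--Katz, exactly as Proposition~\ref{10022024carnaval} is.

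For an unconditional attack I would try to build the required integral $\Z$-algebra $\Z[x]$ containing $y_1,\ldots,y_\Nn$ directly from the $p$-curvature hypothesis, bypassing algebraicity altogether. The starting identity is
\begin{equation*}
y^{[m]} \;=\; \frac{y^{(m)}}{m!} \;=\; \frac{\Am_m}{m!}\, y,
\end{equation*}
so by Remark~\ref{15122023hossein} it suffices to show that the entries of $\frac{\Am_m}{m!}$ lie in a single finitely generated algebra $\sring\bigl[z,\frac{1}{N\Delta}\bigr]$ independent of~$m$ --- precisely the conclusion of Theorem~\ref{19102023chinagain}. The natural route is to convert the hypothesis ``$\Am_p \equiv_p 0$'' for almost all $p$ into the mixed-prime divisibility
\begin{equation*}
\Am_{m_{p,k}} \;\equiv_{p^k}\; 0 \qquad \text{for all but finitely many } p \text{ and all } k \in \N,
\end{equation*}
and then to extract a single uniform denominator $N \in \sring$ by a patching argument prime-by-prime. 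The refined divisibility one needs is exactly of the form produced by the $p$-adic analytic refinements of Bombieri--Sperber, Dwork, and André quoted after Proposition~\ref{26062024kamar}: vanishing of the $p$-curvature strengthens the generic radius of convergence to $R_p > p^{-1/(p-1)}$, which in turn pushes $\frac{\Am_m}{m!}$ into higher and higher powers of $p$ as $m$ grows.

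The main obstacle is precisely the step the paper itself isolates: Proposition~\ref{26062024kamar} shows that for the displayed Lamé equation one has $\Am_5 \equiv_5 0$ but $\Am_{25} \not\equiv_{5^6} 0$, so no prime-by-prime argument can upgrade $\Am_p \equiv_p 0$ to $\Am_{pk} \equiv_{p^k} 0$ in isolation; the cancellation of factorials has to be orchestrated across all primes simultaneously. A reasonable intermediate target, therefore, is to prove the conjecture under the stronger hypothesis that the generic $p$-adic radius of convergence equals $1$ for almost all $p$ (equivalently, the $\tau$-invariant of~\cite{Andre2004} vanishes), where André's machinery directly supplies the required bounds on $v_p\bigl(\Am_m/m!\bigr)$, and then to try to remove the strengthening using a global monodromy input. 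The delicate point, which I expect to be the true difficulty, is that removing this strengthening appears to require essentially the full force of Grothendieck--Katz, so an unconditional proof of Conjecture~\ref{22062024privatizar} is likely no easier than Grothendieck--Katz itself.
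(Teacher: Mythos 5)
Your conditional reduction is exactly what the paper intends: the statement is labelled a conjecture precisely because the paper offers no unconditional argument, only the observation that it follows from Grothendieck--Katz together with the fact that algebraic solutions are integral. The one small variant is your entry point: you invoke Proposition~\ref{09122023sonymuseum} (algebraic $\Rightarrow$ integral), whereas the discussion immediately preceding Conjecture~\ref{22062024privatizar} prefers the more economical integral algebra $\sring\bigl[z,\tfrac{1}{N\Delta},y_1,\ldots,y_n\bigr]$ extracted from Theorem~\ref{19102023chinagain} via equation~\eqref{06032024bitaufrj}; both give the same conclusion, and the paper explicitly notes the two routes. Your diagnosis of the unconditional obstruction is also consonant with the paper's own remarks: Proposition~\ref{26062024kamar} shows that $\Am_p\equiv_p 0$ does not propagate to $\Am_{m_{p,k}}\equiv_{p^k}0$ by a single-prime argument, so the conjecture genuinely needs mixed-prime input, and the Bombieri--Sperber/Dwork/Andr\'e radius-of-convergence machinery you cite is indeed the tool the paper points to for any partial unconditional progress.
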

\begin{rem}\rm
Let us consider a  differential equations $y'=\Am y$ satisfying the hypothesis of \cref{05112023tina}.  It is a straightforward computation to show that 
$$
U=\sum_{k=0}^{\infty}\frac{(-1)^k\Am_k(z)}{k!}(z-z_0)^k, 
$$
with $U(0)=$ the identity matrix, satisfies the differential equation $U'=-U\Am$, and hence, $Y:=U^{-1}$ is a fundamental matrix of $Y'=\Am Y$, see  \cite[Remark 3.20]{BostanCarusoRoques2024} .  We can consider  $U$ and $U^{-1}$ as  formal power series in $(z-z_0)$ and the coefficients are in the ring $\sring[z_0, \frac{1}{N\Delta(z_0)}]$ . Note that we have also 
$$
U=\sum_{k=0}^{\infty}\frac{\Am_k(z_0)}{k!}(z-z_0)^k, 
$$
which follows again by the hypothesis of \cref{05112023tina} and the formula for Taylor series. 
\end{rem}

\section{An attempt}
The author's impression is that \cref{22062024privatizar} which is a consequence of Grothendieck-Katz conjecture might be false. In order to transfer this feeling to the reader, in this section,  we attempt  to translate the vanishing of $p$-curvatures into the $p$-integrality of its solutions.  T
Let $\sring$ be the $\Z$-algebra generated by  the coefficients of $\Am$ and $\check\sring=\sring[z_0,\frac{1}{\Delta(z_0)},y_0]$ for  $y_0\in\C^\Nn$, $z_0\in\C$ which is not a pole of $\Am$.
\begin{prop}
\label{24092024velinho}
If the $p$-curvature of $y'=\Am y$ is zero then  we have a formal power series $\check y\in\check\sring[[z-z_0]]$ with $\check y=y_0$ such that $\check y'=\Am \check y$ in  $\check\sring/p\check\sring$.
\end{prop}
\begin{proof}
We give two proofs of this statement. The first proof is well-known, see \cite[Remark 3.20]{BostanCarusoRoques2024}, and it is less intuitive. It is a mere computation to see that if $\Am_p=0$ in $\check\sring/p\check\sring$ then the sum  $U=\sum_{k=0}^{p-1}\frac{(-1)^k}{k!}(z-z_0)^k\Am_k$ with $U(0)=$ identity matrix, satisfies the differential equation $U'=-U\Am$, and hence, $U^\tr$ is the solution of the dual differential equation $y'=-\Am^{\tr}y$. It turns out that $U^{-1}$ is a fundamental matrix of $y'=\Am y$. 

The second proof is more natural and it is a simplification and  adaptation of \cite[Theorem 2]{Seshadri1960} into our context.
Let $\sring_p:=\sring/p\sring$.
We consider the larger ring $\sring_p[z,y]$ and the derivation
$$
\vf: \sring_p[z,y]\to \sring_p[z,y],\ \ \vf(z)=1,\ \vf(y)=\Am(z)y,
$$
and let
 $$
 \ker_p(\vf):=\{ f\in \sring_p[z,y]\ \Big |  \vf(f)= 0 \}.
 $$
 We have clearly $f^p\subset \ker_p(\vf)$ for all $f\in\sring_p[z,y]$.
 The hypothesis on $p$-curvature is equivalent to the fact that $\vf^p(f)= 0$ for all $f$.
 Let $\ker_p(\vf)[z]$ be the polynomial ring in $z$ and coefficients in $\ker_p(\vf)$.
Since $z^p\in\ker_p(\vf)$ such polynomials can be taken of degree $\leq p-1$ in $z$.
We claim that
$$
\sring_p[z,y]= \ker_p(\vf)[z].
$$
If not, there is a $f\in\sring_p[z,y]$ such that $f\not\in \ker_p(\vf)[z]$.
Since $\vf^pf= 0$, for some $1\leq e\leq p-1$ we have
$g=\vf^ef\not\in \ker_p(\vf)[z]$ but $\vf g\in\ker_p(\vf)[z] $.
Let us write
$$
\vf g=a_0+a_1z+a_2z^2+\cdots+a_{p-1}z^{p-1}, \ \ a_i \in \ker_p(\vf).
$$
By our hypothesis $\vf(a_i)= 0$, and so,   $0=\vf^{p-1}\vf g= (p-1)! a_{p-1}$.  Now we use the fact that $p$ is a prime, and hence $a_{p-1}=0$ in $\sring_p$.
Therefore, we can find $\tilde g=a_0z+\frac{a_1}{2}z^2+\cdots+\frac{a_{p-2}}{p-1} \in \ker_p(\vf)[z]$ such that $\vf g=\vf \tilde g$. Note that $\frac{1}{i}\in\Z$ is any representative for the inverse of $i$ in $\Ff_p$, and hence $\frac{1}{i}i-1\in p\Z$, and  it is zero in $\sring_p$. Once again the construction of $g$ works only for $p$ prime.
We conclude that that $g-\tilde g\in \ker_p(\vf)[z]$, and so, $g\in \ker_p(\vf)[z]$ which is a contradiction.

Let us take $y_0\in\C^\Nn$ and $z_0\in\C$ which is not a pole of $\Am$. From now on we use the larger ring $\check\sring=\sring[y_0,z_0, \frac{1}{\Delta(z_0)}]$ and the equalities are in $\check\sring_p:=\check\sring/p\check\sring$.
We write
$$
y-y_0= a_{0}+a_{1}(z-z_0)+a_{2}(z-z_0)^2+\cdots+a_{p-1}(z-z_0)^{p-1}, \ \ a_i \in \ker_p(\vf)^n.
$$
This equality implies that the algebraic variety $a_0(z,y)=0$ in $\A^{n+1}_{\check\sring_p}$ contains the point $(y,z)=(y_0,z_0)$. By definition of $\ker_p(\vf)$, it is tangent to the vector field $\vf$.
We get the map
$$
Y: \A^{\Nn+1}_{\check\sring_p }\to  \A^{\Nn+1}_{\check \sring_p } ,\ \ \ Y(z,y)=(z,w):=(z,a_0(z,y)), \ Y(z_0,y_0)=(z_0,0), 
$$
whose derivative at $(z_0, y_0)$ has the determinant $1$.
By inverse function theorem over a ring, see \cref{02jan2023impa}, 
we have power series
$\check y\in\check\sring_p[[z-z_0,w]]$ such that
$a_0(z,\check y)=w$. We redefine $\check y$ to be $\check y$ restricted to $w=0$ and get $\check y\in\check \sring_p[[z-z_0]]$. Therefore,  $a_0(z,\check y)=0$. This together with $\vf(a_0)= 0$, imply that
$$
\frac{\partial \check y}{\partial z}= \Am(z)\check y, \ \ \check y(z_0)=y_0,
$$
this is because these equalities imply $\frac{\partial a_0}{\partial z}+\frac{\partial a_0}{\partial y}\frac{\partial \check y}{\partial z}=0$ and $\frac{\partial a_0}{\partial z}+\frac{\partial a_0}{\partial y}\Am(z)\check y=0$.
\end{proof}

\begin{rem}\rm
\label{02jan2023impa}
 The inverse function theorem and implicit function theorem  are true over an arbitrary ring $\sring$ as follows. Let  $A=\A_{\sring}^n=\spec(\sring[z_1,z_2,\ldots,z_n])$ and $\t=(t_1,t_2,\ldots,t_n),\ \t_i \in\sring$ be an $\sring$-valued point of $A$. Let also $\O_{A^\for,\t}$  (resp. $\O_{A^\hol,\t}$) be the ring of formal (resp. convergent for some embedding $\sring\subset \C$) power series $\sum_{i\in\N_0^n} a_i(z-t)^i$.  
 \begin{enumerate}
  \item
Let $F=(F_1,F_2,\ldots, F_n)$ with $F_i\in \O_{A^\hol,\t}$  (resp. $\O_{A^\for,\t}$).  If $\det[\frac{\partial F_i}{\partial z_j}(0)]$ is invertible in $\sring$ then there is $G=(G_1,G_2,\ldots, G_n)$, with $G_i\in \O_{A^\hol,\t}  $ (resp. $\O_{A^\for,\t} $) such that $F\circ G=G\circ F=(z_1,z_2,\ldots,z_n)$.
 \item
Let $F=(F_1,F_2,\ldots, F_m)$ with $F_i\in \O_{A^\hol,\t}$  (resp. $\O_{A^\for,\t}$). If the rows of $[\frac{\partial F_i}{\partial z_j}(0)]$ can be completed to a basis of $\sring^n$  then there is $G=(G_1,G_2,\ldots, G_{n-m})$, with $G_i\in \O_{B^\hol,0}$ (resp. $\O_{B^\for,\t} $) and
$B=\A_{\sring}^{n-m}=\spec(\sring[z_1,z_2,\ldots,z_{n-m}])$
such that $[\frac{\partial G_i}{\partial z_j}(0)]$ is invertible in $\sring$ and
$$
F(\check z,G(\check z))=0,
$$
where $\check z=(z_1,z_2,\ldots,z_{n-m})$.
 \end{enumerate}
\end{rem}
\begin{defi}\rm Let $\sring$ be a  finitely generated subring of $\C$ such that a primes number $p$ is not invertible in $\sring$.    We say that a formal power series is $p$-integral (relative to $\sring$) if all its coefficients are in $\sring\otimes_\Z\tilde\Z_p$, where $\tilde\Z_p$ is the ring  of rational numbers whose denominator is not divisable by $p$. Note that $\tilde\Z_p[\frac{1}{p}]=\Q$.  
\end{defi}
The following is an immediate consequence of the definition.
\begin{prop}
Consider a differential equation $y'=\Am(z)y$  defined over $\sring$. A solution of this differential equation  with $y(z_0)=y_0, \ z_0\in\sring,\  y_0\in \sring^n$, is $p$-integral if 
$$
\frac{\Am_m(z_0)y_0}{m!}\in\sring\otimes_\Z\tilde\Z_p,\ \ \forall m\in\N.
$$ 
\end{prop}
In \cref{19102023chinagain-2} we have proved that for an algebraic solution of a differential equation we have always $N\in\sring$ such that  it is $p$-integral for all primes except a finite number and  relative to the ring $\sring[\frac{1}{N}]$.
\begin{defi}\rm
We say that a differential equation $y'=\Am(z)y$ defined over $\sring$ is $p$-integral (relative to $\sring$) if 
$$
\frac{\Delta(z)^m\Am_m(z)}{m!}\in\sring\otimes_\Z\tilde\Z_p[z],\ \ \forall m\in\N.
$$ 
where $\Delta\in\sring[z]$ is a polynomial such the entries of $\Delta\Am$ are also in $\sring[z]$. 
\end{defi}
If a differential equation is $p$-integral  relative to $\sring$ then all its solutions at $z=z_0\in\sring$ with the initial value $y_0\in\sring^n$, are $p$-integral, relative to $\sring[\frac{1}{\Delta(z_0)}]$, provided that $p$ is not invertible in this ring.  However, the converse does not seem to be true, but the author is not aware of a counterexample. Note that the integer-valued polynomials like  $\frac{z(z+1)}{2}$ have  values in $\Z$ for all $z\in\Z$, however, they are  not defined over $\Z$. 

The example in  \cref{26062024kamar} shows that the $p$-curvature of a differential equation might be zero but its solutions might not be $p$-integral. We can also observe this in a theoretical way as follows. 
 We take a representative for the coefficients of $y(p):=\check y$ in $\check\sring$ and use $y(p)$ to emphasize that it depends on $p$.
Therefore,  $y(p)\in\check\sring[[z-z_0]]^\Nn$ and $\frac{\partial y(p)}{\partial z}\equiv_p\Am(z)y(p)$. The main difficulty in proving that the entries of $y$ are integral functions is to glue the data of all $y(p)$ for different $p$ together.  By \cref{17dec2023handcrafts} we know that over $\check\sring\otimes_\Z\Q$ we have a unique solution $y$ with with $y(z_0)=y_0$ (here we must assume that $\Delta(z_0)$ is invertible in $\check\sring$). We might claim that $y$ is $p$-integral and  
$y\equiv_p y(p)$. The first strategy is  prove this by induction on $n$ for the coefficient of $(z-z_0)^n$.
This is trivially true for the coefficients of $(z-z_0)^i,\ \ i=0,1$. Since in the larger ring $\check\sring\otimes_\Z\tilde\Z_p$, we have inverted all $N$'s with $p\nmid N$, this is also true for $n<p$.
The coefficient of $(z-z_0)^n$ in $y$ is computed by
\begin{equation}
\label{06122023maosheng}
ny_{n}=\sum_{i+j=n-1} \Am_iy_j.
\end{equation}
We have also $ny(p)_{n}\equiv_p\sum_{i+j=n-1} \Am_i y(p)_j$.
For $n\leq p^2-1$ with $p\mid n$, the right hand side of \eqref{06122023maosheng} is divisible by $p$ and so $y_n$ can be computed in $\check\sring\otimes_\Z\tilde\Z_p$.  However, for $n=p^2$, we observe that $y_n$ might have $p$ in its denominator and the argument breaks.  One might try to give a recursive formula for the order of $p$ in the denominator of $y_n$. In case both $z_0$ and the differential equation are defined over $\Q$, the classical lower bound ${\rm ord}_p(y_n)\geq \frac{-n}{p(p-1)}$ (see for instance \cite[Proposition 3.38]{BostanCarusoRoques2024}) does not see this phenomenon, for instance,  it says that for $n=p^2-1$, the prime $p$ might divide the denominator of $y_n$, whereas by our recursion this does not happen.   


\section{Final conclusions}
Apart from \cref{19102023chinagain} with $m=p$ a prime, there are many other evidences for Grothendieck-Katz conjecture. It is verified for ``suitable direct factors'' of Gauss-Manin connections, see \cite{Katz1972}, and connections on rank one vector bundles, see \cite{ChudChud1985-1}. Moreover,  if the $p$-curvature of  a given differential equation is zero for all but a finite number of primes then its singularities are Fuchsian and all the residue matrices are diagonolizable with eigenvalues  in $\Q$, see \cite[Chapter 9]{Katz1996}, see also \cite[Theorem 8.1]{Katz1982}. These are true if the differential equation has algebraic solutions. In other words, the local data of a differential equation with zero $p$-curvatures, looks like a local data of a linear differential equation with only algebraic solutions. The differential equations over $\P^1$ which are uniquely determined by their local data (rigid differential equations) satisfy the Grothendieck-Katz conjecture, see \cite[Chapter 9]{Katz1996}. Therefore,
if there is a counterexample to this conjecture over $\P^1$ we must look for it among non-rigid differential equations, that is, they depend one some auxiliary parameters.
Example of such differential equations come from Painlevé VI, Heun and Lamé equations, see \cite{ho16,ho17}. Among these equations, the Lamé equation is the most simple one:
$$
p(x) \frac{d^2y}{dz^2}+\frac{1}{2} p'(z) \frac{dy}{dz}-(n(n+1)z+B)y=0,
$$
where $p(z)=4z^3-g_2z-g_3$. It depends on the parameters $g_2,g_3\in\C, n\in\Q,B\in\C$. Our search, with no specific strategy, gave us only Lamé equations with algebraic solutions. These are classified in \cite[Table 4]{Beukers2004}. We also found the Lamé equation with $B=0,\ n=\frac{7}{4},g_2=0,g_3=1$ which is not listed in the mentioned reference. \footnote{For further details see \cref{29082024omidint}.}
\cref{19102023chinagain} can be reformulated for Gauss-Manin connections, and its converse provides us with a conjectural description of Gauss-Manin connections. This conjecture with  $k=1$ appears in  \cite[Appendix of Chapter V]{Andre1989} and the main evidence for this comes from \cite{Katz1972}.

\section{Appendix: A finitely generated ring}
We would like to highlight a very particular case of \cref{09122023sonymuseum} which results in the following elementary statement.
\begin{prop}
\label{11122023babak}
 Let $a$ be a positive rational number. The $\Z$-algebra generated by $[a]_k:=\frac{a(a-1)\cdots(a-k+1)}{k!},\ k\in\N_0$ is of the form $\Z[\frac{1}{N}]$, for some $N\in\N$ which is a product of distinct primes.   In particular, the number of primes appearing in the denominator of $[a]_k,\ k\in\N$ is finite.
\end{prop}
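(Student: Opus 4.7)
The plan is to exhibit the generators $[a]_k$ as Taylor coefficients of the algebraic function $(1+z)^a$ at the integral basepoint $z=0$, and then to invoke Eisenstein's theorem to bound the denominators uniformly.

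First, I would write $a = p/q$ in lowest terms with $p,q \in \N$, and set $y(z) := (1+z)^a$. Then $y$ is algebraic over $\Q(z)$: it satisfies the polynomial identity $y^q - (1+z)^p = 0$, whose defining polynomial $P(z,y) = y^q - (1+z)^p$ lies in $\Z[z,y]$. Moreover the basepoint $z_0 = 0$ and the value $y(0) = 1$ both lie in $\sring := \Z$, so part~(3) of Eisenstein's theorem (\cref{26012024margetifi?-2}) applies with ring $\sring = \Z$ and yields some $N_0 \in \N$ with
\[
N_0^k\, y^{[k]}(0) \;\in\; \Z, \qquad \forall\, k \in \N_0 .
\]
Since $[a]_k = y^{[k]}(0)$ is by definition the $k$-th Taylor coefficient of $(1+z)^a$ at $z=0$, this says $[a]_k \in \Z[\tfrac{1}{N_0}]$ for every $k$.

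Next, let $R := \Z\bigl[[a]_k : k \in \N_0\bigr]$. By the previous step $\Z \subset R \subset \Z[\tfrac{1}{N_0}] \subset \Q$. I would then invoke the classification of subrings of $\Q$ containing $\Z$: every such ring has the form $\Z[S^{-1}]$, where $S$ is the set of primes $p$ with $\tfrac{1}{p} \in R$. The inclusion $R \subset \Z[\tfrac{1}{N_0}]$ forces $S$ to consist only of primes dividing $N_0$, so $S$ is finite. Setting $N := \prod_{p \in S} p$ produces a squarefree positive integer with $R = \Z[\tfrac{1}{N}]$, which is the first assertion. The ``in particular'' clause is then immediate: the denominator of $[a]_k$ written in lowest terms must be a unit in $R = \Z[\tfrac{1}{N}]$, and hence composed only of the finitely many primes dividing $N$.

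There is no real obstacle; the content is entirely packaged inside Eisenstein's theorem, and the ring-theoretic cleanup in the second step is routine. The one point that deserves brief verification is the legitimacy of taking $\sring = \Z$ in Eisenstein's theorem, which holds because $P(z,y)$ is defined over $\Z$ and the initial data $(z_0,y_0) = (0,1)$ is integral; this is precisely what allows the output $N_0$ to be an ordinary positive integer rather than an element of some larger ring.
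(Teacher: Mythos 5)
Your proof is correct but follows a different route from the one in the paper. The paper's argument is self-contained: writing $a = d/n$ in lowest terms, it applies the multinomial Leibniz identity \eqref{26nov2023zendegi-2} to $(z^a)^n = z^d$ to deduce that for $k > d$ the quantity $n\,[a]_k$ is a $\Z$-linear combination of products of $[a]_s$ with $s<k$, hence the $\Z$-algebra is finitely generated by $[a]_0,\ldots,[a]_d$ together with $\frac{1}{n}$; a Bezout argument then converts each rational generator into an inverted integer. You instead package the whole denominator bound into a single appeal to Eisenstein's theorem (\cref{26012024margetifi?-2}), after recognizing $[a]_k$ as the $k$-th Taylor coefficient of the algebraic function $(1+z)^a$ at the integral basepoint $z_0 = 0$. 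Both arguments converge on the same concluding step, that a subring of $\Q$ contained in some $\Z[\tfrac{1}{N_0}]$ is itself $\Z[\tfrac{1}{N}]$ for a squarefree $N$. What your version gains is brevity and conceptual economy, since Eisenstein's theorem is already proved earlier in the paper; what the paper's recursion gains is an explicit, computable generating set for the ring and independence from any black box. Indeed, unwinding the proof of Eisenstein for your $P=y^q-(1+z)^p$ gives $\Delta = \frac{\partial P}{\partial y}(0,1) = q$ and $N_0 = \Delta^2 = q^2$, recovering the same primes as the paper's $\frac{1}{n}$. The one choice worth flagging is your replacement of $z^a$ by $(1+z)^a$: this is exactly what moves the basepoint off the branch point of $z^a$ so that $y(z_0)=1$ is integral and Eisenstein's theorem applies with $\sring=\Z$; making that motivation explicit would sharpen the write-up.
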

\begin{proof}
Let $a=\frac{d}{n}, \ \gcd(n,d)=1$. We first prove that such a $\Z$-algebra $A$ is finitely generated by $[a]_k,\ k=0,1,2,\ldots,d$ and $\frac{1}{n}$.
 For $k>d$ we have
 $$
 \sum_{i_1+i_2+\cdots+i_n=k}[a]_{i_1}[a]_{i_2}\cdots [a]_{i_n}=0,
 $$
 which follows from \eqref{26nov2023zendegi-2} applied to the equality $y^n-z^d=0$ with $y:=z^a$.
 This implies that
 $n[a]_k$ is a $\Z$-linear combination of products of $[a]_s,\ s<k$. Therefore, $A$ is finitely generated.
 For rational  numbers $a_i=\frac{d_i}{n_i}, \ \gcd(n_i,d_i)=1$ we have $A=\Z[a_1,a_2,\ldots]=\Z[\frac{1}{n_1},\frac{1}{n_2},\cdots]$ which  follows from $s_in_i+r_id_i=1$ or equivalently $s_i+r_ia_i=\frac{1}{n_i}$ for some $r_i,s_i\in\Z$. In a similar way, $\Z[\frac{1}{n_1},\frac{1}{n_2}]=\Z[\frac{1}{[n_1,n_2]}]$ and then by induction $A=\Z[\frac{1}{N}]$ for some $N\in\N$. If $N=N_1^2N_2$ is not square free then $\Z[\frac{1}{N_1^2N_2}]=\Z[\frac{1}{N_1N_2}]$ and the result follows.
\end{proof}
\cref{11122023babak} implies that for any set of positive rational numbers $a_1,a_2,\ldots,a_k\in\Q$ there exists $N\in\N$ such that for all $n_1,n_2,\ldots,n_k\in\N$ there exists $s$ with
$$
N^s\frac{(a_1)_{n_1}(a_2)_{n_2}\cdots(a_k)_{n_k}}{n_1!n_2!\cdots n_k!}\in\Z.
$$
The main interest in the literature is those cases in which $s=n_1+n_2+\cdots+n_k$, see for instance \cite{zu02}. 

The converse of \cref{11122023babak} must be also true, that is, if  the  $\Z$-algebra generated by $[a]_k,\ k\in\N_0$ is finitely generated then $a$ must be a rational number. For the proof we might use:
\begin{theo}[Kronecker]
\label{10022024nemikham-2}
Let $\sk$ be a number field and $a$ be an element in the ring of integers $\O_\sk$  of $\sk$.
The number $a$ is in $\Z$ if and only if for all except a finite number maximal prime ideals $\pk\subset \O_\sk$, its class in $\O_\sk/\pk\cong \Ff_{p^n}$ is in $\Ff_p$.
\end{theo}
Another useful version of this theorem is:
\begin{theo}
\label{10022024nemikham}
Let $P$ be an irreducible non-constant polynomial in $\Z[x]$ of degree $d$. If for all except a finite number primes $p$, $P$ has $d$ roots  in $\Ff_p$ then $P$ is of degree $1$, and hence, it has a unique rational root.
\end{theo}
\cref{10022024nemikham-2}  is known under the name Kronecker's density theorem, see \cite{Esnault2023} in which the author refers to  M. Bauer.\footnote{M. Bauer, Über einen Satz von Kronecker. Arch. d. Math. u. Phys. 6, 212–222 (1904).} For the proof one mainly proves its generalizations such as Frobenius and Chebotarev density theorems. The author was not able to find an elementry proof.

\section{Appendix: Eisenstein theorem in many variables}
In the following $y(z)$ is a holomorphic function $y:U\to\C$ defined in some connected open subset $U$ of $\C^a$ with $z=(z_1,z_2,\ldots,z_a)\in U$. The following is the generalization of Eisenstein's theorem in \cref{26012024margetifi?-2}, to a multivariable case. In the following for $n\in\N_0^a$ we write $|n|:=\sum_{j=1}^a n_j$ and for two such vectors $n,m$ when we write $n<m$ we mean that $n_j\leq m_j$ for all $j=1,2,\ldots,a$ and at least one of them is a strict inequality.
\begin{prop}
\label{29062024ajib}
 Let $y(z)$ be an algebraic function and $z_0\in U$. If the polynomial equation $P(z,y(z))=0$ is defined over a ring $\sring$, $z_0\in \sring^a$  and $y(z_0)\in\sring$ then there is $N\in\sring$ such that
$$
\frac{N^{|m|}y^{(m)}(z_0)}{m!}\in\sring,\ \ \ \forall m\in\N_0^a.
$$
\end{prop}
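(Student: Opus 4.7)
The plan is to mimic the proof of \cref{26012024margetifi?-2} with multi-indices, using $|m|$ everywhere that the scalar index $n$ appears. Set $y_0 := y(z_0) \in \sring$ and expand $P$ around $(z_0, y_0)$:
$$P(z, y) = \sum_{i \in \N_0^a,\, j \in \N_0} c_{i,j}\, (z - z_0)^i (y - y_0)^j, \qquad c_{i,j} \in \sring,$$
with $c_{0,0} = 0$. Let $\Delta := c_{0,1} = \frac{\partial P}{\partial y}(z_0, y_0)$, which we may assume nonzero by first replacing $P$ by the minimal polynomial of $y(z)$ over the fraction field of $\sring$ and clearing denominators (the existence of a holomorphic single-valued $y$ near $z_0$ forces $\Delta \ne 0$). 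Write the formal Taylor expansion $y(z) = y_0 + \sum_{m \ne 0} y_m (z - z_0)^m$, where $y_m := y^{(m)}(z_0)/m!$. Substituting into $P(z, y(z)) = 0$ and reading off the coefficient of $(z - z_0)^m$ for each $m \ne 0$ yields a recursion
$$\Delta\, y_m \;=\; -c_{m,0} \;-\; \sum_{\substack{0 \,\ne\, i \,\leq\, m \\ i \,\ne\, m}} c_{i,1}\, y_{m-i} \;-\; \sum_{j \ge 2}\ \sum_{\substack{i + l_1 + \cdots + l_j = m \\ |l_\ell| \ge 1}} c_{i,j}\, y_{l_1} y_{l_2} \cdots y_{l_j},$$
whose right-hand side is a polynomial in the $y_{m'}$ with $|m'| < |m|$ and coefficients in $\sring$.

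Next I would run an induction on $|m|$ to show that $y_m \in \sring[1/\Delta]$ with pole order at $\Delta$ bounded by $2|m| - 1$. The base case $|m| = 1$ is immediate, since $\Delta y_m = -c_{m,0} \in \sring$. For the inductive step, every $j = 1$ contribution $c_{i,1} y_{m-i}$ has pole order at most $2|m - i| - 1 \le 2|m| - 3$, and every $j \ge 2$ contribution $c_{i,j} y_{l_1} \cdots y_{l_j}$ has pole order at most $\sum_{\ell=1}^{j} (2|l_\ell| - 1) = 2(|m| - |i|) - j \le 2|m| - 2$; after dividing by $\Delta$ both bounds become $\le 2|m| - 1$. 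Setting $N := \Delta^2 \in \sring$, the inequality $\operatorname{ord}_\Delta(y_m) \le 2|m| - 1$ gives $N^{|m|} y_m \in \sring$, which is exactly the claim $N^{|m|} y^{(m)}(z_0)/m! \in \sring$ for every $m \in \N_0^a$.

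The only obstacle worth highlighting is the combinatorial bookkeeping of the multi-index recursion and the pole-order estimate; conceptually no new idea is needed beyond the one-variable argument, because the total-degree function $m \mapsto |m|$ plays the role of the scalar index and the partial order on $\N_0^a$ makes the induction well-founded.
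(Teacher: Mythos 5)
Your argument is essentially identical to the paper's: expand $P$ as a multi-index Taylor series around $(z_0,y_0)$, set $\Delta:=\frac{\partial P}{\partial y}(z_0,y_0)$, prove by induction on $|m|$ that $y_m$ has pole order at most $2|m|-1$ at $\Delta$, and take $N:=\Delta^2$. The one extra parenthetical you add --- that replacing $P$ by the minimal polynomial of $y$ over the fraction field of $\sring$ always gives $\Delta\neq0$ because $y$ is holomorphic at $z_0$ --- is not actually true in general (e.g. $P=y^2-2z^2$ over $\Z$, $z_0=0$, $y(z)=\sqrt{2}\,z$ is holomorphic at $0$ yet $\Delta=0$), but the paper's own proof also silently assumes $\Delta\neq0$, so this is a hypothesis shared with the source rather than a new gap you introduce.
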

\begin{proof}
 We write the Taylor series $y(z)=\sum_{i\in\N_0^a}y_i\cdot (z-z_0)^i$  of $y(z)$ at $z=z_0$, where $y_i$'s are unknwon coefficients and substitute in $P(z,y(z))=0$. Let $\sring$ be the $\Z$-algebra generated by coefficients of $P$, $z_0$ and $y_0:=y(z_0)$. Let also $\Delta:=\frac{\partial P}{\partial y}(z_0,y_0)$.
 Computing the coefficient of $(z-z_0)^n, n\in\N_0^a$ we get a recursion of type
 $$
 \Delta \cdot y_n=\hbox{ a polynomial of degree $\leq |n|$ in $y_i,\ i<n$,  with coefficients in $\sring$},
 $$
for instance
 $$
 \Delta y_{(0,\cdots,\underbrace{1}_{j-th},0,\cdots,0)}=-\frac{\partial P}{\partial z_j}(z_0,y_0)\in\sring.
 $$
By induction we can show that $y_n$ has a pole order at most $2|n|-1$ at $\Delta$. For $|n|=1$ this follows from the above equality.  If this is true for all $m<n$ then $\Delta y_n$ is a sum of monomials $y_{i_1}y_{i_2}\cdots y_{i_k}$ with $i_1+i_2+\cdots+i_k\leq n$ and $i_1,i_2,\ldots,i_k<n$ and coefficients in $\sring$. If $k=1$ then $2|i_1|-1\leq 2|n|-2$ and we are done. If $k\geq 2$ then  $2|i_1|-1+2|i_2|-1\cdots+2|i_k|-1\leq 2|n|-k\leq 2|n|-2$ and we are done again. It follows that  $N:=\Delta^2$ satisfy the desired property.
\end{proof}
Next, we give an example of algebraic function in many variables.
We consider the polynomial $f$ of degree $d$:
\begin{equation}
\label{30/03/2024}
f:=x^d+1+t_{1}x^{d-1}+\cdots+t_{d-1}x+t_d,
\end{equation}
and regard $t=(t_1,\ldots,t_d)\in\T:\C^d\backslash\{\Delta=0\}$ as parameters. For $t=0$ it is $x^d+1$, and its zeros are $d$-th roots of minus unity. Let $x_i(t),\ i=1,2$ be two roots of $f$ with $x_i(0)=\zeta_i,\ \ \zeta_i^d=-1$. These are holomorphic functions in $t$ and we may ask a student to write its Taylor series at $t=0$. 
In this section we do this.  For a non-integer  positive number $r$ let 
$$
\langle r\rangle :=(r-1)(r-2)\cdots (\{r\}+1)\{r\}=\frac{\Gamma(r)}{\Gamma(\{r\})}.
$$
For $0<r<1$ by definition $\langle r\rangle=1$. For an integer $a$ we denote by $\bar a$ it unique representative modulo $d$ in $\{0,1,\ldots,d-1\}$. The following is a special case of \cite[Theorem 13.4]{ho13}
with $n=0,\ k=1$
 \begin{theo}
 \label{07092024balegh}
For 
$\beta\in\N_0$ with $0\leq \beta\leq d-2$ we have 
 \begin{equation}
\label{300620224pilar}
\frac{x_2^{\beta}(t)}{f'(x_2(t))}-\frac{x_1^{\beta}(t)}{f'(x_1(t))}=
\mathlarger{\mathlarger{\mathlarger{\sum}}}_{a\in\N_0^d,\  d\nmid  (\beta+1-\sum_{i=1}^d ia_i)}
\left(\frac{1}{ a_1!a_1!\cdots a_d! }D_{a}{\sf p}_{a }    \right) t^a,
\end{equation}
where 
 \begin{eqnarray*}
  D_{a} &:=&
  \left\langle \frac{d-1-\beta+\sum_{i=1}^d ia_i}{d}\right\rangle 
  \left\langle \frac{\beta+1+\sum_{j=0}^{d-1} ja_{d-j}}{d}\right\rangle,
\\
{\sf p}_{a} &:=&  \frac{-1}{d}\left(\zeta_2^{\overline{\beta+1+\sum_{j=0}^{d-1} ja_{d-j}}}-\zeta_1^{\overline{\beta+1+\sum_{j=0}^{d-1} ja_{d-j}}}\right).
 \end{eqnarray*}
\end{theo}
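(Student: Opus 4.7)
The plan is to realise each quantity $\frac{x_i^\beta(t)}{f'(x_i(t))}$ as a residue depending holomorphically on $t$ and then expand it as a power series. For $t$ small the root $x_i(t)$ stays close to $\zeta_i$, and since $x_i(t)$ is the unique zero of $f$ in a small neighbourhood of $\zeta_i$,
\[
\frac{x_i^\beta(t)}{f'(x_i(t))}\;=\;\mathrm{Res}_{x=\zeta_i}\frac{x^\beta}{f(x)}\,dx.
\]
Writing $f=f_0+g$ with $f_0(x)=x^d+1$ and $g(x,t)=\sum_{j=1}^d t_j x^{d-j}$, the geometric expansion $1/f=\sum_{k\ge 0}(-g)^k/f_0^{k+1}$ combined with the multinomial formula $g^k=\sum_{|a|=k}\frac{k!}{a_1!\cdots a_d!}t^a x^{dk-\sum j a_j}$ shows that the coefficient of $t^a$ in $\frac{x_i^\beta}{f'(x_i)}$ equals
\[
\frac{(-1)^{|a|}|a|!}{a_1!\cdots a_d!}\,R_i(m_a,|a|),\qquad R_i(m,k):=\mathrm{Res}_{x=\zeta_i}\frac{x^m}{(x^d+1)^{k+1}}\,dx,
\]
where $m_a:=\beta+d|a|-\sum j a_j$.

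To compute $R_i(m,k)$, I would use the successive substitutions $x=\zeta_i(1+s)$ (so that $x^d+1=-((1+s)^d-1)$ thanks to $\zeta_i^d=-1$) and $u=(1+s)^d-1$ (which gives $ds=du/(d(1+u)^{(d-1)/d})$). The residue at $x=\zeta_i$ transforms into a standard residue at $u=0$, yielding
\[
R_i(m,k)\;=\;\frac{(-1)^{k+1}\zeta_i^{m+1}}{d}\binom{r-1}{k},\qquad r:=\frac{m+1}{d}.
\]
Using $\zeta_i^d=-1$ once more writes $\zeta_i^{m+1}=(-1)^{\lfloor r\rfloor}\zeta_i^{\overline{m+1}}$, which accounts for the reduced exponent appearing in ${\sf p}_a$. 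The restriction $d\nmid(\beta+1-\sum j a_j)$ in the sum is equivalent to $r\notin\mathbb{Z}$; whenever it fails, the inequalities $0\le\sum j a_j\le d|a|$ and $\beta\le d-2$ force $0\le r-1<k$, so $\binom{r-1}{k}=0$ and the term contributes nothing.

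To match the right-hand side of the stated formula, I would then convert the generalised binomial into the $\langle\cdot\rangle$ notation. With $q:=\lfloor r\rfloor$, unwinding the definitions gives the identity
\[
(r-1)(r-2)\cdots(r-k)\;=\;(-1)^{k-q}\langle r\rangle\langle k+1-r\rangle,
\]
valid for every positive non-integer $r$ with $0<r<k+1$. Both bounds on $r$ are automatic in our summation range: the upper bound $r<k+1$ is precisely where the hypothesis $\beta\le d-2$ enters, ensuring $\langle k+1-r\rangle$ is well-defined; the lower bound follows from $\sum j a_j\le dk$. Substituting this identity and the $(-1)^q$ from the exponent reduction into the formula for $R_i$, the various pieces assemble to give $\frac{1}{a_1!\cdots a_d!}D_a\,{\sf p}_a$ as announced.

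The main obstacle will be the sign bookkeeping: the factors $(-1)^{k+1}$ from the evaluation of $R_i$, $(-1)^{|a|}$ from the geometric series, $(-1)^{\lfloor r\rfloor}$ from the reduction $\zeta_i^{m+1}\mapsto\zeta_i^{\overline{m+1}}$, and $(-1)^{k-\lfloor r\rfloor}$ from the Pochhammer identity must all combine into the single overall sign consistent with ${\sf p}_a=-\frac{1}{d}(\zeta_2^{\overline{m+1}}-\zeta_1^{\overline{m+1}})$. Once this is checked, the remaining steps are routine substitution.
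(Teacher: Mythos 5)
Your residue-substitution plan is a genuinely different route from the paper's, which gives no proof at all and simply cites \cite[Theorem 13.4]{ho13}. The mechanism you propose — writing $\frac{x_i^\beta(t)}{f'(x_i(t))}$ as a contour integral around $\zeta_i$, expanding $1/f$ geometrically in $g=\sum t_j x^{d-j}$, multinomially expanding $g^k$, and evaluating $\mathrm{Res}_{\zeta_i}\frac{x^m\,dx}{(x^d+1)^{k+1}}$ via the substitutions $x=\zeta_i(1+s)$, $u=(1+s)^d-1$ — is sound, the formula $R_i(m,k)=\frac{(-1)^{k+1}\zeta_i^{m+1}}{d}\binom{r-1}{k}$ with $r=(m+1)/d$ is correct, and your identity $(r-1)\cdots(r-k)=(-1)^{k-q}\langle r\rangle\langle k+1-r\rangle$ with $q=\lfloor r\rfloor$ is also correct. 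You also correctly observe that the hypothesis $\beta\le d-2$ forces $0<r<k+1$, which both zeroes out the excluded integer-$r$ terms via $\binom{r-1}{k}=0$ and keeps $\langle r\rangle$, $\langle k+1-r\rangle$ well defined.

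However, the sign bookkeeping you defer to a final "once this is checked" does \emph{not} close as stated. Tracking your four signs for a term with $k=|a|$ gives $(-1)^{k+1}\cdot(-1)^{k}\cdot(-1)^{q}\cdot(-1)^{k-q}=(-1)^{3k+1}=(-1)^{k+1}$, not the constant $-1$ needed to recover ${\sf p}_a=-\tfrac{1}{d}\bigl(\zeta_2^{\overline{m+1}}-\zeta_1^{\overline{m+1}}\bigr)$; there is a residual $(-1)^{|a|}$. This is not a slip in your reasoning — a direct Taylor expansion at $d=2$, $\beta=0$ confirms it: for $f=x^2+1+t_1x+t_2$ with $\zeta_1=i$, $\zeta_2=-i$, one finds $\frac{1}{f'(x_2)}-\frac{1}{f'(x_1)}=i-\frac{i}{2}t_2+\frac{i}{8}t_1^2+O(t^3)$, whereas \eqref{300620224pilar} as printed predicts $+\frac{i}{2}$ as the $t_2$-coefficient. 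The discrepancy is exactly $(-1)^{|a|}$ and vanishes if the deformation enters $f$ with a minus sign, i.e.\ $f=x^d+1-t_1x^{d-1}-\cdots-t_d$ — the convention of \eqref{15dec2016-2}, rather than the plus signs of \eqref{30/03/2024}. So your proof is essentially complete and correct, but the step you left as a check fails verbatim; finishing it honestly would force you to either insert the missing $(-1)^{|a|}$ on the right of \eqref{300620224pilar} or change the sign of the $t_j$ in \eqref{30/03/2024}.
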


\chapter{Lamé equation}
\label{29082024omidint}
{\it 
 The trademark of Lamé's career was moving from one topic to another in a quite logical way but he often ended up studying problems very far removed from the original [...] for the French seemed to feel that he was too practical for a mathematician and yet too theoretical for an engineer, see \cite{MTH}.
 }
\\
\\

{\it Abstract:}
We analyse Lamé equations with zero $p$-curvature. We investigate the cases in which the $m$-curvature modulo 
$p^k$ is not zero for some  $k,m\in\N$ with
${\rm ord}_pm!\geq  k$.

\section{Introduction}
There are many evidences for Grothendieck-Katz conjecture in the literature. The most important one is due to N. Katz in \cite{Katz1972}: It is true for Gauss-Manin connections and its ``suitable direct factors''. For connections on rank one vector bundles on curves  it is proved by D. V. Chudnovsky and G. V. Chudnovsky in  \cite[Theorem 8.1]{ChudChud1985-1}, however, the author's impression is that the proof must be revised, see \cref{03092024omidsemrespeito} and \cite[footnote page 177]{bost01}. This has been generalized 
when the differential Galois group of the connection  has a solvable neutral component, see \cite[Chapter VIII, Exercises 5 and 6, Section 3]{Andre1989} and \cite{bost01}. 
An important class of linear differential equations $y'=\Am y$ for which the Grothendieck-Katz conjecture is not known is of the format
\begin{equation}
\label{28jan2024nemikhaham}
\Am:=\sum_{i=1}^r\frac{\Am_i}{z-z_i},
\end{equation}
where $z_1,\ldots,z_r\in\C$ are distinct complex numbers and $\Am_i$'s are $\Nn\times\Nn$ matrices with entries in $\C$. The rank one case $\Nn=1$ is easy to handle and it follows from Kronecker's criteria.
The rank two case $\Nn=2$ with two singularities $r=2$ is  not so difficult as it is reduced to verify the conjecture for Gauss hypergeometric equation. This and the case of one (finite) singularity, that is,  $r=1$ follows from N. Katz's work below. 
If the $p$-curvature of  a given differential equation is zero for all but a finite number of primes then its singularities are Fuchsian and all the residue matrices are diagonolizable with eigenvalues  in $\Q$, see \cite[Chapter 9]{Katz1996}, see also \cite[Theorem 8.1]{Katz1982}. This implies that in \eqref{28jan2024nemikhaham} we have to consider diagonolizable matrices $\Am_i$'s with rational eigenvalues. 
These are true if the differential equation has algebraic solutions. In other words, the local data of a differential equation with zero $p$-curvatures, looks like a local data of a linear differential equation with only algebraic solutions. 
The differential equations over $\P^1$ which are uniquely determined by their local data (rigid differential equations) satisfy the Grothendieck-Katz conjecture, see \cite[Chapter 9]{Katz1996}.
 Therefore, if there is a counterexample to this conjecture we must look for it among non-rigid differential equations, that is, they depend on some accessory parameters and they are not pull-back of rigid ones. 
An important  class of differential equations \eqref{28jan2024nemikhaham} for which the Grothendieck-Katz conjecture is still open and depend on some accessory parameters  is the case $r=3$ and $\Nn\geq 2$ for which by a linear transformation we can assume that $z_1=0,\ z_2=1$ and $z_3=t$: 
\begin{equation}
\label{14022024cebolinhasmorreram}
\Am=\frac{\Am_0}{z}+\frac{\Am_1}{z-1}+\frac{\Am_t}{z-t}.
\end{equation}
There are three main examples of linear differential equations of this format: Lamé, Heun and Painlevé VI. It turns out that  Lamé equation is a particular case of Heun equation, and this in turn is a particular case of  the linear differential equation underlying Painlevé VI. All these three differential equations are of the format \eqref{14022024cebolinhasmorreram},  see \cite{ho16,ho17}. In the present text  we consider the Lamé equation 
which is traditionally written as a linear differential equation:
\begin{equation}
\label{07082024un}
P(z) \frac{d^2y}{dz^2}+\frac{1}{2} P'(z) \frac{dy}{dz}-(n(n+1)z+B)y=0,
\end{equation}
where $P(z)=4z^3-g_2z-g_3$ with $27g_3^2-g_2^3\not=0$. It depends on the parameters $g_2,g_3\in\C, n\in\Q,B\in\C$ ($B$ is called the accessory parameter).  
Our main reference for Lamé equation is \cite{Beukers2004} and the references therein. 
In a personal communication with S. Reiter, he sent me the article \cite{Hofmann2012} in which the J. Hofmann computes Lamé equations with the monodromy group of type  $(1; e)$.

\section{Lamé equation as a connection on elliptic curves}
We denote  by
$$E_{g_2,g_3}: y^2=4x^3-g_2x-g_3,$$ the corresponding Weierstrass family of elliptic curves. For this we assume that $\Delta:=27g_3^2-g_2^3\not=0$, otherwise, the curve $E_{g_2,g_3}$ is singular and the Heun equation is a pull-back of the  Gauss hypergeometric equation, see \cref{23june2024beukers}.
The algebraic group $\C^*$ acts on $\spec(\C[g_2,g_3, B])$ by
$$
k\bullet (g_2,g_3,B):=(g_2 k^2,g_3k^3, Bk),\ k\in\C^*
$$
and it is easy to show that if $f(z)$ satisfies the Lamé equation with parameters $g_2,g_3,B,n$ then $f(k^{-1}z),\ \ k\in\C^*$ satisfies
 the Lamé equation with parameters $g_2k^2,g_3k^3, Bk, n$ ($n$ is unchanged). 
 It is proved that any two `equivalent' Lamé equations are necessarily obtained by the above $\C^*$-action, see \cite{Beukers2004}, and so, the moduli space of Lamé equations for fixed $n$ is the weighted projective space $\P^{1,2,3}\backslash\{\Delta=0\}$, which is the projectivization of the homogeneous ring $\C[g_2,g_3, B]$ with $\deg(g_2)=2,\ \deg(g_3)=3,\ \deg(B)=1$. The Lamé equation does not change under $n\mapsto -1-n$ and so we can assume that $n\geq -\frac{1}{2}$.

Let $\Lambda$ be a lattice in $\C$ and $\wp(z,\Lambda)$ be the Weierstrass $\wp$ function. If $f(z)$  is a solution of the the Lamé equation then $f(z):=f(\wp(z))$ is a solution of
 $$
 \frac{d^2f}{dz^2}-(n(n+1)\wp(z)+B)f=0.
 $$
 which is called the elliptic function form of the Lamé equation (we also call \eqref{07082024un} the algebraic Lamé equation). 
 Let $z$ be the coordinate system on $\C$ and hence on the torus $\C/\Lambda$. This gives us the vector field $\frac{\partial }{\partial z}$ on $\C/\Lambda$ as it is invariant under the translation by elements of $\Lambda$.
We remark that under Weierstrass uniformization the vector field  $\frac{\partial }{\partial z}$
is mapped to the vector field
\begin{equation}
\label{23062024consumismo}
\vf:=y\frac{\partial }{\partial x}+ \frac{1}{2}P'(x)\frac{\partial }{\partial y}.
\end{equation}
This follows from $\frac{\partial \wp}{\partial z}=\wp'$ and
$\frac{\partial \wp'}{\partial z}=\frac{1}{2}P'(\wp(z))$. Note that $\vf$ is tangent to $E_{g_2,g_3}$:
$$
\vf(y^2-P(x))=0
$$
and so it is the correct vector field to consider (in the literature sometimes one uses $y\frac{\partial }{\partial x}$).
Therefore, the elliptic function form of the Lamé equation is also an algebraic differential equation, not over $\P^1$, but over the elliptic curve $E_{g_2,g_3}$:
\begin{equation}
\left(y\frac{\partial }{\partial x}+ (6x^2-\frac{1}{2}g_2)\frac{\partial }{\partial y}\right)^2f=
(n(n+1)x+B)f.
\end{equation}
We can also write this as a system.
On the elliptic curve $E_{g_2,g_3}$ we consider the following linear differential equation
\begin{equation}
\label{22062024ulis}
 dY=\Am Y,\  \ \ \Am:=\mat{0}{\frac{dx}{y}}{n(n+1)\frac{xdx}{y}+B\frac{dx}{y}}{0}.
\end{equation}
If we set $Y=[f,\vf f]^\tr$  then $\vf Y=\Am(\vf)Y$. The local exponents of \eqref{22062024ulis} at the point at infinity $\infty=[0:1:0]$ of $E_{g_2,g_3}$ are
$-n$ and $n+1$ and so if it has only algebraic solutions then $n\in\Q$.
Note that the exponents of Lamé equation itself at infinity is
$-\frac{n}{2}$ and $\frac{n+1}{2}$, and the difference is due to the fact that the projection $E_{g_2,g_3}\to \P^1, \ (x,y)\to x$ is two to one map ramified at infinity.

\section{Some special cases}
\label{03092024omidsemrespeito}
\begin{theo}
\label{26june2024ansiedade}
 The Grothendieck-Katz conjecture holds for Lamé equations with  $n+\frac{1}{2}\in\N_0$. 
\end{theo}
\begin{proof}
By Frobenius basis of linear differential equations, we know that if $n+\frac{1}{2}\in \Z$, there might be a logarithmic solution of the Lamé equation at infinity. By a theorem of Brioschi and Halphen we know that  for 
$n+\frac{1}{2}\in \N_0$, there exists a weighted homogeneous polynomial  $p_n(B,g_2,g_3)\in\Z[B,\frac{g_2}{4}, \frac{g_3}{4}]$ of degree $n+\frac{1}{2}$ and monic in $B$ such that the Lamé equation has no logarithmic
solutions at $\infty$ if and only if $p_n=0$. It turns out that this is also equivalent to the Lamé equation having finite monodromy, for details and references see \cite[Theorem  2.2 and 2.3]{Beukers2004}. If the $p$-curvature of the Lamé equation is zero for all but a finite number of primes then its local solution at infinity  cannot be logarithmic.
\end{proof}
In \cite[Theorem 7.2 page 91]{ChudChud1985-1}, brothers Chudnovsky claim the following:  
For $n\in\N_0$ the Lamé equation  with $27g_3^2-g_2^3\not=0$ never satisfies the assumptions of the Grothendieck-Katz  conjecture, that is, its  $p$-curvature is non-zero for infinitely many $p$. 
Unfortunately, this statement is false.  We have many Lamé equations with $n\in\N_0$ and 
finite monodromy, and hence, they have  zero $p$-curvature for all except a finite number of primes.
It has been proved in the literature that if the Lamé equation with $n\in\N_0$ has finite monodromy, then its monodromy group is a dihedral
group $D_N$ of order $2N$ for some $N\in\N$, see \cite[Corollary 3.3]{Beukers2004}.
In this reference, and also Waal's Ph.D. thesis \cite[Remark 6.7.10]{WaallThesis},  we can find many examples, such as 
\begin{equation}
\label{26062024ihes}
n=2, \ B=21, \ g_2=327,\  g_3=1727, 
\end{equation}
of such Lamé equations with finite monodromy, see also \cref{25062024david}. It is strange that in an earlier work \cite[page 2773]{Chiarellotto1995} the author, which mentions \cite{ChudChud1985-1} in the introduction, has even computed the underlying (smooth) elliptic curve $E_{g_2,g_3}$ (unfortunately without computing $B$) but has not complained about the results in \cite[Theorem 7.2 page 91]{ChudChud1985-1}.
After a consult with F. Beukers he sent me the article \cite{Dahmen2007}  in which the author even counts the number of such Lamé equations.
In  \cite[Theorem 7.2 page 91]{ChudChud1985-1}, it has been also claimed the following. For $n\in\N_0$, 
there exists a weighted homogeneous polynomial  $l_n(B,g_2,g_3)\in\Q[B,g_2, g_3]$ of degree $2n+1$ and monic in $B$  with the following property. The $p$-curvature of Lamé equation  is nilpotent for all but a finite number of primes  if and only if $l_n=0$. 
The polynomial $l_n$ seems to be the polynomial described in \cite[Theorem 3.2]{Beukers2004}. In any case, the examples of Lamé equations mentioned in the above references, and also \cref{25062024david},  provide counterexamples to this statement too.
{\tiny
}
A special class of Lamé equations, which is usually excluded from discussions is: 
\begin{prop}
\label{23june2024beukers}
The Lamé equation with $27g_3^2-g_2^3=0$ is a pull-back of Gauss hypergeometric equation, and hence, the Grothendieck-Katz conjecture is true for these differential equations.  
\end{prop}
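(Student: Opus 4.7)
The plan is to exhibit the degenerate Lamé equation as a Fuchsian ODE on $\P^1$ with exactly three regular singular points and then invoke the fact, cited in \cref{27012024tanhaperu}, that rigid differential equations satisfy the Grothendieck-Katz conjecture. The hypothesis $27g_3^2=g_2^3$ is precisely the condition that $P(z)=4z^3-g_2z-g_3$ has a repeated root. In the generic (nodal) case $(g_2,g_3)\neq(0,0)$ we have $P(z)=4(z-\alpha)^2(z+2\alpha)$ with $g_2=12\alpha^2,\,g_3=-8\alpha^3$ and $\alpha\neq 0$, so the algebraic Lamé equation \eqref{07082024un} is a linear ODE on $\P^1_z$ whose singular locus is $\{\alpha,-2\alpha,\infty\}$ --- three points instead of the four that arise in the smooth elliptic case.

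The first concrete step is to compute the indicial equation at each of these three points and check that each is regular. A direct calculation yields local exponents $\pm\sqrt{(n(n+1)\alpha+B)/(12\alpha)}$ at $z=\alpha$, $\{0,\tfrac{1}{2}\}$ at $z=-2\alpha$ (the exponent $\tfrac{1}{2}$ reflecting the hyperelliptic ramification, cf.\ the discussion after \eqref{23062024consumismo}), and $\{n/2,-(n+1)/2\}$ at $z=\infty$. By Riemann's classical theorem, every second-order Fuchsian ODE on $\P^1$ with exactly three regular singular points is equivalent --- after a Möbius change of the independent variable sending the three singular loci to $\{0,1,\infty\}$ (which can be taken over $\sring[\tfrac{1}{3\alpha}]$ after enlarging $\sring$ to contain $\alpha$) and an elementary rational gauge $y\mapsto z^a(z-1)^b y$ absorbing two local exponents --- to a Gauss hypergeometric equation $w(1-w)g''+(c-(a+b+1)w)g'-ab\,g=0$. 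This realizes the degenerate Lamé equation as an algebraic pull-back of a Gauss hypergeometric equation, proving the first clause of the proposition.

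To finish, I would appeal to rigidity: Gauss hypergeometric is the prototypical rigid rank-two irreducible local system on $\P^1\setminus\{0,1,\infty\}$, so by the Katz theorem recalled in \cref{27012024tanhaperu} GK holds for it, and since the pull-back is via a Möbius transformation and a rational gauge, both the hypothesis (vanishing $p$-curvature for almost all primes) and the conclusion (all solutions algebraic) of GK transfer back from the hypergeometric to the Lamé equation. The cuspidal sub-case $g_2=g_3=0$ requires separate treatment: for $B\neq 0$ the equation $4z^3y''+6z^2y'-(n(n+1)z+B)y=0$ acquires an irregular singularity at $z=0$, so the GK hypothesis is vacuous by the Fuchsianness implication cited at the start of this section, while for $B=0$ it collapses to the Euler equation $4z^2y''+6zy'-n(n+1)y=0$ whose solutions $z^{n/2},\,z^{-(n+1)/2}$ are manifestly algebraic for $n\in\Q$. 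I expect the main bookkeeping obstacle to be the indicial computation of the second paragraph, in particular confirming that $z=-2\alpha$ yields only an apparent (order-two) singularity compatible with the hypergeometric identification; the rigidity-plus-Katz step and the pull-back transfer of GK are off-the-shelf.
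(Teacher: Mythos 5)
Your proof is correct and takes a genuinely different route from the paper's. The paper stays in the elliptic-function picture: it specializes the discriminant locus as $g_2=3a^2,\ g_3=a^3$, factors $P(x)=4(x+\tfrac{a}{2})^2(x-a)$, writes the normalization $\P^1\to E_{g_2,g_3},\ z\mapsto (z^2+a,\,2z(z+\tfrac{3}{2}a))$, pulls back the vector field $\vf$ of \eqref{23062024consumismo} to $(z^2+\tfrac{3}{2}a)\frac{\partial}{\partial z}$, and reads off that the resulting second-order equation in the uniformizing coordinate is a hypergeometric equation in disguise. You instead work directly with the algebraic Lam\'e equation \eqref{07082024un} in the $x$-variable: you count its singular locus (two finite points plus $\infty$ once $P$ acquires a double root), verify by the indicial computation that the double root still gives a regular singularity, and then invoke Riemann's classification of Fuchsian equations with three regular singular points. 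Your route is more elementary---no geometry of the normalization is needed and the Fuchsian reduction is entirely classical---and it forces you to peel off the cuspidal stratum $g_2=g_3=0$, which you treat more carefully than the paper: for $B\neq 0$ the cusp becomes an irregular singularity so the GK hypothesis is vacuous by the Fuchsianness constraint cited earlier in the section, while for $B=0$ you land on an Euler equation with manifestly algebraic solutions. The paper's version buys a cleaner geometric picture that fits the section's theme of Lam\'e as a connection on elliptic curves and an explicit normal form for the pulled-back vector field, but note that its normalization map is degree two, so what the paper exhibits as hypergeometric is a degree-two pull-back of the algebraic Lam\'e equation, whereas your M\"obius-plus-gauge reduction identifies the algebraic Lam\'e equation itself with a hypergeometric one; both suffice, since GK is stable under finite pullback.
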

\begin{proof}
 The curve $E_{g_2,g_3}$ is singular and hence after a blow-up it is a rational curve. More explicitly, let us take $g_2=3a^2,\ g_3=a^3$ and so $p(x)=4(x+\frac{a}{2})^2(x-a)$. Under the desingularization map
 $$
 \P^1\to E_{g_2,g_3},\ \ \ z\mapsto (z^2+a, 2z(z+\frac{3}{2}a))
 $$
 the vector field $(z^2+\frac{3}{2}a)\frac{\partial}{\partial z}$ is mapped to $\vf$ in
 \eqref{23062024consumismo} and we get the differential equation:
 $$
 \left((z^2+\frac{3}{2}a)\frac{\partial}{\partial z}\right)^2=(n(n+1)(z^2+a)+B).
 $$
 which can be clearly transformed into a Gauss hypergemetric equation. 
\end{proof}

\section{$p$-curvature of Lamé  equation}
\label{25062024david}
The section is based on the author's search for a possible counterexample to Grothendieck-Katz conjecture 
among Lamé equations. Of course, the author has not been successful. Instead, we have produced many examples such
that the number of primes for which the $p$-curvature vanishes seems to be much more than the number of non-vanishing cases. The author's hope is that the experiments presented in this section motivate the reader to systematically study the density of primes associated to differential equations. Another starting point for this is \cite[Proposition 6.2, Corollary 6.3]{ChudChud1985-1}. As one can feel it by reading the mentioned reference, Cheobotarev density theorem seems to be the  tip of an iceberg in the framework of differential equations. 
 
We first write the Lamé equation \eqref{07082024un} as a system in  a canonical way.
The matrix $Y:=[f,\frac{\partial f}{\partial z}]^\tr$ satisfies
$$
Y'=\Am Y,\ \ \Am:=\mat{0}{1}{-\frac{1}{2}\frac{P'(z)}{P(z)}}{ \frac{n(n+1)z+B}{P(z)}}.
$$
We consider only the cases $g_2,g_3,B,n\in\Q$, and hence, the entries of $P(z)\Am$ are polynomials in $z$ with 
rational coefficients. Let $N$ be the common denominator of all these parameters.
It is easy to see that $y^{(n)}=\Am_ny$, where $\Am_n$ are computed recursively by
$\Am_1=\Am,\ \ \Am_{n+1}=\frac{\partial \Am_n}{\partial z}+\Am_n\Am$, and all $\Am_n$'s have entries in
$\Z[z,\frac{1}{NP(z)}]$.  
For a prime number $p$, the matrix $\Am_p$
considered as a matrix with entries in $\Z[z,\frac{1}{NP(z)}]/p\sring[z,\frac{1}{NP(z)}]$ is usually called the $p$-curvature of the Lam\'e equation. We have proved:
\footnote{This is \cref{19102023chinagain} for Lam\'e equations.}
\begin{theo}
\label{11092024omid}
If all the entries of solutions of the Lam\'e equation are algebraic functions then 
for all but  a finite number of primes $p$ and $k,m\in\N$ with ${\rm ord}_pm!\geq  k$ 
we have  $\Am_m\equiv_{p^k} 0$, that is, $\Am_m$ is zero in the ring $\Z[z, \frac{1}{NP(z)}]/p^k\Z[z,\frac{1}{NP(z)}]$.
\end{theo}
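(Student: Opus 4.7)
The theorem is flagged in the footnote as the specialization of \cref{19102023chinagain} to the Lam\'e equation, so the plan is to check the hypotheses of that theorem with a carefully chosen ring and then to read off the asserted mod-$p^k$ vanishing. First, I recall the matrix form set up just above: writing $Y = [f, f']^\tr$, the Lam\'e equation becomes $Y' = \Am Y$, with $\Am$ a $2\times 2$ matrix of rational functions whose common denominator is $\Delta(z) := P(z)$. Since $g_2, g_3, B, n \in \Q$, let $N_0 \in \N$ denote their common denominator; then the entries of $P(z)\Am$ lie in $\Z[\frac{1}{N_0}][z]$. We therefore take $\sring := \Z[\frac{1}{N_0}]$ as the finitely generated $\Z$-subalgebra containing the coefficients of $\Delta$ and of $\Delta\Am$, exactly as required by \cref{19102023chinagain}.

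Next, since every entry of every solution of $Y' = \Am Y$ is algebraic over $\C(z)$ by assumption, \cref{19102023chinagain} directly yields an $N' \in \sring$ such that the entries of $\frac{(N' P(z))^m \Am_m}{m!}$ lie in $\sring[z]$ for every $m$. Equivalently, for every prime $p$ and every $k, m \in \N$ with ${\rm ord}_p m! \geq k$, one has $\Am_m \equiv_{p^k} 0$ in $\sring[z, \frac{1}{N' P(z)}] / p^k \sring[z, \frac{1}{N' P(z)}]$.

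Finally, to match the statement exactly I would translate back to $\Z$. Since $\sring$ sits inside $\Q$, whose ring of integers is $\Z$, \cref{27042024frankfurt} lets us replace $N'$ by a positive integer $N_1 \in \N$; alternatively, writing $N' = a/N_0^b$ with $a \in \Z$ and setting $N_1 := |a|$ does the job by inspection. Put $N := N_0 N_1 \in \N$. For any prime $p$ not dividing $N$, both $N_0$ and $N_1$ are invertible modulo $p^k$, so $\Z[z, \frac{1}{NP(z)}]/p^k\Z[z, \frac{1}{NP(z)}]$ coincides with $\sring[z, \frac{1}{N' P(z)}]/p^k\sring[z, \frac{1}{N' P(z)}]$, and the previous vanishing transports verbatim. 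Only finitely many primes divide $N$, giving the asserted finite exceptional set. Because the result is a specialization of an already-proven theorem, there is no real obstacle; the only bookkeeping is tracking the two sources of bad primes—those dividing the denominator $N_0$ of the parameters, and those entering via Eisenstein's theorem applied to the algebraic solutions.
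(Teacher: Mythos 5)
Your argument is correct and follows the same direct-specialization route the paper takes; the paper's own ``proof'' is just the footnote asserting that this is \cref{19102023chinagain} applied to the Lam\'e system, with the two sources of bad primes (parameter denominators and the Eisenstein $N$) absorbed into a single integer $N$. One small caveat: your appeal to \cref{27042024frankfurt} is not quite on target, since that theorem is stated for $\sring$ a ring of integers of a number field, while $\Z\big[\tfrac{1}{N_0}\big]$ is a localization of $\Z$ and not such a ring; your alternative derivation (writing $N' = a/N_0^b$ and absorbing $|a|$ into the integer $N$) is the correct way to close this step, so the overall argument stands.
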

For our purpose, we have written the procedure {\tt BadPrD} in {\tt foliation.lib}.
It computes the bad ($p$-curvature non-zero) and good ($p$-curvature zero) primes of a linear differential defined over $\Q$  for primes less than or equal to a given number. If the differential equation  is not written as a system then the procedure transforms it into a system in a canonical way. The output consists of three list of primes 1. primes in the denominator of the system. For the Lamé equation these are primes 2, and those in the denominator of $n$ and $B$.  We completely ignore these primes.  2. Bad primes: primes such that the
$p$-curvature is not zero 3. Good primes:  primes that the $p$-curvature is zero. 
 The fourth entry is optional. If it is given and it is a positive integer $k$ then the procedure verifies whether the $m_{p,k}$-curvature  is zero or not, where for a given prime and $k\in\N$,  $m_{p,k}$ is the smallest $m\in\N$ such that ${\rm ord}_pm!\geq  k$. 
 Only the result of the last two lists might change.

We first check that algebraic Lamé equations (those with only algebraic solutions=finite monodromy) in \cite[Table 4]{Beukers2004} have  zero $p$-curvature.  We only consider examples in this table which are defined over $\Q$.
{\tiny 
\begin{verbatim}
LIB "foliation.lib";
   ring r=(0,z),x,dp;
   matrix lde[1][3]; 
   list L=list(1/4,0,0,1), list(3/4,3/8,-168,622), list(1/6,0,1,0), list(5/6,0,1,0),list(1/6,1/6,60,90),
   list(1/10,0,0,1), list(3/10, 3/100,3,5/4), list(7/10,0,0,1), list(7/4,0,0,1);
   number B; number n; number g2; number  g3; int ub=100;
   number P; 
   for (int i=1;i<=size(L);i=i+1)
      { 
        n=L[i][1]; B=L[i][2]; g2=L[i][3]; g3=L[i][4];
        P=4*z^3-g2*z-g3; lde=-n*(n+1)*z-B, 1/2*diffpar(P,z), P; 
        BadPrD(lde, z, ub)[2]; " "; 
      }
\end{verbatim}  
}
Apart from the the primes that appear in the expression of the $8$ Lamé equations above, we observe that we have the following list of bad primes for each of them:
$$
\{3\}, \{5\}, \{\}, \{5\}, \{5\},\{3\},\{2\},\{3,7\},\{3,7\}
$$
respectively.  This data might be useful for the study of bad primes of differential equations. The last entry $(n,B,g_2,g_3)=(7/4,0,0,1)$ is not listed in \cite[Table 4]{Beukers2004} and we found it by our random search of differential equations with zero $p$-curvature. We can verify experimentally that it has finite monodromy. 
\footnote{Therefore, it is not a counterexample to the conclusion of \cref{10022024carnaval}, and hence the Grothendieck-Katz conjecture. }
Below,  we check that for this example $m_{p,k}$-curvature vanishes for all good primes $p\leq 23,\ p\not=2,3,7$ and all $k\leq 6$:
{\tiny
\begin{verbatim}
 LIB "foliation.lib";
 ring r=(0,z),x,dp;
 matrix lde[1][3];  number n=7/4; number B=0; number g2=0; number g3=1; int ub=23;
   number P=4*z^3-g2*z-g3;
   lde=-n*(n+1)*z-B, 1/2*diffpar(P,z), P;
   for (int i=1;i<=6;i=i+1){BadPrD(lde, z, ub,i);}
\end{verbatim}
}Therefore, this examples must have only algebraic solutions. According to \cite{Beukers2004} its monodromy group must be $G_{12}$.  The way that we have found this example is explained in the next paragraph.

The main difficulty in finding Lamé equations with vanishing $p$-curvature for all except a finite number of primes is that there are Lamé equations which the first non-vanishing $p$-curvature is obtained for large $p$'s. 
For example,  for 
$(n, B,g_2,g_3)=(\frac{12}{89},0,0,1)$, the first bad primes are $83, 107, 113,$ 
$127, 149,...$. For $(n, B,g_2,g_3)=(\frac{5}{87},0,0,1)$ the bad primes below $150$ are
$17, 97, 107, 109, 113, 127, 131, 137$.  
For $(n, B,g_2,g_3)=(\frac{4}{65},0,0,1)$, the bad primes below $150$ are
$71,73,89,97,101,103,107,109,113,127,137,139$. For all these we have used: 
{\tiny 
\begin{verbatim}
LIB "foliation.lib";
ring r=(0,z),x,dp;
matrix lde[1][3]; 
list L=list(12/89,0,0,1), list(5/87,0,0,1), list(4/65,0,0,1); 
   number B; number n; number g2; number  g3; int ub=150; number P; 
   for (int i=1;i<=size(L);i=i+1)
      { 
        n=L[i][1]; B=L[i][2]; g2=L[i][3]; g3=L[i][4];
        P=4*z^3-g2*z-g3; lde=-n*(n+1)*z-B, 1/2*diffpar(P,z), P; 
        BadPrD(lde, z, ub)[2]; " "; 
      }
\end{verbatim}            
}
In order to search for Lamé equations with only few bad primes, let us say $\leq m$,  in a large range of primes we have written the procedure
{\tt BadPrDLess} which computes $p$-curvature until the number of bad primes exceed $m$. Using this we can for instance observe that
\begin{prop}
Among the Lamé equations with $n=\frac{j}{i}, \ 1\leq j,i\leq 100$ and 
$$
(B,g_2, g_3)=(\frac{3}{8},-168,622),(\frac{1}{60},60,90),(\frac{3}{100}, 3,\frac{5}{4})
$$
only the three cases $n=\frac{3}{4}, \frac{1}{6}, \frac{3}{10}$, respectively attached to each case above, have at most three bad primes in the range $2\leq p\leq 100$.
\end{prop}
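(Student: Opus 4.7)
The statement is a finite computational claim: for a very explicit finite family of Lam\'e equations, one counts the bad primes up to $100$ and checks that only three equations in the family have at most three bad primes. My plan is therefore not to attempt a structural proof, but to set up and run the computation implied by the problem, matching the style of the other computational propositions in this section (such as \cref{26062024kamar}).

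First, I would use the procedure {\tt BadPrDLess} (the refinement of {\tt BadPrD} introduced just above the statement, which terminates the $p$-curvature computation as soon as the number of bad primes exceeds a chosen threshold $m$). For each of the three accessory triples
\[
(B,g_2,g_3)\in\Bigl\{\bigl(\tfrac{3}{8},-168,622\bigr),\ \bigl(\tfrac{1}{60},60,90\bigr),\ \bigl(\tfrac{3}{100},3,\tfrac{5}{4}\bigr)\Bigr\},
\]
I would loop over all pairs $(i,j)$ with $1\leq i,j\leq 100$, form $n=j/i$, assemble the matrix $\Am$ in canonical form attached to the Lam\'e equation \eqref{07082024un}, and call {\tt BadPrDLess} with upper bound $100$ and threshold $m=3$. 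The output is the list of bad primes (if it has length $\leq 3$) or a flag that the threshold was exceeded. Concretely the Singular code would take the shape
\begin{verbatim}
LIB "foliation.lib";
ring r=(0,z),x,dp;
matrix lde[1][3]; number P; number n; number B; number g2; number g3;
list T=list(list(3/8,-168,622), list(1/60,60,90), list(3/100,3,5/4));
for (int t=1;t<=3;t=t+1){
  B=T[t][1]; g2=T[t][2]; g3=T[t][3];
  for (int i=1;i<=100;i=i+1){
    for (int j=1;j<=100;j=j+1){
      n=j/i; P=4*z^3-g2*z-g3;
      lde=-n*(n+1)*z-B, 1/2*diffpar(P,z), P;
      BadPrDLess(lde, z, 100, 3);
}}}
\end{verbatim}
After the run, I would collect the pairs $(t,n)$ whose output list has size at most $3$, and check by inspection that exactly the three cases $n=\tfrac{3}{4}$, $n=\tfrac{1}{6}$, $n=\tfrac{3}{10}$ (in the respective triples) survive. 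The remaining check is harmless: reduce each $j/i$ to lowest terms so that repeats of the same $n$ do not inflate the list, and verify that the surviving $n$'s appear in the correct triple.

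The main obstacle is purely computational cost: a naive loop computes the first $p$-curvature for all $10^4$ values of $n$ per triple, and the $p$-curvatures grow rapidly in size and denominator. This is where {\tt BadPrDLess} is essential, since for the overwhelming majority of $n$'s the bad-prime count crosses $3$ already for very small primes, and the computation stops almost immediately; only a tiny number of $n$'s force the procedure to go all the way up to $p=100$. A secondary concern is the handling of primes dividing the denominators of $n$ and $B$: these are excluded by definition from the bad/good classification, so before comparing lists I would pass them through the standard filter returned as the first component of {\tt BadPrD}. Once these bookkeeping issues are settled, the statement reduces to a verification that the printed output matches the claim, and no mathematical argument beyond \cref{11092024omid} and the definition of bad primes is required.
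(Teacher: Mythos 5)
Your proposal is correct and follows essentially the same approach as the paper: the proposition is verified by running {\tt BadPrDLess} with threshold $3$ over the specified parameter grid, and the code given in the paper (which appears a few paragraphs below the statement, with the coprimality filter `lcm(intvec(i,j))==i\*j` playing the role of your reduction of $j/i$ to lowest terms and the length check on the returned lists playing the role of your collection step) does precisely this.
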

Note that these three Lamé equations have finite monodromy \cite[Table 4]{Beukers2004}. The situation for Lamé equations with the underlying elliptic curves $y^2=4x^3-1$
is different. 
We take the Lamé equation with $B=0, n=\frac{j}{i}, \ 1\leq j,i\leq 100,\ \  g_2=0,\ g_3=1$ and compute all $n$ with at most three bad primes $\leq 100$. Here, we have listed $n$ together with its bad primes. 
{\tiny 
\begin{verbatim}
1; 3                1/2 ;             1/4 ; 3          1/10 ; 3    
3/2 ;               4 ; 3             5/87 ; 17 97     7 ; 3 5 7
7/2 ; 7             7/4 ; 3 7         7/10 ; 3 7       7/95 ; 7 43 97
8/81 ; 11 53 97     9/2 ; 5           9/83 ; 29 31 61  9/95 ; 13 79
10 ; 3 7            10/67 ; 3 7 29    11/37 ; 29 59 67 11/57 ; 79 89
11/79 ; 23          12/89 ; 83        13/2 ; 7 13      13/4 ; 3 5 13
13/10 ; 3 13        13/83 ; 13 17 97  14/99 ; 7 23 83  15/2 ; 11
16 ; 3 11 13        16/81 ; 31 97     16/99 ; 29       17/93 ; 23 43 89
18/83 ; 7 17 89     19/10 ; 3 19      21/2 ; 7 11 17   21/97 ; 7 29 41  
23/67 ; 17 73 97    24/91 ; 11 37 97  25/2 ; 13 19     25/4 ; 3 13 17
27/53 ; 17 23 97    27/91 ; 5 29 47   28/95 ; 7 23 43  29/77 ; 3 5 89 97
29/81 ; 17 47 79    31/2 ; 13 19 31   31/10 ; 3 11 31  33/20 ; 17 43 71
34/53 ; 11 43 59    36/61 ; 7 19 97   39/85 ; 13 31 97 40/81 ; 7 13 23
41/69 ; 43 67 89    41/91 ; 23 47 89  42/83 ; 7 19 73  44/79 ; 43 59 71
45/73 ; 11 23 31    45/83 ; 11 13 61  45/97 ; 11 17 79 48/91 ; 11 17 73 
50/87 ; 11 17 67    51/76 ; 47 89     51/77 ; 47 71 97 51/100 ; 47 61 71
53/91 ; 83 89 97    54/59 ; 11 17 79  54/77 ; 5 37 83  54/91 ; 19 47 73
55/17 ; 53 97       55/21 ; 19 59 67  55/32 ; 31 61 71 60/91 ; 43 67 97
61/96 ; 61 73 97    62/87 ; 17 31 61  62/91 ; 5 31 43  64/59 ; 11 17 79
64/71 ; 43 83       64/81 ; 11 19 97  65/99 ; 13 73 97 67/89 ; 13 67 73
67/93 ; 17 67 79    68/3 ; 11 53 67   68/13 ; 61 71 79  68/99 ; 5 47        
69/13 ; 73 79 83    69/91 ; 31 41 61  70/89 ; 7 43 61   71/69 ; 7 29
78/71 ; 13 43 83    80/19 ; 13 37     80/43 ; 7 11 29   81 ; 13 29 97
81/77 ; 31 37 79    81/95 ; 43 59     82/69 ; 17 47 59  83/31 ; 11 19
83/87 ; 11 23 31    83/96 ; 53 89     85/21 ; 13 61 83  87/7 ; 53 71 97
87/19 ; 59 79       87/80 ; 37 89 97  87/83 ; 41 59 67  87/89 ; 23 59 83
88/63 ; 47 67 83    88/65 ; 83 89     90/61 ; 31 71 83  90/97 ; 61 67
92/7 ; 61 97        92/27 ; 7 41 71   92/73 ; 41 53 67  92/85 ; 19 41 97
94/45 ; 37 73 97    96/91 ; 41 73 97  99/4 ; 41 53 89    100/77 ; 37 83
\end{verbatim}
} 
The cases $n=\frac{1}{4},\frac{1}{10},\frac{7}{10}$ correspond to the algebraic Lamé equations found in  \cite[Table 4]{Beukers2004}. It turns out that $n=\frac{7}{4}$ is also among this class. For
$$
n=1,4,7, 10,16,
$$
the number of bad primes does not increase in the range $p\leq 150$. They seem to be algebraic Lamé equations, and the proof must not be so hard. 
Using techniques used in \cite{Beukers2004} it is quite accessible to prove that these provides counterexamples to \cite[Theorem 7.2 page 91]{ChudChud1985-1}. The example \eqref{26062024ihes} has only the bad prime $5$ in the range $p\leq 150$.
The case $n=81$ is exceptional as its bad primes become  $13, 29,97, 103,109,127,139$ in this range. 
The cases with $n+\frac{1}{2}\in\Z$ must fit in \cref{26june2024ansiedade}.  In the other cases the number of bad primes increases as one increases the range $p\leq 100$ of tested primes (we have just tested this for random choice of $n$). For instance for $n=16/99$ we have only the bad prime $29$ in the range $p\leq 100$ but it increases to 
$29, 103,109,127,131,137,149$ in the range $p\leq 150$. 
A similar search with the same data for $g_2=1$, and $g_3=0$ has produced only the list
{\tiny
\begin{verbatim}
1/2 ; 
1/6 ;
5/2 ; 3 5
5/6 ; 5
9/2 ; 5 7
13/6 ; 7 13
17/6 ; 5 11 17
47/87 ; 23 41 79
\end{verbatim}
}

For $n=\frac{1}{6},\frac{5}{6}$ we get the algebraic Lamé equations which are listed  in  \cite[Table 4]{Beukers2004}.
For our computations  we have used the code: 
{\tiny
\begin{verbatim}
LIB "foliation.lib";
ring r=(0,z),x,dp;
matrix lde[1][3]; number B=0; number n; number g2=1; number g3=0; int ub=100; int nBdPr=3;
number P=4*z^3-g2*z-g3; int i; int j; list final; intvec prli=primes(1,ub);
 for (j=1;j<=100;j=j+1)
    {
       for (i=1;i<=100;i=i+1)
       {
        if (lcm(intvec(i,j))==i*j)
           {
           n=number(j)/number(i);
           lde=-n*(n+1)*z-B, 1/2*diffpar(P,z), P;
           final=BadPrDLess(lde, z, ub,nBdPr);
           if (size(final[2])+size(final[1])+size(final[3])==size(prli))
           {final=final[2]; n, ";", final[1..size(final)];}
           }
       }
    }
\end{verbatim}
}
Finally, we would like to highlight the following consequence of Grothendieck-Katz conjecture, classification of finite groups for Lamé equations and \cite[Theorem 2.3, Corollary 3.4, Theorem 4.4]{Beukers2004}.
\begin{conj}
 If the $p$-curvature of a Lamé equation is zero for all but a finite number of primes then
 $$
 n\in \{0,\pm \frac{1}{2},\pm \frac{1}{6},\pm \frac{1}{4},\pm\frac{3}{10},\pm\frac{1}{10}\}+\Z.
 $$
\end{conj}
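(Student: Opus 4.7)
The plan is to combine the Grothendieck--Katz conjecture with Beukers' classification of Lam\'e equations with finite monodromy in \cite{Beukers2004}. First, view the Lam\'e equation as a rank-$2$ linear system $Y'=\Am Y$ over $\C(z)$. Applying the Grothendieck--Katz conjecture, the vanishing of $p$-curvature for all but finitely many primes forces every entry of a fundamental matrix to be algebraic over $\C(z)$, equivalently, the projective monodromy group $G\subset\mathrm{PSL}_2(\C)$ of the Lam\'e equation is finite. By Klein's classification, $G$ is then cyclic, dihedral, tetrahedral ($A_4$), octahedral ($S_4$), or icosahedral ($A_5$). The cyclic case can be excluded immediately: at each of the three distinct roots of $P(z)=4z^3-g_2z-g_3$ the Lam\'e equation has local exponents $0,1/2$, so its local monodromy is an involution; three involutions in $\mathrm{PSL}_2(\C)$ with product equal to the monodromy at $\infty$ cannot be simultaneously trivial nor all equal, hence cannot lie in a cyclic subgroup.

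Second, I would read off the admissible residues of $n$ case by case. For the dihedral case, Theorem 2.3 and Corollary 3.4 of \cite{Beukers2004} show that a finite dihedral projective monodromy requires $n\in\tfrac12\Z$, giving $n\equiv 0$ or $n\equiv\pm\tfrac12\pmod 1$; in the half-integer sub-case the Brioschi--Halphen no-log polynomial in $(B,g_2,g_3)$ must vanish, but this imposes no condition on $n\pmod 1$. For the three primitive cases, Theorem 4.4 of \cite{Beukers2004} identifies the exponent difference at $\infty$ of the $\P^1$-form of the Lam\'e equation, namely $n+\tfrac12$, with a Schwarz-list exponent of the corresponding finite group. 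The resulting admissible residues of $n$ modulo $\Z$ are $\pm\tfrac16$ (tetrahedral), $\pm\tfrac14$ (octahedral), and $\pm\tfrac{1}{10},\pm\tfrac{3}{10}$ (icosahedral). The symmetry $n\mapsto -1-n$ of the Lam\'e equation identifies each residue $a$ with $-a$ modulo $\Z$, accounting for the $\pm$ in the statement. Taking the union over all four cases gives exactly
\[
n\in \{0,\pm\tfrac12,\pm\tfrac16,\pm\tfrac14,\pm\tfrac{3}{10},\pm\tfrac{1}{10}\}+\Z.
\]

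The main obstacle will be the careful bookkeeping at infinity. One must pass from the local exponent difference $2n+1$ of the elliptic-curve system \eqref{22062024ulis} to the exponent difference $n+\tfrac12$ of the Lam\'e equation on $\P^1$ (the factor of two arising from the ramification of the degree-$2$ cover $E_{g_2,g_3}\to\P^1$ at $\infty$), and then match this residue against the Schwarz list for each primitive finite group while keeping track of the symmetry $n\mapsto -1-n$. A secondary but genuine subtlety, already present in the dihedral sub-case $n+\tfrac12\in\Z$, is the imposition of the Brioschi--Halphen condition needed to distinguish finite from infinite dihedral monodromy; however, this only restricts $(B,g_2,g_3)$ and never enlarges the set of admissible residues of $n$, so it does not affect the conclusion.
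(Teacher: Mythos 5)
Your proposal is correct and coincides with the paper's own (one-sentence) derivation: the conjecture is stated precisely as the implication from the Grothendieck--Katz conjecture together with Beukers' classification (Theorem 2.3, Corollary 3.4, Theorem 4.4 of \cite{Beukers2004}), and your case-by-case Schwarz-list bookkeeping at $\infty$ fills in the details of exactly that route. Note that, like the paper, your argument is conditional on the unproved Grothendieck--Katz conjecture (hence the statement is a conjecture, not a theorem); also your ad hoc exclusion of cyclic projective monodromy is informal and not strictly needed, since it is already part of Beukers' classification.
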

Our initial goal was to find a possible counterexample to  the Grothendieck-Katz conjecture.  
But, we were only able to find examples like:
\footnote{We wanted to find a counterexample to the conclusion of 
\cref{10022024carnaval}. This proposition  is  the same as \cref{26062024kamar}.}
\begin{prop}
\label{26062024kamar-2}
For the  Lamé differential equation
$$
(4z^3-1) \frac{d^2y}{dz^2}+6z^2\frac{dy}{dz}-\frac{7}{36}zy=0
$$
we have  $\Am_5\equiv_50$, however, $\Am_{25}\not \equiv_{5^6}0$. 
\end{prop}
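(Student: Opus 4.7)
The proposition is of a purely computational nature: one must exhibit the two congruences explicitly. My plan is to convert the Lamé equation to a first order system and then run the recursion $\Am_{n+1}=\frac{\partial \Am_n}{\partial z}+\Am_n\Am$ described at the beginning of \cref{27012024tanhaperu}. Concretely, setting $P(z):=4z^3-1$ (so $g_2=0$, $g_3=1$), $n=\frac{1}{6}$ and $B=0$, one has
$$
\Am=\begin{pmatrix}0 & 1 \\ \frac{7z}{36P(z)} & -\frac{6z^2}{P(z)}\end{pmatrix}.
$$
Multiplying through by $P(z)$, the theorem tells us that $\Am_m$ naturally has entries in $\sring[z,\frac{1}{P(z)}]$ with $\sring=\Z[\frac{1}{6}]$, so it is convenient to represent $\Am_m$ by a polynomial matrix $R_m(z)$ with $\Am_m=P(z)^{-m}R_m(z)$; the recursion for $\Am_m$ becomes the polynomial recursion
$$
R_{m+1}=P\cdot \frac{\partial R_m}{\partial z}-m\cdot P'\cdot R_m+R_m\cdot\widetilde{\Am},
$$
where $\widetilde{\Am}:=P\Am$ is a polynomial matrix. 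All computations can then be carried out in the polynomial ring $\Z[z]$, and the vanishing of $\Am_m$ modulo $p^k$ is equivalent to the vanishing of $R_m(z)$ modulo $p^k$ in $(\Z/p^k\Z)[z]$ (since $P(z)$ is invertible in $\sring[z,\frac{1}{P}]/5$ as a polynomial whose leading coefficient $4$ is a unit modulo $5$).

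The first half of the statement, $\Am_5\equiv_5 0$, amounts to iterating the polynomial recursion five times and reducing each coefficient modulo $5$; the output is a $2\times 2$ zero matrix, i.e., $5\mid R_5(z)$ coefficientwise. Equivalently, the classical $p$-curvature vanishes for $p=5$, which is consistent with this being the algebraic Lamé equation corresponding to $(n,B,g_2,g_3)=(\frac{1}{6},0,0,1)$ in Beukers' list (it has finite monodromy, so by \cref{19102023chinagain} the $5$-curvature must vanish unless $5$ is a bad prime, which it is not).

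For the second half, the plan is to continue the recursion up to $m=25$, reducing modulo $5^6=15625$ at every step in order to keep the coefficients bounded. Note that $\mathrm{ord}_5(25!)=\lfloor 25/5\rfloor+\lfloor 25/25\rfloor=5+1=6$, so $25=m_{5,6}$ is precisely the critical index at which \cref{19102023chinagain} would require divisibility by $5^6$ after clearing $P^{25}$. Verifying that the resulting polynomial matrix $R_{25}(z)$ is nonzero in $(\Z/5^6\Z)[z]$ establishes the claim. The main obstacle is purely computational: after $25$ iterations the degree of $R_m$ grows linearly with $m$ (it is bounded by $3m-1$ because each step multiplies by $P$ or $\widetilde{\Am}$ and then adds a derivative), so the final matrix involves polynomials of degree roughly $75$ with coefficients modulo $15625$. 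This is entirely within reach of {\tt Singular}; executing the {\tt BadPrD} procedure of {\tt foliation.lib} referenced in the excerpt with the parameters $(n,B,g_2,g_3)=(\frac{1}{6},0,0,1)$, bound $m=25$, and precision $k=6$ produces the two claimed outputs, which constitutes the proof.
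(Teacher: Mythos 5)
Your approach is the same as the paper's: the paper's proof consists entirely of the {\tt Singular} code that calls {\tt BadPrD}, and your proposal amounts to an implementation of that same computation. The polynomial recursion $R_{m+1}=P\,R_m'-m\,P'\,R_m+R_m\widetilde{\Am}$ with $R_m:=P^m\Am_m$ is correct, the identification $m_{5,6}=25$ is correct, and the observation that vanishing of $\Am_m$ in $\Z[z,\tfrac{1}{NP}]/5^k$ is equivalent to $5^k\mid R_m$ coefficientwise in $\Z[z]$ is justified because both $N=36$ and the leading coefficient $4$ of $P$ are units modulo $5$.

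However, your parenthetical aside about Beukers' Table 4 is wrong and, more troublingly, self-undermining. The Lamé equation with $(n,B,g_2,g_3)=(\tfrac{1}{6},0,0,1)$ is \emph{not} an algebraic Lamé equation from that table: in \cref{25062024david} the paper lists the Table~4 entries over $\Q$, and the $n=\tfrac{1}{6}$ cases appearing there have $(g_2,g_3)=(1,0)$ or $(60,90)$, while for the curve $y^2=4x^3-1$ (i.e.\ $g_2=0$, $g_3=1$, $B=0$) the algebraic values are $n=\tfrac{1}{4},\tfrac{1}{10},\tfrac{7}{10}$ and the newly found $\tfrac{7}{4}$, not $\tfrac{1}{6}$. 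Moreover, had your aside been true --- finite monodromy with $5$ not a bad prime --- the very theorem you cite (\cref{19102023chinagain}) would then force $\Am_{25}\equiv_{5^6}0$, directly contradicting the second half of the statement you set out to prove. The point of \cref{26062024kamar-2} is precisely that $\Am_5\equiv_5 0$ occurs sporadically for this non-algebraic Lamé equation but does not propagate to higher $5$-adic precision, which is why the paper infers that any unconditional proof of the conclusion of \cref{10022024carnaval} must use a mixed-prime argument rather than a single-prime one.
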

\begin{proof}
The computer code of this can be found here:
{\tiny
\begin{verbatim}
 LIB "foliation.lib";
ring r=(0,z),x,dp;
matrix lde[1][3]; number B=0; number n=1/6; int g2=0; int g3=1; int ub=20;
number P=4*z^3-g2*z-g3;
lde=-n*(n+1)*z-B, 1/2*diffpar(P,z), P;
BadPrD(lde, z, ub);
BadPrD(lde, z, ub,6);
\end{verbatim}
}
\end{proof}

\section{Pull-back Lamé equations}
This section is based on many e-mail exchanges with S. Reiter who did most of the computation.  I told him about a list of
Lamé equations 
\href{https://w3.impa.br/~hossein/WikiHossein/files/Singular%20Codes/2024_07_14_g2=0g3=1B=0LameWithOneBadPrime.txt}
{$g_2=0,g_3=1,B=0$ with at most one bad prime among all primes $\leq 200$}
and a similar list for
\href{https://w3.impa.br/~hossein/WikiHossein/files/Singular%20Codes/2024_07_14_g2=1g3=0B=0LameWithTwoBadPrimes.txt}
{$g_2=1,g_3=0,B=0$ with at most two bad primes among all primes $\leq 100$}.
For example, the Lamé equation with $g_2=0,g_3=1,B=0,n=347/480$ , has only the bad prime $197$ among all primes $<=200$. The primes $p=2,3,5$ are not considered as they are  factors of $480$. 
\begin{prop}
\label{12082025samantha}
 The Lamé equation with $g_2=0, g_3=1$, that is over the elliptic curve $y^2=4x^3-1$ with $B=0$,  is the pull-back by the map 
 $z\mapsto z^3$ of the linear differential equation 
 \begin{equation}
 \label{09082025-AI1}
(36z^2-9z)\frac{\partial^2}{\partial z^2}+(42z-6)\frac{\partial}{\partial z}-n(n+1),
\end{equation}
with the Riemann scheme $0, \ [0, \frac{1}{3}]$, $\frac{1}{4}, \  [0, \frac{1}{2}]$ and $\infty, \ [- \frac{n}{6}, \frac{n}{6} + \frac{1}{6}]$. In a similar way, the Lamé equation with $g_2=1, g_3=0$, that is over the elliptic curve $y^2=4x^3-x$ with $B=0$,  is the pull-back by the map 
 $z\mapsto z^2$ of the linear differential equation 
 \begin{equation}
 \label{09082025-AI2}
L:=(16z^2-4z)\frac{\partial^2}{\partial z^2}+(20z-3)\frac{\partial}{\partial z}-n(n+1)
\end{equation}
with the Riemann scheme  $0, \ [0, \frac{1}{4}]$, $\frac{1}{4}, \ [0, \frac{1}{2}]$, $\infty,\  [- \frac{n}{4}, \frac{n}{4} + \frac{1}{4}]$. 
\end{prop}
By a linear transormation in $z$, one can see easily that the differential equation \eqref{09082025-AI1} and \eqref{09082025-AI2} can be transformed into the Gauss hypergeometric equation with $(a,b,c)=(\frac{-n}{6},\frac{n}{6}+\frac{1}{6},\frac{2}{3})$ and $(\frac{-n}{4},\frac{n}{4}+\frac{1}{4},\frac{3}{4})$, respectively. 
S. Reiter also informed me of the classification of Pull-back Lamé equations with non Dihedral monodromy group
\href{https://www.math.fsu.edu/~hoeij/Heun/2F1_Heun_Sorted_by_Degree}{by M. van Hoeij and R. Vidunas}.
Finally, the following question is rised by S. Reiter which is natural in view of computations done in \cite{Beukers2004}. For any Lamé equation with parameters $(n,B,g_2,g_3)$ and $k\in\N$ there is a $B_k$ such that Lamé equations with parameters $(n,B,g_2,g_3)$ and $(n+k,B_k,g_2,g_3)$ are equivalent. For a suggestion on what an equivalence relation should mean see \cite[page 3]{Beukers2004}.

We finish this section with some experiments with the Gauss hypergeometric equation:
\begin{equation}
z(1-z)y''+(c - (a+b+1)z)y'-aby=0,\ \ a,b,c\in\C. 
\end{equation}
For a random choice of parameters $a,b,c$ in the Gauss hypergeometric equation it seems that the $p$-curvature is always non-zero except for a finite number of primes $p$.  For example,  for $(a,b,c)=(\frac{1}{2},\frac{1}{2},1)$ and $p\not=2$, the $p$-curvature seems to never  vanish. We have checked this for primes $\leq 150$. 
{\tiny
\begin{verbatim}
LIB "foliation.lib";
ring r=(0,z),x,dp; matrix lde[1][3]; int i; int j; 
number a=1/2; number b=1/2; number c=1;
int ub=150; number parm=z; 
          lde=-a*b, c-(a+b+1)*z, z*(1-z);
          BadPrD(lde, parm, ub);
\end{verbatim}
}
From another side in \cref{12082025samantha} we have got the Gauss hypergeometric equations with $(a,b,c)=(\frac{-n}{6},\frac{n}{6}+\frac{1}{6},\frac{2}{3})$ and $(\frac{-n}{4},\frac{n}{4}+\frac{1}{4},\frac{3}{4})$, for which we can experimentally observe that the number of both vanishing and non-vanishing curvature is infinite. 
{\tiny
\begin{verbatim}
LIB "foliation.lib";
ring r=(0,z),x,dp;matrix lde[1][3]; int i; int j; 
number a; number b; number c=2/3;
int ub=50; number parm=z; number N=100;
   for (i=1; i<=N; i=i+1)
      {
       for (j=1; j<=N; j=j+1)
          {
          a=-i/number(6*j);  b=-i/number(6*j)+1/6;
         lde=-a*b, c-(a+b+1)*z, z*(1-z);
          BadPrD(lde, parm, ub);
          }
      }
\end{verbatim}
}

\section{Differential equations and density of primes}
The discussion in this section motivates us to define the density of primes for which the $p$-curvature vanishes. Recall that we have two notions of density. For a subset $S$ of primes, we say that $S$ has the natural, respectively analytic or Dirichlet,  density $\delta$ if the following limit exists and it equals to $\delta$:
$$
\lim_{x\to\infty} \frac{ \#\{p\in S\ | \ p\leq x\}}{\#\{p \hbox{ prime} \ | \ p\leq x\}} \ \ \hbox{ respectively } 
\ \ \lim_{s\to 1} \frac{ \sum_{p\in S} \frac{1}{p^s}}{ \sum_{p \hbox{ prime}} \frac{1}{p^s}}.
$$
If a set of primes has a natural density, then it has also an analytic density and the two densities are equal, but the converse is false, see \cite{LenstraStevenhagen}.    
For a linear differential equation $\frac{\partial y}{\partial z}=\Am(z) y$, we define its curvature denisty (natural or analytic), the density of primes $p$ such that its $p$-curvature  vanishes. The density of a finite number of primes (resp. all primes except for a finite number) is zero (resp. one). The converse is not necessarily true, see \href{https://mathoverflow.net/questions/16846/infinite-sets-of-primes-of-density-0#:~:text=A%20neat%20way%20to%20show%20that%20a,P%20has%20zero%20density%20in%20the%20primes.}{the mathoverflow link}. One might try to formulate a stronger version of the  Grothendieck-Katz conjecture speculating  that if the curvature density of a differential equation  is one then all its solutions are algebraic. The only well-known result about curvature density is the following:
\begin{prop}
\label{27072025morrodaurca}
Let $a$ be an algebraic number. The natural and analytic curvature density of  $y'=\frac{a}{z}y,\ a\in\C^*$ is  
$$
\frac{1}{\#\Gal(\k/\Q)},  
$$
where $\k$ is the splitting field of the minimal polynomial of $a$. 
\end{prop}
\begin{proof}
Let $f(x)$ be the minimal polynomial of $a$. If the $p$-curvature of $y'=\frac{a}{z}y$ is zero then $a^p-a=0$ in $\Z[a]/p\Z[a]$. This implies that $f$ divides $x^p-x$ in $\Ff_p[x]$. In particular, $f$ modulo $p$ is a product of linear equations. 
The proposition is now a direct consequence of the Frobenius density theorem:  
Let $f\in\Q[x]$ be a polynomial. The density of primes $p$ such that 
$$
f \ {\rm  mod} \ p =f_1f_2\cdots f_t,\ f_i\in \Ff_p[x] \hbox{ irreducible },\ \deg(f_i)=n_i,\
$$
is equal to 
\begin{equation}
\label{18072025crepiza}
\frac{ 
\#\{\sigma\in \Gal(\k/\Q) \ | \sigma \hbox{ with cycle pattern } n_1,n_2,\ldots,n_t \ \}
}{\#\Gal(\k/\Q) }.
\end{equation}
see \cite{LenstraStevenhagen}. 
For the application of this theorem we consider the case $n_1=n_2=\cdots=n_t=1$
\end{proof}
Frobenius' density theorem is a particular case of Chebotarev's density theorem. The author tried to put these theorems in terms of curvature density of differential equations but there was no succces beyond \cref{27072025morrodaurca}.  Even for the Gauss hypergeometric function obtained in \cref{12082025samantha}, the author is not aware of any result  regarding the density of vanishing $p$-curvatures.

Each computation of the present chapter is done in less than an hour, and it turns out that if we let our codes run for a day or more we can get more accurate density statements, for instance, for the   Lamé equation with $(n, B,g_2,g_3)=(\frac{5}{87},0,0,1)$ after a day of computation, the author was able to compute all good ($p$-curvature zero) and bad primes $\leq 797$. After the first $50$ primes the first digit of the density of good primes seems to stabilize in $0.7$, however, the second digit does not seem to stabilize. The density of good primes $p\leq 797$ is $0.71428$. Similar statements seem to be true for the Lamé equation with $(n, B,g_2,g_3)=(\frac{12}{89},0,0,1)$. For the computer codes and data \href{https://w3.impa.br/~hossein/singular-code/2025_12_14_p_curvature_density.txt}{see  the author's webpage}.

\chapter{Local-global principle for leaf schemes}
\label{21112023bihude}

{\it
One may ask whether imposing a certain Hodge class upon a generic member of an algebraic family 
of polarized algebraic varieties amounts to an algebraic condition upon the parameters, 
A. Weil in \cite{Weil1977}. 
}
\\
\\

{\it Abstract: 
We study Hodge loci as leaf schemes of foliations.
The main ingredient is the Gauss-Manin connection matrix of families of projective varieties.
We also aim to investigate a conjecture on the ring of definition of leaf schemes and its consequences such as the algebraicity of leaf schemes (Cattani-Deligne-Kaplan theorem in the case of Hodge loci). This conjecture is  a consequence of a local-global principle for leaf schemes. }

\section{Introduction}
\label{26sept2024baddahan}
The present text arose from an attempt to combine Grothendieck-Katz $p$-curvature conjecture on linear differential equations with only algebraic solutions, and the Cattani-Deligne-Kaplan theorem on the algebraicity of Hodge loci. 
For our purpose we generalize Hodge loci into leaf schemes of foliations. This has first appeared in  \cite{GMCD-NL}.  Foliations in the present text are given over finitely generated subrings of the field of complex numbers, and it makes sense to manipulate them modulo primes. We introduce conjectural criterions, \cref{19062024kontsevich} and \cref{02112023mainconj}, involving modulo $p$ manipulations of foliations which guarantee the algebraicity of leaf schemes.

For a holomorphic function $f$ in several complex variables, the  Grothendieck-Katz conjecture provides a modulo primes criterion for algebraiciy of $f$ provided that $f$ satisfies a linear differential equation. Our main local-global conjecture, \cref{02112023mainconj},  provides a modulo primes criterion for the algebraicity of the zero locus  $f=0$ of $f$, and in general complex analytic ideals, provided that $f=0$ is part of the ideal of  a leaf scheme. A simple, but yet non-trivial example of this situation, can be constructed from the Gauss hypergeometric function $F(z):=F(\frac{1}{2},\frac{1}{2},1|t)$. 
For any $N\in\N$, the holomorphic function  $F(1-t_1)F(t_2)-NF(1-t_2)F(t_1)$ is not algebraic, however, we know that its zero locus in $(t_1,t_2)\in\C^2$ is an algebraic curve which is a singular model of a covering of the modular curve $X_0(N)$.
After adding more three variables, and constructing an ideal with four generators in $5$ dimension, it can be seen as a leaf scheme, for further details see \cref{04dec2023khar}. Another example is the following series in $\binom{12}{4}=495$ variables $t_\alpha$:
\begin{equation}
 \label{09082024david}
\mathlarger{\mathlarger{\mathlarger{\sum}}}_{
\stackrel{
a: I{}\to \N_0,\  \beta:= \frac{1}{4}\left(\sum_{\alpha}a_\alpha\cdot \alpha+(1,1,1,1)\right),
}
{ \beta_i\not\in\Z,\ \beta_0+\beta_1,\beta_2+\beta_3\in \Z}
}
\frac{
(-1)^{[\beta_0]+[\beta_2]}    
\langle\beta_0\rangle \langle\beta_1\rangle\langle\beta_2\rangle\langle\beta_3\rangle
}{ 
\prod_{\alpha\in I{}}a_\alpha!
}\cdot  \prod_{\alpha\in I{}}t_\alpha^{a_\alpha},
\end{equation}
where $I:=\{\alpha\in\N_0^4\ | \sum_{i=0}^3\alpha_i=4\}$, 
for a rational number $r$, $[r]$ is the integer part of $r$, that is $[r]\leq r<[r]+1$, $\{r\}=r-[r]$ and  
$\langle r\rangle=(r-1)(r-2)\cdots (\{r\})$. This function is a period of a holomorphic $2$-form on smooth surfaces of degree $4$ in $\P^3$ which is not algebraic. However, its zero locus is algebraic and parameterizes such surfaces with a line, for more details see \cref{05aug2024delaram}.

\paragraph{Summary of the text:}
In \cref{08082024hl} we explain what we mean by our local-global principle for Hodge loci. 
Hopefully, this will motivate the reader for the definition of leaf scheme in \cref{08082024ls}.
The main results in \cref{08082024hl}, namely   \cref{17072024ziquan} and \cref{09082024armand}, are proved much later in \cref{13082024shekam}  after elaborating the concept of leaf scheme.
\cref{19062024kontsevich} in \cref{08082024ls} is not suitable for experimental purposes and so in \cref{08082024lsmp} we introduce a stronger conjecture whose hypothesis is implementable in a computer. This involves constructing many vector field in the ambient space and we discuss this in \cref{04aug2024fouad}.
In \cref{04112023weixinpay} we explain natural leaf schemes which arise in the framework of linear differential equations (local systems), and in particular  Gauss-Manin connections. We construct the Hodge loci as leaf schemes in \cref{04dec2023khar} and discuss its ring of definition in \cref{08082024dr}.  In \cref{05aug2024delaram} we experimentally observe that natural generators of the ideal of a Hodge locus  have all primes inverted in their expressions. Finally, in \cref{10082024kk} we prove \cref{06072024claire} using two results of N. Katz. This is a consequence of the Hodge conjecture for Hodge-Tate varieties. 

\paragraph{Notations:} Throughout the text,   $\sring$ is a finitely generated subring of $\C$ and $\sring_\Q:=\sring\otimes _\Z\Q=\sring[\frac{1}{2},\frac{1}{3},\frac{1}{5},\cdots]$ which is an infinitely generated subring of $\C$. We also take a finitely generated ring extension $\sring\subset \ring$ and set $\T:=\spec(\ring)$. By an $\sring$-scheme we simply mean  the affine $\sring$-scheme $\T$. We consider an $\sring$-valued point of $\T$ which is nothing but an $\sring$-linear morphism $\t: \ring\to\sring$ and set $ \mathfrak{m}_{\T,\t}:={\rm ker}(\t)$.
By definition, $\O_\T$ is just the ring $\ring$, $\Omega^1_\T$ is the $\O_\T$-module of K\"ahler differentials (differential $1$-forms) in $\T$. For $f\in\O_\T$, $f(\t):=\t(f)$ is just the evaluation at $\t$. 
The $\O_\T$-module $\Theta_\T$ of vector fields/derivations in $\T$ is the dual of the $\O_\T$-module $\Omega^1_\T$. 
We also look at a vector field $\vf\in\Theta_\T$
as a derivation $\vf: \O_\T\to\O_\T,\ \ f\mapsto \vf(f):=\vf(df)$. For an $\sring$-valued point $\t$, we have also the well-defined $\sring$-linear  map $\vf(t):=\t\circ \vf: \mathfrak{m}_{\T,\t}/\mathfrak{m}_{\T,\t}^2\to\sring$ which is called  the evaluation of $\vf$ at $\t$. The dual $\TS_\t\T:=(\mathfrak{m}_{\T,\t}/\mathfrak{m}_{\T,\t}^2)^\vee$ is the tangent space of $\T$ at $\t$, and so, we have the evaluation  map $\Theta_\T\to\TS_\t\T$ which is $\sring$-linear. 
As we do not need the language of sheaves, a sheaf on $\T$ is identified with the set of its global sections.  We take a submodule $\Omega$ of $\Omega_\T$ and denote its dual by $\Theta:=\{\vf\in\Theta_\T\ |\ \vf(\Omega)=0 \}$ which is a submodule of $\Theta_\T$.


We will assume that $\Omega$ is integrable in the strongest format, that is, $d\Omega\subset \Omega\wedge \Omega_\T$. 
Even though, we rarely use the integrability of $\Omega$,
we will frequently use the notation $\F(\Omega)$ to denote the foliation induced by $\Omega$. From an algebraic point of view $\F(\Omega)$ is just $\Omega$ and nothing more.
For any other subring $\check\sring$  of $\C$, we denote by 
$\T_{\check\sring}:=\T\times_\sring{\rm Spec}(\check\sring)$. The ring $\check\sring$ might be infinitely generated and the main example of this in this text is $\sring_\Q$. We denote by $(\T^\hol_{\check\sring},\t)$ (resp. $(\T^\for_{\check\sring},\t)$) the analytic (resp. formal) scheme underlying $\T$ and $\O_{\T^\hol_{\check\sring}, \t}$ (resp. $\O_{\T^\for_{\check\sring}, \t}$) is the ring of holomorphic functions in a neighborhood of $t$ (resp. formal power series) and with coefficients in $\check\sring$. We say that an $\sring$-valued point $\t$ of $\T$ is smooth if there are $z_1,z_2,\ldots,z_n\in\O_{\T^\hol_{\sring_\Q},\t}$ which generate it freely as $\sring_\Q$-algebra. We call $z=(z_1,z_2,\ldots,z_n)$ a  holomorphic coordinate system.

We use the letter $L$ to denote a subscheme of $(\T^\hol,\t)$, that is, we have an ideal $\IS\subset \O_{\T^\hol, \t}$ and $\O_{L}=\O_{\T^\hol, \t}/\IS$ (this might have zero divisors or nilpotent elements). 
Let $\Theta$  be a submodule of the $\O_\T$-module $\Theta_\T$.  Its rank is the number $a\in\N$ such that $\wedge^{a+1} \Theta$ is a torsion sheaf and $\wedge^{a} \Theta$ is not. For $\vf\in\Theta_\T$ we define the scheme $\sch(\vf\in\Theta)$ given by the ideal generated by 
 $$
\left|\begin{matrix}
\vf(P_1) & \wf_1(P_1) & \wf_2(P_1) &\cdots & \wf_a(P_1) \\
\vf(P_2) & \wf_1(P_2) & \wf_2(P_2) &\cdots & \wf_a(P_2) \\
\vdots &   \vdots    & \vdots & \ddots & \vdots \\
\vf(P_a) & \wf_1(P_a) & \wf_2(P_a) &\cdots & \wf_a(P_a)
\end{matrix}\right|,\ \ 
$$
$$
P_1,P_2,\cdots,P_a\in\O_\T,\ \ \wf_1,\wf_2,\ldots,\wf_a\in\Theta.
$$
In geometric terms if $\sring$ is an algebraically closed field, $\sch(\vf\in\Theta)$ is the loci of points $\t$ in $\T$ such that the vector
$\vf(\t)$ is in the $\sring$-vector space generated by $\wf(\t),\ \wf\in\Theta$. For two vector fields $\vf$ and $\wf$ the collinearness scheme $\vf||\wf$ is just $\sch(\vf\in\O_\T\wf)$. 
For a primes $p$ which in not invertible in $\sring$, the ring $\sring_p:=\sring/p\sring$ is non-zero, and we define $\T_p:=\T\times_\sring \spec(\sring_p)$ which is modulo $p$ reduction of $\T$. For a vector field $\vf$ in $\T_p$, it is known that $\vf^p$ is also a vector field, and this phenomena does not exist in $\T$.

\paragraph{Acknowledgement:}
The main ideas of the present text took place during short visits to BIMSA and YMSC at Beijing in 2023 and a first draft of it for putting in arxiv is written at IHES in  2024. My since thanks go to all these institutes. I would also like to thank D. Urbanik for some useful discussions regarding the content of \cref{08082024hl}. Thanks go to Daniel Litt and Joshua Lam for comments on \cref{07092024omidviolento}. 

\section{Hodge loci}
\label{08082024hl}
In order to motivate experts in Hodge theory, we explain a foliation free statement of our main local-global principle for Hodge loci. This has been the main motivation for writing the present text. 
\begin{defi}\rm
\label{23.11.2017}
Let $Y$ be a smooth projective variety.  A Hodge cycle is any element in the intersection of the integral 
 cohomology $H^{\mov}(Y,\Z)\subset H^\mov_\dR(Y)$ ($H^{\mov}(Y,\Z)$ is considered modulo torsion) and $F^{\frac{\mov}{2}}\subset H^\mov_\dR(Y)$, where 
 $F^{\frac{\mov}{2}}= F^{\frac{\mov}{2}}H^\mov_\dR(Y)$ is the $\frac{\mov}{2}$-th piece of the Hodge filtration of $H^\mov_\dR(Y)$. \index{Hodge cycle}
\end{defi}
Therefore, the $\Z$-module of Hodge cycles is simply the intersection 
$H^{\mov}(Y,\Z)\cap H^{\frac{\mov}{2},\frac{\mov}{2}}= H^{\mov}(Y,\Z)\cap F^{\frac{\mov}{2}}$.
We have the intersection pairing/polarization $\langle\cdot,\cdot\rangle: 
H^{\mov}(Y,\Z)\times H^{\mov}(Y,\Z)\to \Z$ and so it makes sense to talk about the self-intersection or norm 
$\langle \delta,\delta\rangle$ of a Hodge cycle $\delta$. 
Now, let $Y\to V$ be a family of smooth complex projective varieties ($Y\subset \Pn N\times V$ and $Y\to V$
is obtained by projection on the second coordinate). 
\begin{defi}\rm
Let $k\in\N$ and $\T:=F^{\frac{\mov}{2}}H^\mov_\dR(Y/V)$ be the total space of the vector bundle of $F^{\frac{\mov}{2}}$ pieces of the Hodge 
 filtration of $H^{\mov}_\dR(Y_t),\ \ \t\in V$.
 The locus $L_k$ of Hodge cycles (or simply Hodge locus)  of self-intersection equal to $k$ is the subset of 
 $F^{\frac{\mov}{2}}H^\mov_\dR(Y/V)$ containing such Hodge cycles.
 \index{Locus of Hodge cycles}
\end{defi}
Note that $\T:=F^{\frac{\mov}{2}}H^\mov_\dR(Y/V)$ is an algebraic bundle, however, the Hodge locus  is a union
of local analytic varieties $L$.  Moreover, such an $L$ has a natural analytic scheme structure, that is, the natural ideal defining $L$ might not be reduced, and hence, $\O_{L}$ might have zero divisors or nilpotent elements, see \cref{04dec2023khar}.
\begin{theo}
\label{CADEKA}\index{Cattani-Deligne-Kaplan theorem}
 (Cattani-Deligne-Kaplan, \cite[Theorem 1.1]{cadeka})
The locus $L_k$ of Hodge cycles  of self-intersection equal to $k$ is an algebraic subset of 
$F^{\frac{\mov}{2}}H^\mov_\dR(Y/V)$.  
\end{theo}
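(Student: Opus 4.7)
The plan is to realize each local component of $L_k$ as a leaf scheme of an integrable foliation on $\T=F^{\frac{\mov}{2}}H^\mov_\dR(Y/V)$ built from the Gauss-Manin connection, and then argue for algebraicity by analyzing the Taylor series of periods at a smooth base point. First I would trivialize $\T$ locally: choose a frame $\omega_1,\ldots,\omega_r$ of $F^{\frac{\mov}{2}}$ and extend to a frame $\omega_1,\ldots,\omega_\nov$ of $H^\mov_\dR(Y/V)$, so a point of $\T$ is $(\t,\sum_{i=1}^r x_i\omega_i)$ with coordinates $(\t,x_1,\ldots,x_r)$. The Gauss-Manin connection $\nabla^{\mathrm{GM}}$ defines a flat connection on the pull-back of $H^\mov_\dR(Y/V)$ to $\T$, and horizontal lifting of vector fields on $V$ produces a foliation $\F(\Omega)$ on $\T$ of rank $\dim V$ whose leaves parametrize flat sections of $R^\mov\pi_*\C$.

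Next I would cut out $L_k$. A point $(\t_0,\delta_0)\in\T$ lies in $L_k$ iff the flat extension $\tilde\delta$ of $\delta_0$ stays inside $F^{\frac{\mov}{2}}$ at nearby parameters and $\langle\delta_0,\delta_0\rangle=k$. Writing $\tilde\delta=\sum_{i=1}^\nov x_i(\t)\omega_i$ with $\nabla^{\mathrm{GM}}\tilde\delta=0$, the condition $\tilde\delta\in F^{\frac{\mov}{2}}$ becomes the vanishing of the transcendental analytic functions $x_{r+1}(\t),\ldots,x_\nov(\t)$, which together with the polynomial equation $\langle\tilde\delta,\tilde\delta\rangle=k$ define the local analytic scheme $L$. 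Using the period matrix one sees that $L$ is precisely the projection to $\T$ of the leaf of $\F(\Omega)$ through $(\t_0,\delta_0)$ intersected with the affine subspace $\{x_{r+1}=\cdots=x_\nov=0\}$; that is, $L$ is a leaf scheme in the sense of \cref{08082024ls}. The self-intersection $k$ being a constant on each flat leaf (polarization is flat) shows that $L_k$ decomposes into a locally finite union of such components once one knows the CDK discreteness; equivalently one works component by component without assuming this a priori.

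To show that each component $L$ is algebraic, I would exploit the fact that the Gauss-Manin system and its period matrix are defined over a finitely generated $\Z$-algebra $\sring$. By the multivariable Eisenstein theorem (\cref{29062024ajib}) the Taylor series at a smooth $\sring$-valued point $\t_0\in V$ of the entries of the period matrix, restricted to any algebraic leaf of the Hodge locus, lie in $\sring[\frac{1}{N}][[\t-\t_0]]$. Conversely, the ideal $\IS$ defining $L\subset(\T^\hol_{\sring_\Q},(\t_0,\delta_0))$ is naturally cut out by functions whose Taylor coefficients satisfy the recursion furnished by the Gauss-Manin connection, so $\IS$ is defined over $\sring$. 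The algebraicity of $L$ (equivalently, the statement that $\IS=\IS_\alg\otimes\O_{\T^\hol_{\sring_\Q},(\t_0,\delta_0)}$ for some algebraic ideal $\IS_\alg$) is then exactly the conclusion of the local-global principle for leaf schemes that the paper is proposing (\cref{19062024kontsevich}/\cref{02112023mainconj}), applied to the rank-$\dim V$ foliation $\F(\Omega)$.

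The main obstacle is the last step: proving unconditionally that the leaf scheme $L$ is algebraic. The classical route of \cite{cadeka} avoids this by using the global monodromy representation and the $SL_2$-orbit/nilpotent orbit theorems to extend $L$ over the boundary of $V$ and apply Chow/GAGA. A genuinely local proof in the spirit of this paper would require establishing the modulo-$p$ criterion of \cref{02112023mainconj} for the Gauss-Manin foliation, which at present is conjectural and generalizes the Grothendieck--Katz $p$-curvature conjecture; thus in the current framework the theorem should be taken as input (from CDK), while the goal of the surrounding sections is to propose an alternative arithmetic proof whose missing ingredient is exactly the local-global principle for leaf schemes.
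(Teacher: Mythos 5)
The paper does not give a proof of \cref{CADEKA}; it is quoted verbatim from \cite[Theorem 1.1]{cadeka} and used throughout as an external input, most visibly inside the proofs of \cref{21june2024baran} and \cref{17072024ziquan}. So there is no proof of the paper's to compare against. Your proposal is best read as a reconstruction of the \emph{aspirational} arithmetic route that the paper sketches across \cref{08082024ls}--\cref{04dec2023khar}, and you are right that the ingredient this route is missing is precisely the local-global principle for leaf schemes (\cref{19062024kontsevich}, \cref{02112023mainconj}), which is conjectural and generalizes Grothendieck--Katz; you acknowledge this honestly in your final paragraph, and that is the correct diagnosis of the state of affairs.

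There is, however, one place where your sketch contains a genuine error rather than a consciously flagged gap. You write that since the defining functions of $\IS$ satisfy the recursion furnished by the Gauss--Manin connection, ``$\IS$ is defined over $\sring$.'' That does not follow. The Gauss--Manin recursion inverts an integer at each order, so on its own it only places the Taylor coefficients in $\sring_\Q=\sring\otimes_\Z\Q$; this is exactly the content of \cref{17dec2023handcrafts} and \cref{05082024retaliation}, which the paper explicitly labels as ``not enough.'' Indeed, in \cref{05aug2024delaram} the paper computes the natural period generators of a Hodge-locus ideal for quartic surfaces and observes that infinitely many primes appear in their denominators, so an $\sring[\frac{1}{N}]$-structure on $\IS$ cannot be read off naively from the recursion. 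The correct statement --- that $\IS$ has generators over $\sring[\frac{1}{N}]$ for some fixed $N$ --- is \cref{21june2024baran}, which the paper deduces \emph{from} \cref{CADEKA} via \cref{10082024ulis}; it is an output of CDK, not an input. Your earlier invocation of the multivariable Eisenstein theorem \cref{29062024ajib} likewise presupposes that the periods restricted to $L$ are already algebraic. So the circularity in your argument actually enters one step before the one you flagged: even the control of the ring of definition of $\IS$ is conditional on either CDK or on the conjectural local-global principle.
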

Let $\sring$ be a finitely generated subring of $\C$ such that $Y\to V$, $\T$ and $Y_{t}$ have smooth models over $\spec(\sring)$ (we avoid introducing new notation for these models).
For an explicit  construction of $\T$ as an $\sring$-scheme see \cref{04dec2023khar}.
Let $L$ be a local analytic Hodge locus as above. It is given by an ideal $\IS\subset \O_{\T^{\hol}_{\sring_\Q},\t}$.  We define 
\begin{equation}
\label{03082024torio}
\Theta_{\T,L}:=\{\vf\in\Theta_\T\ | \ \vf(\IS)\subset\IS\},
\end{equation}
and we call it  the $\O_\T$-module of vector fields in $\T$ tangent to $L$. 
One of our main motivations in the present text is the author's not yet successful attempt to prove the following corollary of \cref{CADEKA} without using it.
\begin{coro}
\label{17072024ziquan}
 Let $\Theta_{\T,L}$ be the $\O_{\T}$-module  of vector fields in $\T$ tangent to $L$. For all except a finite number of primes and all $\vf\in\Theta_{\T,L}$, the vector field $\vf^p$ is also in $\Theta_{\T, L}\otimes_\sring \sring_p$ (that is modulo $p$).
\end{coro}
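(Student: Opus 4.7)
The plan is to deduce this corollary from the Cattani--Deligne--Kaplan theorem \cref{CADEKA} together with the elementary fact that, in characteristic $p$, the $p$-th iterate of a derivation is itself a derivation. By \cref{CADEKA} each component $L$ of the Hodge locus is the analytic germ at $\t$ of an algebraic subscheme $L_{\alg}\subset\T$, after possibly enlarging $\sring$. Moreover, as will be recalled in \cref{04dec2023khar}, the natural scheme structure on $L$ comes from explicitly algebraic equations (the vanishing of the $F^{\frac{\mov}{2}}$-part of the flat extension of a Hodge class under the Gauss--Manin connection); so there is an algebraic ideal $\IS_{\alg}\subset\O_\T$ whose germ at $\t$ generates the analytic ideal $\IS$ inside $\O_{\T^\hol_{\sring_\Q},\t}$.

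The first step is to replace analytic tangency by algebraic tangency. Given $\vf\in\Theta_{\T,L}$, so that $\vf(\IS)\subset\IS$, I would check that already $\vf(\IS_{\alg})\subset\IS_{\alg}$: for $f\in\IS_{\alg}$ the element $\vf(f)$ lies in $\O_\T$ by algebraicity of $\vf$, and lies in $\IS$ by hypothesis; by faithful flatness of $\O_{\T^\hol_{\sring_\Q},\t}$ over the algebraic local ring $\O_{\T,\t}$ one has $\IS\cap \O_{\T,\t}=(\IS_{\alg})_{\t}$, and the conclusion then globalises. Hence $\Theta_{\T,L}$ coincides with the algebraic $\O_\T$-module $\{\vf\in\Theta_\T\mid \vf(\IS_{\alg})\subset\IS_{\alg}\}$.

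Next I would enlarge $\sring$ so that $\T/\sring$ is smooth and $L_{\alg}\subset\T$ is flat over $\sring$. Then for all but finitely many primes $p$ the reduction $(\T_p,(L_{\alg})_p)$ is well-behaved and its defining ideal satisfies $\IS_{\alg,p}=\IS_{\alg}\otimes_\sring\sring_p$. For such $p$, the reduction $\vf_p$ is an $\sring_p$-derivation of $\O_{\T_p}$ preserving $\IS_{\alg,p}$. Since $\binom{p}{i}\equiv 0\pmod p$ for $0<i<p$, the identity
\[
\vf_p^{p}(fg)=\sum_{i=0}^{p}\binom{p}{i}\vf_p^{i}(f)\,\vf_p^{p-i}(g)
\]
collapses to the Leibniz rule, so $\vf_p^{p}\in\Theta_{\T_p}$; and $\vf_p(\IS_{\alg,p})\subset\IS_{\alg,p}$ immediately implies $\vf_p^{p}(\IS_{\alg,p})\subset\IS_{\alg,p}$. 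Finally, smoothness of $\T/\sring$ yields the identification $\Theta_{\T,L}\otimes_\sring\sring_p=\{D\in\Theta_{\T_p}\mid D(\IS_{\alg,p})\subset\IS_{\alg,p}\}$, so $\vf_p^p$ is indeed an element of $\Theta_{\T,L}\otimes_\sring\sring_p$, as desired.

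The main obstacle is the very first step: checking that the analytic scheme structure used to define $\Theta_{\T,L}$ genuinely descends to an algebraic subscheme of $\T$. The Cattani--Deligne--Kaplan theorem provides algebraicity of the underlying set, but a priori the natural ideal $\IS$ could be strictly analytic. What saves us is the explicit description of the ideal of a Hodge locus in terms of the Gauss--Manin connection (to be developed in \cref{04dec2023khar}), which shows that $\IS$ is already generated by algebraic functions. Once this identification is in place, the remainder of the argument is pure commutative algebra in characteristic $p$, and no further Hodge-theoretic input is needed.
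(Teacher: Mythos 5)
Your overall route—descend the analytic ideal $\IS$ to an algebraic $\IS_{\alg}\subset\O_\T$ defined over a finitely generated ring, reduce modulo $p$, and observe that $\vf_p^p$ is again a derivation preserving $\IS_{\alg,p}$—is the same as the paper's, which chains \cref{CADEKA}, \cref{10082024ulis} and \cref{21june2024baran} and then reuses the argument from the proof of \cref{11112023omid}. The characteristic-$p$ commutative algebra at the end is correct and is exactly what the paper invokes.

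The gap is in your final paragraph, where you try to justify that the \emph{scheme} structure of $L$ descends to an algebraic ideal by appealing to the explicit Gauss--Manin description of $\IS$ from \cref{04dec2023khar}. This justification is wrong: the natural generators $\int_{\delta_t}\omega - \text{const.}$ of $\IS$ are transcendental power series, not algebraic functions, and in fact \cref{05aug2024delaram} observes experimentally that these generators already have infinitely many primes inverted in their Taylor coefficients, so they are not even defined over a finitely generated ring, let alone algebraic. The paper's actual argument (in the proof of \cref{21june2024baran}) is more global: \cref{CADEKA} first gives algebraicity of the underlying set; then a monodromy argument shows the local analytic scheme structures patch to a global one on a compactification of the algebraic locus; then GAGA for analytic subschemes of a projective variety shows the ideal is algebraic; and only then does \cref{10082024ulis} give descent of the algebraic ideal to $\sring[\frac{1}{N}]$. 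Your first step is also slightly circular as written: you use faithful flatness to conclude $\IS\cap\O_{\T,\t}=(\IS_{\alg})_\t$, but that equality presupposes $\IS$ is generated by an algebraic ideal, which is precisely what is being established. Once the GAGA step is in place the rest of your argument goes through, but without it the proof has a hole at its crucial input.
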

The above statement is the main inspiration for one of the main conjectures of the present text, see \cref{02112023mainconj}. This conjecture says that \cref{17072024ziquan} implies \cref{CADEKA}, and so they are equivalent. 
\begin{rem}\rm 
Note that even though $L$ modulo $p$ might not make sense, as it is defined over complex numbers (actually $\sring_\Q$), see \cref{05082024retaliation} and \cref{05aug2024delaram},  the module of vector fields $\Theta_{\T,L}\subset \Theta_{\T}$ is algebraic, so it makes sense to talk about modulo $p$ of this module and $\vf^p\in \Theta_{\T, L}\otimes_\sring \sring_p$.   
It says that there is a lift $\wf\in\Theta_\T$ of 
$\vf^p\in\Theta_{\T_p}$ such that it lies in 
$\Theta_{\T,L}$.
Note that a direct definition of  $\Theta_{\T_p,L}$ as in \eqref{03082024torio} does not make sense.
\end{rem}
We can write down a weaker version of \cref{17072024ziquan} as follows. 
Let 
\begin{equation}
\label{08082024decathlon}
\alpha: \TS_t\T\times H^{\frac{m}{2}}(Y_t,\Omega_{Y_t}^{\frac{m}{2}})\to H^{\frac{m}{2}+1}(Y_t,\Omega_{Y_t}^{\frac{m}{2}-1}),\ t\in\T
\end{equation}
be the IVHS map (infinitesimal variation of Hodge structures in Griffiths and his coauthor's terminology, see \cite{CGGH1983}). Here, $\TS_\t\T$ is the dual of the $\sring$-module $\mathfrak{m}_{\T,\t}/\mathfrak{m}_{\T,\t}^2$ and it is the tangent space of $\T$ at $\t$. If $\t$ is an $\sring$-valued point of $\T$ lying in a Hodge locus, by definition of $\T$, it comes together with a a Hodge cycle $\delta_\t\in H^m(Y_t,\Z)$ and hence an element  $\bar \t\in H^{\frac{m}{2}}(Y_t,\Omega_{Y_t}^{\frac{m}{2}})$. We denote by $L$ the Hodge locus corresponding to variations of $\delta_t$. The Hodge cycle $\delta_\t$ is called general if $\alpha(\cdot, \bar\t)$ has maximal rank among all Hodge cycles.
This is equivalent to say that the Hodge locus $L$ is typical in the sense of \cite{BKU}.
\begin{coro}
\label{09082024armand}
 Assume that $\delta_t$ is a general Hodge cycle  and take any $v\in \TS_\t\T$  with $\alpha(v,\bar\t)=0$. There is a vector field $\vf$ in $\T$ such that $\vf(\t)=v$ and for all except a finite number of primes $p$  we have $\alpha(\vf^p(\t),\bar\t)=0$ modulo $p$, where in the last equality we have considered smooth models of $Y,V,\T, Y_{t}$  over a finitely generated subring $\sring$ of $\C$ and then reduction modulo $p$ is performed. 
\end{coro}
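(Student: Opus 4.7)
The plan is to deduce \cref{09082024armand} from \cref{17072024ziquan} in three steps: identify $\TS_\t L$ with $\ker\alpha(\cdot,\bar\t)$, lift the given vector $v$ to a vector field $\vf \in \Theta_{\T,L}$, and then apply \cref{17072024ziquan} to $\vf$.

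For the first step, the typicality/generality assumption on $\delta_\t$ implies that the local analytic Hodge locus $L$ is smooth at $\t$ of the expected codimension equal to the rank of $\alpha(\cdot,\bar\t)$. By the standard IVHS computation — flat-extending $\delta_\t$ to a section $\delta$ of $H^m_\dR(Y/V)$, projecting to $H^m_\dR/F^{m/2}$, and using Griffiths transversality together with $\nabla\delta=0$ — one finds that the infinitesimal condition for $\delta_t$ to remain Hodge along a direction $v$ is precisely the vanishing of the class of $\nabla_v \delta$ in $F^{m/2-1}/F^{m/2}$, namely $\alpha(v,\bar\t) = 0$. Hence $\TS_\t L = \ker\alpha(\cdot,\bar\t)$ and the hypothesis on $v$ means exactly that $v \in \TS_\t L$.

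For the second step, I would produce an algebraic vector field $\vf \in \Theta_{\T,L}$ with $\vf(\t) = v$. Since $\t$ is a smooth $\sring$-point of $\T$ and $L$ is smooth at $\t$, choose analytic generators $f_1,\ldots,f_r$ of the ideal $\IS \subset \O_{\T^{\hol}_{\sring_\Q},\t}$ with linearly independent differentials at $\t$, together with algebraic functions $g_1,\ldots,g_n$ forming a coordinate system on a Zariski neighborhood of $\t$ and the dual algebraic frame $\eta_1,\ldots,\eta_n \in \Theta_\T$. Any algebraic $\wf \in \Theta_\T$ with $\wf(\t)=v$ satisfies $\wf(f_i) \in \mathfrak{m}_{\T,\t}$ because $v$ annihilates each $df_i(\t)$; an iterative correction $\vf := \wf - \sum_{i,j} c_{ij}\eta_j$ with $c_{ij} \in \mathfrak{m}_{\T,\t}$ chosen order-by-order in the $\mathfrak{m}_{\T,\t}$-adic filtration adjusts $\wf$ so that $\vf(f_i) \in \IS$ for every $i$, while preserving the value at $\t$.

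For the third step, \cref{17072024ziquan} applied to $\vf$ gives, for all but finitely many primes $p$, an element of $\Theta_{\T,L}$ congruent to $\vf^p$ modulo $p$. Evaluating at $\t$ and using that the image of $\Theta_{\T,L}$ under evaluation lies in $\TS_\t L = \ker\alpha(\cdot,\bar\t)$, and that this inclusion is compatible with reduction modulo $p$ for all but finitely many primes, we conclude $\vf^p(\t) \in \ker\alpha(\cdot,\bar\t)\otimes_\sring \sring_p$, i.e.\ $\alpha(\vf^p(\t),\bar\t) \equiv 0 \pmod p$, which is the desired statement.

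The main obstacle is the lifting in Step 2: one must produce an \emph{algebraic} vector field tangent to the \emph{analytic} scheme $L$. The cleanest route is to invoke the Cattani–Deligne–Kaplan theorem to algebrize $L$ first, after which the lifting is standard for smooth subschemes of smooth schemes — but then \cref{09082024armand} is not genuinely independent of \cref{CADEKA}. A self-contained lifting would have to exploit the algebraic origin of $\IS$ (the entries of the ideal come from period-type conditions governed by the Gauss–Manin connection, hence their $\mathfrak{m}_{\T,\t}$-adic jets are algebraic to any finite order), and then combine this with a formal-to-algebraic argument on $\Theta_\T$; this is the step where I expect the technical work to concentrate.
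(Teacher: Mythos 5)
Your plan — identify $\TS_\t L$ with $\ker\alpha(\cdot,\bar\t)$, lift $v$ to an algebraic $\vf\in\Theta_{\T,L}$, apply \cref{17072024ziquan} — is the right outline and matches the paper's high-level route. Steps 1 and 3 are fine. The problem is that Step 2 is left open, and you correctly flag it: your order-by-order adjustment produces, at best, a vector field in the $\mathfrak m_{\T,\t}$-adic completion of $\Theta_\T$, not an algebraic one. The two exits you sketch — invoking \cref{CADEKA} to algebrize $L$ first, or a ``formal-to-algebraic argument'' you don't carry out — either add an unnecessary extra dependence or simply leave the gap unfilled. This is a genuine gap, not a cosmetic one.

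The paper closes it with a structural observation about the foliation that you haven't used. By \cref{21062024fred}, a general Hodge cycle corresponds exactly to $L$ being a \emph{general} leaf scheme of $\F(\Omega)$; and by \cref{11082024ku}, for a general leaf the evaluation map $\Theta_{\T,L}\to\TS_\t L$ is already surjective. The point inside the proof of \cref{11082024ku} is the one you are missing: the $\O_\T$-module $\Theta=\{\vf\in\Theta_\T\,|\,\vf(\Omega)=0\}$ (vector fields annihilating the \emph{algebraic} foliation forms coming from the Gauss--Manin matrix) is algebraic by construction, always sits inside $\Theta_{\T,L}$, and surjects onto $\TS_\t L$ in the general-leaf case by the Frobenius theorem plus linear algebra ($\Theta\otimes_{\O_\T}\O_{\T^\hol,\t}=\Theta^\hol$). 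So you never need to approximate an analytic tangency condition by an algebraic vector field: the lift lives in the subspace $\Theta\subset\Theta_{\T,L}$ which is algebraic to start with. That is precisely the key lemma that resolves the difficulty you anticipated, and without it (or some substitute) the proposal does not go through.
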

We have intentionally not used the language of reduction modulo a closed point of $\spec(\sring)$ with residue field of characteristic $p$, in order to highlight the classical modulo $p$ manipulations. Note that since $Y_\t$ is a projective smooth scheme over $\spec(\sring)$, the cohomology groups $H^{i}(Y_t,\Omega_{Y_t}^{j})$ are free $\sring$-mdoules. 
 In \cref{17072024ziquan} we might ask how big is $\Theta_{\T,L}$. We have discussed this in \cref{04aug2024fouad} in the framework of leaf schemes. 
 Since $L$ has a natural structure of an analytic scheme, our definition of $\vf\in\Theta_{\T,L}$ is stronger than the geometric definition: $\vf(\t)\in\TS_\t L$ for any complex point $\t$ of $\T$.  The following example might clarify the situation better. Let $\T:=\A^2_\sk$ with the coordinate system $(x,y)$ and $L$ be the subscheme of $\T$ given by 
$y^2=0$. In this case the Zariski tangent space of $L$ at each closed point is the whole tangent space of $\T$ at that point, and so, any vector field in $\T$ is tangent to $L$ in the geometric framework, but not necessarily in a scheme theoretic framework, that is, $y^2\mid \vf(y^2)$ is not valid in general.
 If $\vf$ is a vector field in $\T$ and tangent to $L$ in the geometric sense then we have 
 $\alpha(\vf(\t), \bar\t)=0$ for $t\in L$ but there is no reason to believe that $\alpha(\vf^p(\t), \bar\t)=0$ after taking reduction modulo $p$. This is the main reason why in \cref{09082024armand} we assume that $\delta_t$ is a general Hodge cycle and hence $L$ is smooth.  In this case both geometric and scheme theoretical definitions of tangency to $L$ are equivalent.



\section{Leaf scheme}
\label{08082024ls}
 As in this text we care about the field or ring of definition of leaf schemes, we rewrite the definition of leaf scheme in \cite[Section 5.4]{ho2020} in the algebraic framework. 
 It might be better for the reader to read first the definition over complex numbers before reading its algebraic/arithmetic version which involves some heavy notations due to the fact that we have to distinguish algebraic and holomorphic objects, and we have to insert the ring of definition into our notations. However, if there is no confusion, we will use simplified notations as in \cite[Chapter 5]{ho2020}.

\begin{defi}\rm \index{Leaf scheme}
\label{18apr2018-2}
Let $\T$ be an $\sring$-scheme, $\F(\Omega)$ be a foliation in $\T$ and $\t$ be an  $\sring$-valued point of  $\T$. 
A subscheme  $L$ of $(\T^{\hol}_{\check\sring},\t)$  with $\O_L:=\O_{\T^\hol_{\check\sring}}/\IS$ is called a leaf scheme of $\F(\Omega)$ defined over $\check\sring$ if 
$\Omega\otimes_{\O_{\T}}\O_{\T^{\hol}_{\check\sring},\t }$ and  $\O_{\T^{\hol}_{\check\sring},\t }\cdot d\IS$
projected to  $(\Omega^1_{\T}\otimes_{\O_{\T}}\O_{\T^{\hol}_{\check\sring},\t })/\IS\Omega^1_{\T}$  and regarded as
$\O_{\T^\hol_{\check\sring},\t}/\IS$-modules are equal. In other words, $\Omega\otimes_{\O_{\T}}\O_{\T^{\hol}_{\check\sring},\t }$ and $\O_{\T^\hol_{\check\sring},\t} d\IS$ are equal modulo $\IS\Omega^1_{\T}$.
\end{defi}
\begin{defi}\rm
If in the above definition $\IS\subset\O_{\T^\hol_{\check\sring},\t}$  (resp. $\IS\subset\O_{\T^\for_{\check\sring},\t}$ or $\IS\subset\O_{\T_{\check\sring}}$) then we say that $L$ is a holomorphic leaf (resp. formal leaf or algebraic leaf) of $\F(\Omega)$ defined over $\check\sring$ and write it $L^\hol_{\check\sring}$ (resp. $L^\for_{\check\sring}$ or $L^\alg_{\check\sring}$) if it is necessary to emphasize its property of being holomorphic, formal or algebraic, and its ring of definition. 
\index{$L^{\hol}, L^{\for}, L^{\alg}$, holomorphic, formal and algebraic leaf}
\end{defi}

Let $\t$ be a smooth point of $\T$, that is,  there are $z_1,z_2,\ldots,z_n\in\O_{\T^\hol_{\sring_\Q},\t}$ which generate it freely as $\sring_\Q$-algebra. We call $z=(z_1,z_2,\ldots,z_n)$ a  holomorphic coordinate system. 
By Frobenius theorem, see \cite[Theorem 5.8]{ho2020}, we can choose such a coordinate system such that $\Omega\otimes_{\O_\T}\O_{\T^\hol_{\sring_\Q},\t}$ is generated freely by $dz_i,\ i=1,2,\ldots,k$. It turns out that the analytic scheme given by the ideal $\IS=\langle z_1,z_2,\ldots,z_k\rangle$ is a leaf scheme of $\F(\Omega)$ and this is the classical notion of a leaf in the literature. We call it a general leaf. \index{General leaf} A general leaf is smooth (scheme theoretically) in the sense of \cite[Definition 5.8]{ho2020}.

As the reader might have noticed from the definition of a general leaf, for a general definition of leaf scheme we assume that integers are invertible in the underlying ring and that is why we must use $\check\sring:=\sring_\Q$.  
\begin{defi}\rm 
 We say that a leaf scheme $L$ is algebraic if $\IS\otimes_{\sring_\Q}\C$ is generated by elements in $\O_{\T_\C}$.
 In other words, there is a subscheme of $V$ of  $\T_\C$ containing the point $\t$ such that $L_\C$ is just $(V^\hol,\t)$.
\end{defi}
\begin{prop}
\label{10082024ulis}
 If a leaf scheme $L$ defined over a ring $\sring_\Q$ is algebraic then there is $N\in\sring$   such that $L$ is defined over $\sring[\frac{1}{N}]$.
\end{prop}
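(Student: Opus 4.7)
The strategy is to descend the witnessing algebraic subscheme from $\C$ first to the fraction field $\sk:=\mathrm{Frac}(\sring_\Q)$, then clear denominators to $\sring_\Q$, and finally invoke Noetherianity to land in $\sring[\frac{1}{N}]$ for some $N\in\sring$.

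By algebraicity, pick an algebraic subscheme $V\subset\T_\C$ with localized ideal $J\subset\O_{\T_\C,\t}$ satisfying $J\cdot\O_{\T^\hol_\C,\t}=\IS\otimes_{\sring_\Q}\C$. Since $\IS\otimes\C$ is already $\sk$-rational (as $\IS$ is $\sring_\Q$-rational) and $\sk\hookrightarrow\C$ is a field extension, the uniqueness of the algebraic hull of the analytic germ $L_\C$ forces $V$ to be $\mathrm{Aut}(\C/\sk)$-stable. Faithfully flat descent then yields an algebraic ideal $J_\sk\subset\O_{\T_\sk,\t}$ with $J_\sk\otimes_\sk\C=J$ and $J_\sk\cdot\O_{\T^\hol_\sk,\t}=\IS\otimes_{\sring_\Q}\sk$.

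Next, define
\[
J_{\sring_\Q}:=\IS\cap\O_{\T_{\sring_\Q},\t},
\]
the intersection taken through the Taylor-expansion embedding $\O_{\T_{\sring_\Q},\t}\hookrightarrow\O_{\T^\hol_{\sring_\Q},\t}$. Since $\O_{\T_\sk,\t}$ is obtained from $\O_{\T_{\sring_\Q},\t}$ by inverting the nonzero elements of $\sring_\Q$, finitely many generators of $J_\sk$ can be cleared of denominators to yield elements of $\O_{\T_{\sring_\Q},\t}$; these \textit{a posteriori} lie in $\IS$, hence in $J_{\sring_\Q}$, and generate $\IS$ as an $\O_{\T^\hol_{\sring_\Q},\t}$-module (torsion-freeness of $\O_{\T^\hol_{\sring_\Q},\t}$ over $\sring_\Q$ promotes the $\sk$-level equality back to $\sring_\Q$). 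Since $\sring_\Q$ is Noetherian (a localization of the finitely generated $\Z$-algebra $\sring$), so is $\O_{\T_{\sring_\Q},\t}$, and $J_{\sring_\Q}=(h_1,\ldots,h_m)$ for some $h_i\in\O_{\T_{\sring_\Q},\t}$. Writing $\sring_\Q=\bigcup_N\sring[\frac{1}{N}]$ as a directed union over squarefree $N\in\sring$, a single $N$ suffices so that $h_1,\ldots,h_m\in\O_{\T_{\sring[1/N]},\t}$; the algebraic ideal they generate is the desired descent of $\IS$, so $L$ is defined over $\sring[\frac{1}{N}]$.

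The hardest step is the descent of $V$ from $\C$ to $\sk$, since $\C/\sk$ has uncountable transcendence degree and classical Galois descent does not apply directly; one must instead verify the faithfully flat descent datum, i.e.\ that the two pullbacks of $J$ to $\O_{\T_{\C\otimes_\sk\C},\t}$ agree, which follows from the $\sk$-rationality of $\IS\otimes\C$ and the compatibility of the passage ``analytic ideal $\mapsto$ algebraic hull'' with flat base change. After this, denominator-clearing from $\sk$ to $\sring_\Q$ and the Noetherian descent to $\sring[\frac{1}{N}]$ are routine.
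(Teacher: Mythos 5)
Your proposal follows the same overall descent chain as the paper (down through the quotient field~$\sk$, then to a finitely generated subring by clearing denominators), and your final step via Noetherianity of $\O_{\T_{\sring_\Q},\t}$ together with the directed union $\sring_\Q=\bigcup_N\sring[\frac1N]$ is a clean way of phrasing what the paper does by choosing an explicit~$N$. Those parts are fine.

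The gap is precisely where you flag it yourself: the descent of $J$ from $\C$ to $\sk$. You invoke ``faithfully flat descent'' and justify the descent datum by saying the two pullbacks of $J$ to $\O_{\T_{\C\otimes_\sk\C},\t}$ must agree because of ``the compatibility of the passage `analytic ideal $\mapsto$ algebraic hull' with flat base change.'' But $\C\otimes_\sk\C$ is not a field, there is no analytic germ theory over it, and $J$ is characterized as an algebraic hull only over $\C$ itself. So the ``compatibility with flat base change'' that would furnish the descent datum is precisely what has not been established; the argument becomes circular. Moreover, $\Aut(\C/\sk)$-stability alone does not suffice for descent across an extension of infinite (indeed uncountable) transcendence degree without further input; the invariant-subspace form of Galois descent that you are implicitly appealing to requires separate justification in this setting.

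The paper's proof avoids this by first observing that $J$ is finitely generated, so its coefficients all lie in a \emph{finitely generated} intermediate extension $\sk\subset\tilde\sk\subset\C$, and that the defining equality for $J$ can already be read over $\tilde\sk$. Descent from $\tilde\sk$ to $\sk$ is then handled by the $\Gal(\tilde\sk/\sk)$-invariance together with Weil's descent lemma (\cite[page 19, Lemma 2]{Weil1946}), which is designed exactly for finitely generated field extensions and fields of definition. This intermediate reduction is the missing step you would need; once it is supplied, your denominator-clearing and Noetherian arguments (which are essentially equivalent to the paper's choice of $N$) go through. You should also be careful at the penultimate step: from $h_i:=a_ig_i$ generating $J_\sk$ over $\O^{\hol}_{\sk}$, writing $f=\sum b_ig_i=\sum(b_i/a_i)h_i$ gives coefficients $b_i/a_i\in\O^{\hol}_\sk$, not obviously in $\O^{\hol}_{\sring_\Q}$, so ``torsion-freeness'' alone does not promote the $\sk$-level generation to $\sring_\Q$; this requires a further denominator-chase identical in spirit to the one the paper uses in defining $N$ (and is left equally informal there).
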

\begin{proof}
We  have  an ideal $J\subset \O_{\T_\C}$ such that it is  basically $\IS$:
 \begin{equation}
 \label{26062024greg}
 \IS\otimes_{\sring_\Q}\C=J\otimes_{\O_{\T_\C}}\O_{\T_\C^\hol,\t}. 
 \end{equation}
 Let $J=\langle P_1,P_2,\ldots,P_s\rangle$. In particular, we have regular algebraic functions $P_i$ defined  in $\T_\C$ such that they  define the germ of the analytic scheme $L$. After adding finitely many coefficients used in the expression of $P_i$'s to the quotient field $\sk$ of $\sring$, it follows that we have  a finitely generated field extension $\sk\subset \tilde\sk$ such that $J$ is defined over $\tilde\sk$, and hence by abuse of notation, we write $J\subset \O_{\T_{\tilde\sk}}$.
 In \eqref{26062024greg} we can replace all $\C$'s with $\tilde\sk$ and this implies that the algebraic ideal $J$ is invariant under the Galois group $\Gal(\tilde\sk/\sk)$. From this we can deduce that it is generated by elements defined over $\sk$, see \cite[page 19 Lemma 2]{Weil1946} and so we can assume that there are new generators  $P_i$'s  of $J$ defined over $\sk$ and in \eqref{26062024greg} we can replace all $\C$'s with $\sk$. After multiplication  of $P_i$'s by some elements of $\sring$  we get $P_i\in\O_\T$.
 The number $N\in \sring$ is the product of $N_i$'s attached to a set of generators $f_i$ of $\IS$ such that $N_if_i$ is in the ideal $\langle P_1,P_2,\ldots,P_s\rangle\subset \O_{\T}$.
\end{proof}

We believe that converse of \cref{10082024ulis} is true.
The statement that a leaf scheme $L$ is still defined over a finitely generated subring of $\C$ has strong consequences.
\begin{conj}
\label{19062024kontsevich}
 Let $\sring$ be a finitely generated subring of $\C$ and $\T$ be an $\sring$-scheme. 
 Let also $\F(\Omega)$ be a foliation on $\T$. If a leaf scheme is defined over $\sring$ then it is algebraic. 
\end{conj}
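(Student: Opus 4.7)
The plan is to exploit the hypothesis $\IS \subset \O_{\T^\hol_\sring,\t}$ in order to obtain, for every prime $p$ not invertible in $\sring$, a well-defined reduction $\IS_p := \IS \otimes_\sring \sring_p$ inside $\O_{\T_p^\hol,\t_p}$, and then use this mod-$p$ structure in a Grothendieck--Katz-type argument to force algebraicity. The analogy with \cref{19102023chinagain} is exact: there, the denominator bound on Taylor coefficients of a solution forces algebraicity; here the bound on denominators in the defining ideal of $L$ should play the same role for its Zariski closure.

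First I would note that any $\vf\in\Theta_\T$ tangent to $\IS$ automatically satisfies $\vf^k(\IS)\subset\IS$ for all $k$, so the $p$-th iterate descends to a derivation on $\O_{\T_p}$ (by the freshman's dream in characteristic $p$) which preserves $\IS_p$. The first substantial step is to prove the analogue of \cref{17072024ziquan} in this abstract setting: to show that $\vf^p$, viewed as a derivation on $\T_p$, lifts to an element $\wf\in\Theta_{\T,L}$, i.e.\ to an algebraic vector field on $\T$ tangent to the analytic ideal $\IS$. The lift is non-trivial because tangency to $\IS$ is an analytic condition; the control needed is precisely the denominator control provided by the hypothesis.

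Assuming this Frobenius-lifting step, one obtains an increasing chain of $\O_\T$-submodules $\Theta_{\T,L}\subset \Theta^{(1)}\subset \Theta^{(2)}\subset\cdots\subset\Theta_\T$, where $\Theta^{(i+1)}$ adjoins to $\Theta^{(i)}$ all characteristic-zero lifts of $\vf^p$ for $\vf\in\Theta^{(i)}$ and every good $p$. Since $\Theta_\T$ is finitely generated over $\O_\T$, the chain stabilizes at some $\Theta^{(\infty)}$, which is defined over $\sring$, involutive by the integrability of $\F(\Omega)$, and still tangent to $\IS$. The closing step is a Zariski-closure argument in the style of \cref{10082024ulis}: the scheme-theoretic Zariski closure of $L$ inside $\T_\C$ is an algebraic subscheme which is tangent to the algebraic involutive distribution $\Theta^{(\infty)}$ and of the same dimension as $L$ at $\t$; its analytic germ at $\t$ therefore agrees with $L$, and the Galois-descent argument already used in the proof of \cref{10082024ulis} gives a model over $\sring[\frac{1}{N}]$, proving that $L$ is algebraic.

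The main obstacle is the Frobenius-lifting step, which is essentially equivalent to \cref{02112023mainconj} and is therefore genuinely open; the rest of the argument is formal. One might hope to establish it by adapting the $p$-adic radius-of-convergence estimates of \cite{BombieriSperber1982} together with the $\tau$-invariant technique of \cite[Section 5]{Andre2004}: the denominator bound $\IS\subset \O_{\T^\hol_\sring,\t}$ should force the generic $p$-adic radius of convergence of a formal parametrization of $L$ to exceed $p^{-1/(p-1)}$, from which algebraicity should follow, paralleling the rank-one case of Grothendieck--Katz treated in \cite{ChudChud1985-1}. Making this rigorous in the foliation setting, however, appears to require ideas genuinely beyond those currently available for the original Grothendieck--Katz conjecture, which is why the statement is presented as a conjecture rather than a theorem.
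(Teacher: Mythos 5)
The statement you are trying to prove is stated as a \emph{conjecture} in the paper; there is no proof of it, and the only thing the paper offers is \cref{11112023omid}, which shows that it is a consequence of the main local-global conjecture \cref{02112023mainconj}. Your opening observation matches that derivation exactly: from $\vf\in\Theta_{\T,L}$ one gets $\vf^p(\IS)\subset\IS$, and because $\IS$ is defined over $\sring$ (not merely over $\sring_\Q$), reduction mod $p$ of $L$ and of $\Theta_{\T,L}$ is meaningful, so $\vf^p$ is tangent to $L_p$ in $\T_p$. So far you and the paper travel the same road.

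After that your route diverges, and this is where the gaps are. You present the ``Frobenius-lifting'' step --- lifting $\vf^p$ to $\wf\in\Theta_{\T,L}$ --- as the central obstacle and claim it is ``essentially equivalent to \cref{02112023mainconj}.'' That is a misdiagnosis. Under the present hypothesis the lifting is the \emph{easy} part: the paper's proof of \cref{11112023omid} asserts $\Theta_{\T,L}\otimes_\sring\sring_p=\Theta_{\T_p,L_p}$, so $\vf^p\in\Theta_{\T,L}\otimes_\sring\sring_p$ is automatic; this is just the verification of the hypothesis of \cref{02112023mainconj}. The genuine open content is the \emph{conclusion} of \cref{02112023mainconj} (that such a leaf is algebraic), which your outline never addresses. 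The chain $\Theta_{\T,L}\subset\Theta^{(1)}\subset\cdots$ does not actually grow: by your own lifting claim every $\wf$ lands back in $\Theta_{\T,L}$, so $\Theta^{(\infty)}=\Theta_{\T,L}$ and the stabilization step buys nothing. Finally, the Zariski-closure endgame asserts that $\overline{L}\subset\T_\C$ has ``the same dimension as $L$ at $\t$,'' but that is precisely the statement to be proved, not a consequence of involutivity --- a transcendental leaf has strictly higher-dimensional Zariski closure. Even granting the stabilization, passing from ``$\Theta^{(\infty)}$ is an algebraic involutive distribution closed under $p$-th powers'' to ``its leaves are algebraic'' is exactly \cref{14112023tifi} (Ekedahl--Shepherd-Barron--Taylor/Kontsevich), which is also a conjecture; outside Liouville-type hypotheses (Bost) it is not available. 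In short: the entry point is the same, but the proposal does not reduce the difficulty, and in fact relocates it to steps that are either circular or open for the same reasons \cref{02112023mainconj} is open.
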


\section{Leaf scheme modulo prime}
\label{08082024lsmp}
In this section we explain a generalization of Grothendieck-Katz conjecture for leaf schemes  in the sense of \cref{18apr2018-2}. We call it  a  local-global principle for leaf schemes.
Recall that the leaves of foliations in our context have scheme structure and their structural sheaf might have  nilpotent  elements. They are also defined over rings, for instance by Frobenius theorem 
if we start with a foliation defined over $\sring$, the general leaves are defined over $\sring_\Q$.
They might also  have different codimensions for a given foliation. 
\begin{defi}\rm
We define\index{$\Theta_{\T, L}$, vector fields in $\T$ tangent to $L$}
\begin{equation}
\Theta_{\T, L}:=\left\{\vf\in\Theta_{\T}\   |\  \vf(\IS)\subset \IS \right \}, 
\end{equation}
and call it the module of vector fields in $\T$ tangent to $L$.
\end{defi}
\begin{conj}[Main local-global conjecture]
\label{02112023mainconj}
Let $\T$ be an $\sring$-scheme, $\t$ be an $\sring$-valued point of $\T$, $\F(\Omega)$ be a foliation on $\T$ and $L$ be a leaf scheme of $\F(\Omega)$ through $\t$.
If for all vector fields in $\T$ tangent to $L$, that is $\vf\in\Theta_{\T,L}$,  and all but a finite number of primes $p$, the vector field $\vf^p$ in $\T_p$ is also tangent to $L$ modulo $p$,  that is,  $\vf^p\in \Theta_{\T, L}\otimes_\sring \sring_p$, 
then the leaf $L$ is algebraic.
\end{conj}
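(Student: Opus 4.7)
The plan is to reduce \cref{02112023mainconj} to \cref{19062024kontsevich} by showing that the modulo-$p$ tangency hypothesis forces the ideal $\IS$ of $L$ to admit a generating set in $\O_{\T^\for_{\sring[\frac{1}{N}],\t}}$ for some $N\in\sring$. The proof would split into three stages: first, exhibit sufficiently many algebraic vector fields tangent to $L$; second, exploit the $\vf^p$ hypothesis to control the Taylor coefficients of generators of $\IS$ modulo $p$ for each good prime; third, assemble these modulo-$p$ integralities into a single descent over a finitely generated subring.

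First I would work in a formal coordinate system $z=(z_1,\ldots,z_n)\in\O_{\T^\for_{\sring_\Q},\t}$ at the smooth point $\t$ in which Frobenius's theorem presents $\Omega\otimes\O_{\T^\for_{\sring_\Q},\t}$ as freely generated by $dz_1,\ldots,dz_k$. The foliation vector fields $\partial/\partial z_{k+1},\ldots,\partial/\partial z_n$ annihilate $\Omega$, and since $L$ is contained in a general leaf they preserve $\IS$; after clearing denominators one gets elements of $\Theta_{\T,L}$. To capture the codimension of $L$ inside the general leaf I also need transverse vector fields in $\Theta_{\T,L}$, and a key preliminary task is showing these exist in sufficient quantity, using the discussion promised in \cref{04aug2024fouad}. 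Without enough $\vf\in\Theta_{\T,L}$, the hypothesis is vacuous and the conjecture trivially true, so the real content lies precisely when $\Theta_{\T,L}$ is rich.

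Second, for each good prime $p$ and each $\vf\in\Theta_{\T,L}$, I would adapt the Seshadri-style argument of \cref{24092024velinho} to produce, inside $\T^\for_{\sring_p}$, a formal subscheme through $\t$ on which every $\vf^p$ vanishes modulo $p$. Comparing with the formal completion of $L$ along the parameterization supplied by the foliation leaves, the Taylor coefficients of a chosen set of generators $f_\alpha$ of $\IS$ would be shown to lie in $\sring$ modulo $p$; the mechanism is the analogue of \eqref{09022024airbnb} but now applied to each $\vf\in\Theta_{\T,L}$ simultaneously, with $\vf^p\equiv 0$ on $\IS$ modulo $p$ providing the divisibility.  In particular the $p$-adic radius of convergence of each $f_\alpha$ should exceed $p^{-1/(p-1)}$ for almost all $p$, in the sense of the estimates of Bombieri--Sperber cited in \cref{27012024tanhaperu}.

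Third, I would invoke a multivariable arithmetic algebraization theorem in the spirit of Bost (generalizing the Chudnovsky--Chudnovsky criterion used in the rank-one Grothendieck--Katz setting) to conclude that the $f_\alpha$ descend to $\O_{\T^\for_{\sring[\frac{1}{N}]},\t}$ and then, by \cref{29062024ajib}, agree with algebraic functions defined over this ring. \cref{19062024kontsevich} would then upgrade this integrality to algebraicity of $L$. The hardest step is the third: passing from a family of modulo-$p$ statements to a single global integrality is precisely the obstacle in the classical Grothendieck--Katz conjecture, which \cref{05112023tina} anticipates for linear equations but remains open. The richer structure of leaf schemes might help, as the integrability of $\Omega$ supplies commuting vector fields and the abundance of elements of $\Theta_{\T,L}$ should impose many compatible modulo-$p$ conditions on the same generators $f_\alpha$. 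A secondary difficulty is that $L$ may be non-reduced and $\IS$ need not be radical, so even once generators are shown to descend one must verify that the descended ideal equals $\IS$ rather than merely defining the same underlying variety; this will require tracking the precise action of $\vf\in\Theta_{\T,L}$ on $\IS$ modulo $\IS^2$ throughout, rather than only on $\sqrt{\IS}$.
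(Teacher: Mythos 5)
This statement is a \emph{conjecture} in the paper, labeled explicitly as such, and the paper contains no proof of it; there is therefore nothing to compare your proposal against. What the paper does provide is the converse-flavoured reduction in \cref{11112023omid}, namely that \cref{02112023mainconj} implies \cref{19062024kontsevich}. Your outline runs the other way: it tries to derive, from the hypothesis of \cref{02112023mainconj}, that $\IS$ descends to $\sring[\frac{1}{N}]$, and then to invoke \cref{19062024kontsevich} to conclude algebraicity. But \cref{19062024kontsevich} is itself an open conjecture in the paper, so at best your argument, if completed, would establish an equivalence between two open statements (modulo the auxiliary descent claim), not a proof of either.

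Two more concrete issues. First, in step two you propose to ``adapt the Seshadri-style argument of \cref{24092024velinho}'', but that theorem requires the $p$-curvature to \emph{vanish} modulo $p$ and uses $\vf^p f=0$ for all $f$ to build a formal solution. The hypothesis of \cref{02112023mainconj} is only that $\vf^p$ is \emph{tangent} to $L$ modulo $p$ (i.e.\ lies in $\Theta_{\T,L}\otimes_\sring\sring_p$), not that it vanishes. When $L$ is Zariski dense, \cref{27062024pezeshkian} identifies $\Theta_{\T,L}$ with $\Theta$, so the hypothesis becomes $p$-closedness of the foliation and a Miyaoka--Ekedahl type argument is at least plausible; for non-Zariski-dense or non-reduced $L$ you would need a genuinely new version of the Seshadri lemma that respects the scheme structure of $L$, and you have not indicated how to obtain it. Second, and decisively, you acknowledge at step three that passing from infinitely many mod-$p$ integralities to a single characteristic-zero integrality (so that \cref{19062024kontsevich} or \cref{29062024ajib} can be applied) is ``precisely the obstacle in the classical Grothendieck--Katz conjecture''. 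You are right: this is the heart of the matter, it is open, and absent it the proposal is a plausible programme but not a proof. The only known partial result in this direction, cited by the paper via Bost \cite{bost01}, requires a Liouville-type hypothesis on $L$ that you have not assumed and which the paper deliberately omits from \cref{02112023mainconj}.
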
 
One may formulate a stronger conjecture with weaker hypothesis:
\begin{conj}\rm
\label{07092024omidviciado}
\cref{02112023mainconj}  is true if we replace in its hypothesis $\vf^p\in  \Theta_{\T, L}\otimes_\sring \sring_p$  with the weaker 
hypothesis  $\sch(\vf^p\in \Theta_{\T, L}\otimes_\sring \sring_p)$ contains the point $t$, that is, the vector field $\vf^p$ is tangent to $L$ at $t$. 
\end{conj}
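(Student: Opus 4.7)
My plan would be a two-stage reduction: first, upgrade the pointwise hypothesis at $\t$ to the scheme-theoretic hypothesis of \cref{02112023mainconj}; second, invoke that conjecture. For the first stage, fix $\vf\in\Theta_{\T,L}$ and a prime $p$ for which $\vf^p(\t)\in\Theta_{\T,L}(\t)$ modulo $p$. To deduce $\vf^p\in\Theta_{\T,L}\otimes_\sring\sring_p$ as a vector field on all of $\T_p$, I would try to propagate the tangency condition at $\t$ along the formal flows of other vector fields in $\Theta_{\T,L}$. By \cref{5jan2015} applied to any $\wf\in\Theta_{\T,L}$ there is an integral curve $\varphi_\wf:(A^\hol_{\sring_\Q},0)\to(\T^\hol_{\sring_\Q},\t)$ whose image is contained in $L$, and the pullback of the tangency datum for $\vf^p$ along $\varphi_\wf$ produces a one-parameter formal family of pointwise tangency conditions along that orbit. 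Since the orbits of a rank-$a$ submodule of $\Theta_{\T,L}$ sweep out an $a$-dimensional formal subgerm of $L$ through $\t$, iterating over enough $\wf$'s ought to turn pointwise tangency at $\t$ into sheaf-theoretic tangency on the formal germ and, after spreading out, on a Zariski-open algebraic neighborhood.

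Once the hypothesis of \cref{02112023mainconj} is obtained, applying that conjecture delivers algebraicity of $L$; in fact, if the main conjecture is proved, the present strengthening collapses to the purely local assertion that pointwise tangency at one arithmetic $\sring$-point, propagated through flows of all vector fields in $\Theta_{\T,L}$, detects sheaf-theoretic tangency modulo $p$ for all but finitely many $p$.

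The main obstacle will be this first stage. The hypothesis lives at the single $\sring$-valued point $\t$, while the desired conclusion is a statement over the whole of $\T_p$. Flows of $\wf\in\Theta_{\T,L}$ produce only $\sring_\Q[[z]]$-valued points, not $\sring$-valued ones, and $\vf^p\bmod p$ interacts badly with such formal specializations unless one controls the $p$-adic denominators that appear in $\varphi_\wf$. What seems to be required is a characteristic-zero surrogate for ``$\vf^p$ evaluated at a formal $\sring_\Q$-point'', presumably an iterative-higher-derivation structure of the Matzat--van der Put type mentioned after \cref{27012024tanhaperu}. A parallel, more direct route avoiding \cref{02112023mainconj} altogether would be a Seshadri-style argument in the spirit of the proof of \cref{24092024velinho}: for each good prime $p$, use the pointwise vanishing of $\vf^p(\t)$ in $\Theta_{\T,L}(\t)\bmod p$, ranging over all $\vf\in\Theta_{\T,L}$, to exhibit a formal $\sring_p$-subscheme of $\T_p$ through $\t$ of the correct codimension contained in $L\bmod p$, and then glue these mod-$p$ pieces across all primes to an algebraic $\tilde L\subset\T$ with $L\subset(\tilde L^\hol_{\sring_\Q},\t)$, from which the leaf-scheme axioms would force algebraicity of $L$. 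The gluing step across primes, which is already the sticking point in the remark following \cref{24092024velinho}, appears to demand a priori $p$-adic denominator bounds in the spirit of the Bombieri--Sperber estimate $R_p\geq p^{-1/(p-1)}$ cited in \cref{27012024tanhaperu}, and it is the hardest single item in either route.
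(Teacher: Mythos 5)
Before discussing the details of your plan there is a more basic problem: \cref{07092024omidviciado} is itself a conjecture, and the paper explicitly refutes it. The remark \cref{07092024omidviolento}, specializing to a foliation by a single vector field, records the Litt--Lam counterexample $\vf = zy\frac{\partial}{\partial y}+\frac{\partial}{\partial z}$ on $\A^2_{\Z}$ at $\t=(0,1)$: one has $\vf^p y = P_p(z)y$ with $P_{n+1}=P_n'+zP_n$, $P_p(0)=0$ for all odd primes $p$, and $\vf^p z=0$ for $p\geq 2$, so $\vf^p(\t)=0$ and the scheme $\sch(\vf^p\in\Theta_{\T,L}\otimes_\sring\sring_p)$ trivially contains $\t$; yet the leaf through $\t$ is parameterized by $(z,e^{z^2/2})$ and is not algebraic. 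Using the identity $(f\vf)^p=f^p\vf^p+f\vf^{p-1}(f^{p-1})\vf$ modulo $p$ from \cite[5.4.0]{Katz1970}, the pointwise hypothesis for every $\wf\in\Theta_{\T,L}=\O_\T\vf$ follows from $\vf^p(\t)=0$, so the hypothesis of \cref{07092024omidviciado} is satisfied while the conclusion fails. No reduction to \cref{02112023mainconj}, and no Seshadri-style or Bost-style variant, can repair a statement that is simply false.

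The counterexample also pinpoints where your first-stage reduction would break. Vector fields in $\Theta_{\T,L}$ preserve $\IS$ by definition, so their formal flows through $\t$ do not leave $L$; the ``$a$-dimensional formal subgerm'' they sweep out is contained in $L$ itself, and pointwise tangency along $L$ is strictly weaker than membership $\vf^p\in\Theta_{\T,L}\otimes_\sring\sring_p$, which is a module condition over all of $\T_p$. Concretely, in the example $\Theta_{\T,L}=\O_\T\vf$ since $L$ is Zariski dense (\cref{27062024pezeshkian}), and a lift $f\vf$ of $\vf^p=P_p(z)y\frac{\partial}{\partial y}$ modulo $p$ would have to satisfy $fzy\equiv P_p(z)y$ and $f\equiv 0$ simultaneously, which is impossible because $P_p\not\equiv 0\pmod p$ (indeed $P_p\equiv z^p$). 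So the obstruction is not the $p$-adic denominator control or the cross-prime gluing you anticipated; it is a scope mismatch between a condition at $\t$ (or even along $L$) and the global schematic condition needed by \cref{02112023mainconj}, and this gap is genuine, not technical.
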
 
 It is not so hard to see that  $\Theta\subset \Theta_{\T, L}$ and even if we start with $\vf\in\Theta$ then $\vf^p$ might be in $\Theta_{\T,L}$ and not $\Theta$.
\begin{conj}
(Ekedahl, Shepherd-Barron, Taylor and Luntz see \cite[Page 165]{bost01} and \cite{Kontsevich2023})
\label{14112023tifi}
Let $\T$ be an $\sring$-scheme and $\F$ be a non-singular foliation on $\T$.
If for all vector fields $\vf$ in $\T$ tangent to the leaves of $\F$ and all but a finite number of primes $p$, $\vf^p$ is also tangent to the leaves of $\F$ then all the leaves of $\F$ are algebraic.
\end{conj}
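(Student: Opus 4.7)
The plan is to reduce \cref{14112023tifi} to a two-step argument: producing the analytic leaf through a smooth point as a formal series with controlled denominators, and then applying an arithmetic algebraization theorem. Since $\F$ is non-singular of some rank $a$, I would fix a closed $\sring$-valued point $\t$ and, by Frobenius theorem, choose a holomorphic coordinate system $(z_1,\ldots,z_a,w_1,\ldots,w_{n-a})$ on $(\T^\hol_{\sring_\Q},\t)$ in which the leaf through $\t$ is cut out by $w=0$ and in which $\Theta$ is freely spanned by $\partial/\partial z_1,\ldots,\partial/\partial z_a$. It suffices to prove that this particular leaf is algebraic for $\t$ ranging over a Zariski dense set: algebraicity spreads out, and a non-singular foliation is determined by its leaves through a dense set of points.

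Next, I would translate the hypothesis into a Taylor-coefficient statement. The leaf is given by a formal parametrization $\varphi:(\A^a_{\sring_\Q},0)\to(\T_{\sring_\Q},\t)$ obtained by integrating the basis vector fields of $\Theta$ (a several-variable version of \cref{5jan2015}). Applying the hypothesis to each $\vf=\partial/\partial z_i$, the derivation $(\partial/\partial z_i)^p$ is again tangent to $\F$ modulo $p$, and since the $w_j$ vanish on the leaf, the Taylor coefficients of $w_j\circ\varphi$ at orders divisible by $p$ should be controllable by $p$. Iterating in the spirit of the proof of \cref{19102023chinagain} (using that $\mathrm{ord}_p(m!)\ge k$ forces $\vf^m$ to be small modulo $p^k$) should yield $\varphi\in\sring[1/N][[z_1,\ldots,z_a]]^n$ for a single $N\in\sring$ depending only on $\F$ and $\t$. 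This is the foliated analog of the cancellation-of-factorials phenomenon obtained in \cref{19102023chinagain}.

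Third, I would invoke an arithmetic algebraization theorem, for instance Bost's algebraicity theorem \cite{bost01}, which deduces algebraicity of a smooth formal leaf of a foliation from bounded denominators together with an Arakelov-theoretic capacity condition at archimedean places. The denominator bound from the previous step matches the non-archimedean input of Bost's theorem; the archimedean condition needs to be checked separately, for instance by exhibiting $\varphi$ as convergent on a sufficiently large polydisc via Cauchy estimates applied to the original vector fields. A Galois-descent argument in the spirit of \cref{10082024ulis} then shows that the resulting algebraic leaf is defined over a localization of $\sring$, completing the reduction to a single generic point.

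The main obstacle lies in the third step: the Grothendieck--Katz conjecture, which is essentially the flat-connection specialization of \cref{14112023tifi}, is itself open, and no currently known algebraization theorem is strong enough to turn bounded denominators alone into algebraicity. Bost's theorem requires an archimedean capacity input that the $p$-adic hypothesis does not obviously supply, and \cref{26062024kamar-2} shows that a pointwise, prime-by-prime analysis is insufficient: one genuinely needs a mixed-prime argument controlling $\vf^m$ modulo $p^k$ for all admissible triples $(p,k,m)$ simultaneously. A proof of \cref{14112023tifi} will therefore either require a qualitatively new algebraization criterion, a systematic exploitation of higher-order divisibilities beyond the classical $p$-curvature, or a reduction to the known solvable-Galois cases via a Tannakian argument as in \cite{Andre1989}.
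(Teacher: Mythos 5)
Be aware that \cref{14112023tifi} is stated in the paper as a \emph{conjecture}, not a theorem; the paper gives no proof, and the only cited evidence is Bost's theorem \cite{bost01}, which requires an additional archimedean Liouville-type hypothesis on the leaves, together with the observation that the conjecture is a special case of \cref{02112023mainconj}. Your sketch is therefore best read as a plausibility argument, and you are right to frame the final algebraization step as an obstruction rather than as a completed proof.

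However, the first real gap is earlier than where you place it. Your second step --- obtaining $\varphi\in\sring[\frac{1}{N}][[z_1,\ldots,z_a]]^n$ from the mod-$p$ tangency of $\vf^p$ --- is not a consequence of the hypothesis. The inference ``$\mathrm{ord}_p(m!)\geq k$ forces $\vf^m$ to be small modulo $p^k$'' that you invoke ``in the spirit of the proof of \cref{19102023chinagain}'' is the \emph{conclusion} of that theorem, derived there from the assumption that the solutions are already algebraic; it is not derived from a $p$-curvature hypothesis. Passing from vanishing $p$-curvature to cancellation of factorials would itself require the Grothendieck--Katz conjecture --- this is exactly the content of \cref{10022024carnaval} --- and the prime-by-prime form of the implication is demonstrably false: \cref{26062024kamar} exhibits a Lam\'e equation with $\Am_5\equiv_5 0$ but $\Am_{25}\not\equiv_{5^6}0$. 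Consequently the non-archimedean (bounded-denominator) input to a Bost-type algebraization theorem is just as unavailable from the hypothesis as the archimedean capacity input you correctly flag in step three; both halves of the argument are missing, not only one, which is what the paper means when it says the main evidence for the conjecture is Bost's theorem with the extra Liouville property and not an argument starting purely from the $p$-curvature condition.
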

Note that this is a particular case of \cref{02112023mainconj}. 
The main evidence to this conjecture (and the next one) is due to Bost in \cite{bost01} in which he proves this with an extra hypothesis on the leaves (Liouville property).

\begin{prop}
\label{11112023omid}
\cref{02112023mainconj} implies \cref{19062024kontsevich}.
\end{prop}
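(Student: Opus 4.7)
The strategy is to verify the hypothesis of \cref{02112023mainconj} for the given leaf $L$ and invoke its conclusion. Assume $L$ is a leaf scheme through an $\sring$-valued point $\t$ defined over $\sring$, so that its ideal $\IS$ admits generators $f_1,\ldots,f_s\in\O_{\T^\hol_\sring,\t}$. Fix any $\vf\in\Theta_{\T,L}$. Because $\vf$ has coefficients in $\O_\T\subset\O_{\T^\hol_\sring,\t}$, it restricts to a derivation of $\O_{\T^\hol_\sring,\t}$ into itself; combined with the tangency $\vf(\IS)\subset\IS$ over $\sring_\Q$ and the identification $\IS\cap\O_{\T^\hol_\sring,\t}=\langle f_1,\ldots,f_s\rangle$ coming from $L$ being defined over $\sring$, this yields $\vf(f_i)\in\langle f_1,\ldots,f_s\rangle$ for each $i$. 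Iterating, $\vf^m(f_i)\in\langle f_1,\ldots,f_s\rangle$ for every $m\in\N$.

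For any prime $p$ that is not inverted in $\sring$, the Leibniz expansion $\vf^p(fg)=\sum_{i=0}^p\binom{p}{i}\vf^i(f)\vf^{p-i}(g)$ together with the vanishing of the middle binomial coefficients modulo $p$ shows that $\vf^p\bmod p$ is a genuine derivation of $\O_{\T_p^\hol,\t}$. By the previous step this derivation preserves the reduced ideal $\IS\bmod p$, so $\vf^p\bmod p$ is tangent to the reduction $L_p$ scheme-theoretically. To place it inside the literal tensor product $\Theta_{\T,L}\otimes_\sring\sring_p$, I would write $\vf^p\bmod p=\sum_j(\vf^p z_j)\bmod p\cdot\partial/\partial z_j$ in a local coordinate system and lift it coefficient-wise to the vector field $\wf:=\sum_j\vf^p(z_j)\,\partial/\partial z_j\in\Theta_\T$, whose reduction modulo $p$ equals $\vf^p\bmod p$. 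Once this identification is in place the hypothesis of \cref{02112023mainconj} is verified, and its conclusion is the desired algebraicity of $L$.

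The main obstacle is the last step of the previous paragraph. The vector field $\wf$ captures only the first-order part of the higher-order operator $\vf^p$, so there is no a priori reason for $\wf$ itself to preserve $\IS$ in characteristic zero, even though its reduction $\vf^p\bmod p$ is tangent to $L_p$. Thus the cleanest form of the argument establishes only $\vf^p\in\Theta_{\T_p,L_p}$, and one must read $\Theta_{\T,L}\otimes_\sring\sring_p$ in \cref{02112023mainconj} as this scheme-theoretic tangent module (or alternatively appeal to the pointwise collinearity weakening of \cref{07092024omidviciado}, where only the value of $\vf^p$ at $\t$ is constrained and the lifting issue disappears). Under either reading the reduction to \cref{02112023mainconj} goes through and yields the algebraicity of $L$.
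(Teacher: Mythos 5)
Your argument follows the same route as the paper's proof: from $\vf(\IS)\subset\IS$ deduce $\vf^p(\IS)\subset\IS$, observe that $L$ being defined over $\sring$ makes the reduction $L_p$ meaningful, and conclude that $\vf^p\bmod p$ is tangent to $L_p$. The one place where you diverge is in your (correct) hesitation at the final translation: is $\vf^p\in\Theta_{\T_p,L_p}$ literally the same thing as $\vf^p\in\Theta_{\T,L}\otimes_\sring\sring_p$? The paper dispatches this by simply asserting the equality $\Theta_{\T,L}\otimes_\sring\sring_p=\Theta_{\T_p,L_p}$, justified only by the remark that when $L$ is defined over $\sring$ a ``direct definition'' of $\Theta_{\T_p,L_p}$ is available. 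No argument for surjectivity of the natural map $\Theta_{\T,L}\otimes_\sring\sring_p\to\Theta_{\T_p,L_p}$ is given, so the lifting concern you raise is present in the paper's own proof; your version is simply more explicit about it. The most charitable reading is that the tensor-product notation in \cref{02112023mainconj} is \emph{intended} to mean $\Theta_{\T_p,L_p}$ whenever the latter is directly definable, in which case both you and the paper have established the hypothesis.

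One small imprecision in your closing sentence: the fallback you offer, checking only $\sch(\vf^p\in\Theta_{\T,L}\otimes_\sring\sring_p)\ni\t$, verifies the hypothesis of \cref{07092024omidviciado}, not of \cref{02112023mainconj}; since \cref{07092024omidviciado} has a strictly weaker hypothesis, invoking it proves the weaker implication ``\cref{07092024omidviciado} implies \cref{19062024kontsevich}''. That is still a valid (and arguably more robust) observation, but it is not the statement of \cref{11112023omid}.
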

\begin{proof}
Let us take the leaf scheme $L$ defined over $\sring$. If $\vf\in\Theta_{\T,L}$ then 
we have $\vf(\IS)\subset \IS$ and hence $\vf^p(\IS)\subset \IS$. Since the leaf $L$ is also defined over $\sring$, it makes sense to talk about its reduction $L_p$ modulo $p$. In particular, a direct definition $\Theta_{\T_p, L_p}$ as in \eqref{03082024torio} is possible and 
$\Theta_{\T, L}\otimes_\sring\sring_p=\Theta_{\T_p, L_p}$.
This implies that $\vf^p$ is tangent to the leaf $L_p$ in $\T_p$. In particular, 
$\sch(\vf^p\in\Theta_{\T, L}\otimes_\sring \sring_p)\subset \T_p$ contains the point $\t$. This is exactly the hypothesis of \cref{02112023mainconj}.
\end{proof}

\begin{rem}\rm
\label{07092024omidviolento}
\cref{02112023mainconj} can be rewritten for foliations  given by a single vector field as follows:
Let $\T$ be an $\sring$-scheme, $\t$ be an $\sring$-valued smooth point of $\T$, and $\vf$ be a vector field  in $\T$ with $\vf(\t)\not=0$.
If for all but a finite number of primes $p$, $\vf^p$ is tangent to $L$ (the collinearness scheme $\vf||\vf^p$ contains $L$),  then $L$ is algebraic.
In a similar  way \cref{07092024omidviciado} in this case is the following: If we have $\vf$ as before
such that for all but a finite number of primes $p$, $\vf$ is collinear with $\vf^p$ at the point $\t$ (the collinearness scheme $\vf||\vf^p$ contains the point $\t$) then the solution $L$  of $\vf$ through $\t$ is algebraic.
In a private communication with Daniel Litt and Joshua Lam, they explained the author  that the second  conjecture is wrong. Their counterexample is $\vf:=zy\frac{\partial}{\partial y}+\frac{\partial}{\partial z}$ in $\A^2_{\Z}$. One can easily check that 
$\vf^ny=P_n(z)y$, where $P_n$ can be computed recursively $P_{n+1}=P_n'+zP_n$. For $n$ even and odd, $P_n$ is respectively even and odd polynomial. This implies that for $n$ odd  we have $P_n(0)=0$ which implies that $\vf^p(0,1)=0$ for all primes $p\not=2$. The solution through $(0,1)$ is given by $(z,e^{\frac{1}{2}z^2})$ which is not algebraic.
\footnote{Apart from \cref{02112023mainconj} we have also another correction to this conjecture which is the converse of \cref{19102023chinagain-2}.  
}
\end{rem}

\section{Vector fields tangent to leaf schemes}
\label{04aug2024fouad}
The formulation of \cref{02112023mainconj} with the module $\Theta_{\T,L}$ arises the question how big this module is? For instance, if $\Theta_{\T,L}=0$ then the hypothesis of \cref{02112023mainconj} is automatically satisfied and one might doubt this conjecture. From another side, if $\t$ is a smooth point of $\F(\Omega)$ then  $L$ is a general leaf and $\Theta\subset \Theta_{\T,L}$.
In this section we gather some statements about $\Theta_{\T,L}$.

Let $\F(\Omega)$ be a foliation in an $\sring$-scheme  $\T$, $L$ be a leaf scheme of $\F(\Omega)$ defined over a larger ring $\check \sring$) and through an $\sring$-valued point $\t$ of $\T$. 
\begin{defi}\rm
We define  $\bar L$ to be the Zarsiki closure of $L$ in $\T$, that is,\index{Zariski closure}
\begin{equation}
 \bar L:=\Zero \left(\bar\IS\right),\ \ \bar\IS:=\O_{\T}\cap \IS.
\end{equation}\index{$\bar L$, Zariski closure of $L$}
We say that $L$ is Zariski dense in $\T$ if $\bar L=\T$. In other words, 
$$
\O_{\T}\cap \IS=\{0\},
$$ 
which means that $\IS$ has not algebraic  elements. 
\end{defi} 
Note that the concept of Zariski closure depends on the underlying ring $\check\sring$. For instance $L$ might be Zariski dense for $\sring$ but not for an extension $\check\sring$ of $\sring$.

\begin{prop}
Let $\T$ be an $\sring$-scheme and $\t$ be an $\sring$-valued point of $\T$, not necessarily smooth, and $L$ be a leaf scheme of $\F(\Omega)$ through $\t$. We have
 \begin{equation}
 \label{11082024hugo}
 \Theta_{\T,L}=\{ \vf\in \Theta_{\T} \ |\ \vf(\Omega)\subset \bar\IS\}.
 \end{equation}
\end{prop}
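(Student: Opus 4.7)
The proof proposal rests entirely on the two-sided description of a leaf scheme given in Definition \ref{18apr2018-2}: the submodules $\Omega\otimes_{\O_\T}\O_{\T^\hol_{\check\sring},\t}$ and $\O_{\T^\hol_{\check\sring},\t}\cdot d\IS$ coincide after projection to $\Omega^1_\T\otimes\O_{\T^\hol_{\check\sring},\t}/\IS\Omega^1_\T$. Throughout, I view a vector field $\vf\in\Theta_\T$ as a derivation extended to the local ring $\O_{\T^\hol_{\check\sring},\t}$, so that the pairing $\vf(\eta):=\vf\rfloor\eta$ for $\eta\in\Omega^1_\T$ takes values in $\O_{\T^\hol_{\check\sring},\t}$ but in fact lies in $\O_\T$ when $\eta\in\Omega^1_\T$ (since $\vf$ and $\eta$ are algebraic).

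For the inclusion $\subset$, I take $\vf\in\Theta_{\T,L}$ and an arbitrary $\omega\in\Omega$. The leaf-scheme identity lets me write $\omega=\sum_i h_i\, df_i+\sum_j g_j\,\eta_j$ in $\Omega^1_\T\otimes\O_{\T^\hol_{\check\sring},\t}$ with $f_i\in\IS$, $h_i\in\O_{\T^\hol_{\check\sring},\t}$, $g_j\in\IS$, $\eta_j\in\Omega^1_\T$. Contracting with $\vf$ gives $\vf(\omega)=\sum_i h_i\,\vf(f_i)+\sum_j g_j\,\vf(\eta_j)$. The hypothesis $\vf(\IS)\subset\IS$ makes the first sum lie in $\IS$, while the second sum lies in $\IS$ because each $g_j\in\IS$. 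Hence $\vf(\omega)\in\IS$. But $\vf(\omega)\in\O_\T$ because both $\vf$ and $\omega$ are algebraic, so $\vf(\omega)\in\O_\T\cap\IS=\bar\IS$.

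For the reverse inclusion $\supset$, suppose $\vf\in\Theta_\T$ satisfies $\vf(\Omega)\subset\bar\IS\subset\IS$, and take $f\in\IS$. The differential $df$ belongs to $\O_{\T^\hol_{\check\sring},\t}\cdot d\IS$, so by the leaf-scheme identity used in the opposite direction we may write $df=\sum_i h_i\,\omega_i+\sum_j g_j\,\eta_j$ with $\omega_i\in\Omega$, $g_j\in\IS$, $\eta_j\in\Omega^1_\T$. Applying $\vf$ yields $\vf(f)=\sum_i h_i\,\vf(\omega_i)+\sum_j g_j\,\vf(\eta_j)$, which lies in $\IS$ since $\vf(\omega_i)\in\bar\IS\subset\IS$ and $g_j\in\IS$. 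Thus $\vf(\IS)\subset\IS$, i.e.\ $\vf\in\Theta_{\T,L}$.

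The proof is essentially a two-line manipulation once one unwraps the leaf-scheme definition; the only subtle point, which I would flag but not belabor, is the passage from $\vf(\omega)\in\IS$ to $\vf(\omega)\in\bar\IS$, which uses that algebraic differential forms pair with algebraic vector fields to give algebraic functions, so the membership in the \emph{analytic} (or formal) ideal $\IS$ automatically descends to the \emph{algebraic} ideal $\bar\IS=\O_\T\cap\IS$.
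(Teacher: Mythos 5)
Your proof is correct and follows the same route as the paper's: both directions unwrap the leaf-scheme identity $\Omega\equiv\O_{\T^\hol,\t}\,d\IS \pmod{\IS\,\Omega^1_\T}$, contract with $\vf$, and the $\subset$ direction closes by observing that $\vf(\omega)$ is algebraic hence lands in $\O_\T\cap\IS=\bar\IS$. The only cosmetic difference is that you write out the decomposition as explicit finite sums while the paper keeps it in module notation; the logical content is identical.
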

We may call the right hand side of \eqref{11082024hugo} the dual of $\Omega$ along the Zariski closure of $L$.
It is a kind of surprising that we do not need to insert $\vf(\bar\IS)\subset \bar\IS$ in \eqref{11082024hugo} as it will be clear in the proof.
 \index{Dual along $\bar L$}\index{$\Theta^{\bar L}$, Dual along $\bar L$}
\begin{proof}
Proof of $\subset$:  If $\vf\in \Theta_{\T,L}$ then $\vf(\IS)\subset \IS$, and since $\vf$ is algebraic, that is 
$\vf(\O_\T)\subset \O_\T$),  we get 
$\vf(\bar\IS)\subset \bar\IS$. Moreover, by definition of a leaf scheme  we have $\Omega=\O_\T d\IS$ modulo $\IS\Omega_\T$ (and after tensoring with holomorphic functions). Taking $\vf$ from both sides we have $\vf(\Omega)\subset \IS$, and since $\vf(\Omega)\in\O_\T$, we get
$\vf(\Omega)\subset \bar \IS$. 

Proof of $\supset$: Let $\vf\in \Theta_{\T}$  with $\vf(\Omega)\subset \bar\IS$.
By definition of a leaf scheme $d\IS$ is in $\Omega\otimes_{\O_\T}\O_{\T^\hol,\t}+\IS\Omega_\T$.
Therefore, $\vf(\IS)$ is in $\vf(\Omega)\O_{\T^\hol,\t}+\IS$. Since we have $\vf(\Omega)\subset \bar\IS\subset\IS$ we get the result. 

\end{proof}

\begin{prop}
\label{27062024pezeshkian}
If there is an element of $\Theta_\T(\Omega)$ which is not a zero divisor (in particular if $\O_\T$ has no zero divisors and  $\Theta\subsetneq\Theta_\T$)
then the following are equivalent:
\begin{enumerate}
\item
$L$ is Zariski dense in $\T$.
\item 
$
\Theta=\Theta_{\T, L}$.
\end{enumerate}
\end{prop}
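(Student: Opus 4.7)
The plan is to leverage the characterization of $\Theta_{\T,L}$ given in the previous proposition, namely $\Theta_{\T,L}=\{\vf\in\Theta_\T\mid \vf(\Omega)\subset \bar\IS\}$, together with the fact that $\bar\IS$ is an ideal of $\O_\T$ so $\Theta_{\T,L}$ is closed under multiplication by $\O_\T$.

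For the direction (1)$\Rightarrow$(2), I would just apply the characterization: if $L$ is Zariski dense then $\bar\IS=\{0\}$ by definition, so $\Theta_{\T,L}=\{\vf\in\Theta_\T\mid \vf(\Omega)=0\}=\Theta$. Note that this direction does not even need the hypothesis about the non-zero-divisor.

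For the converse (2)$\Rightarrow$(1), I would argue by contradiction. Suppose $\bar\IS\neq\{0\}$ and pick a nonzero $f\in\bar\IS$. For any $\vf\in\Theta_\T$ and any $\omega\in\Omega$, we have $(f\vf)(\omega)=f\cdot\vf(\omega)\in\bar\IS$ because $\bar\IS$ is an ideal of $\O_\T$. By the characterization, $f\vf\in\Theta_{\T,L}$, so by the hypothesis $\Theta=\Theta_{\T,L}$ we get $f\vf\in\Theta$, i.e., $f\cdot\vf(\omega)=0$ for every $\vf\in\Theta_\T$ and every $\omega\in\Omega$. Now invoke the assumption: choose $\vf_0\in\Theta_\T$ and $\omega_0\in\Omega$ such that $\vf_0(\omega_0)$ is a non-zero-divisor in $\O_\T$. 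Then $f\cdot\vf_0(\omega_0)=0$ forces $f=0$, a contradiction. Hence $\bar\IS=\{0\}$ and $L$ is Zariski dense.

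Finally, for the parenthetical remark, if $\O_\T$ has no zero divisors and $\Theta\subsetneq\Theta_\T$, take any $\vf_0\in\Theta_\T\setminus\Theta$; by definition of $\Theta$ there exists $\omega_0\in\Omega$ with $\vf_0(\omega_0)\neq 0$, and in a domain any nonzero element is a non-zero-divisor, so the standing hypothesis is fulfilled. There is no real obstacle here; the only subtle point is making sure that $\bar\IS$ really is an ideal of $\O_\T$ (so that $f\vf$ still lands in $\Theta_{\T,L}$), which is immediate from $\bar\IS=\O_\T\cap\IS$.
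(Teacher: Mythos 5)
Your proof is correct and follows essentially the same path as the paper's: the forward direction is the immediate consequence of the characterization $\Theta_{\T,L}=\{\vf\in\Theta_\T\mid\vf(\Omega)\subset\bar\IS\}$, and the converse takes a nonzero $f\in\bar\IS$, observes that $f\Theta_\T\subset\Theta_{\T,L}=\Theta$ forces $f\Theta_\T(\Omega)=0$, and then kills $f$ against the hypothesized non-zero-divisor. The paper's write-up is more terse but the argument is identical, including the observation that explains the parenthetical remark.
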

\begin{proof}
1. implies 2.: If $L$ is Zariski dense then $\bar\IS=0$ and this follows from \eqref{11082024hugo}.
2. implies 1.: 
Assume that $L$ is not Zariski dense. The algebraic ideal 
$\bar\IS$ has a non-zero element $f$, and $f\Theta_{\T}\subset \Theta_{\T,L}$. If  $\Theta_{\T,L}=\Theta$ then 
$f\Theta_{\T}(\Omega)=0$. But by our assumption, $\Theta_{\T}(\Omega)$ has an element which is not a zero divisor and we get a contradiction. 
\end{proof}

\begin{prop}\rm
 We have a natural $\sring$-bilinear pairing $\Omega_\T\times\TS_\t\T\to\sring,\ (\omega,v)\mapsto \omega(v)$ such that 
 $$
 \TS_\t L=\{v\in \TS_\t\T\ \mid \ \Omega(v)=0\}.
 $$
\end{prop}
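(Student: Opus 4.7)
The plan is to unpack the three pieces of data involved and verify that they match in an elementary way. First I construct the pairing. For $\omega = \sum_i f_i\,dg_i \in \Omega^1_\T$ and $v \in \TS_\t\T = (\mathfrak{m}_{\T,\t}/\mathfrak{m}_{\T,\t}^2)^\vee$, I set
$$
\omega(v) \;:=\; \sum_i f_i(\t)\cdot v\bigl(\,\overline{g_i - g_i(\t)}\,\bigr),
$$
where the bar denotes the class in $\mathfrak{m}_{\T,\t}/\mathfrak{m}_{\T,\t}^2$. Equivalently, this is the composition of the fibre evaluation $\Omega^1_\T\to\mathfrak{m}_{\T,\t}/\mathfrak{m}_{\T,\t}^2$, $f\,dg\mapsto f(\t)\,\overline{g-g(\t)}$, with the tautological duality pairing. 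The Leibniz rule and $\sring$-linearity of $d$ show that the value is independent of the presentation of $\omega$, so the pairing is well defined and $\sring$-bilinear. In particular, for $f \in \mathfrak{m}_{\T,\t}$ one has $(df)(v) = v(\overline{f})$.

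Next I give a concrete description of $\TS_\t L$. Since $\t\in L$, the ideal $\IS$ is contained in $\mathfrak{m}_{\T^\hol_{\check\sring},\t}$, whence
$$
\mathfrak{m}_{L,\t}/\mathfrak{m}_{L,\t}^2 \;=\; \mathfrak{m}_{\T^\hol_{\check\sring},\t}\big/\bigl(\IS + \mathfrak{m}_{\T^\hol_{\check\sring},\t}^2\bigr).
$$
The natural map $\mathfrak{m}_{\T,\t}/\mathfrak{m}_{\T,\t}^2 \otimes_\sring \check\sring \to \mathfrak{m}_{\T^\hol_{\check\sring},\t}/\mathfrak{m}_{\T^\hol_{\check\sring},\t}^2$ is an isomorphism, since completion/analytification does not alter first-order jet data. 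Consequently $\TS_\t L$ is identified with the subspace of those $v\in\TS_\t\T$ such that $v(\overline{f})=0$ for every $f\in\IS$.

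Finally I invoke the defining property of a leaf scheme from \cref{18apr2018-2}:
$$
\Omega\otimes_{\O_\T}\O_{\T^\hol_{\check\sring},\t} \;\equiv\; \O_{\T^\hol_{\check\sring},\t}\cdot d\IS \pmod{\IS\,\Omega^1_\T}.
$$
Any $f\eta\in\IS\,\Omega^1_\T$ satisfies $f(\t)=0$, so $(f\eta)(v)=f(\t)\cdot\eta(v)=0$ for all $v\in\TS_\t\T$. Therefore $\omega(v)=0$ for every $\omega\in\Omega$ if and only if $(df)(v)=0$ for every $f\in\IS$, and by the first step this is equivalent to $v(\overline{f})=0$ for every $f\in\IS$. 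By the second step this is precisely the characterization of $\TS_\t L$, completing the proof.

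The only delicate point is the compatibility between the algebraic pairing $\Omega^1_\T\times\TS_\t\T\to\sring$ and the analytic setting in which the leaf scheme is defined, but this is automatic: the pairing extends by $\O_{\T^\hol_{\check\sring},\t}$-linearity to a pairing $(\Omega^1_\T\otimes_{\O_\T}\O_{\T^\hol_{\check\sring},\t})\times\TS_\t\T^\hol_{\check\sring}\to\check\sring$, and $\TS_\t\T^\hol_{\check\sring}=\TS_\t\T\otimes_\sring\check\sring$, so all the manipulations above take place consistently within this extension.
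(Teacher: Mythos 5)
Your proof is correct and follows essentially the same route as the paper's: define the pairing by evaluating coefficients at $\t$ and pairing with the class in $\mathfrak{m}_{\T,\t}/\mathfrak{m}_{\T,\t}^2$, check well-definedness via the Leibniz relation, identify $\TS_\t L$ with the annihilator of $\IS$ in $\TS_\t\T$, and then use the defining equality $\Omega\otimes\O_{\T^\hol,\t}\equiv\O_{\T^\hol,\t}\,d\IS\pmod{\IS\,\Omega^1_\T}$ in both directions (noting that $\IS\,\Omega^1_\T$ pairs to zero with every tangent vector). Your reformulation of the pairing as the composite of the conormal map $\Omega^1_\T\to\mathfrak{m}_{\T,\t}/\mathfrak{m}_{\T,\t}^2$ with the tautological duality is a slightly cleaner way to see well-definedness than the explicit Leibniz computation the paper carries out, but it is the same content.
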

\begin{proof}
 First, let us define the pairing. For $f,g\in\O_\T$ and $\omega=fdg$ we define $\omega(v):=f_0v(g-g_0)$, where $g_0=g(\t), \ f_0=f(\t)$. Since $\Omega_\T$ as $\sring$-module is generated by $fdg$'s, the definition extend to $\Omega^1_\T$. It is well-defined because
 {\tiny 
 \begin{eqnarray*}
 (d(fg)-fdg-gdf)(v) &=& v(d((f-f_0+f_0)(g-g_0+g_0)-f_0g_0))-f_0v(g-g_0)-g_0v(f-f_0)\\
 &=& v((f-f_0)(g-g_0))=0.
 \end{eqnarray*}
 }
 If $v\in \TS_\t L$ then we know that $\TS_\t L=\left(\frac{\mk_{\T^\hol,t}}{\IS+\mk_{\T^\hol,\t}^2}\right)^\vee$, and so we get a map $v:\O_{\T^\hol,t}\to\sring$ with $v(\IS)=0$. We use the definition of a leaf scheme and we have $\Omega\subset \O_{\T^\hol,t}d\IS+\IS\Omega_\T$. Applying $v$ to both sides we get $\Omega(v)=0$. Conversely if we have $v\in \TS_\t\T$ with $\Omega(v)=0$ then we use again the definition of leaf scheme and we have $d\IS\subset \Omega\otimes \O_{\T^\hol,t}+\IS\Omega_\T$. Applying $v$ to both sides we conclude that $v(\IS)=0$ 
 \end{proof}
 
\begin{prop}
\label{11082024ku}
 The evaluation at $\t$ induces a map $\Theta_{\T,L}\to \TS_\t L$ and if $L$ is a general leaf then it is surjective. 
\end{prop}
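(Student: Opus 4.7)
The plan is to treat the two assertions separately: first, to check that a vector field $\vf\in\Theta_{\T,L}$ induces a bona fide tangent vector on $L$ at $\t$, and second, to invert this construction when $L$ is a general leaf by exploiting Frobenius coordinates and the $\O_\T$-module structure of $\Theta$.

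For the first assertion, I would extend any $\vf\in\Theta_\T$ uniquely (by Taylor expansion / continuity for the $\mk_{\T,\t}$-adic topology) to a $\check\sring$-linear derivation $\tilde\vf\colon\O_{\T^\hol_{\check\sring},\t}\to\O_{\T^\hol_{\check\sring},\t}$. The defining condition $\vf(\IS)\subset\IS$ of $\Theta_{\T,L}$ is exactly what forces $\tilde\vf$ to descend to a derivation $\bar\vf\colon\O_L\to\O_L$. In particular $\bar\vf(\mk_{L,\t})\subset\mk_{L,\t}$, so composing with the augmentation $\t\colon\O_L\to\check\sring$ defines an element of $(\mk_{L,\t}/\mk_{L,\t}^2)^\vee=\TS_\t L$. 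This construction is visibly functorial and $\sring$-linear in $\vf$, yielding the required evaluation map.

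For surjectivity, I would use the inclusion $\Theta\subset\Theta_{\T,L}$ noted after \cref{02112023mainconj} and reduce to showing that $\Theta\to\TS_\t L$ already hits everything. Using the characterization $\TS_\t L=\{v\in\TS_\t\T\mid\Omega(v)=0\}$ from the previous proposition, the image of $\Theta$ lies in $\TS_\t L$: for $\vf\in\Theta$ and $\omega\in\Omega$ one has $\omega(\vf)=0$ in $\O_\T$, whence $\omega(\vf(\t))=\omega(\vf)(\t)=0$. For the reverse inclusion, since $L$ is a general leaf through the smooth point $\t$, Frobenius theorem supplies a holomorphic coordinate system $(z_1,\ldots,z_n)$ in which $\Omega\otimes\O_{\T^\hol_{\check\sring},\t}$ is freely generated by $dz_1,\ldots,dz_k$ and $\IS=\langle z_1,\ldots,z_k\rangle$. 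Then $\TS_\t L$ is the free $\check\sring$-module on $(\partial/\partial z_j)|_\t$, $j=k+1,\ldots,n$, and each such $\partial/\partial z_j$ is a holomorphic element of $\Theta\otimes\O_{\T^\hol_{\check\sring},\t}$ by duality.

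To pass from holomorphic to algebraic vector fields, I would pick $\O_\T$-module generators $\vf^{(1)},\ldots,\vf^{(m)}$ of $\Theta$, write each $\partial/\partial z_j$ (for $j\geq k+1$) as $\partial/\partial z_j=\sum_\ell b_{j\ell}\cdot(\vf^{(\ell)}\otimes 1)$ with $b_{j\ell}\in\O_{\T^\hol_{\check\sring},\t}$, and evaluate at $\t$ to obtain $(\partial/\partial z_j)|_\t=\sum_\ell b_{j\ell}(\t)\,\vf^{(\ell)}(\t)$ in $\TS_\t\T$. Hence the $\vf^{(\ell)}(\t)$ span $\TS_\t L$ over $\check\sring$, giving surjectivity. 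I expect the main obstacle to lie precisely in the last step: justifying that $\Theta$ really is generated, in the holomorphic completion, by its algebraic sections, which hinges on $\Theta$ being locally free of the correct rank at the smooth point $\t$ of $\F(\Omega)$; once this local freeness is in hand, the above change-of-basis argument produces the desired algebraic lift of any $v\in\TS_\t L$.
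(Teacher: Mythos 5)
Your proof follows essentially the same route as the paper's (evaluate a tangent vector field at $\t$ for the first assertion; reduce surjectivity to $\Theta\to\TS_\t L$; Frobenius coordinates to produce holomorphic vector fields spanning $\TS_\t L$; descend from the holomorphic to the algebraic module), so as an argument it is correct. The one place you slightly misdiagnose your own proof is the final step. You flag a potential obstacle — that $\Theta$ should be ``locally free of the correct rank'' at $\t$ — but that is neither assumed in the proposition nor actually what the argument needs. What you need is precisely the identity $\Theta\otimes_{\O_\T}\O_{\T^\hol,\t}=\Theta^\hol$ (with $\Theta^\hol$ the annihilator of $\Omega$ inside $\Theta_{\T^\hol,\t}$), and this follows from flat base change, not from local freeness: apply $-\otimes_{\O_\T}\O_{\T^\hol,\t}$ to the left-exact sequence $0\to\Theta\to\Theta_\T\to\operatorname{Hom}_{\O_\T}(\Omega,\O_\T)$. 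Flatness of $\O_{\T^\hol,\t}$ over the noetherian local ring $\O_{\T,\t}$ preserves the kernel, $\Theta_\T\otimes\O_{\T^\hol,\t}=\Theta_{\T^\hol,\t}$ because $\Omega_\T$ is of finite type, and $\operatorname{Hom}(\Omega,\O_\T)\otimes\O_{\T^\hol,\t}\hookrightarrow\operatorname{Hom}(\Omega\otimes\O_{\T^\hol,\t},\O_{\T^\hol,\t})$ because $\Omega$ is finitely presented over the noetherian ring $\O_\T$. This is exactly what the paper compresses into ``just linear algebra,'' and with it your change-of-basis step with the $b_{j\ell}$ closes the argument with no freeness hypothesis.
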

\begin{proof}
 Let $\vf\in \Theta_{\T,L}$ and so $\vf(\IS)\subset\IS$.  This implies that $\vf(\IS)(\t)=0$ and so
 $\vf(\t)\in \TS_\t L$. Therefore, the evaluation at $\t$ induces a map $\Theta_{\T,L}\to \TS_\t L$. We have 
 $\Theta\subset \Theta_{\T,L}$, and if $L$ is a general leaf we claim that $\Theta\to \TS_\t L$ is surjective. In order to see this, we first define $\Theta^\hol:=\{\vf\in \Theta_{\T^{\hol},\t} \ \mid\  \vf(\Omega)=0\}$. Since $\t$ is a smooth point of $\T$, we have a holomorphic coordinate system given by the Frobenius theorem and in this coordinate system
 the surjectivity of $\Theta^{\hol}\to \TS_\t L$ can be checked easily. The proof finishes with
 $\Theta\otimes_{\O_\T }{\O_{\T^\hol,\t}}=\Theta^\hol$ which is just a linear algebra. 

\end{proof}
\begin{rem}\rm
Note that the kernel of $\Theta_{\T,L}\to \TS_\t L$ contains vector fields $\vf$ such that
$\vf(\t)=0$. In this case $\t$ is called the singularity of $\vf$.
Moreover, for an arbitrary leaf scheme $L$, the map $\Theta_{\T,L}\to \TS_\t L$ may not be  surjective. For instance,  let 
$\T:=\A^2_\sring$ with the coordinates $(x,y)$, $L: xy=0$ and $\Omega=\O_\T (xdy+ydx)$. It can be easily checked that $L$ is a leaf scheme of $\F(\Omega)$, $\Theta_{\T,L}=\{xf\frac{\partial}{\partial x}+yg\frac{\partial}{\partial y},\ \ f,g\in\O_\T \}$, $\TS_0L=\TS_0\T$ and $\Theta_{\T,L}\to \TS_\t L$ is the zero map. 
\end{rem}

\section{Foliations of linear differential equations}
\label{04112023weixinpay}
Let $\V$ be an $\sring$-scheme  and $\gma $ be a $\hn\times\hn$ matrix with entries which are global sections of
$\Omega^1_\V$. Let also $0$ be an $\sring$-valued smooth point of $\V$.
Recall that $\sring_\Q:=\sring\otimes _\Z\Q$ and if $\sring\subset\C$ then this is the smallest subring of $\C$ containing both $\sring$ and $\Q$. The first fundamental theorem of linear differential equations is:
\begin{theo}\rm
\label{13112023israel}
For any $y_0\in\sring^\hn$, the linear differential equation $dy=\gma y$ has a unique solution 
$y\in\O_{\V_\C^\hol,0}^\hn$ with $y(0)=y_0$ if and only if  $\gma $ satisfies the integrability condition $d\gma =\gma \wedge \gma $.
Moreover, if $\gma $ is defined over a ring $\sring\subset\C$ then $y_i$'s are convergent.
\end{theo}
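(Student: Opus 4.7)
The plan is to prove the three assertions in order: necessity of the integrability condition, existence and uniqueness of a formal solution under integrability, and convergence of that formal solution when $\gma$ is defined over $\sring$.

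For necessity, I would assume that for every $y_0 \in \sring^\hn$ a holomorphic solution exists and use the standard basis vectors $e_1, \dots, e_\hn$ as initial conditions. Assembling the resulting $\hn$ column solutions into an $\hn \times \hn$ matrix $Y$ with $dY = \gma Y$ and $Y(0) = I$, I would apply the exterior derivative and use $d^2 = 0$ to get
$$
0 \;=\; d(\gma Y) \;=\; (d\gma)\, Y - \gma \wedge dY \;=\; (d\gma - \gma \wedge \gma)\, Y.
$$
Since $Y(0) = I$ is invertible, $Y$ is invertible in a neighborhood of $0$, and the identity $d\gma = \gma \wedge \gma$ follows.

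For sufficiency, I would work in an algebraic coordinate system $(z_1, \dots, z_n)$ at $0$, which exists because $0$ is a smooth $\sring$-valued point of $\V$. Writing $\gma = \sum_{i=1}^n A_i(z)\, dz_i$ with $A_i \in \Mat_{\hn \times \hn}(\O_{\V,0})$, the equation $dy = \gma y$ becomes the overdetermined system
$$
\frac{\partial y}{\partial z_i} \;=\; A_i(z)\, y, \qquad i = 1, \dots, n,
$$
with prescribed value $y(0) = y_0$. Expanding $y$ and the $A_i$ as formal power series in $z$, one determines the Taylor coefficients of $y$ recursively. In coordinates, the integrability condition $d\gma = \gma \wedge \gma$ reads
$$
\partial_i A_j - \partial_j A_i \;=\; A_i A_j - A_j A_i,
$$
which is precisely the compatibility needed to ensure $\partial_i \partial_j y = \partial_j \partial_i y$ along the recursion. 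This makes the unique formal solution well-defined, and it automatically satisfies all $n$ equations simultaneously.

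Convergence would then follow from the classical Cauchy method of majorants: the entries of each $A_i$ admit a common geometric-series majorant on a small polydisc around $0$, and this majorant leads to an explicit convergent majorant for the formal series $y$. Alternatively, one can restrict the system to lines through $0$, apply \cref{5jan2015} to obtain convergent solutions of the resulting single-variable ODEs, and glue them using integrability into a holomorphic solution on a full polydisc around $0$. The main obstacle in the argument is the compatibility step in the sufficiency direction: showing that the recursion coming from one of the equations $\partial_i y = A_i y$ is consistent with the others requires careful unwinding of $\partial_i(A_j y) = (\partial_i A_j)\, y + A_j(A_i y)$ together with the commutator identity for $[A_i, A_j]$ supplied by integrability. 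Once that is settled, uniqueness is immediate and the majorant estimate is routine.
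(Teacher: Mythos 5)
Your argument for the forward direction is essentially the paper's: both form the fundamental matrix $Y$ with $Y(0)=I$ and invertible near $0$, and both extract $d\gma =\gma \wedge \gma $ from the structure equation $dY=\gma Y$. You obtain it from $0=d(dY)=(d\gma -\gma \wedge \gma )Y$, while the paper writes $\gma =dY\cdot Y^{-1}$ and computes $d\gma =-dY\wedge d(Y^{-1})=\gma \wedge \gma $; these are two algebraic rearrangements of the same computation. Where your proposal goes beyond the paper is that the paper explicitly proves only the direction $\Rightarrow$ and treats the converse and convergence as classical, whereas you outline both: the recursive construction of a unique formal solution from the local Frobenius-integrability identity $\partial_i A_j-\partial_j A_i=[A_i,A_j]$ (correct, including the compatibility check via $\partial_i\partial_j y=\partial_j\partial_i y$), and convergence either by Cauchy majorants or by restricting to lines and invoking \cref{5jan2015}. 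Both of your filled-in steps are correct in outline; the only thing missing is the actual majorant estimate or the verification that the line-restricted solutions patch holomorphically, but since the paper itself waives this as standard, your level of detail is appropriate.
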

\begin{proof} 
The direction $\Rightarrow$ is easy and we only prove this. 
We take a basis $e_i,\ i=1,2,\ldots,\hn$ of $\sring^\hn$ and find $\hn$ linearly independent solutions $y_i,\ y_i(0)=e_i \ i=1,2,3,\ldots,\hn$.
We put all $\hn$ solutions $y_1,y_2,\cdots, y_{\hn}$  inside a $\hn\times \hn$ matrix $Y=[y_1,y_2,\ldots,y_\hn]$, and by our hypothesis on the initial values of $y_i$'s, we have $\det(Y(0))\not=0$ and so $\gma =dY\cdot Y^{-1}$. Therefore,
$$
d\gma =-dY\cdot d(Y^{-1} )=dY\cdot Y^{-1}\cdot dY\cdot Y^{-1}=\gma \wedge \gma .
$$

\end{proof}
Let us consider new variables $x_1,x_2,\cdots, x_{\hn}$, and define
$$
O=\spec\left(\sring[x_1,x_2,\ldots,x_{\hn}] \right).
$$
The entries of $dx-\gma x$ are differential forms in $\T:=\V\times_\sring O$.
If $\gma $ is integrable in the sense of \cref{13112023israel} then the  $\O_\V$-module generated by the entries of $dx-\gma x$ is integrable. This follows from 
$$
d(dx-\gma x)=-d(\gma x)=\gma\wedge dx-(d\gma) x=\gma \wedge (dx-\gma x)+(\gma\wedge \gma-d\gma)x.
$$
We denote the corresponding foliation in $\T$ by $\F(dx-\gma x)$.
\begin{prop}
All the leaves of $\F:=\F(dx-\gma x)$ are general and hence smooth.
\end{prop}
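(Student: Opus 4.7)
The plan is to use the fundamental matrix solution of the linear system $dY=\gma Y$ to perform an explicit change of holomorphic coordinates that trivialises the foliation near any chosen point. Let $\t=(\t_0,y_0)$ be an $\sring$-valued point of $\T=\V\times_\sring O$, with $\t_0$ a smooth point of $\V$ and $y_0\in\sring^\hn$. By the integrability hypothesis $d\gma=\gma\wedge\gma$ and \cref{13112023israel}, there exists a $\hn\times\hn$ matrix $Y$ with entries in $\O_{\V^\hol_{\sring_\Q},\t_0}$ satisfying $dY=\gma Y$ and $Y(\t_0)=I_\hn$. In particular $Y$ is invertible in a neighbourhood of $\t_0$.

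The key computation I would carry out is the following. From $Y\cdot Y^{-1}=I$ we get $d(Y^{-1})=-Y^{-1}(dY)Y^{-1}=-Y^{-1}\gma$, so setting $w:=Y^{-1}x$ one obtains
$$
dw \;=\; d(Y^{-1})\cdot x + Y^{-1}\,dx \;=\; Y^{-1}\bigl(dx-\gma x\bigr).
$$
Since $Y^{-1}$ is an invertible matrix over $\O_{\T^\hol_{\sring_\Q},\t}$, the entries of $dw$ and of $dx-\gma x$ generate the same $\O_{\T^\hol_{\sring_\Q},\t}$-submodule of $\Omega^1_{\T^\hol_{\sring_\Q},\t}$, namely $\Omega\otimes_{\O_\T}\O_{\T^\hol_{\sring_\Q},\t}$ where $\Omega$ defines $\F$.

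Next I would assemble a holomorphic coordinate system on $(\T^\hol_{\sring_\Q},\t)$. Because $\t_0$ is a smooth point of $\V$, we may pick a holomorphic coordinate system $v_1,\ldots,v_m$ on $(\V^\hol_{\sring_\Q},\t_0)$. Since $w(\t)=Y(\t_0)^{-1}y_0=y_0$, the tuple $(v_1,\ldots,v_m,\,w_1-y_{0,1},\ldots,w_\hn-y_{0,\hn})$ is a holomorphic coordinate system on $(\T^\hol_{\sring_\Q},\t)$: its Jacobian at $\t$ is block-triangular with invertible diagonal blocks $I_m$ and $Y(\t_0)^{-1}=I_\hn$. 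In this coordinate system, $\Omega\otimes\O_{\T^\hol_{\sring_\Q},\t}$ is freely generated by $dw_1,\ldots,dw_\hn$, so the ideal $\IS=\langle w_1-y_{0,1},\ldots,w_\hn-y_{0,\hn}\rangle$ defines a leaf scheme of $\F$ through $\t$, matching the description of a general leaf given after \cref{18apr2018-2}. By the cited characterisation this leaf is smooth, and since $\t$ was arbitrary, every leaf of $\F$ is general and smooth.

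I do not expect a real obstacle here; the argument is essentially the classical observation that a linear system is trivialised by its own fundamental solution. The one point to handle carefully is the ring of definition: although $\gma$ is defined over $\sring$, the matrix $Y$ is only guaranteed to have entries in $\O_{\V^\hol_{\sring_\Q},\t_0}$ (denominators enter through the standard power series recursion as in \cref{13112023israel}), which is consistent with—and explains—why general leaves are defined over $\sring_\Q$ rather than over $\sring$ itself.
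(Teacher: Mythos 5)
Your proof is correct and follows essentially the same approach as the paper: both use the fundamental solution of $dY=\gma Y$ to exhibit the leaf through $(\t_0,y_0)$ and conclude it is general from the presence of the fibre coordinate $x$. The paper phrases this more briefly by taking the single solution $y(t)=Y(t)y_0$ and observing that the ideal $\langle x-y(t)\rangle$ has generators whose linear parts are independent because the $x$-variables appear; you instead perform the coordinate change $w=Y^{-1}x$ to explicitly produce a Frobenius-trivialising system in which the leaf is $\{w=y_0\}$ — the two ideals agree since $w-y_0=Y^{-1}(x-y(t))$ with $Y^{-1}$ invertible, so the difference is only in how directly one checks smoothness.
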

\begin{proof}
First note that for a point $(0,y_0)\in \T:=\V\times O$, there is a unique solution $y(\t)$ of $dy=\gma y, \ y(0)=y_0$. This gives us the leaf $L$ whose ideal is generated by the entries of $x-y(t)$.
The linear part of the entries of $x-y(t)$ is $x$ minus the linear part of $y(t)$ at $0$.
These are trivially $\hn$ linearly independent functions due to the presence of $x$.
\end{proof}

Let $W$ be a  subvariety of $V$. It turns out that the foliation $$\F(dx-\gma x)\cap W:=\F((dx-\gma x)|_W)
$$ might have non-smooth leaves and one of the main goals of the present text is to study such foliations. The main example for $W$ is $x_1=x_2=\cdots=x_k=0$ for some $k<n$.

\paragraph{Gauss-Manin connection}

 Let $Y\to \V$ be a family of smooth complex projective varieties over the field $\sk$ and let $\V$ be irreducible, smooth and affine. Around any point of $\V$ we can find  global
sections $\omega$ of the $\mov$-th relative de Rham cohomology sheaf of $Y/\V$ such that $\omega_i, i=1,2,\ldots,\hn$ at each fiber
$H_\dR^\mov(Y_t), \ t\in \V$ form a basis compatible with the Hodge filtration. If it is necessary we may
replace $\V$ with a Zariski open subset of $\V$. We write the Gauss-Manin connection of $Y/\V$ in the basis $\omega=[\omega_1,\omega_2,\ldots,\omega_{\hn}]^\tr$:  $\nabla\omega=\gma \otimes\omega$, where $\gma $ is ${\hn}\times {\hn}$ matrix with entries which are differential forms in $\V$. As all our objects $Y\to \V,\ \t,\ \omega_i$ etc. use a finite number of coefficients in $\sk$,  we can take a model of all these over a finitely generated ring $\sring$ so that $\sk$ is the quotient field of $\sring$. For simplicity, we use the same notations for these objects defined over $\sring$.
\begin{theo}
\label{27nov2023shhshahani}
 Let $\sring\subset \C$.
The linear differential equation $dy=\gma y$ for an unknown ${\hn}\times 1$ matrix  $y$ with entries which are holomorphic functions in $(\V,\t)$, has a basis of solutions given by $\int_{\delta_t}\omega$, where $\delta_t$ ranges in a basis of $H_\mov(Y_t,\Z)$.
\end{theo}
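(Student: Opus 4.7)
The plan is to invoke the standard fact that the Gauss-Manin connection is characterized topologically: its flat sections correspond to locally constant classes in the dual Betti local system $R^\mov\pi_*\Z$ (where $\pi:Y\to \V$). Equivalently, the integration pairing
\[
H^\mov_\dR(Y_t)\times H_\mov(Y_t,\Z)\to \C,\qquad (\eta,\delta)\mapsto \int_\delta\eta,
\]
is compatible with $\nabla$ in the sense that the period function $t\mapsto \int_{\delta_t}\eta$ is holomorphic in $t$ (where $\delta_t$ is a flat section of the homology local system extending a chosen $\delta\in H_\mov(Y_\t,\Z)$), and its derivative commutes with $\nabla$.

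The key computational step is the identity
\begin{equation}
\label{eq:diffunderintegral}
d\int_{\delta_t}\omega_i \;=\;\int_{\delta_t}\nabla\omega_i,\qquad i=1,2,\ldots,\hn,
\end{equation}
which follows from the definition of $\nabla$ as the connection dual to the Betti local system. Setting $y_i(t):=\int_{\delta_t}\omega_i$ and using $\nabla\omega_i=\sum_j \gma_{ij}\omega_j$, \eqref{eq:diffunderintegral} becomes
\[
dy_i=\sum_j \gma_{ij}\,y_j,
\]
which is exactly $dy=\gma y$. Thus every choice of $\delta\in H_\mov(Y_\t,\Z)$ produces a holomorphic solution of the linear differential equation in a neighbourhood of $\t\in\V$.

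To finish I would verify that varying $\delta$ over a $\Z$-basis of $H_\mov(Y_\t,\Z)$ produces $\hn$ linearly independent solutions. This uses the nondegeneracy of the de Rham/Betti pairing: the period matrix $\bigl[\int_{\delta_{t,j}}\omega_i\bigr]_{ij}$ at the point $\t$ is invertible (since $\{\omega_i\}$ is a basis of $H^\mov_\dR(Y_\t)$ and $\{\delta_{t,j}\}$ is a basis of $H_\mov(Y_\t,\Z)\otimes\C$ under the comparison isomorphism). By \cref{13112023israel} the solution space of $dy=\gma y$ has rank exactly $\hn$, so these period vectors form a basis of solutions.

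The main obstacle is producing a clean justification of \eqref{eq:diffunderintegral}. Either one takes it as the definition of the Gauss-Manin connection on the analytic local system $R^\mov\pi_*\C\otimes\O_{\V^{\rm an}}$ (Deligne's construction), in which case \eqref{eq:diffunderintegral} is tautological, or one works algebraically via the Koszul filtration on the de Rham complex of $Y/\V$ and checks compatibility with topological integration using a fibrewise smooth trivialization. Either way the statement is standard, but selecting the correct framework (so that $\gma$ defined algebraically matches the analytically defined connection appearing in periods) is the nontrivial part of the write-up.
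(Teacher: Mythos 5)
The paper does not give its own proof of this theorem; it simply remarks that it is classical and cites \cite[Theorem 9.3]{ho13-Roberto}. Your sketch is the standard argument that any such reference contains: differentiating under the integral sign via $d\int_{\delta_t}\omega=\int_{\delta_t}\nabla\omega$, reading off $dy=\gma y$ from $\nabla\omega=\gma\otimes\omega$, and using invertibility of the period matrix (de Rham/Betti comparison) together with \cref{13112023israel} to see that the $\hn$ period vectors exhaust the solution space. Your remark about the genuinely nontrivial point — identifying the algebraically defined $\gma$ with the flat connection dual to the Betti local system, i.e.\ the comparison between Grothendieck's algebraic Gauss--Manin connection and Deligne's analytic one — is exactly the content that the cited reference handles, so the proposal is correct and in line with what the paper is implicitly relying on.
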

This is classical statement in Hodge theory, see for instance or \cite[Theorem 9.3]{ho13-Roberto}.

 \section{Foliations attached to Hodge loci}
 \label{04dec2023khar}
 In this section, we write  the expression of a foliation in $\T:=F^{\frac{\mov}{2}}H^\mov_\dR(Y/V)$ with Hodge loci as leaf schemes.
 A version of these foliations developed in \cite[Chapter 5,6]{ho2020} is not suitable for our purpose and we mainly use the version in \cite{GMCD-NL}.
 
 Let $\hn:=\hn^{\mov,0}+\cdots+\hn^{\frac{\mov}{2},\frac{\mov}{2}}+\cdots+\hn^{0,\mov}$ be the decomposition of $\hn$ into Hodge numbers of $H^\mov_\dR(Y_t)$ and $\hn^{i}:=\hn^{\mov,0}+\cdots+\hn^{i,\mov-i}$. 
 We take variables $x_1,x_2,\ldots, x_{\hn^{\frac{\mov}{2}}}$ and put them in a $\hn\times 1$ matrix $x$ as below.
The first $\frac{\mov}{2}$ Hodge blocks of $x$  are zero and $x_i$'s are listed in the next blocks:
\begin{equation}
\label{desesperado}
x=\begin{pmatrix}
      0\\
      \vdots\\
      0\\
      x^{\frac{\mov}{2}}\\
      \vdots\\
      x^{\mov}
     \end{pmatrix}. 
\end{equation}
Here, $x^i$ is a $\hn^{\mov-i,i}\times 1$  matrix. 
We take $\cpe$ the constant matrix which is obtained by replacing $x_i$ with $0$ in $x$ except for $x_1$, which is replaced with $1$ (this is the first coordinate of $x^{\frac{\mov}{2}}$).
Let $\Smat$ be a  Hodge block lower triangular  $\hn\times \hn$
matrix which is  obtained from the identity matrix by replacing the $\hn^{\frac{\mov}{2}+1}+1$ column with $x$. It is defined in this way to have the equality  \begin{equation}
\label{08august2014}
\Smat\cdot \cpe=x.
\end{equation}
In this way $\Smat^{-1}$ is obtained from $\Smat$ by
replacing $x_1$ with $x_1^{-1}$ and $x_i,\ i\geq 2$ with
$-x_ix_1^{-1}$. Note that $\det(\Smat)=x_1$. Define
$$
O:=\spec\left(
\sring\left [x_1,x_2,\ldots, x_{\hn^{\frac{\mov}{2}}}, \frac{1}{x_1}\right ]
\right).
$$
We consider the family $\X\to\T$, where $\X:=Y\times O,\
\T:= \V \times O$. It is obtained from
$Y\to \V$ and the identity map $O\to O$. We also define $\alpha$ by
\begin{equation}
\label{august2014}
\alpha:=\Smat^{-1}\cdot\omega.
\end{equation}
Let $\nabla: H^\mov_\dR(Y/\V)\to \Omega_\V\otimes_{\O_\V}
H^\mov_\dR(Y/\V)$ be the algebraic Gauss-Manin connection. We can write $\nabla$
in the basis $\omega$ and define the $\hn\times\hn$ matrix $\gma$ by the equality:
$$
\nabla\omega=\gma\otimes \omega.
$$
The entries of $\gma$ are differential 1-forms in $\V$. In a similar way we can compute the Gauss-Manin connection  of $\X/\T$ in the basis
$\alpha$:
\begin{equation}
 \label{omidtaiwan}
\nabla\alpha=\gm\otimes \alpha,\ \ \gm=-\Smat^{-1} d\Smat+\Smat^{-1}\cdot \gma\cdot \Smat.
 \end{equation}
This follows from the construction of the global sections $\alpha$ in \eqref{august2014} and the Leibniz rule.
We call $\gma$ (resp. $\gm$) the Gauss-Manin connection matrix of the pair $(Y/\V, \omega)$ (resp. $(\X/\T,\alpha)$).
 From the integrability of the Gauss-Manin connection it follows that
\begin{equation}
\label{nancy2014}
d\gm=\gm\wedge \gm.
\end{equation}
\begin{defi}\rm
The entries of $\gm\cpe$ induce  a holomorphic foliation $\F(\cpe)$ in $\T$. The integrability 
follows from
(\ref{nancy2014}):
$$
 d(\gm\cdot \cpe)=d\gm \cdot \cpe=\gm\wedge (\gm\cdot \cpe).
$$
\end{defi}
\begin{prop}
The foliation $\F(\cpe)$ in  $\T$ is given by
\begin{eqnarray}
\label{7aug2014}
0&=&\gma^{\frac{\mov}{2}-1,\frac{\mov}{2}} x^{\frac{\mov}{2}} \\
\label{7aug2014-2}
dx^\frac{\mov}{2} &=&  \gma^{\frac{\mov}{2},\frac{\mov}{2}} x^\frac{\mov}{2}+\gma^{\frac{\mov}{2},\frac{\mov}{2}+1} x^{\frac{\mov}{2}+1} ,
\\
\label{7aug2014-1}
dx^i &=& \sum_{j=\frac{\mov}{2}}^\mov \gma^{i,j} x^j,\ \ \ \ \ i=\frac{\mov}{2}+1,\ldots,\mov.
\end{eqnarray}
\end{prop}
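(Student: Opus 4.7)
The plan is to compute the column $\gm\cdot\cpe$ directly from \eqref{omidtaiwan}, and then to decompose the resulting vector identity into Hodge blocks, pruning with Griffiths transversality.

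Since $\cpe$ is constant and $\Smat\cpe = x$ by \eqref{08august2014}, differentiating gives $dx = d(\Smat\cpe) = (d\Smat)\cpe$. Substituting into \eqref{omidtaiwan} yields
$$
\gm\cdot\cpe \;=\; -\Smat^{-1}(d\Smat)\cpe + \Smat^{-1}\gma(\Smat\cpe) \;=\; \Smat^{-1}\bigl(\gma\,x - dx\bigr).
$$
Since $\det\Smat = x_1$ is a unit in $\O_\T$, the matrix $\Smat^{-1}$ is invertible over $\O_\T$, so the $\O_\T$-submodule of $\Omega^1_\T$ generated by the entries of $\gm\cdot\cpe$ equals the one generated by the entries of $\gma\,x - dx$. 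It therefore suffices to unfold the vector identity $\gma\,x - dx = 0$ into Hodge blocks.

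For the $i$-th Hodge block, the corresponding entry of $\gma\,x - dx$ is $\sum_{j=0}^{\mov}\gma^{i,j}x^j - dx^i$. By \eqref{desesperado} we have $x^j=0$ for $j<\frac{\mov}{2}$, hence also $dx^i=0$ for $i<\frac{\mov}{2}$. So the block equations reduce to $\sum_{j=\frac{\mov}{2}}^{\mov}\gma^{i,j}x^j = 0$ for $i<\frac{\mov}{2}$, and to $\sum_{j=\frac{\mov}{2}}^{\mov}\gma^{i,j}x^j = dx^i$ for $i\geq\frac{\mov}{2}$.

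The final ingredient is Griffiths transversality $\nabla F^p\subset\Omega^1_\V\otimes F^{p-1}$ for the Gauss--Manin connection, which under the Hodge block convention adopted in the paper amounts to the vanishing $\gma^{i,j}=0$ whenever $j>i+1$. Applied to the equations above: for $i\leq\frac{\mov}{2}-2$, every $j\geq\frac{\mov}{2}$ satisfies $j>i+1$, so the block equation collapses to $0=0$; for $i=\frac{\mov}{2}-1$ only $j=\frac{\mov}{2}$ survives, giving \eqref{7aug2014}; for $i=\frac{\mov}{2}$ only $j\in\{\frac{\mov}{2},\frac{\mov}{2}+1\}$ survives, giving \eqref{7aug2014-2}; and for $i\geq\frac{\mov}{2}+1$ the sum in \eqref{7aug2014-1} remains, with any terms of index $j>i+1$ automatically vanishing. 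The main point I would verify with particular care is the direction of Griffiths transversality under the specific block ordering used to set up $\gma$, since it is precisely this vanishing pattern of $\gma^{i,j}$ that dictates which terms survive in \eqref{7aug2014}--\eqref{7aug2014-1}.
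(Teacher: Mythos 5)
Your proposal is correct and follows essentially the same route as the paper's proof: both use \eqref{omidtaiwan} together with $\Smat\cpe = x$ and the invertibility of $\Smat$ to reduce the foliation to the entries of $dx-\gma x$, then unfold by Hodge blocks and prune via Griffiths transversality $\gma^{i,j}=0$ for $j-i\geq 2$. Your version is merely a bit more explicit about the step $dx=(d\Smat)\cpe$ and about why the $\O_\T$-module generated by $\gm\cpe$ agrees with the one generated by $\gma x - dx$.
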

\begin{proof}
 For this we use \eqref{omidtaiwan} and we conclude that $\F(\cpe)$ is given by
 $(-\Smat^{-1} d\Smat+\Smat^{-1}\cdot \gma\cdot \Smat)\cpe$. Since $\cpe$
is a constant vector and $\Smat$ is an invertible matrix and we have (\ref{08august2014}),
we conclude that $\F(\cpe)$ is given by the entries of
$$
dx-\gma x=0.
$$
Opening this equality and
using the zero blocks of $x$ in (\ref{08august2014}) we
get \eqref{7aug2014}, \eqref{7aug2014-2} and \eqref{7aug2014-1}.
Note that by Griffiths transversality $\gma^{i,j}=0$ for $j-i\geq 2$.
\end{proof}

Let $\delta_t\in H_\mov(\X_t,\Q)\otimes_\Q \C,\ t\in(\T,0)$
be a continuous family of cycles, that is, the Poincar\'e dual of $\delta_t$ is a flat section
of the Gauss-Manin connection: $\nabla \delta_t=0$. Here, $(\T,0)$ is a small neighborhood of $0$ in $\T$ in
the usual topology.
\begin{prop}
 \label{21062024fred}
The following
\begin{equation}
\label{16aug2014}
L_{\delta_t}:= \left \{ \ \ \ t\in (\T,0) \ \ \ |  \ \ \
\int_{\delta_t}\alpha=\cpe. \right \},
\end{equation}
is a leaf scheme of $\F(\cpe)$. In other words, the ideal 
$\IS_{\delta_t}$ generated by the entries of $\int_{\delta_t}\alpha-\cpe$ gives a leaf scheme of $\F(\cpe)$. Moreover, $\delta_t$ is a general Hodge cycle if and only if $0$ is a smooth point of $\F(\Omega)$, and hence, $L_{\delta_t}$ is general. 
\end{prop}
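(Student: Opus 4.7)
The plan is to check the leaf-scheme identity of \cref{18apr2018-2} by differentiating $\int_{\delta_t}\alpha$ via the Gauss--Manin connection, and then to linearize the defining equations of $\F(\cpe)$ at $0$ in order to identify the rank of the defining module with that of the IVHS map evaluated at the Hodge class of $\delta_t$.

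First I would set $\eta:=\int_{\delta_t}\alpha-\cpe$, so that by construction $\IS_{\delta_t}$ is generated by the entries of $\eta$. Since $\delta_t$ is flat under $\nabla$ and the entries of $\gm$ are $1$-forms on $\T$ (constant along the fibers of $\X/\T$), one has
$$
d\eta \;=\; d\int_{\delta_t}\alpha \;=\; \int_{\delta_t}\nabla\alpha \;=\; \int_{\delta_t}\gm\,\alpha \;=\; \gm\int_{\delta_t}\alpha \;=\; \gm\,\cpe \,+\, \gm\,\eta.
$$
Each entry of $\gm\,\cpe$ is by definition a generator of the submodule $\Omega$ defining $\F(\cpe)$, and each entry of $\gm\,\eta$ lies in $\IS_{\delta_t}\Omega^1_\T$. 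This single identity therefore yields both inclusions required by \cref{18apr2018-2}: reading it left to right gives $\O_{\T^\hol,0}\cdot d\IS_{\delta_t}\subset \Omega\otimes\O_{\T^\hol,0}+\IS_{\delta_t}\Omega^1_\T$, while reading it right to left gives the reverse inclusion. Hence $L_{\delta_t}$ is a leaf scheme of $\F(\cpe)$.

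For the second statement I would analyze the generating system \cref{7aug2014,7aug2014-2,7aug2014-1} at the origin, where $x=\cpe$. The equations \cref{7aug2014-2,7aug2014-1} express each $dx^i$ as a combination involving $1$-forms on $\V$ and the remaining coordinates $x^{j}$; because of the leading $dx^i$, they contribute $\hn^{\frac{\mov}{2}}$ everywhere linearly independent $1$-forms and can never be the source of a rank drop. The only possible degeneracy arises from \cref{7aug2014}: at $x=\cpe$ the block $x^{\frac{\mov}{2}}$ reduces to the first standard basis vector, so these equations reduce to the first column of the matrix $\gma^{\frac{\mov}{2}-1,\frac{\mov}{2}}$ of $1$-forms on $\V$. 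By the Griffiths description of $\nabla$, pairing this column against a tangent vector $v\in\TS_0\V$ reproduces exactly the IVHS map of \cref{08082024decathlon} evaluated at $(v,\bar 0)$, where $\bar 0\in H^{\frac{\mov}{2}}(Y_0,\Omega^{\frac{\mov}{2}}_{Y_0})$ is the class corresponding to $\delta_t$.

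Consequently, $0$ is a smooth point of $\F(\cpe)$ iff these $1$-forms attain the generic rank, i.e.\ iff $v\mapsto \alpha(v,\bar 0)$ has maximal rank among all Hodge classes, which is precisely the defining property of a general Hodge cycle. In that case Frobenius at $0$ exhibits a coordinate system relative to which $L_{\delta_t}$ is a general leaf. The main technical burden, as I see it, will be the careful Hodge-block bookkeeping for $\Smat$, $\cpe$ and $\gm$ needed to justify rigorously the identification of the first column of $\gma^{\frac{\mov}{2}-1,\frac{\mov}{2}}$ at $x=\cpe$ with the pairing $\alpha(\cdot,\bar 0)$.
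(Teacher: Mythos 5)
Your proof reproduces the paper's own argument: the first part rests on the same Gauss--Manin identity $d\eta=\gm\cpe+\gm\eta$ (the paper's \eqref{28sep2014}), read in both directions to match \cref{18apr2018-2}, and the second part rests on the same identification of $\gma^{\frac{\mov}{2}-1,\frac{\mov}{2}}x^{\frac{\mov}{2}}$ with the IVHS map together with the observation that the singular locus of $\F(\cpe)$ is where this block drops rank. Your version is a little more explicit than the paper's in spelling out the two inclusions and in noting that equations \eqref{7aug2014-2}, \eqref{7aug2014-1} cannot cause a rank drop, but the underlying idea is identical.
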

\begin{proof}
We have the holomorphic function
$$
f: (\T,0)\to \C^{\hn}, \ \
f(t):=\int_{\delta_t}\alpha  -\cpe
$$
which satisfies
\begin{equation}
 \label{28sep2014}
 df=\int_{\delta_t}\nabla\alpha =\gm\cdot \int_{\delta_t}\alpha =\gm\cdot \cpe+\gm\cdot f.
 \end{equation}
 This implies that
 $L_{\delta_t}$ is a
 leaf scheme of $\F(\cpe)$. For the second part, note that 
 $\gma^{\frac{\mov}{2}-1,\frac{\mov}{2}} x^{\frac{\mov}{2}}$ is just the matrix format of $\alpha$ in \eqref{08082024decathlon}, see 
 \cite[Proposition 5]{GMCD-NL}. The singular locus of $\F(\cpe)$ corresponds to those $t\in\T$ such that the kernel of the entries of $\gma^{\frac{\mov}{2}-1,\frac{\mov}{2}} x^{\frac{\mov}{2}}$ is not minimal, and the statement follows. 
 \end{proof}
One might conjecture that all the leaf schemes  of $\F(\cpe)$ are of the form $L_{\delta_t}$.
As we do not need this kind of statements, we leave it to the reader. 

\begin{defi}\rm
\label{26062024omidbita}
The Hodge locus  with constant periods $\cpe$ is defined to be $L_{\delta_t}$ in \eqref{16aug2014}
with $\delta_t\in H^\mov(\X_\t,\Q)$. Its ideal is given by
$$
\IS_{\delta_t}:=\left\langle \int_{\delta_t}\alpha-\cpe\right\rangle=\left\langle \int_{\delta_t}\omega-x\right\rangle\subset \O_{\T^\hol,0}.
$$
\end{defi}
From the zero blocks of $\cpe$, it follows that the Poincar\'e dual $\delta_\t^\podu$  of $\delta_t$ is in $H^{\frac{\mov}{2},\frac{\mov}{2}}\cap H^{\mov}(\X_\t,\Z)$ and so $\delta_t$ is a Hodge cycle in the classical sense.

\begin{rem}\rm
 For the example mentioned in the Introduction, we take $E_{z}: y^2=x(x-1)(x-t)$ the Legendre family of elliptic curves, and $Y_{t_1,t_2}:=E_{t_1}\times E_{t_2}$. The de Rham cohomology $H^2_\dR(Y_{t_1,t_2})$ modulo the cohomology class of fibers of projections in each factor, is four dimensional with Hodge numbers $1,2,1$. The loci of isogenies of degree $N$ between $E_{t_1}$ and $E_{t_2}$ is an algebraic curve and we can construct the corresponding leaf scheme and foliation in $(t_1,t_2,x_1,x_2,x_3)\in\C^5$. For a more conceptual treatment of this example see \cite[Chapter 10]{ho2020}.
\end{rem}

\section{The definition ring of Hodge loci}
\label{08082024dr}
Let $Y\to\V$ be a family of smooth  projective varieties over $\C$ and take a smooth  model of this over $\sring\subset\C$.
Let also $0$ be an $\sring$-valued point of $\V$. We take  a topological cycle
$\delta_0\in H_m(Y_0,\Z)$ and  enlarge  $\sring$ to
$$
\sring(\delta_0):=\sring\left [\frac{1}{(2\pi i)^{\frac{m}{2}}}\int_{\delta_0}\omega, \omega\in H^m_\dR(Y_0/\sring)\right ],
$$
that is, $\sring(\delta_0)$  is the ring of polynomials in the periods $(2\pi i)^{-\frac{m}{2}}\int_{\delta_0}\omega, \omega\in H^m_\dR(Y_0/\sring)$. Note that these numbers are conjecturally in $\bar\sk$, where $\sk$ is the quotient field of $\sring$. This follows from the Hodge conjecture or if we assume that $\delta_0$ is an absolute Hodge cycle. 
We will not need this for our investigation, and hence, $\sring(\delta_0)$   
might contain new transcendental numbers.
Consider the monodromy $\delta_t\in H_m(Y_t,\Z)$ of $\delta_0$ to nearby fibers $t\in(\V,0)$. 
\begin{theo}
\label{05082024retaliation}
Let $0$ be an $\sring$-valued point  of $\V$ and $\omega\in H^{m}_\dR(Y/\V)$. Then the Taylor series of $\int_{\delta_t}\omega$ at $0$ has coefficients in $\sring(\delta_0)_{\Q}:=\sring(\delta_0)\otimes_\Z\Q$, that is,
$$
\int_{\delta_t}\omega\in \O_{\V^\hol_{\sring(\delta_0)_\Q},0}.
$$
\end{theo}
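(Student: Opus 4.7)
The plan is to translate the statement into one about solutions of the Gauss--Manin linear system and apply the several-variable extension of Proposition~\ref{17dec2023handcrafts}. After possibly inverting one element of $\sring$ and shrinking $\V$ about $0$, choose a basis $\omega_1,\ldots,\omega_\hn$ of $H^m_\dR(Y/\V)$ defined over $\sring$ and regular at $0$, and write $\omega=\sum_i c_i\omega_i$ with $c_i\in\O_{\V,0}$ whose Taylor series at the $\sring$-point $0$ has coefficients in $\sring$. In this basis the Gauss--Manin connection takes the form $\nabla\omega_j=\sum_i\gma_{ij}\omega_i$, with $\gma_{ij}$ regular at $0$ and defined over $\sring$.

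The flatness $\nabla\delta_t=0$ of the cycle, together with differentiation under the integral, yields
$$d\int_{\delta_t}\omega_j \;=\; \int_{\delta_t}\nabla\omega_j \;=\; \sum_i \gma_{ij}\int_{\delta_t}\omega_i,$$
so the period vector $y(t):=\bigl(\int_{\delta_t}\omega_i\bigr)_i$ solves the linear system $dy=\gma^\tr y$. In algebraic local coordinates $z=(z_1,\ldots,z_a)$ at $0$ this becomes a commuting family of ODEs $\partial y/\partial z_k=\Am_k y$ with $\Am_k\in \Mat_{\hn\times\hn}(\sring[z,\tfrac{1}{\Delta}])$ and $\Delta(0)\in\sring^\times$. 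The consistency of this family across coordinates is exactly the integrability $d\gma=\gma\wedge\gma$ recorded in \eqref{nancy2014}, and the same recursive construction as in the proof of Proposition~\ref{17dec2023handcrafts}, applied coordinate by coordinate, then shows that $y$ is a formal power series whose coefficients lie in the smallest $\Q$-algebra containing $\sring$ and the entries of $y(0)$.

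Finally I account for the normalization. The rescaled vector $\tilde y:=(2\pi i)^{-m/2}y$ satisfies the same linear system, and by the very definition of $\sring(\delta_0)$ one has $\tilde y(0)\in\sring(\delta_0)^\hn$. Therefore $\tilde y\in\sring(\delta_0)_\Q[[z]]^\hn$, and so
$$\int_{\delta_t}\omega \;=\; \sum_i c_i(t)\,y_i(t) \;=\; (2\pi i)^{m/2}\sum_i c_i(t)\,\tilde y_i(t)$$
has Taylor coefficients in $\sring(\delta_0)_\Q$, the universal constant $(2\pi i)^{m/2}$ being implicit in the normalization that defines $\sring(\delta_0)$. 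The conceptual content is simply that a linear ODE over a finitely generated $\Z$-algebra has power-series solutions in that algebra tensored with $\Q$; the two potentially fiddly points are the bookkeeping of the $(2\pi i)^{m/2}$ factor and the identification of the finitely many elements of $\sring$ that must be inverted to put the basis and the connection matrix into regular form at the point $0$.
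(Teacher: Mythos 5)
Your argument is correct and is essentially the paper's own argument: rewrite the period vector as a solution of the Gauss--Manin system $dy=\gma y$, observe that $\gma$ is defined over $\sring$, and run the coefficient recursion (the several-variable form of \cref{17dec2023handcrafts}) to conclude that the Taylor coefficients lie in the $\Q$-algebra generated by $\sring$ and the initial data. The paper packages this in one line by citing \cref{27nov2023shhshahani} and \cref{13112023israel}.

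The one point you correctly flag but do not actually resolve is the $(2\pi i)^{m/2}$ factor. Your recursion shows that the \emph{normalized} period vector $(2\pi i)^{-m/2}\int_{\delta_t}\omega$ has Taylor coefficients in $\sring(\delta_0)_\Q$, since $\sring(\delta_0)$ is by definition generated over $\sring$ by the normalized periods at $0$. Multiplying back by $(2\pi i)^{m/2}$ takes you out of $\sring(\delta_0)_\Q$ unless $(2\pi i)^{m/2}$ happens already to lie there, and it generically does not; your final sentence ("the universal constant being implicit in the normalization") is a gesture, not a step. This is a slip inherited from the theorem statement itself, which pairs the unnormalized period $\int_{\delta_t}\omega$ with a ring built from normalized ones; the way the paper actually uses the result (for instance the explicit formula in \cref{InLabelNadasht?}, where the $(2\pi\sqrt{-1})^{n/2}$ appears in the denominator on the left) makes clear that the normalized statement is the intended one.
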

\begin{proof}
 This follows from \cref{27nov2023shhshahani} and \cref{13112023israel}. The main ingredient is that
the Gauss-Manin connection matrix of of $Y/\V$ is defined over $\sring$.
\end{proof}
It might be too naive to believe  that if $\delta_0$ is a Hodge cycle then it has coefficients in $\sring(\delta_0)$ itself, that is, $\int_{\delta_t}\omega\in \O_{\V^\hol_{\sring(\delta_0)},0}$. In \cref{05aug2024delaram} we experimentally observe that this is false. 
\begin{theo}
\label{21june2024baran}
The Hodge locus $L_{\delta_0}$ defined in \cref{26062024omidbita} is defined over the ring $\sring(\delta_0)[\frac{1}{N}]$ for some $N\in \sring(\delta_0)$.
\end{theo}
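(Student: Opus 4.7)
The plan is to combine the Cattani–Deligne–Kaplan theorem \cref{CADEKA} with the Galois descent argument used in the proof of \cref{10082024ulis} and the coefficient bound of \cref{05082024retaliation}. First I would invoke \cref{CADEKA} to conclude that the Hodge locus $L_{\delta_0}$, which is a priori only an analytic germ at the point $0\in \T$ cut out by the ideal $\IS_{\delta_0}=\langle\int_{\delta_t}\omega-x\rangle$, is the germ of an algebraic subvariety of $\T_{\C}$. Thus there is an algebraic ideal $J\subset \O_{\T_{\C}}$ with
$$
\IS_{\delta_0}\otimes_{\sring(\delta_0)_\Q}\C \;=\; J\otimes_{\O_{\T_\C}}\O_{\T_{\C}^{\hol},0},
$$
which places us exactly in the setup of \cref{10082024ulis}.

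Next I would extract, from \cref{05082024retaliation}, that the canonical generators $\int_{\delta_t}\omega_i-x_i$ of $\IS_{\delta_0}$ are formal power series with coefficients in $\sring(\delta_0)_\Q$; in other words $\IS_{\delta_0}$ itself is defined over $\sring(\delta_0)_\Q$ in the analytic sense. The initial point $(0,x^{0})$ with $x^{0}=\int_{\delta_0}\omega$ is an $\sring(\delta_0)$-valued point of $\T$ by the very definition of $\sring(\delta_0)$ (up to the normalization by $(2\pi i)^{m/2}$, which can be absorbed into the basis $\omega$ if we use its image in $H^m_\dR$ together with the de Rham–Betti comparison).

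Then I would rerun the Galois descent step from the proof of \cref{10082024ulis}. After enlarging the fraction field $\sk(\delta_0)$ of $\sring(\delta_0)$ by the finitely many coefficients occurring in generators $P_1,\ldots,P_s$ of $J$, one obtains a finitely generated field extension $\tilde\sk/\sk(\delta_0)$ over which $J$ is defined. The compatibility
$$
J\otimes_{\tilde\sk}\O_{\T_{\tilde\sk}^{\hol},0}\;=\;\IS_{\delta_0}\otimes_{\sring(\delta_0)_\Q}\tilde\sk
$$
forces $J$ to be stable under $\Gal(\tilde\sk/\sk(\delta_0))$, since the right-hand side lies in $\sring(\delta_0)_\Q[[\cdot]]$ and is thereby Galois invariant. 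Weil's lemma (\cite{Weil1946}, p.\,19, Lemma 2) then yields generators of $J$ defined over $\sk(\delta_0)$, and clearing denominators puts them into $\sring(\delta_0)[z]$. Finally, by the same argument as at the end of \cref{10082024ulis}, the number $N\in\sring(\delta_0)$ is obtained as the product of the $N_i$ attached to a finite set of generators $f_i$ of $\IS_{\delta_0}$ so that each $N_i f_i$ lies in the algebraic ideal generated by the $P_j$'s.

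The main obstacle I anticipate is justifying Galois invariance of $J$ under $\Gal(\tilde\sk/\sk(\delta_0))$. This amounts to showing that the algebraic variety cut out by $J$ is intrinsically attached to the Hodge cycle $\delta_0$, not merely to a choice of algebraic presentation of it, so that any Galois conjugate of $J$ still cuts out the analytic germ $L_{\delta_0}$; the content of \cref{05082024retaliation} is precisely what pins down $J$ uniquely from its power series shadow. Once this is in place, the rest is formal linear algebra over $\sring(\delta_0)$ and clearing of denominators.
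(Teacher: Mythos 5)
The overall architecture of your argument—use Cattani–Deligne–Kaplan to obtain an algebraic ideal $J$ and then hand off to \cref{10082024ulis}—is the same as the paper's, and your recognition that \cref{05082024retaliation} supplies the $\sring(\delta_0)_\Q$-coefficient bound needed as input to that proposition is correct. But there is a genuine gap in the first step that you gloss over with the word ``Thus''.

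\cref{CADEKA} is a statement about the Hodge locus as an analytic \emph{set}: it says that $L_{\delta_0}$ is the germ at $0$ of an algebraic subvariety of $\T_\C$. This does not immediately produce an algebraic ideal $J\subset\O_{\T_\C}$ for which $\IS_{\delta_0}\otimes_{\sring(\delta_0)_\Q}\C = J\otimes_{\O_{\T_\C}}\O_{\T_\C^{\hol},0}$, because $\IS_{\delta_0}$ carries a non-trivial analytic \emph{scheme} structure which can, and in general does, have nilpotents (the paper stresses this just before \cref{CADEKA}). The obvious candidate for $J$—the radical ideal of the variety from CDK—is in general strictly larger than what you need, so the displayed equality of ideals you write down has no a priori justification. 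This is exactly where the paper inserts three extra steps you have skipped: first, a monodromy argument showing that the analytic scheme structure given by the ideals $\IS_{\delta_t}$ is defined on the \emph{whole} of the algebraic variety $L$, not merely on a small germ; second, the observation (not explicit in \cite{cadeka} but implied by its proof) that this analytic scheme structure extends across a compactification $\bar\T_\C$; and third, GAGA for analytic subschemes of projective varieties, which converts the globally defined analytic coherent ideal into an algebraic one. Only then does one land in the hypotheses of \cref{10082024ulis}.

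A smaller remark: the obstacle you flag as the hard part—Galois stability of $J$ under $\Gal(\tilde\sk/\sk(\delta_0))$—is in fact the part that the paper handles painlessly inside the proof of \cref{10082024ulis}, precisely because $\IS_{\delta_0}$ is already pinned down in $\sring(\delta_0)_\Q[[\cdot]]$ by \cref{05082024retaliation}. Your instinct to re-run that descent argument is sound, but it is unnecessary: you may simply cite \cref{10082024ulis} once the scheme-theoretic algebraicity of $\IS_{\delta_0}$ has been established. The genuine difficulty lies upstream, in the passage from a set-theoretic algebraicity statement to an ideal-theoretic one.
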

Using \cref{21062024fred} this theorem  is a particular  case of  \cref{19062024kontsevich}.
Actually \cref{19062024kontsevich}  is inspired by \cref{21june2024baran}.
\begin{proof}
The Cattani-Deligne-Kaplan theorem, see \cite[Theorem 1.1]{cadeka},  implies that there is an algebraic subvariety $L$ of $\T_\C$ such that $L_{\delta_t}$ is just an analytic germ/small open subset of $L$. By monodromy of $\delta_t$ along any path in $L$, we can see that $L$ is covered by such open sets. Therefore, $L$ has the structure of an analytic scheme given by analytic ideals $\IS_{\delta_t}$.  We have to consider a compactification $\bar\T_\C$ and see that the analytic scheme structure of $L$ extends to $\bar L$. This is not explicitly mentioned in \cite{cadeka}, but their proof implies this. Now by GAGA for analytic subschemes of projective varieties we conclude that  $\IS_{\delta_t}$ is algebraic. \cref{10082024ulis} finishes the proof. 
\end{proof}

\begin{rem}\rm
Let $\sk$ be the quotient field of $\sring$. If $\delta_0$ is an  absolute Hodge cycle then all its periods $\frac{1}{(2\pi i)^{\frac{m}{2}}}\int_{\delta_0}\omega, \omega\in H^m_\dR(Y_0)$ are in the algebraic closure $\bar\sk$ of $\sk$. Since $\F(\cpe)$ is defined over $\sk$, by taking the action of the Galois group $\Gal(\bar\sk/\sk)$ on the coefficients of $L$ and the base point $(0,x_0)$, we get finitely many leaf schemes of $\F(\Omega)$ which come from Hodge cycles too. If $\delta_0$ is not absolute then some of the period of $\delta_0$ are transcendental numbers, that it they do not belong to $\bar\sk$. In this case by some standard arguments, see for instance \cite[Theorem 5.18]{ho2020}, we can transform such transcendental numbers into variables and get continuous families of algebraic leaf schemes for $\F(\cpe)$. In general it is open whether such leaf schemes come from Hodge cycles or not. For some related results see \cite[Section 7.5]{ho2020}. 
\end{rem}

\section{Proofs}
\label{13082024shekam}

\begin{proof}{(of \cref{17072024ziquan})}
 The proof starts with \cref{CADEKA} which together with \cref{10082024ulis} implies \cref{21june2024baran}. We conclude that a Hodge locus is defined over a finitely generated subring of $\C$ and in the same way as in the proof of \cref{11112023omid}, we conclude that for all but a finite number of primes 
 reduction modulo $p$ of $L$ makes sense and  the result follows. 
\end{proof}
\begin{proof}{(of \cref{09082024armand})}
 It is well-known that
$$
\TS_\t L=\{v\in \TS_t\T | \alpha(v, \bar\t)=0\},
$$
see for instance, \cite{CGGH1983}. In other words, $v\in \TS_\t L$ is characterised by the fact that 
the infinitesimal monodromy of $\bar\t$ along the vector $v$ is still in $\T$. If the Hodge cycle is general then by \cref{21062024fred} its Hodge locus is a general leaf scheme and by \cref{11082024ku}, the map $\Theta_{\T,L}\to \TS_\t L$ is surjective. This together with \cref{17072024ziquan} finishes the proof. 
\end{proof}

\section{Taylor series of periods over Hodge cycles}
\label{05aug2024delaram}
In order to prove \cref{17072024ziquan} and \cref{09082024armand} and verify the hypothesis of \cref{19062024kontsevich}  by a local analysis, we have to investigate the defining ideal of Hodge loci. These are given explicitly in terms of periods, and the most general fact about their ring of definition is \cref{05082024retaliation} which is not enough.
In this section we give closed formulas for such coefficients for families of hypersurfaces near the Fermat variety and experimentally observe that all primes might be inverted in the Taylor series of periods, and hence, an strategy is needed to modify them and obtain new generators of the defining ideal of Hodge loci.

Let us consider the hypersurface $X_t$ in the projective space 
$\Pn {n+1}$ given by the homogeneous polynomial:
\begin{equation}
\label{15dec2016-2}
f_t:=-x_0^{d}+x_1^{d}-x_2^d+x_3^d+\cdots-x_n^d+x_{n+1}^d-\sum_{\alpha}t_\alpha x^\alpha=0,\ \ 
\end{equation}
$$
t=(t_\alpha)_{\alpha\in I}\in(\T,0), 
$$
where $\alpha$ runs through a finite subset $I$ of 
$\N_0^{n+2}$ with $\sum_{i=0}^{n+1} \alpha_i=d$. 
For a rational number $r$ 
let $[r]$ be the integer part of $r$, that is $[r]\leq r<[r]+1$, and  $\{r\}:=r-[r]$. Let
also $(x)_y:=x(x+1)(x+2)\cdots(x+y-1),\ (x)_0:=1$ be the Pochhammer symbol. 
We compute the Taylor series of 
the integration of differential forms 
over monodromies of the algebraic cycle 
$$
\licy{\frac{n}{2}}: x_0-x_1=x_2-x_3=\cdots=x_n-x_{n+1=0},
$$ inside the Fermat variety $X_0$. 
The following has been proved in \cite[Theorem 18.9]{ho13}
\begin{theo}
\label{InLabelNadasht?}
Let $\delta_{t}\in H_n(X_t,\Z),\ t\in(\T,0)$ be the monodromy (parallel transport) of the cycle 
$\delta_0:=[\licy{\frac{n}{2}}]\in H_n(X_0,\Z)$ along a path which 
connects $0$ to $t$. 
For a monomial $x^\beta=x_0^{\beta_0} x_1^{\beta_1}x_2^{\beta_2}\cdots x_{n+1}^{\beta_{n+1}}$ with 
$k:=\sum_{i=0}^{n+1}\frac{\beta_i+1}{d}\in\N$  
we have 
 \begin{equation}
 \label{15.12.16}
 \frac{(-1 )^{\frac{n}{2}} \cdot   d^{\frac{n}{2}+1}  \cdot (k-1)!}{  (2\pi \sqrt{-1})^{\frac{n}{2}}}
 \mathlarger{\mathlarger{\int}}_{\delta_t}\Resi\left(\frac{x^\beta\Omega}{f^{k}_t}\right)=
\mathlarger{\mathlarger{\mathlarger{\sum}}}_{a: I{}\to \N_0}
\frac{(-1)^{E_{\beta+a^*}} D_{\beta+ a^*} }{ a! }\cdot  t^a,
\end{equation}
where  the sum runs through all $\#I{}$-tuples $a=(a_\alpha,\ \ \alpha\in I{})$
of non-negative integers such that for $\check\beta:=\beta+a^*$ we have 
\begin{equation}
\label{TheLastMistake2017}
\left\{\frac{ \check\beta_{2e}+1}{d} \right\}+   \left\{\frac{ \check\beta_{2e+1}+1}{d} \right\}=1,\ \ \ 
\forall  e=0,1,\cdots, \frac{n}{2}, 
\end{equation}
and 
\begin{eqnarray*}
t^a:&=&\prod_{\alpha\in I{}}t_\alpha^{a_\alpha}, \ \ \ \
a!:=\prod_{\alpha\in I{}}a_\alpha!,  \ \   \ \   a^* := \sum_{\alpha}a_\alpha\cdot \alpha, \\ 
D_{\check\beta} &:=& 
\mathlarger{\prod}_{i=0}^{n+1}\left( \left\{\frac{\check\beta_i+1}{d}\right\}\right)_{ \left[\frac{\check\beta_i+1}{d}\right ]},\ \ \ \ 
 E_{\check\beta} := 
		   \sum_{e=0}^\frac{n}{2} \left[\frac{\check\beta_{2e}+1}{d}\right]. 
\end{eqnarray*}
\end{theo}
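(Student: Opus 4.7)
The plan is to (a) Taylor expand $f_t^{-k}$ in the deformation parameters $t$, (b) interchange integration and summation to reduce each term to a monomial period on the Fermat variety $X_0$, and (c) evaluate the resulting classical Fermat periods using Beta/Gamma identities.

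For (a), write $f_t = f_0 - g$ with $g := \sum_\alpha t_\alpha x^\alpha$. The generalized binomial series combined with the multinomial expansion of $g^j$ gives
$$\frac{1}{f_t^{k}} = \sum_{a:I\to\N_0} \frac{(k)_{|a|}}{a!}\cdot \frac{t^a\, x^{a^*}}{f_0^{k+|a|}}, \qquad |a|:=\sum_\alpha a_\alpha.$$
For (b), $\delta_t$ is by hypothesis a flat section of the Gauss--Manin local system, so $\int_{\delta_t}(\cdot)$ is analytic in $t$ near $0$ and commutes with the $t$-series:
$$\int_{\delta_t}\Resi\!\left(\frac{x^\beta\Omega}{f_t^k}\right) = \sum_a \frac{(k)_{|a|}}{a!}\, t^a \int_{\delta_0}\Resi\!\left(\frac{x^{\check\beta}\Omega}{f_0^{k'}}\right),$$
where $\check\beta := \beta + a^*$ and $k' := k+|a|$; note $\sum_i(\check\beta_i+1)/d = k'$ by the degree hypothesis. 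Since $(k)_{|a|} = (k'-1)!/(k-1)!$, multiplying through by $(-1)^{n/2}d^{n/2+1}(k-1)!/(2\pi i)^{n/2}$ turns each coefficient on the right into the normalized $\delta_0$-period that appears on the right of \eqref{15.12.16}.

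For (c), parametrize $\licy{n/2}=\{x_{2e}=x_{2e+1}\ \forall e\}$ by passing to the affine chart $x_{n+1}=1$ and pulling back along the $d$-fold covering suggested by the $d$-th power variables in $f_0$. Poincaré's residue formula turns the monomial period into a product of $n/2+1$ one-variable Beta integrals $B(\{(\check\beta_{2e}+1)/d\},\{(\check\beta_{2e+1}+1)/d\})$ (one per coupled pair $x_{2e},x_{2e+1}$), and this product is nonzero exactly when both fractional arguments are strictly positive and sum to $1$, which is the selection rule \eqref{TheLastMistake2017} (otherwise the residue around the corresponding pair becomes holomorphic and vanishes when integrated). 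Applying the identity $\Gamma(r) = (\{r\})_{[r]}\Gamma(\{r\})$ and Euler's reflection $\Gamma(u)\Gamma(1-u) = \pi/\sin(\pi u)$ to eliminate the transcendental $\Gamma(\{\cdot\})$'s yields precisely the $D_{\check\beta}$ factor, while the alternating signs in \eqref{15dec2016-2} together with the $d$-th roots-of-unity branch chosen in the cover yield $(-1)^{E_{\check\beta}}$. The spurious $d^{n/2+1}$, $(2\pi i)^{n/2}$, and $(k'-1)!$ that emerge cancel the normalization constant on the left of \eqref{15.12.16}.

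The main obstacle lies in step (c): carefully tracking the sign $(-1)^{E_{\check\beta}}$ through the branch choices for the $d$-fold cover and through the alternating signs $\pm x_i^d$ in $f_0$, and verifying that the selection rule \eqref{TheLastMistake2017} picks out exactly the nonvanishing $a$'s rather than a finer or coarser condition. Matching the exact powers of $2\pi i$ and $d$ is delicate; a wrong branch choice would produce spurious roots of unity rather than the clean integer sign $(-1)^{E_{\check\beta}}$, and a misidentification of which residue factor vanishes would alter the pairing pattern in \eqref{TheLastMistake2017}.
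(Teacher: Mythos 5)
The paper does not prove this theorem; the sentence preceding it explicitly cites \cite[Theorem 18.9]{ho13}, and it is stated here only to exhibit the shape of the Taylor coefficients used later. Your steps (a) and (b)---expand $f_t^{-k}=\sum_a \frac{(k)_{|a|}}{a!}\, t^a x^{a^*} f_0^{-(k+|a|)}$, commute the flat-section integral past the $t$-series, and fold $(k)_{|a|}=(k'-1)!/(k-1)!$ into the normalization---are correct and are certainly the opening of the intended argument; the entire theorem thereby reduces to a closed formula for the single Fermat period $\int_{[\licy{\frac n2}]}\Resi\!\left(x^{\check\beta}\Omega/f_0^{k'}\right)$.

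The gap is that step (c) invokes the wrong mechanism for the selection rule \eqref{TheLastMistake2017}. A Beta value $B(u,v)=\Gamma(u)\Gamma(v)/\Gamma(u+v)$ does not vanish merely because $u+v\neq 1$, and $[\licy{\frac n2}]$ is not a product chain, so the $n$-dimensional residue pairing does not split a priori into independent one-variable Beta integrals whose individual vanishing could impose \eqref{TheLastMistake2017}. The selection rule is representation-theoretic: $H^n_\dR(X_0)$ decomposes into one-dimensional eigenspaces under the action of $\mu_d^{n+2}$, the class $\Resi(x^{\check\beta}\Omega/f_0^{k'})$ lies in the eigenspace labelled by $\check\beta+(1,\dots,1)\bmod d$, and the homology class $[\licy{\frac n2}]$ pairs non-trivially only with those eigenspaces whose character is trivial on the pair-diagonal subgroup that stabilizes $\licy{\frac n2}$, i.e.\ $(\check\beta_{2e}+1)+(\check\beta_{2e+1}+1)\equiv 0\pmod d$ for each $e$. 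This is exactly \eqref{TheLastMistake2017} once the degenerate congruence (both summands $\equiv 0$, in which case $D_{\check\beta}=0$) is set aside. Only after this character-balance argument do the surviving periods factor over the pairs and get evaluated as Beta values on Pham-type one-dimensional cycles, and then the Gamma identities you quote do yield $D_{\check\beta}$, the sign $(-1)^{E_{\check\beta}}$, and the cancellation of $(2\pi i)^{\frac n2}$, $d^{\frac n2+1}$ and $(k'-1)!$. Your closing paragraph's worry is therefore justified but understated: as written the argument for \eqref{TheLastMistake2017} would not go through, and must be replaced by the eigenspace-support observation before any Beta evaluation.
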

Note that for two types of $a$ the coefficient of $t^a$ in \eqref{15.12.16} is zero.
First, when $\beta+a^*$ does not satisfy \eqref{TheLastMistake2017}. 
Second,   when an entry of $\beta+a^*$ plus one is divisible by $d$ 
(this is hidden in the definition of $D_{\beta+a^*}$). The coefficients of
the Taylor series are in $\Q$. The Taylor series \eqref{09082024david} is just obtained from
\eqref{15.12.16} by setting $n=2,\ d=4,\ \beta=0, \ k=1$. Since in this case $H^{2,0}$ is one dimensional and it is generated by $\omega:=\Resi\left(\frac{\Omega}{f_t}\right)$, the Hodge locus in $V$ corresponding to the homology class of the line $\P^1: x_0-x_1=x_2-x_3=0$  is given by the zero locus of $\int_{\delta_t}\omega$ whose Taylor series at $\t=0$ is given in  \eqref{09082024david}.

In the following, we use the computer implementation of \eqref{TheLastMistake2017} and its general format in \cite[Theorem 18.9]{ho13}. The full family  of hypersurfaces has too much parameters, and so, one has to consider lower truncation of the Taylor series.
Our main goal is to show that the natural generators of the ideal of a Hodge locus, are not necessarily defined over $\sring(\delta_0)$ and one has to invert  infinitely many primes.  In particular, in \cref{19062024kontsevich} the natural generators of the ideal might not prove this conjecture, and that is why, in this conjecture we have claimed that the ideal is defined over the ring $\sring$ and not its natural generators. For example, we consider the family
$$
X:\ \ x_1^4+x_2^4+x_3^4+x_0^4-t_0x_0x_1^3-t_1x_1x_2^3-t_2x_2x_3^3-t_3x_3x_0^3=0. 
$$
In this case all the Griffiths basis of differential forms for $H^2_\dR(X_t)$ 
has Taylor series with primes appearing in their denominators, but very slowly. For truncation  with degree $\leq 30$ we get the following denominator:
{\tiny
$$
1, 2^{84}\cdot 5\cdot 7\cdot 11\cdot 13 : 
x_2^2x_3^2, 2^{86}\cdot 7\cdot 11\cdot 13:
x_1x_2x_3^2, 2^{84}\cdot 11:
x_0x_2x_3^2 , 2^85\cdot 7\cdot 11\cdot 13:
x_1^2x_3^2 , 2^{86}\cdot 11\cdot 13:
$$
$$
x_0x_1x_3^2 ,2^{86}\cdot 7\cdot 11\cdot 13:
x_0^2x_3^2, 2^{86}\cdot 7\cdot 11\cdot 13:
x_1x_2^2x_3 , 2^{85}\cdot 7\cdot 11\cdot 13:
x_0x_2^2x_3, 2^{85}\cdot 7\cdot 11\cdot 13:
x_1^2x_2x_3, 2^{86}\cdot 7\cdot 11\cdot 13:
$$
$$
x_0x_1x_2x_3, 2^{85}\cdot 11:
x_0^2x_2x_3, 2^{86}\cdot 11:
x_0x_1^2x_3, 2^{84}\cdot 11:
x_0^2x_1x_3, 2^{85}\cdot 7\cdot 11\cdot 13:
x_1^2x_2^2, 2^{86}\cdot 7\cdot 11\cdot 13:
$$
$$
x_0x_1x_2^2, 2^{86}\cdot 11:
x_0^2x_2^2, 2^{86}\cdot 11\cdot 13:
x_0x_1^2x_2, 2^{85}\cdot 7\cdot 11\cdot 13:
x_0^2x_1x_2, 2^{85}\cdot 7\cdot 11\cdot 13:
x_0^2x_1^2, 2^{86}\cdot 7\cdot 11\cdot 13:
$$
$$
x_0^2x_1^2x_2^2x_3^2, 2^{88}\cdot 7\cdot 11\cdot 13:
$$
}
where we have written the monomial $x^\beta$ in 
$\frac{x^\beta\Omega}{f^{k}_t}$ and then the denominator of its period, separated by two points. For the computer code used for this computation, see 
\href{https://w3.impa.br/~hossein/WikiHossein/files/Singular%20Codes/2024_08_09_Primes_Taylor_Series_Periods.txt}
{the author's webpage here} or the latex text of the present paper in arxiv.
{\tiny

}

\section{Hodge-Tate varieties}
\label{10082024kk}
One of the main goals of the present text has been to elaborate a generalization of Grothendieck-Katz conjecture which implies the algebraicity of the Hodge loci. This is also a consequence of the Hodge conjecture. It turns out that we can formulate  another statement on the algebraicity of periods which is a common consequence of both Hodge and the original Grothendieck-Katz conjecture.

\begin{defi}\rm
 The $m$-th cohomology of a smooth projective variety $X$ is called of Hodge-Tate  type if
 $H^n_\dR(X)=H^{\frac{m}{2},\frac{m}{2}}$. In other words,
 $$
 H^p(X,\Omega_X^q)=0,\ \forall p+q=m,\ \ p\not=q.
 $$
\end{defi}
As the Hodge conjecture can be reduced to middle cohomology, we say that a variety is of Hodge-Tate type if its middle cohomology is of Hodge-Tate type. The following very particular case of the Hodge conjecture is still open. 
\begin{conj}
\label{05082024ki}
 Hodge conjecture is true for Hodge-Tate varieties $X$, that is, there are algebraic cycles $Z_i,\ i=1,2,\ldots,a$ in $X$  of dimension $\frac{m}{2}$ such that $[Z_i]$'s generate the homology group $H_m(X,\Q)$. 
\end{conj}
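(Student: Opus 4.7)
The statement asks for the Hodge conjecture in the very special Hodge--Tate setting, where $H^m_\dR(X)=H^{m/2,m/2}$ and so \emph{every} rational class in $H^m(X,\Q)$ is automatically a Hodge class. Thus the task reduces to showing that the cycle class map ${\rm cl}\colon CH^{m/2}(X)_\Q\to H^m(X,\Q)$ is surjective.

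First I would spread $X$ to a smooth projective scheme $\mathcal{X}\to\spec(\sring)$ for a finitely generated $\Z$-subalgebra $\sring\subset\C$. Since Hodge numbers are constant under specialization, the reductions $X_s$ at closed points $s\in\spec(\sring)$ remain Hodge--Tate in the appropriate crystalline sense. By Mazur's inequality combined with the Hodge--Tate hypothesis, the Newton polygon of Frobenius on crystalline $H^m$ coincides with the Hodge polygon (a single slope $m/2$), so all eigenvalues of Frobenius on $H^m_\ell(X_s)$ are Weil numbers of weight $m$ and $p$-adic valuation exactly $m/2$, i.e.\ $q^{m/2}\cdot(\text{roots of unity})$. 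The Tate conjecture for $X_s/\Ff_q$ in codimension $m/2$ would then predict that the entire space $H^m_\ell(X_s)^{{\rm Fr}=q^{m/2}}$, which in the Hodge--Tate case is all of $H^m_\ell(X_s)$, is spanned by classes of algebraic cycles defined over $\Ff_q$.

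Next I would lift those cycles back to characteristic zero. The Hodge--Tate hypothesis is precisely the vanishing of the infinitesimal deformation obstructions: in the IVHS map \eqref{08082024decathlon} (or in Bloch's semi-regularity formalism) the obstruction to deforming a codimension-$m/2$ cycle along a tangent direction $v$ lives in $H^{m/2+1}(X,\Omega^{m/2-1}_X)$, which vanishes by assumption. A standard formal-smoothness argument (Bloch, Deligne--Illusie, Fontaine--Messing) then produces a formal lift, and GAGA together with a Lefschetz/algebraization step should promote the resulting formal cycle to an honest algebraic cycle on $X_\C$ whose class is the given rational class.

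The main obstacle will be the intermediate step: the relevant case of the Tate conjecture is itself open in general, even for varieties of Hodge--Tate type, and the only secure input is via motives generated by CM abelian varieties or by Kuga--Satake-type constructions. A potential alternative, more in the spirit of the present paper, is to work with the Gauss--Manin connection in a family $Y\to V$ of Hodge--Tate varieties: Griffiths transversality is vacuous (there is only one nonzero Hodge piece), the connection matrix $\gma$ preserves $F^{m/2}=H^m_\dR$, and every flat section is pointwise a Hodge class. One could then hope to combine a local-global principle such as \cref{02112023mainconj} for the Hodge loci $L_{\delta_t}$ of \cref{26062024omidbita} with Katz's theorems relating vanishing of $p$-curvature to finite monodromy (and hence to motivic origin), in order to deduce algebraicity of $\delta_0$ itself. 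Either way, reducing the assertion to a known instance of the Tate conjecture or of the Grothendieck--Katz conjecture is the genuinely hard part.
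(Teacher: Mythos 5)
The statement you are asked to prove is \cref{05082024ki}, but in the paper it is introduced with the sentence ``The following very particular case of the Hodge conjecture is still open,'' and a few lines later the author reiterates that ``\cref{05082024ki} is as open as the Hodge conjecture itself.'' There is therefore no proof in the paper for you to match; the label \texttt{conj} is not an accident of formatting.

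Your sketch is honest about this---you end by saying the Tate-conjecture input ``is itself open in general, even for varieties of Hodge--Tate type.'' That concession is exactly right, and it means the argument is not a proof but a conditional strategy. Two other steps in the sketch would also need substantial work even granting the Tate conjecture. First, after spreading out and invoking Mazur's theorem to force the Newton and Hodge polygons to coincide, you conclude the Frobenius eigenvalues are $q^{m/2}$ times roots of unity; to make this precise you must control the slopes at \emph{all} places above $p$ and combine with Deligne's weight bounds and the product formula, and you still need to pass to a finite extension so that the roots of unity collapse to $1$ before the Frobenius-fixed subspace is the whole of $H^m_\ell$. Second, and more seriously, the lifting step does not go through as stated: the vanishing of $H^{\frac{m}{2}+1}(X,\Omega^{\frac{m}{2}-1}_X)$ gives formal (or infinitesimal) unobstructedness via Bloch's semiregularity, but algebraizing a formal family of cycles is a genuine and in general unsolved problem; GAGA closes the gap only once you already have a proper analytic or algebraic object, which is precisely what is missing here. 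So the proposal, as written, has gaps at the Tate step, at the eigenvalue analysis, and at algebraization, and the paper's own stance is that the conjecture remains open; the only thing the paper proves in this neighbourhood is the weaker \cref{06072024claire}, and it does so by an entirely different route through Katz's nilpotence theorem for the $p$-curvature of Gauss--Manin connections together with his proof of the $p$-curvature conjecture for direct factors of Gauss--Manin, not by attacking \cref{05082024ki} directly.
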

Note that for a Hodge-Tate variety all the cycles $\delta\in H_m(X_z,\Z)$ are Hodge. 
\begin{conj}
For a Hodge-Tate variety $X$ defined over $\bar\Q$ and $\delta\in H_m(X,\Z)$ we have
 $$
 \frac{1}{(2\pi i)^{\frac{m}{2}}} \int_{\delta_z}\omega\in\bar\Q,\ 
 \forall \omega\in H^m_\dR(X/\bar\Q).
 $$
\end{conj}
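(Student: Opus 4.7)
The plan is to derive the statement from \cref{05082024ki}, the Hodge conjecture for Hodge-Tate varieties stated immediately above, which is itself a deep open problem. The key observation is that since $X$ is Hodge-Tate in degree $m$, \emph{every} class in $H^m(X,\Q)$ is a Hodge class, so \cref{05082024ki} produces algebraic cycles $Z_1,\ldots,Z_a$ in $X$ of dimension $m/2$ whose homology classes span $H_m(X,\Q)$. A standard Galois descent, analogous to the one used in the proof of \cref{10082024ulis} and invoking \cite[page 19, Lemma 2]{Weil1946}, then allows one to take each $Z_i$ to be defined over $\bar\Q$: $X/\bar\Q$ and the $\Q$-structure $H^m(X,\Q)$ are both stable under $\Gal(\C/\bar\Q)$, and Galois-symmetrisation of any $\C$-rational algebraic representative produces a $\bar\Q$-rational one. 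Writing $\delta=\sum_{i}q_i[Z_i]$ with $q_i\in\Q$, the conjecture is then reduced to the single-cycle case $\delta=[Z]$ with $Z$ algebraic of dimension $m/2$ over $\bar\Q$.

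This last step is classical and would proceed via the compatibility of the cycle class map with the de Rham--Betti comparison on $X/\bar\Q$. Setting $d=\dim X$ and $p=d-m/2$, the algebraic cycle class $\mathrm{cl}_\dR(Z)\in H^{2p}_\dR(X/\bar\Q)$ and the Betti cycle class $\mathrm{cl}_B(Z)\in H^{2p}(X,\Q)$ are related by $\mathrm{cl}_\dR(Z)=(2\pi i)^p\,\mathrm{cl}_B(Z)$, and the Grothendieck algebraic trace $\mathrm{tr}_\dR\colon H^{2d}_\dR(X/\bar\Q)\to\bar\Q$ differs from the topological integration $\int_X$ by the factor $(2\pi i)^d$. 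Combining these via Poincar\'e duality yields
$$\int_{Z}\omega\ =\ (2\pi i)^{m/2}\,\mathrm{tr}_\dR\!\left(\omega\cup\mathrm{cl}_\dR(Z)^\vee\right),$$
where $\mathrm{cl}_\dR(Z)^\vee\in H^{2d-m}_\dR(X/\bar\Q)$ is the algebraic Poincar\'e dual of $[Z]$. The right-hand side lies in $(2\pi i)^{m/2}\bar\Q$ because the algebraic Poincar\'e pairing on $X/\bar\Q$ sends $\bar\Q$-rational classes to $\bar\Q$. Summing the rational combination from the first paragraph then gives the conjecture.

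The main obstacle is of course the very first step: \cref{05082024ki} is the Hodge conjecture in its restriction to Hodge-Tate varieties, and is itself open; everything that follows it (Galois descent of algebraic cycles to $\bar\Q$ and the factor tracking in the Betti--de Rham comparison) is by now standard in the theory of algebraic de Rham cohomology and absolute Hodge cycles \`a la Deligne. An alternative route, more in the spirit of the rest of this paper and avoiding the Hodge conjecture, would be to realise $X$ as a fibre $X_{t_0}$ of a family $Y\to V$ over $\bar\Q$ and to combine the local-global principle \cref{02112023mainconj} applied to the Hodge locus $L_{\delta_0}$ of \cref{26062024omidbita} with Katz's nilpotence theorem for the $p$-curvature of Gauss--Manin connections; this would in principle give algebraicity of the period functions $t\mapsto (2\pi i)^{-m/2}\int_{\delta_t}\omega$ over $\bar\Q(V)$, and hence $\bar\Q$-values at the rational point $t_0$. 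Upgrading Katz's nilpotence to genuine vanishing of the $p$-curvature on the entirely $(m/2,m/2)$-typed Hodge-Tate piece of Gauss--Manin, which is what \cref{02112023mainconj} requires, and passing from algebraic-function-valued periods to $\bar\Q$-valued periods at $t_0$ without appealing to the Hodge conjecture, is where the essential difficulty of this second route lies.
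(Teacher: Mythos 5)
Your proposal is essentially the same derivation the paper has in mind: the paper does not prove this statement (it is stated as a conjecture) and justifies it with the single sentence ``This is a consequence of the Hodge conjecture, see \cite[Proposition 1.5]{dmos}''; your argument is a detailed unpacking of precisely that reduction, using \cref{05082024ki} to produce algebraic cycles and then the compatibility of the Betti and de Rham cycle class maps (with the factors of $2\pi i$ tracked correctly) to get $\bar\Q$-rationality of the normalized periods.

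One point should be rephrased: the step ``Galois-symmetrisation of any $\C$-rational algebraic representative produces a $\bar\Q$-rational one'' is not the right mechanism. The group $\Aut(\C/\bar\Q)$ is not profinite, orbits of a cycle class under it are generally uncountable so there is nothing finite to average, and the $\sigma$-semilinear action on $H^m_\dR(X/\bar\Q)\otimes\C$ transferred to Betti cohomology via the comparison isomorphism does \emph{not} preserve the Betti $\Q$-structure (that failure is precisely what the Grothendieck period conjecture is about). Moreover the analogy you draw with \cref{10082024ulis} and \cite[page 19, Lemma 2]{Weil1946} is based on descent along a \emph{finite} extension $\tilde\sk/\sk$, which does not apply to $\C/\bar\Q$. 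The correct and standard way to get a $\bar\Q$-rational algebraic cycle representing a prescribed class is by spreading out: the $\C$-cycle $Z$ is defined over a finitely generated $\bar\Q$-subalgebra $R\subset\C$, the Chow (or Hilbert) scheme parameterising such cycles is defined over $\bar\Q$ and has $\bar\Q$-points in every geometrically connected component, and the Betti cycle class is locally constant on the Chow scheme, so specialising $Z$ to a $\bar\Q$-point gives $Z_0$ over $\bar\Q$ with $[Z_0]=[Z]$. Alternatively one can sidestep all this by invoking \cite[Prop.~1.5]{dmos} directly at the level of absolute Hodge cycles, which is in fact what the paper's one-line citation points to. This is a presentational repair rather than a gap; the rest of your computation, including the $(2\pi i)^{m/2}$ bookkeeping through the de Rham cycle class and the algebraic trace map, is correct.
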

This is a consequence of the Hodge conjecture, see \cite[Proposition 1.5]{dmos}.
A complex version of the above statement is as follows.
\begin{theo}
\label{06072024claire}
Let $X_z,\ z\in V$ be a family of varieties of Hodge-Tate type and let
$\omega\in H^m_\dR(X/V)$. The holomorphic multi-valued function $\int_{\delta_z}\omega$ is algebraic (as  a function in $z$).
 \end{theo}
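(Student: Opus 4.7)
The plan is to combine two theorems of N.~Katz applied to the Gauss--Manin connection $\nabla$ on $\mathcal{H}:=H^m_\dR(X/V)$. By \cref{27nov2023shhshahani} the period integrals $\int_{\delta_z}\omega$, as $\delta_z$ ranges over a basis of $H_m(X_z,\mathbb{Z})$ and $\omega$ over a basis of $\mathcal{H}$, form a complete system of flat sections of $\nabla$. Hence it suffices to establish that $\nabla$ has only algebraic solutions. First I would take a model of $X\to V$ over a finitely generated subring $\sring\subset\mathbb{C}$, so that $\mathcal{H}$, $\nabla$ and the Hodge filtration $F^{\bullet}$ are all defined over $\sring$ and can be reduced modulo primes of $\sring$.

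Next, I would invoke Katz's theorem on the $p$-curvature of Gauss--Manin connections: for all but finitely many primes $\mathfrak{p}$, the $p$-curvature $\psi_{\mathfrak{p}}$ of $(\mathcal{H},\nabla)\bmod\mathfrak{p}$ is nilpotent, with nilpotence level bounded above by the Hodge level of the family, i.e.\ by the number of indices $i$ with $\operatorname{gr}^i_F\mathcal{H}\neq 0$. Under the Hodge--Tate hypothesis this level equals $1$, since the only nonzero graded piece of $F^{\bullet}$ sits in degree $m/2$; consequently $\psi_{\mathfrak{p}}$ itself vanishes modulo $\mathfrak{p}$ for almost all $\mathfrak{p}$. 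This is precisely the step where the Hodge--Tate assumption is used: without it one only gets nilpotence rather than vanishing, and the argument fails.

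Finally, with the vanishing of $p$-curvature at almost all primes secured, I would apply Katz's second theorem in \cite{Katz1972} which proves the Grothendieck--Katz conjecture for those direct factors of Gauss--Manin connections that arise from spaces of Hodge classes. In the Hodge--Tate case every cohomology class on every fibre is of type $(m/2,m/2)$ and hence is a Hodge class, so the entire connection $(\mathcal{H},\nabla)$ qualifies as such a factor. Katz's theorem then yields that the flat local system underlying $\nabla$ has finite monodromy, which is equivalent to the assertion that every flat section is algebraic over $\mathbb{C}(V)$. Applied to $\int_{\delta_z}\omega$, this is the desired conclusion.

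The main obstacle is step two: one must read Katz's Hodge-filtration-controlled nilpotence result carefully enough to extract, from the Hodge--Tate hypothesis, the upgrade from ``$\psi_{\mathfrak{p}}$ is nilpotent'' to ``$\psi_{\mathfrak{p}}=0$''. The bound on nilpotence level by the length of the Hodge filtration is the crucial ingredient, and one should verify the Hodge-to-de~Rham degeneration modulo $\mathfrak{p}$ for almost all $\mathfrak{p}$, which guarantees that the mod~$\mathfrak{p}$ graded of the Hodge filtration remains concentrated in a single degree. Once this step is in place, the invocation of Katz's 1972 theorem is essentially formal, and the algebraicity of the multi-valued function $\int_{\delta_z}\omega$ in $z$ follows directly.
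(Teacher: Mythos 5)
Your proposal is correct and follows essentially the same route as the paper's proof: both invoke Katz's (Deligne's) bound that the $p$-curvature of the Gauss--Manin connection is nilpotent of order at most the number of nonzero Hodge numbers, observe this forces vanishing in the Hodge--Tate case, and then apply Katz's 1972 verification of the Grothendieck--Katz conjecture for Gauss--Manin connections to conclude algebraicity of the flat sections, identified with the periods via \cref{27nov2023shhshahani}. The extra remarks about Hodge-to-de~Rham degeneration modulo $p$ and the characterization of the whole connection as a ``Hodge-class factor'' are not spelled out in the paper's proof, but they do not change the substance of the argument.
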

\begin{proof}
 Both Hodge and Grothendieck-Katz conjectures imply \cref{06072024claire}.
 Hodge conjecture is not known for varieties of Hodge-Tate type and Grothendieck-Katz conjecture for Gauss-Manin connections and its factors is known in \cite{Katz1972}. 
 
 If the Hodge conjecture for Hodge-Tate varieties is true then $\delta_z=[Z_z]$ and the result follows from a version of \cite[Proposition 1.5]{dmos} for families of algebraic cycles.

Let $a:=\#\{(p,q) |  p+q=m,\ h^{p,q}(X_t)\not=0\}$. N. Katz in  \cite[Corollary 7.5, page 383]{Katz1970} proves that the $p$-curvature of the Gauss-Manin connection of $X/V$ is nilpotent of order at most $a$. He attributes this theorem to  P. Deligne.  If $X/V$ is of Hodge-Tate type  then $a=1$ and so 
we know that  the $p$-curvature of the Gauss-Manin connection of $X/V$ is zero. 
The Grothendieck-Katz conjecture for Gauss-Manin connections and its factors is known in \cite{Katz1972},
and so the solutions to the Gauss-Manin connection as a differential equation are algebraic. 
These are exactly the periods $\int_{\delta_z}\omega$, see \cref{27nov2023shhshahani}. 
\end{proof}

 As far as the author is aware of it, there is no classification of Hodge-Tate varieties, and  \cref{05082024ki} is as open as the Hodge conjecture itself. Toric varieties are Hodge-Tate varieties, and we can look for complete intersections in Toric varieties which are Hodge-Tate. For instance, let $X$ be a smooth hypersurface of degree $d$ in the weighted projective space
 $\P^{v_0,v_1,\cdots,v_{n+1}}$. By a result of J. Steenbrink in \cite{st77} we know that  for $v_0=1$ if $\frac{n}{2}\leq  \frac{\sum_{i=1}^{n+1}v_i}{d}$ then $X$ is of Hodge-Tate type. For an example of this, and the resulting algebraic periods see \cite[Section 16.9]{ho13}.

\chapter{Ramanujan vector field}
\label{Ramanujan27062024}
{\it
On se propose de donner un dictionnaire heuristique entre énoncés en cohomologie
$l$-adique et énoncés en théorie de Hodge. Ce dictionnaire a notamment pour sources [...]
et la théorie conjecturale des motifs de Grothendieck [...]. Jusqu'ici, il a surtout servi
a formuler, en théorie de Hodge, des conjectures, et il en a parfois suggéré une démonstrdtion,
(P. Deligne in  \cite{de71-0}).
}
\\
\\

{\it Abstract:} In this article we prove that for all primes $p\not=2,3$, the Ramanujan vector field has an invariant algebraic curve and then
we give a moduli space interpretation of this curve in terms of Cartier operator acting on the de Rham cohomology of elliptic curves. The main ingredients of our study are due to Serre, Swinnerton-Dyer and Katz in 1973.
We aim to generalize this for  the theory of Calabi-Yau modular forms, which includes the generating function of genus $g$ Gromov-Witten invariants.
The integrality of $q$-expansions of such modular forms is still a main conjecture which has been only established for special Calabi-Yau varieties, for instance those whose periods are hypergeometric functions. For this the main tools are Dwork's theorem.
We present an alternative project which aims to prove such integralities using modular vector fields and Gauss-Manin connection in positive characteristic.

\section{Introduction}
We are going to study algebraic leaves of the Ramanujan vector field
\begin{equation}
\label{16112023yau}
\vf:=
(t_1^2-\frac{1}{12}t_2)\frac{\partial}{\partial t_1}+
(4t_1t_2-6t_3)\frac{\partial}{\partial t_2}+
(6t_1t_3-\frac{1}{3}t_2^2)\frac{\partial}{\partial t_3}
\end{equation}
in characteristic $p\not=2,3$. All the algebraic leaves of this vector field over $\C$ are inside the hypersurface $27t_3^2-t_2^3=0$ and it has a transcendental solution given by the Eisenstein series $a_1E_2,a_2E_4,a_3E_6$, where $a=(-\frac{1}{12}, \frac{1}{12}, -\frac{1}{216})$, and that is why it carries this name.
We consider it as a vector field in
$\A^3_\sring=\spec(\sring[t_1,t_2,t_3])$, where $\sring$ is a ring of characteristic zero with $2$ and $3$ invertible. We usually take $\sring=\Z[\frac{1}{6}]$ and so $\sring/p\sring=\Ff_p$.
Let $\sk$ be the quotient field of $\sring$.  In \cite{ho14} the author gave a moduli space interpretation of the Ramanujan vector field:
\begin{theo}
\label{21may2024raisimord}
Let $\T$ be the moduli space of triples  $(E,\alpha, \omega)$, where $E$ is an elliptic curve defined over $\sk$ and $\alpha,\omega$ form a basis of $H^1_\dR(E/\sk)$ with
$\alpha\in F^1H^1_\dR(E/\sk)$ and $\langle\alpha,\omega\rangle=1$.
This moduli space is the affine variety
$$
\T=\spec \Q[t_1,t_2,t_3, \frac{1}{\Delta}], \ \Delta:=27t_3^2-t_2^3,
$$
and we have a universal family over $\T$ given by ${\sf E}\to \T$, where
\begin{eqnarray*}
\label{khodaya}
{\sf E} & : & zy^2-4(x-t_1z)^3+t_2z^2(x-t_1z)+t_3z^3=0,\\
& & [x;y;z]\times (t_1,t_2,t_3)\in\P ^2\times \T,\ \\
(\alpha,\omega) &:=& \left(\left[\frac{dx}{y}\right], \left[\frac{xdx}{y}\right]\right)\ \ \hbox{given in the affine coordinate $z=1$.}
\end{eqnarray*}
The natural action of the algebraic group
$$
\BG:=\left\{\mat{k}{k'}{0}{k^{-1}}\Bigg|  k'\in \sk, k\in \sk-\{0\}\right\} 
$$
by change of basis  on $\T$ is given by
$$
t\bullet g:=(
 t_1k^{-2}+k'k^{-1},
t_2k^{-4}, t_3k^{-6}), t=(t_1,t_2,t_3)\in\T,\ \
g=\mat {k}{k'}{0}{k^{-1}}\in \BG.
$$
Moreover, if $\nabla: H^1_\dR({\sf E}/\T)\to \Omega^1_{\T}\otimes_{\O^1_{\T}} H^1_\dR({\sf E}/\T)$ is the Gauss-Manin connection of ${\sf E}/\T$ then the Ramanujan vector field is the unique vector field in $\T$ with the property
$$
\nabla_\vf\alpha=-\omega,\ \ \nabla_\vf\omega=0.
$$
 \end{theo}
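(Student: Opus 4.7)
The statement has three essentially independent pieces: (i) the explicit description of the moduli $\T$ and the universal family, (ii) the formula for the $\BG$-action, and (iii) the characterization of $\vf$ via Gauss-Manin. I would prove them in that order.

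For (i), start with any triple $(E,\alpha,\omega)$ over $\sk$. By Riemann-Roch applied to a multiple of the origin of $E$, pick functions $x,y$ with poles of order $2,3$ respectively, giving a Weierstrass equation $y^2=4x^3-g_2x-g_3$ with $\Delta=27g_3^2-g_2^3\neq 0$. The space $F^1H^1_\dR(E/\sk)$ is one-dimensional and generated by $dx/y$, while the cup product pairs $[dx/y]$ with $[xdx/y]$ to $1$ in the standard normalization. Hence in this model $\alpha=k^{-1}dx/y$ and $\omega=k\,xdx/y+k'\,dx/y$ for uniquely determined $k\in\sk^\times,k'\in\sk$ (the coefficient of $\omega_0$ being forced by $\langle\alpha,\omega\rangle=1$). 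Now absorb $k$ by the substitution $(x,y)\mapsto(k^{-2}x,k^{-3}y)$, which sends $g_i\mapsto k^{-2i}g_i$ and $dx/y\mapsto k\,dx/y$, so that $\alpha$ becomes $dx/y$; then absorb $k'$ by the shift $x\mapsto x-k'$, which sends the equation into the $t_1$-shifted form of the theorem and turns $\omega$ into $xdx/y$ with the updated parameters $(t_1,t_2,t_3)=(-k',g_2,g_3)$. One checks by uniqueness of Weierstrass coordinates that these $t_i$ are uniquely determined by $(E,\alpha,\omega)$, giving the bijection with $\T=\spec\Q[t_1,t_2,t_3,\Delta^{-1}]$ and the universal family $\sf E$.

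For (ii), any change of basis preserving $F^1$ and the pairing has matrix in $\BG$. Substituting the two elementary transformations used in (i) into the normalization procedure gives the formulas $t_2\mapsto k^{-4}t_2$, $t_3\mapsto k^{-6}t_3$ from the scaling action, and the additional translation $t_1\mapsto k^{-2}t_1+k^{-1}k'$ from the shift, matching the action stated.

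For (iii), compute the Gauss-Manin matrix of $({\sf E}/\T,(\alpha,\omega))$. Writing
\begin{equation*}
\nabla\alpha=\Gamma_{11}\otimes\alpha+\Gamma_{12}\otimes\omega,\qquad \nabla\omega=\Gamma_{21}\otimes\alpha+\Gamma_{22}\otimes\omega,
\end{equation*}
the compatibility with the polarization forces $\Gamma_{11}+\Gamma_{22}=0$, and Griffiths transversality together with the fact that $\alpha=dx/y$ is holomorphic while $\omega=xdx/y$ is of the second kind, pin down the entries $\Gamma_{ij}\in\Omega^1_\T$. Concretely, one differentiates $y^2=4(x-t_1)^3-t_2(x-t_1)-t_3$ with respect to each $t_i$, divides by $2y$, and rewrites the resulting rational differentials modulo exact forms (using the relation $d(x^k/y)$ to reduce poles at infinity); this produces each $\Gamma_{ij}$ as an explicit $\sk$-linear combination of $dt_1,dt_2,dt_3$. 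The conditions $\nabla_\vf\alpha=-\omega$ and $\nabla_\vf\omega=0$ become four linear equations on the three components of $\vf$, reducing (via $\Gamma_{11}+\Gamma_{22}=0$) to three independent equations, and solving them yields exactly the Ramanujan vector field \eqref{16112023yau} and its uniqueness.

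The main obstacle is step (iii): the bookkeeping of the de Rham reduction modulo exact forms in a three-parameter family, carried out so that the final connection matrix is truly in $\Omega^1_\T\otimes H^1_\dR$ rather than in an ambient rational differential space. Once this reduction is organized (e.g.\ by using the Griffiths-Dwork basis $\{dx/y,xdx/y\}$ together with the syzygy $2y\,dy=(12(x-t_1)^2-t_2)\,dx$), the formulas fall out mechanically and the uniqueness of $\vf$ is clear from the nondegeneracy of $\Gamma_{12}$.
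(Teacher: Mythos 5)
Your three-step scheme is exactly what the paper has in mind: the paper itself gives no details, calling the proof ``an easy exercise'' and citing Katz's Appendix 1 in LNM~350, and the standard route is indeed (i)~Weierstrass normalization for the moduli identification, (ii)~tracking the same normalization under a frame change for the $\BG$-action, and (iii)~Griffiths--Dwork reduction to compute the Gauss--Manin matrix and solve the resulting linear system for $\vf$. Step~(iii) is right in outline; your use of $\Gamma_{11}+\Gamma_{22}=0$ to cut four equations on $\vf$ down to three is correct, although uniqueness should be credited to the invertibility of the $3\times3$ matrix formed by $\Gamma_{11},\Gamma_{12},\Gamma_{21}$ in the basis $dt_1,dt_2,dt_3$, not merely to nondegeneracy of $\Gamma_{12}$.

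Steps (i)/(ii), however, contain a bookkeeping slip that you should fix. If you set $\alpha=k^{-1}dx/y$, then the Weierstrass rescaling that renormalizes $\alpha$ to $dX/Y$ is $X=k^{2}x$, $Y=k^{3}y$ (since $dX/Y=k^{-1}\,dx/y$), and it sends $g_i\mapsto k^{2i}g_i$ --- the opposite sign of the exponent you wrote. Conversely, if you insist on the substitution $(x,y)\mapsto(k^{-2}x,k^{-3}y)$ with $g_i\mapsto k^{-2i}g_i$, you must instead write $\alpha=k\,dx/y$ and $\omega=k^{-1}x\,dx/y+k'\,dx/y$. With either convention made internally consistent, and after the subsequent affine shift (which lands at $t_1=k'k^{-1}$, not $-k'$), the output is
$(t_1,t_2,t_3)=(k'k^{-1},\,g_2k^{-4},\,g_3k^{-6})$.
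This is exactly the $\BG$-action you announce in~(ii), but it does not follow from the exponents you wrote in~(i); and your displayed final parameters $(-k',g_2,g_3)$ have no $k$-dependence at all, which contradicts the transformation law $t_2\mapsto k^{-4}t_2$ that the theorem asserts. None of this is a conceptual gap --- the framework is exactly right and the correct formulas fall out from the same manipulations --- but the arithmetic in (i) and (ii) needs to be redone carefully so that the two steps use the \emph{same} $k$ and agree with the stated action.
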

The moduli space $\T$ in \cite{ho2020} is called  "ibiporanga", a word in Tupi language meaning pretty land.  Once we master all the preliminaries of \cref{21may2024raisimord}, the proof becomes an easy exercise, see also \cite[Appendix 1, page 158]{ka73}, and we realize that being an elliptic curve does not play a significant role in this theorem.
 The author took the job of generalizing \cref{21may2024raisimord} and wrote many papers with the title ``Gauss-Manin connection in disguise". The project has been summarized in the book \cite{ho2020}, and it has been so far written over fields of characteristic zero. For arithmetic applications, it seems to be necessary to consider fields of positive characteristic.   We redefine $\T$ to be $\spec \Z[t_1,t_2,t_3, \frac{1}{\Delta}]$ which has a similar moduli space interpretation. 
 In this article we prove the following:
 \begin{theo}
\label{21may2024raisimord-2}
For primes $p\not=2,3$, there is a curve  in $\T/\Ff_p$ which is invariant by $\vf$, and its $\sk$-rational points correspond to triples $(E,\alpha,\omega)$
such that $C(\alpha)=\alpha$ and $C(\omega)=0$, where $C$ is the Cartier operator.
 \end{theo}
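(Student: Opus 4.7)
The plan is to first construct the curve $Z_p\subset \T/\Ff_p$ explicitly using the moduli interpretation, and then verify its $\vf$-invariance via the characterizing property $\nabla_\vf\alpha=-\omega,\ \nabla_\vf\omega=0$ from \cref{21may2024raisimord}. In characteristic $p$ the $\sigma^{-1}$-semilinear Cartier operator $C$ on the relative de Rham bundle $H^1_\dR({\sf E}/\T)\otimes\Ff_p$ has, in the basis $(\alpha,\omega)$, a matrix whose entries are explicit polynomials in $t_1,t_2,t_3 \bmod p$, coming from the Hasse invariant $A(t_2,t_3)$ (a weight $p-1$ polynomial, Swinnerton-Dyer) together with Hodge-filtration corrections. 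The condition $C(\omega)=0$ says $\omega\in\ker C=\operatorname{im} F$, i.e., we are in the ordinary locus $\{A\neq 0\}$ with $\omega$ pinned to the unit-root line; the condition $C(\alpha)=\alpha$ normalizes $\alpha$ to be Frobenius-fixed. Both conditions together give two equations cutting out a subscheme $Z_p$ of dimension $1$ in $\T/\Ff_p$.

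The second step is invariance under $\vf$. The relevant compatibility, going back to Serre and made explicit in \cite{ka73}, is that the Ramanujan derivation in characteristic $p$ acts on the Hasse invariant via the Serre $\theta$-operator: one has a congruence $\vf(A)\equiv \lambda\cdot A\pmod p$ for an explicit $\lambda\in\O_\T/p$ (essentially a multiple of $t_1$). This gives the $\vf$-invariance of the supersingular divisor $\{A=0\}$, and dually reflects the fact that the unit-root section $\alpha$ and its symplectic dual $\omega$ transform $\vf$-equivariantly with respect to the Cartier structure. More intrinsically, the Cartier operator is horizontal for the conjugate filtration: up to a Frobenius twist encoded in the $p$-curvature formula, one has $C\circ\nabla_\vf=\nabla_{\vf^{(p)}}\circ C$ in characteristic $p$.

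To finish, I would compute $\vf(C(\alpha)-\alpha)$ and $\vf(C(\omega))$ as elements of the coordinate ring modulo $p$ and show they belong to the ideal $\IS_{Z_p}$ defining $Z_p$. Using the two input identities $\nabla_\vf\alpha=-\omega$, $\nabla_\vf\omega=0$ plus the Katz compatibility $C\circ\nabla_\vf\equiv\nabla_{\vf^{(p)}}\circ C\pmod p$, both vanishings reduce to the two defining equations of $Z_p$, so \mbox{$\vf(\IS_{Z_p})\subset\IS_{Z_p}$} as required. The curve $Z_p$ is then contained in the ordinary locus by construction, and its $\sk$-points parameterize precisely ordinary $(E,\alpha,\omega)$ with $\alpha$ the unit-root differential and $\omega\in\ker C$, matching the claimed moduli description.

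The main obstacle is the explicit translation of the semilinear operator $C$ into an honest pair of algebraic equations on $(t_1,t_2,t_3)$ and the careful bookkeeping of the Frobenius twist when moving $C$ past $\nabla_\vf$; this is precisely the content of Katz's 1973 Antwerp paper, and once his formulas are imported the $\vf$-invariance of $\IS_{Z_p}$ becomes a bounded calculation of $p$-curvature type.
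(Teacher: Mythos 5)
Your route is genuinely different from the paper's, and it helps to see where the two diverge. The paper never establishes the $\vf$-invariance of the curve through a Cartier-versus-connection compatibility. Instead, it works entirely on the modular-forms side: it takes the ideal $\IS_p\subset\Ff_p[t_1,t_2,t_3]$ of all polynomial relations among the Eisenstein series $E_2,E_4,E_6$ modulo $p$, identifies its generators as $A-1$ and $B+12t_1$ using Swinnerton-Dyer's structure theorem for the ring of modular forms mod $p$ (which rests on von~Staudt--Clausen and Kummer), and proves $\vf$-invariance by noting that $\vf$ corresponds under $q$-expansion to $-q\,\partial/\partial q$, so the kernel ideal is automatically $\vf$-stable. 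The Cartier description ($C(\alpha)=\alpha$, $C(\omega)=0$) is then established \emph{post hoc}, by a separate computation (\cref{05062024bita}) equating the entries of the Cartier matrix with $A$ and $\tfrac{1}{12}B$. You are reversing this logic, taking the Cartier conditions as the definition and trying to establish invariance from Frobenius compatibility. This is a plausible alternative, and in fact the paper itself hints at the ``direct'' invariance check (via Swinnerton-Dyer's Theorem 2(ii) differential equations for $A$ and $B$), which is the polynomial content underlying your plan.

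The concrete gap: the identity $C\circ\nabla_\vf=\nabla_{\vf^{(p)}}\circ C$ is not a usable theorem in the form you have stated it, and you cannot ``compute $\vf(C(\alpha)-\alpha)$ as an element of the coordinate ring.'' The Cartier operator is $\sigma^{-1}$-semilinear, which means $C(\alpha)$ is of the form $c_{p-1}^{1/p}\alpha$ with $c_{p-1}^{1/p}\notin\O_\T/p$; differentiating such an expression by $\vf$ literally produces a factor $\tfrac{1}{p}$ and is meaningless in characteristic $p$. To make the argument honest you must first clear the Frobenius roots --- that is, replace the semilinear conditions $C(\alpha)=\alpha$, $C(\omega)=0$ by the polynomial equations $c_{p-1}-1=0$ and $\tfrac{1}{12}c_{p-2}+t_1c_{p-1}=0$ (this step \emph{is} the content of \cref{05062024bita}: $c_{p-1}\equiv A$, $c_{p-2}\equiv\tfrac{1}{12}B$ mod $p$), and only then apply $\vf$. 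At that point you are no longer using any Cartier compatibility at all; you need the explicit Swinnerton-Dyer differential equations $\vf(A)=\cdots$, $\vf(B)=\cdots$ in $\Ff_p[t]$, which express these derivatives as $\O_\T$-combinations of $A-1$ and $B+12t_1$. Until you import those (or the $q$-expansion argument replacing them), the step ``both vanishings reduce to the two defining equations of $Z_p$'' is unjustified. The auxiliary congruence $\vf(A)\equiv\lambda A$ mentioned in your second paragraph only shows invariance of the supersingular divisor $\{A=0\}$, which is a different locus, and does not by itself control the ideal $\langle A-1,\,B+12t_1\rangle$.
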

It is shown by J.V. Pereira in \cite{Pereira2002}, that foliations modulo primes might have algebraic leaves, even though in
characteristic zero they do not have such leaves. \cref{21may2024raisimord-2} is a manifestation of this phenomena.
Most of the ingredients of the proof of \cref{21may2024raisimord-2} comes from the articles of Swinnerton-Dyer, Serre and Katz in LNM 350 and they are the building blocks of the theory of $p$-adic modular forms. The integrality of the coefficients of $E_2,E_4, E_6$ play a main role in the proof of \cref{21may2024raisimord-2}. For Calabi-Yau varieties such integralities, and in particular the integrality of the mirror map,  have been experimentally observed by physicists, and proved for special class of Calabi-Yau varieties, see \cite[Appendix C]{GMCD-MQCY3} and the references therein, but in general it is an open problem.
In \cite{GMCD-MQCY3, HosseinMurad} we have introduced the theory of Calabi-Yau modular forms and proved a similar statement as in \cref{21may2024raisimord} for Calabi-Yau threefolds. We strongly believe that  \cref{21may2024raisimord-2} for Calabi-Yau varieties is equivalent to the $p$-integrality of Calabi-Yau modular forms. In this direction we observe the following.
For a moment forget that the Ramanujan vector field has a solution given by  the Eisenstein series. Instead, consider $\vf$ as an ordinary differential equation: 
\begin{equation}
\label{raman}
{\rm R}:
 \left \{ \begin{array}{l}
\dot t_1=t_1^2-\frac{1}{12}t_2 \\
\dot t_2=4t_1t_2-6t_3 \\
\dot t_3=6t_1t_3-\frac{1}{3}t_2^2
\end{array} \right. .
\end{equation}
We write each $t_i$ as a formal power series in $q$, $t_i=\sum_{n=0}^\infty t_{i,n}q^n,\ i=1,2,3$ and substitute in the above differential
equation with $\dot t_i=-q\frac{\partial t_i}{\partial  q}$ and the initial values $t_1=-\frac{1}{12}(1-24q+\dots)$. It turns out that all $t_{i,n}$ can be computed recursively, however, by recursion we can at most claim $t_{i,n}\in\Q$, for more details
see \cite[Section 4.3]{ho14}.
\begin{theo}
\label{23052024jea}
\label{23m2024hamid}
 Let $p\not=2,3$ be a prime.  The followings are equivalent: 
 \begin{enumerate}
     \item 
     There is a curve $C$ in $\T/\Ff_p$ passing through $a:=(-\frac{1}{12}, \frac{1}{12}, -\frac{1}{216})$, smooth at $a$,  tangent to $b=(2,20, \frac{7}{3})$
at $a$ and tangent to the Ramanujan vector field $\vf$ and $\vf^p=\vf$ restricted to $C$.
\item 
 The solution $t_1,t_2,t_3$ of $\vf$ described above is $p$-integral, that is, $p$ does not appear in the denominator of $t_{i,n}$'s.
 \end{enumerate}
\end{theo}
The author strongly believes that the existence of algebraic solutions of modular vector fields as in \cite{ho2020} and generalizations of \cref{23052024jea} can be proved by available methods in algebraic geometry. This will give a purely geometric method for proving the integrality of Calabi-Yau modular forms without computing periods explicitly. Despite the fact that in this article we heavily use the Eisenstein series $E_{p-1},\ E_{p+1}$ for $p$ a prime number, and these objects in the framework of Calabi-Yau varieties do not exists or not yet discovered, we have formulated \cref{13112023tifi}, \cref{06062024ihes} which describes the curve in \cref{21may2024raisimord} using only $\vf$ and primary decomposition of ideals.

We would also like  to announce the following theorem which suggests that there might be a heuristic dictionary for properties of foliations over $\C$ and properties of foliations in characteristic $p$. This is mainly inspired by a  similar work in Hodge theory introduced by  P. Deligne in \cite{de71-0}.
\begin{theo}
\label{08062024drywall}
For any prime $p\not=2,3$, the Ramanujan vector field in $\A^3_{\Ff_p}$ has a first integral
$f\in\Ff_p[t]$, that is $\vf(f)=0$. It is a homogeneous polynomial of degree $p+1$ with $\deg(t_i)=2i,\ i=1,2,3$. Moreover, $\vf$ restricted to $f=0$ has a regular first integral $A$. The curve $A=1, f=0$ is the curve in \cref{23052024jea}.
\end{theo}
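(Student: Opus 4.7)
The plan is to identify both $f$ and $A$ via the modular interpretation of \cref{21may2024raisimord} and to verify the required identities using Ramanujan's differential equations. Via $t_1 = -\tfrac{1}{12}E_2$, $t_2 = \tfrac{1}{12}E_4$, $t_3 = -\tfrac{1}{216}E_6$, the vector field $\vf$ acts as $-q\tfrac{d}{dq}$ on the $q$-expansion homomorphism $\Ff_p[t_1,t_2,t_3] \to \Ff_p[[q]]$. Katz's description of the Cartier operator on $H^1_\dR$ of the universal elliptic curve along the Tate curve (LNM~350) translates the two conditions $C(\alpha)=\alpha$ and $C(\omega)=0$ of \cref{21may2024raisimord-2} into polynomial equations on $\T/\Ff_p$. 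I take $A \in \Ff_p[t_2,t_3]$ to be the image of the Hasse polynomial (i.e., $E_{p-1}$ expressed via Clausen--von Staudt as a polynomial in $E_4, E_6$), weighted homogeneous of degree $p-1$; and I take $f \in \Ff_p[t_1,t_2,t_3]$ to be the image of $-\theta(E_{p-1})$, weighted homogeneous of degree $p+1$.

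From Ramanujan's identities $\theta E_4 = \tfrac{1}{3}(E_2E_4 - E_6)$ and $\theta E_6 = \tfrac{1}{2}(E_2E_6 - E_4^2)$, direct computation gives $\vf(A) = -\theta(E_{p-1}) = f$, so $\vf(A) \in (f)$ and $A$ is a regular first integral of $\vf$ on $\{f=0\}$. For the stronger identity $\vf(f)=0$ in $\Ff_p[t]$, the Hasse congruence $E_{p-1}(q) \equiv 1 \pmod{p}$ implies $\theta^k E_{p-1}(q) \equiv 0 \pmod{p}$ for every $k\ge 1$; in particular the $q$-expansion of $\vf(f) = \theta^2(E_{p-1})$ vanishes modulo $p$. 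A weighted-homogeneity argument shows that $\vf(f)$, of weighted degree $p+3$, must be either $0$ or a constant multiple of $t_1 f$ (the only other candidate, since $t_1$ is the unique generator in weighted degree $2$); comparing leading $q$-coefficients using $E_2(q) = 1-24q+O(q^2)$ together with $\theta E_{p-1}(q) \in p\,\Z_{(p)}[[q]]$ forces the constant to be zero, yielding $\vf(f)=0$.

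For the matching with \cref{21may2024raisimord-2}: the surface $\{A=1\}$ parametrises triples $(E,\alpha,\omega)$ whose Hasse invariant equals $1$, equivalently with $C(\alpha)=\alpha$, and the further equation $f=0$ cuts out the subcurve on which additionally $C(\omega)=0$; the $\vf$-invariance of this curve is then manifest from $\vf(f)=0$ and $\vf(A)\in(f)$. The main obstacle is passing from the vanishing of $q$-expansions to polynomial identities modulo $p$: the kernel of $\Ff_p[t_1,t_2,t_3] \to \Ff_p[[q]]$ is nontrivial (it contains the Hasse relation $A-1$), so the $q$-expansion argument alone does not imply the polynomial statement $\vf(f)=0$. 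Bridging this gap requires either the homogeneity plus leading-term analysis sketched above, or alternatively the Serre--Swinnerton-Dyer filtration theory for the mod-$p$ theta operator combined with Katz's crystalline description of the Cartier operator.
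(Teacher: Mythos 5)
Your overall strategy matches the paper's: you identify $f$ as (a nonzero scalar multiple of) the weight-$(p+1)$ polynomial representing $\theta E_{p-1}$, which by Ramanujan's identity $\theta E_{p-1}=\tfrac{p-1}{12}(E_2E_{p-1}-E_{p+1})$ is the paper's $B+12t_1A$ up to a unit; you observe $\vf(A)\in(f)$, so that $A$ is a regular first integral on $\{f=0\}$; and you reduce everything to the polynomial identity $\vf(f)=0$. Your characterization of $\{A=1,f=0\}$ as the curve of \cref{23052024jea} also agrees with the paper. Be aware, though, that already the step ``$\vf(A)=f$'' identifies two polynomials solely on the strength of their having equal $q$-expansions, and so needs the same kind of justification as the step $\vf(f)=0$ does.

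The genuine gap, which you flag yourself, is precisely in the passage to $\vf(f)=0$, and your primary bridge (the ``homogeneity plus leading $q$-coefficient'' sketch) does not cross it. Once weighted homogeneity gives $\vf(f)=c\,t_1 f$ for some $c\in\Ff_p$, the leading-coefficient comparison is vacuous: $f$ has identically zero mod-$p$ $q$-expansion (it represents a unit multiple of $\theta E_{p-1}\equiv 0 \bmod p$), so $\vf(f)$ and $c\,t_1 f$ both map to $0$ in $\Ff_p[[q]]$ for every $c$; and the characteristic-zero fact $\theta E_{p-1}\in p\,\Z_{(p)}[[q]]$ does not transfer, because $A$, $B$, $f$ are elements of $\Ff_p[t]$ with no canonical $\Z_{(p)}$-lift compatible with $\vf$. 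The paper avoids this entirely by citing Swinnerton--Dyer's \emph{explicit} differential equations for $A$ and $B$ (\cite[Thm.\ 2(ii), p.\ 22]{Swinnerton1973}), which are polynomial identities and therefore pin down $\vf(B)$, and hence $\vf(f)$, outright. If you want to stay on the $q$-expansion route, the correct rescue is not a leading-coefficient argument but this: split $\vf(A)$ and $\vf(B)$ via Euler's identity into a $t_1$-part (forced by the weights $p-1$, $p+1$ of $A,B\in\Ff_p[t_2,t_3]$) plus a remainder lying in $\Ff_p[t_2,t_3]$; the residual identity to be proved then takes place inside $\Ff_p[t_2,t_3]$, where the kernel of the $q$-expansion map is precisely $(A-1)$ (\cite[Thm.\ 2(iv), p.\ 22]{Swinnerton1973}), and a short degree argument shows the non-homogeneous ideal $(A-1)\subset\Ff_p[t_2,t_3]$ contains no nonzero weighted-homogeneous element. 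That forces the remainder, yields $\vf(f)=0$, and simultaneously legitimizes ``$\vf(A)=f$''.
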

A similar theorem has been proved for the Ramanujan vector field in $\C^3$ in \cite[Theorem 1]{ho06-3}.
It has a real analytic first integral $f$ in $\C^3\backslash\{\Delta=0\}$ and $\vf$ restricted to
$f=0$ has also a real analytic first integral.

During the preparation of the present text we have consulted J.V. Pereira, F. Bianchini, F. Voloch, and N. Katz whose names appear throughout the text. My heartfelt thanks go to all of them.

\begin{figure}[t]
\begin{center}
\includegraphics[width=0.5\textwidth]{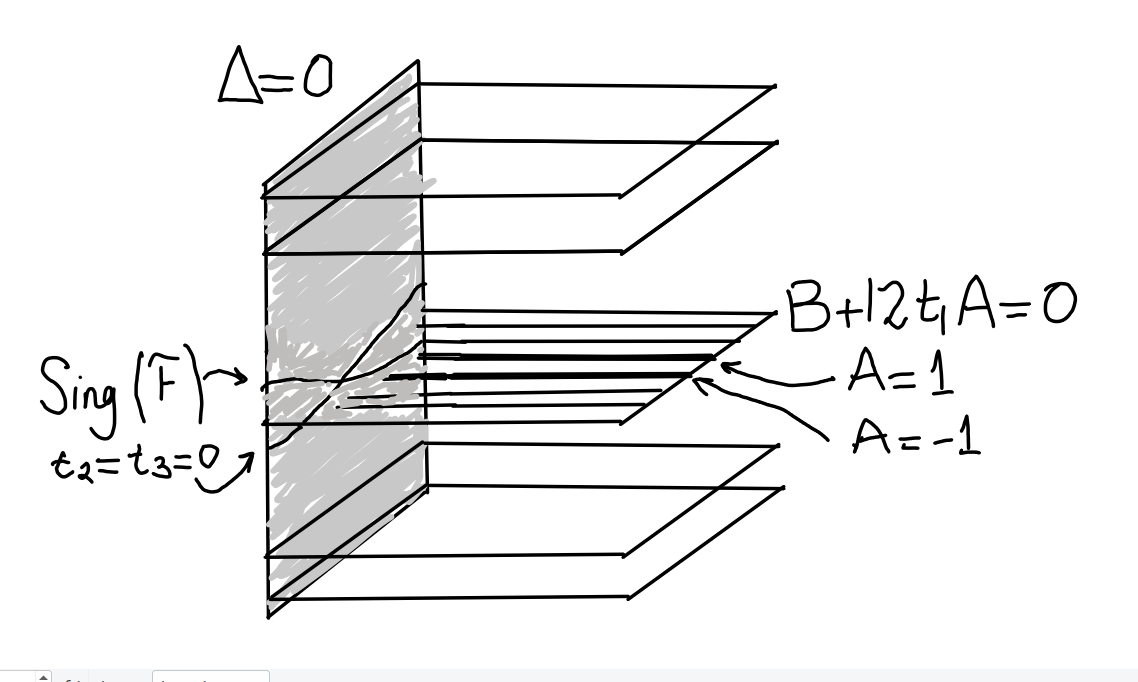}
\caption{Leaves and first integral}
\label{Lattice}
\end{center}
\end{figure}

\section{Bernoulli numbers}
Bernoulli numbers $B_k$ are defined through the equality\index{Bernoulli numbers}\index{$B_k$,  Bernoulli numbers}
$$
\frac{x}{e^x -1}=\sum_{k=0}^{\infty}B_k\cdot\frac{x^k}{k!}.
$$
For instance, $B_0=1, \ B_1=\frac{-1}{2}, \ B_2=\frac{1}{6}, \ B_4=\frac{-1}{30}, \ B_6=\frac{1}{42}$.
It is easy to see that for any odd $k\geqslant 3$ we have $B_k=0$.
\begin{theo}
 We have the following congruence properties for Bernoulli numbers:
 \begin{enumerate}
  \item Von Staudt–Clausen theorem:
$$
B_{k}+\sum_{(p-1)|k, \ p \hbox{ prime }}{\frac {1}{p}}\in \Z
$$
In particular, for $(p-1)|k$  ${\rm ord}_p B_{k}=-1$. 
\item 
Kummer theorem:  If $(p-1)\not| k$ then ${\rm ord}_p \frac{B_k}{k}\geq 0$ and 
$$
\frac{B_k}{k}\equiv_p \frac{B_{k'}}{k},\ \ \forall k \equiv_{p-1}k'\not\equiv_{p-1}0.
$$
 \end{enumerate}
 \end{theo}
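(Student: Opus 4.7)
The plan is to prove both parts by analyzing the power sums $S_k(n) := \sum_{j=0}^{n-1} j^k$ via Faulhaber's formula
$$S_k(n) = \frac{1}{k+1}\sum_{i=0}^{k}\binom{k+1}{i} B_i\, n^{k+1-i},$$
which already encodes all the arithmetic content of Bernoulli numbers once one knows how to compute the left-hand side modulo powers of $p$.

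For the Von Staudt--Clausen congruence, I would fix a prime $p$ and set $n=p$ in Faulhaber. By Fermat's little theorem, $j^k \equiv j^{\,k \bmod (p-1)} \pmod p$ for $\gcd(j,p)=1$, so $S_k(p) \equiv -1 \pmod p$ when $(p-1)\mid k$ (positive $k$) and $S_k(p) \equiv 0 \pmod p$ otherwise. Proceeding by strong induction on $k$ with the inductive hypothesis ``$pB_i$ is a $p$-integer whose residue is $-1$ if $(p-1)\mid i$ and $0$ otherwise,'' I would isolate the term $\binom{k+1}{k} B_k\, p / (k+1) = B_k\, p$ from the right-hand side. The remaining terms involve $B_i\, p^{k+1-i}/(k+1)$ for $i < k$; here the extra factors of $p$ together with the inductive $p$-integrality of $B_i$ make these terms vanish modulo $p$, as long as the harmless $1/(k+1)$ denominator is dealt with by separating the case $p \mid (k+1)$ and extracting that factor of $p$ from the binomial. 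Comparing both sides yields $p B_k \equiv -1 \pmod p$ in the first case and $\equiv 0$ in the second, which is equivalent to the stated formula after summing the fractional parts $-1/p$ over all primes $p$ with $(p-1)\mid k$.

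For Kummer's congruence, the $p$-integrality of $B_k/k$ under $(p-1)\nmid k$ is an immediate consequence of part (1): the Clausen--Staudt sum has no $1/p$ contribution, and the factor $k$ in the denominator is coprime to $p$ unless $p\mid k$, a case handled separately by using the variant of Faulhaber applied at $n=p^2$ together with the Kummer-style extension of the above induction. To obtain the congruence $B_k/k \equiv B_{k'}/k' \pmod p$ for $k \equiv k' \pmod{p-1}$ with $(p-1) \nmid k$, I would again apply Faulhaber's formula modulo $p^2$ to both $k$ and $k'$. Since $j^k \equiv j^{k'} \pmod p$ on units, one has $S_k(p) \equiv S_{k'}(p) \pmod p$; combined with $S_k(p) \equiv 0 \pmod p$ from (1), this in fact forces a relation modulo $p^2$ between the two power sums once we subtract them. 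Matching the leading $B_k\, p$ term on each side and controlling the lower-order terms $B_i\, p^{k+1-i}/(k+1)$ by the inductive $p$-adic valuations, one extracts $B_k/k \equiv B_{k'}/k' \pmod p$.

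The main obstacle in both steps is the bookkeeping of $p$-adic valuations of the lower-order terms $B_i\, p^{k+1-i}/(k+1)$ in Faulhaber's formula, especially when $p$ divides the binomial coefficient $\binom{k+1}{i}$ or the denominator $k+1$. This is routine but error-prone, and is the place where the induction has to be set up with enough strength (e.g., tracking $p$-adic valuation rather than mere $p$-integrality) so that the recursion closes. Once this is managed, both Von Staudt--Clausen and Kummer fall out in parallel from the same elementary identity.
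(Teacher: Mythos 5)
The paper does not in fact prove this theorem; it is stated as a classical fact and the reader is referred to Borevich--Shafarevich and Ireland--Rosen \cite[Chapter 15]{IrelandRosen}. So there is no in-paper proof to compare your argument against; what matters is whether your sketch would close.

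Your treatment of Von Staudt--Clausen is on the right track and follows the standard elementary argument (essentially the one in Ireland--Rosen): apply Faulhaber at $n=p$, observe via Fermat's little theorem that $S_k(p) \equiv -1$ or $0 \pmod p$ according as $(p-1)\mid k$ or not, isolate the $B_k\,p$ term, and induct with enough strength to kill the lower-order terms despite the $1/(k+1)$ denominator. That will work, though as you flag yourself the bookkeeping when $p \mid (k+1)$ or $p \mid \binom{k+1}{i}$ needs to be carried out explicitly.

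The Kummer part is where the proposal has a genuine gap. You write that, since $S_k(p) \equiv S_{k'}(p) \equiv 0 \pmod p$ for $(p-1)\nmid k$, ``this in fact forces a relation modulo $p^2$ between the two power sums once we subtract them,'' and that comparing Faulhaber expansions modulo $p^2$ then yields $B_k/k \equiv B_{k'}/k' \pmod p$. This is not a consequence of anything you have established: knowing two quantities are each $\equiv 0 \pmod p$ tells you nothing about their difference modulo $p^2$, and the Faulhaber expansion $S_k(p) = B_k\,p + \frac{k}{2}B_{k-1}p^2 + \cdots$ has no factor of $k$ in its leading term that would produce the $B_k/k$ normalization appearing in Kummer's statement. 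The substance of Kummer's congruence lies precisely in manufacturing that normalization, and the standard elementary route passes through an intermediate identity (Voronoi's congruence, or equivalently an analysis of $\sum_{j} j^{k-1}\lfloor ja/p\rfloor$ with a free auxiliary parameter $a$) which your sketch omits entirely. Likewise, the $p$-integrality of $B_k/k$ when $(p-1)\nmid k$ but $p \mid k$ (the Adams/von Staudt supplement) is a separate theorem, not an ``immediate consequence of part (1),'' and your one-clause gesture at ``Faulhaber at $n=p^2$'' does not constitute a proof. Either work through Voronoi's congruence as an intermediate lemma, or simply cite the references as the paper does.
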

See \cite[384-386]{BorevichShafarevich} or \cite[Chapter 15]{IrelandRosen} for a proof. The articles \cite{Serre1973, Swinnerton1973, ka73} use the above theorem to build up the theory of p-adic modular forms. Recall the Eisenstein series: 
\begin{equation}
 E_{2k}(\tau)=1-\frac{4k}{B_{2k}}\sum_{n=1}^{\infty}\sigma_{2k-1}(n)q^n,\ \ k=1,2,3,\ldots. 
\end{equation}
 where $\sigma_i(n):=\sum_{d\mid n}d^i$. 
\begin{prop}
\label{04042024bitaamad}
 The numerator of $\frac{B_{2k}}{2k}$ is the smallest number $a\in\N$ such that
 $aE_{2k}$ can be written as a polynomial with coefficients in $\Z$ of weight $2k$ in $E_4,E_6$ with $\deg(E_i)=i,\ i=4,6$. 
\end{prop}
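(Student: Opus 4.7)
The plan is to compare two conditions on $a\in\N$: integrality of the $q$-expansion of $aE_{2k}$, and integrality of the coefficients of $aE_{2k}$ as a weighted polynomial in $E_4,E_6$. Write $B_{2k}/(2k)=N/D$ in lowest terms. Since $p-1$ divides $2k$ for both $p=2$ and $p=3$, von Staudt--Clausen gives $6\mid D$ and hence $\gcd(N,6D)=1$. From the classical normalization
\[
E_{2k}(q)=1-\frac{4k}{B_{2k}}\sum_{n\geq 1}\sigma_{2k-1}(n)q^n=1-\frac{2D}{N}\sum_{n\geq 1}\sigma_{2k-1}(n)q^n,
\]
and using $\sigma_{2k-1}(1)=1$, the coefficient of $q$ in $aE_{2k}$ equals $-2aD/N$. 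Since $aE_{2k}\in\Z[E_4,E_6]$ forces $aE_{2k}\in\Z[[q]]$ (because $E_4,E_6\in\Z[[q]]$), this coefficient must be an integer, and as $\gcd(N,2D)=1$ I conclude $N\mid a$. This gives the lower bound $a\geq N$.

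\textbf{The hard direction.} It remains to produce $NE_{2k}\in\Z[E_4,E_6]$. By algebraic independence of $E_4,E_6$ over $\C$, there is a unique representation
\[
NE_{2k}=\sum_{4i+6j=2k}c_{i,j}\,E_4^i\,E_6^j,\qquad c_{i,j}\in\Q,
\]
and the task reduces to proving $c_{i,j}\in\Z$ for all $(i,j)$. Assume, for a contradiction, that some prime $p$ divides the common denominator to the maximal power $p^e$ with $e\geq 1$; multiplying through yields an identity $\sum d_{i,j}E_4^iE_6^j=p^e NE_{2k}\in p\Z[[q]]$ with $d_{i,j}\in\Z_{(p)}$ and at least one $d_{i_0,j_0}$ a $p$-adic unit. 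Reducing mod $p$ produces a non-trivial weighted-homogeneous relation of weight $2k$ between $\bar E_4$ and $\bar E_6$ in $\Ff_p[[q]]$. For $p\geq 5$, the theorem of Serre--Swinnerton-Dyer (\cite{Serre1973,Swinnerton1973}) identifies the kernel of $\Ff_p[x,y]\to\Ff_p[[q]]$, $x\mapsto E_4$, $y\mapsto E_6$, as the principal ideal $(A_p(x,y)-1)$, with $A_p$ weighted-homogeneous of weight $p-1$ representing $E_{p-1}$. A short degree-shift argument, using that $A_p$ is homogeneous of positive weight $p-1$ while $1$ has weight $0$, shows that this ideal contains no non-zero weighted-homogeneous element (any $G(A_p-1)$ of homogeneous degree $2k$ would force $G_{2k}A_p^n\neq 0$ in all degrees $2k+n(p-1)$, contradicting $G$ being a polynomial), so no $p\geq 5$ can occur.

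\textbf{Main obstacle and proposed resolution.} The delicate cases are $p=2,3$: here $\bar E_4\equiv\bar E_6\equiv 1$ in $\Ff_p[[q]]$, so the weighted-homogeneous relation degenerates into the trivial constraint $\sum d_{i,j}\equiv 0\pmod p$, which is automatic because the constant term $p^e N$ is already divisible by $p$. To handle these primes I would pass to Serre's ring of $p$-adic modular forms \cite{Serre1973,Swinnerton1973,ka73}: in that framework $E_{2k}$ sits in a canonical $p$-adic Eisenstein family whose $p$-adic denominator is controlled by the $p$-part of $B_{2k}/(2k)$, and since $\gcd(N,6)=1$ this $p$-part is trivial, forcing $c_{i,j}\in\Z_{(p)}$ for $p=2,3$. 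Combining $c_{i,j}\in\Z_{(p)}$ over all $p$ gives $c_{i,j}\in\bigcap_p\Z_{(p)}=\Z$ and hence $NE_{2k}\in\Z[E_4,E_6]$, completing the upper bound. I anticipate that the $p=2,3$ step, which requires either Serre's explicit $p$-adic machinery or a carefully chosen higher-order $q$-expansion congruence refining the mod-$p$ computation, will be the main obstacle.
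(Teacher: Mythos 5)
Your approach differs substantively from the paper's. The paper invokes Swinnerton-Dyer's isobaric-decomposition lemma --- a modular form whose $q$-coefficients generate the additive group $A$ lies in $A[E_4,\Delta]\oplus E_6A[E_4,\Delta]$ --- proved uniformly in $p$ by the cusp-form/division-by-$\Delta$ induction. You instead localize at each prime and argue through the structure of the map $\Ff_p[x,y]\to\Ff_p[[q]]$. Your lower bound is fine, and your argument at $p\geq 5$ is correct: if $G\cdot(A_p-1)$ were a non-zero weighted-homogeneous polynomial of weight $2k$, comparing homogeneous pieces gives $G_m=G_{m-(p-1)}A_p$ for every $m\neq 2k$, which first forces $G_m=0$ for $m<2k$ and then $G_{2k+n(p-1)}=G_{2k}A_p^n\neq 0$ for all $n\geq 0$ in the domain $\Ff_p[x,y]$, contradicting that $G$ is a polynomial.

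The genuine gap is at $p=2,3$, and I would push back on your framing that the fix is a routine appeal to Serre's $p$-adic theory. Your sketch infers $c_{ij}\in\Z_{(p)}$ from the $p$-part of $B_{2k}/(2k)$ being trivial, but the $p$-adic theory controls $q$-expansion denominators, and $NE_{2k}\in\Z[[q]]$ is \emph{already known}; that is strictly weaker than $NE_{2k}\in\Z_{(p)}[E_4,E_6]$ for $p\in\{2,3\}$. The form $\Delta$ makes this concrete: $\Delta\in\Z[[q]]$, yet its unique $\Q[E_4,E_6]$-expansion is $\frac{1}{1728}\bigl(E_4^3-E_6^2\bigr)$, and $1728=2^6\cdot 3^3$ is a non-unit at $2$ and $3$, so $\Delta\notin\Z_{(p)}[E_4,E_6]$. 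Structurally, since $\bar E_4=\bar E_6=1$ in $\Ff_p[[q]]$ for $p=2,3$, the kernel of $\Ff_p[x,y]\to\Ff_p[[q]]$ in any positive weight is a full hyperplane and the reduction-mod-$p$ strategy only yields the automatic constraint $\sum\bar d_{ij}=0$. Some feature of $E_{2k}$ as an \emph{Eisenstein} series, over and above $q$-integrality, has to enter at $p=2,3$, and your proposal does not identify it. (Note, incidentally, that the cited Swinnerton-Dyer lemma produces an integral expansion in $E_4$, $\Delta$ and $E_6E_4^a\Delta^b$, not in $E_4,E_6$ alone; passing from the former to the latter requires exactly the $2$- and $3$-adic divisibilities at issue here.)
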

For the sequence of these numbers see
 \href{https://oeis.org/A001067}{A001067}.  
 Examples of $a=a_{2k}, 2k=2,4,6,8,\cdots, 12$ are 
 $$
1, 1, 1, 1, 1, 691, 1, 3617, 43867, 174611, 77683, 236364091, 
$$
\begin{proof}
This proposition follows from a statement in \cite[page 19]{Swinnerton1973}:
``if $f$ is a modular form and $A$ the additive group generated by the
coefficients of the $q$-series expansion of $f$, then $f$ has a unique expression as an isobaric element of
$A[Q,\Delta]\oplus  R A[Q,\Delta]$". Here, $Q=E_4, R=E_6$ and $\Delta=\frac{1}{1728} (Q^3-R^2)$. The proof is easy,
even though at first it did not appear to me and Frederico Bianchini reminded me the argument. First,  $f-f_0E_4^aE_6^b$,  for some $a,b\in\N$ such that $k:=4a+6b$ is the weight of $f$, is a cusp form. We know that the ideal of cusp form over $\Q$, is generated by $\Delta$
and so $\frac{f-f_0E_4^aE_6^b}{\Delta}$ has coefficients in $A$ and is a modular form of weight $k-12$. The proof is finished by using induction on $k$.
\end{proof}

Let $E_2,E_4,E_6$ be the Eisenstein series.
For a prime $p\not=2,3$ let us define
$$
\IS_p:=\left \{P\in\Ff_p[t_1,t_2,t_3]\ \  \Big| \ \  P(a_1E_2,a_2E_4,a_3E_6)=0\right \}.
$$
This is an ideal in $\Ff_p[t_1,t_2,t_3]$ and we know that it has many elements.
We have $E_{p-1}=1+\frac{2(p-1)}{B_{p-1}}(q+\cdots)$ and Von Staut Clausen theorem says that
${\rm ord}_p \frac{2(p-1)}{B_{p-1}}=+1$ and so $E_{p-1}\equiv_p1$. We write 
\begin{equation}
 \label{07102025bimsa-1}
E_{p-1}=A\left(\frac{1}{12}E_4,-\frac{1}{216}E_6\right)
\end{equation}
and by  \cref{04042024bitaamad} the prime $p$ does not appear in the denominator of $A$ and so it makes sense to consider $A\in\Ff_p[t_2,t_2]$. We get $A-1\in\IS_p$.
In a similar way we use Kummer's theorem for $(p-1)\not|(p+1)$ and so ${\rm ord}_p\frac{B_{p+1}}{2(p+1)}\geq 0$ and this is $\equiv_{p}\frac{B_2}{4}=24$ which implies that 
\begin{equation}
\label{25042024zagier}
 {\rm ord}_p \frac{B_{p+1}}{2(p+1)}=0
\end{equation}
and hence it it invertible modulo $p$.
 This together with Fermat's little theorem imply that  $E_{p+1}\equiv E_2$.
 By \cref{04042024bitaamad} and \eqref{25042024zagier} we can write \begin{equation}
 \label{07102025bimsa-2}
 E_{p+1}\equiv_p B\left(\frac{1}{12}E_4,-\frac{1}{216}E_6  \right)
 \end{equation}
 with $B\in \Ff_p[t_2,t_3]$, and we have another element $B+12t_1\in\IS_p$.
 \begin{prop}
 \label{05062024lopes}
 For $p\geq 5$ we have
 \begin{enumerate}
 \item
  The ideal $\IS_p$ is  generated by $A(t_2,t_3)-1$ and $B(t_2,t_3)+12t_1$.
  \item
  It is invariant under the Ramanujan vector field $\vf$ and
  $(\vf^{p}-\vf)\Ff_p[t_1,t_2,t_3]\subset \IS_p$. 
  \item
  The scheme ${\rm Zero}(\IS_p)$ is an irreducible curve in $\A_{\bar \Ff_p}^3$  with the smooth  point $a$ and tangent to the vector $b$ at $a$, both defined in  \cref{23m2024hamid}.
  \end{enumerate}
 \end{prop}
\begin{proof}
1. Since $\frac{1}{12}B(t_2,t_3)+t_1\in\IS_p$, we need to show that any $P(t_2,t_3)\in\IS_p$ is a multiple of $A(t_2,t_3)-1$.
This has been proved in \cite[Theorem 2 (iv), page 22]{Swinnerton1973}, see also
\cite[page 196]{Serre1973}.

2. Let us consider the following map which is a ring homomorphism:
 $$
 \sring[t_1,t_2,t_3]\to \sring[[q]], P(t_1,t_2,t_3)\mapsto P(a_1E_2,a_2E_4,a_3E_6).
 $$
 In $\sring[[q]]$ we consider the derivation $-q\frac{\partial}{\partial q}$ and it turns out that the following is commutative:
 \begin{equation}
\begin{array}{ccc}
 \sring[t_1,t_2,t_3]&\stackrel{\vf}{\to}&  \sring[t_1,t_2,t_3]\\
\downarrow &&\downarrow\\
 \sring[[q]]&\stackrel{-q\frac{\partial}{\partial q}}{\to} & \sring[[q]].
\end{array}
\end{equation}
  This implies that $\IS_p$ is invariant under the Ramanujan vector field  $\vf$. One can compute  $\vf(A-1)$ and $\vf(B+12t_1)$ in terms of $A-1$ and $B+12t_1$ using the equalities in \cite[Theorem 2 page 22]{Swinnerton1973}.
  By Fermat little theorem we have $a^p\equiv_p a$ for all $a\in\N$, and so,   $(-q\frac{\partial}{\partial q})^p=-q\frac{\partial}{\partial q}$.
 This implies that $\vf^pf-\vf f\in \IS_p$ for all $f\in\sring[t]$.

3. Note that
$$
\left \{P\in\bar \Ff_p[t_1,t_2,t_3]\ \  \Big| \ \  P(E_2,E_4,E_6)=0\right \}=\IS_p\otimes_{\Ff_p} \bar\Ff_p.
$$
This follows by considering the same equality for the vector space of polynomials $P$ of degree $\leq d$ and the fact that the left hand side of the above equality is defined over $\Ff_p$. By definition $\IS_p\otimes_{\Ff_p} \bar\Ff_p$ is a prime ideal.
The point  $a$ is a singular point of the Ramanujan vector field. Of course we can prove this also by explicit generators $A-1$ and $B+12t_1$ of $\IS_p$. The smoothness at $a$ follows from $(p-1)A=4t_2\frac{\partial A}{\partial t_2}+6t_3\frac{\partial A}{\partial t_3}$.
\end{proof}

\begin{rem}\rm
We remark that $\IS_p$ is not generated by $(\vf^p-\vf)t_i,\ \ i=1,2,3$. For the computation below we use the Ramanujan vector field corresponding with the solution $(E_2,E_4,E_6)$ (without constant $a_i$' s).
We first compute the linear part of $\vf$ at $\t$. In the cordinates $x_i=t_i-1,\ \ i=1,2,3$, $\vf$ can be written as
$$
\vf:=
\begin{bmatrix} \frac{\partial}{\partial x_1} & \frac{\partial}{\partial x_2} & \frac{\partial}{\partial x_3}
 \end{bmatrix}
\begin{bmatrix}
\frac{1}{6} & \frac{-1}{12} & 0\\
\frac{1}{3} & \frac{1}{3} & \frac{-1}{3}\\
\frac{1}{2} & -1 & \frac{1}{2}
 \end{bmatrix}
 \begin{bmatrix}
 x_1 \\
 x_2\\
 x_3
\end{bmatrix}+\cdots
$$
where $\cdots$ means higher order terms.
The Jordan decomposition of the linear part  is $A=SJ S^{-1}$, where
$$
J=\begin{bmatrix}
  0&1&0\\
  0&0&0\\
  0&0&1
  \end{bmatrix}
\ \ \
S=\begin{bmatrix}
   1/3 & 2 & 1/21 \\
   2/3 & 0 & -10/21 \\
   1 & 0 & 1
  \end{bmatrix}.
$$
This implies that the linear part of $\vf^p-\vf$ is of rank one. If $(\vf^p-\vf)t_i,\ \ i=1,2,3$ generates $\IS_p$, this must be $2$ because
${\rm Zero}(\IS_p)$ is smooth at $t=(1,1,1)$.
{\tiny
}
\end{rem}

\section{Hasse-Witt invariant}
For the definition of Hasse-Witt invariants, we follow \cite{AchterHowe}. We actually use a notion of Hasse-Witt invariant for differential forms of the second kind.  For a curve $X$ defined over a perfect field of characteristic $p$, there exists a unique map $C:\Omega^1_X\to \Omega^1_X$, called the Cartier operator,
such that
\begin{enumerate}
\item
$C$ is $\frac{1}{p}$-linear, that is  $C$ is additive and $C(f^p\omega)=f C(\omega)$.
\item
$C(df)=0$
\item
$C(f^{p-1} df ) = df$,
\item
a differential $\omega$ is logarithmic, that is $\omega=\frac{df}{f}$ if and only if $\omega$ is closed and
$C(\omega)=\omega$.
\end{enumerate}

This operator induces a $1/p$-linear map on merormorphic differential 1-forms on $X$. We can compute the Cartier operator in the following way.
Let $a$ be a closed point of a smooth curve $X$ and let $t$ be a coordinate system at $a$. The $\O_{X,a}^p$-module $\O_{X,a}$ is freely generated by
functions $1,t,\cdots, t^{p-1}$.  Any meromorphic 1-form  which is holomorphic at $a$ admits an expression
\begin{equation}
\label{16052024gagoaluguel}
\omega=(\sum_{j=0}^{p-1} f_j^pt^j)dt,\ \  f_j\in\O_{X,a}.
\end{equation}
We have
$$
C(\omega)=f_{p-1}dt.
$$
We follow \cite{AchterHowe} and compute the matrix of $C\frac{x^idx}{y}$.
Let 
$f(x)=4x^3-t_2x-t_3$, where $t_2,t_3\in\sk$ and $\sk$ is a perfect field of characteristic $p$. We denote the inverse of the Frobenius map $\sk\to\sk, \ \ t\mapsto t^p$ by $t\mapsto t^{\frac{1}{p}}$ and write
$$
f(x)^{\frac{p-1}{2}}=\sum_{i=1}^{\frac{3(p-1)}{2}} c_ix^i.
$$
By the  $\Gm$ action $x\to kx,\ t_2\to k^{-2}t_2, \ t_3\to k^{-3}t_3$,   we can see that $c_i\in\Z[t_2,t_3],\ \deg(t_3)=6,\ \deg(t_2)=4$ is homogeneous of degree $3(p-1)-2i$. A simple calculation shows that
$$
C\left(\frac{x^{j-1}dx}{y}\right)=\sum_{i=1}^{}c_{ip-j}^{\frac{1}{p}}\frac{x^{i-1}dx}{y}=c_{p-j}^\frac{1}{p}\frac{dx}{y},
$$
$$
\left [C(\frac{dx}{y}),C(\frac{xdx}{y})\right]=
 \left[\frac{dx}{y},\frac{xdx}{y}\right]
 \begin{bmatrix}
 c_{p-1}^{\frac{1}{p}} & c_{p-2}^{\frac{1}{p}}\\
 0  & 0
\end{bmatrix}.
$$
The quantity $c_{p-1}$ is called the Hasse-Witt invariant of the elliptic curve $E: y^2=f(x)$.
\begin{prop}
\label{05062024bita}
Let $E_{t_2,t_3}: y^2= 4x^3-t_2x-t_3$ be an  elliptic curve in the Weierstrass format over a perfect field of characteristic $p\not=2,3$.
We have
$$
c_{p-1}\equiv_p A(t_2,t_3),\ \  c_{p-2}\equiv_p \frac{1}{12} B(t_2,t_3),
$$
where $A,B\in\Q[t_2,t_3]$ are computed via $E_{p-1}=A(a_2E_4,a_3E_6),\ \ E_{p+1}=B(a_2E_4,a_3E_6)$.
\end{prop}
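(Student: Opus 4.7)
The plan is to prove each congruence via the $q$-expansion principle applied to the Tate curve. First, by the $\Gm$-weighting recorded just above the statement, $c_{p-1}$ and $c_{p-2}$ are weighted-homogeneous polynomials in $(t_2,t_3)$ of weights $p-1$ and $p+1$ respectively (with $\deg t_2=4$, $\deg t_3=6$). Via the moduli interpretation of \cref{21may2024raisimord}, such polynomials are the $t_1=0$ restriction of classical modular forms over $\Z$ of those weights, and both sides of each congruence are therefore elements of the same weight piece of $\Ff_p[t_2,t_3]$. By the $q$-expansion principle (the algebraic independence of $E_4,E_6$ modulo $p$ for $p\geq 5$), it is enough to check that after specializing to the Tate curve via $t_2\mapsto a_2E_4(q)$, $t_3\mapsto a_3E_6(q)$ the two $q$-expansions agree modulo $p$.

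For the first congruence I would invoke the classical identification of the Hasse--Witt invariant of an elliptic curve in Weierstrass form with the Eisenstein series $E_{p-1}$ modulo $p$, a theorem of Deligne recorded in \cite{Serre1973} and developed further in \cite{ka73}. Concretely, on the Tate curve the Cartier operator acts on $\frac{dx}{y}$ by multiplication by $E_{p-1}(q)^{1/p}$ modulo $p$, so the coefficient $c_{p-1}$ computed above reduces to $E_{p-1}(q)\pmod p$. By the very definition of $A$ this equals the $q$-expansion of $A(a_2E_4,a_3E_6)$, and hence $c_{p-1}\equiv_p A(t_2,t_3)$ as weighted-homogeneous polynomials.

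For the second congruence I would proceed by a direct computation combined with the same principle. From the explicit expansion of $(4x^3-t_2x-t_3)^{(p-1)/2}$ one extracts $c_{p-2}$ as a weighted-homogeneous polynomial of weight $p+1$ and then substitutes the Tate-curve values. The target is the $q$-expansion of $\frac{1}{12}E_{p+1}$ modulo $p$: Kummer's congruence together with \eqref{25042024zagier} gives $B_{p+1}/(p+1)\equiv_p 1/12$, hence $-2(p+1)/B_{p+1}\equiv_p -24$, and Fermat's little theorem $d^p\equiv_p d$ yields $\sigma_p(n)\equiv_p\sigma_1(n)$; consequently $E_{p+1}\equiv_p E_2$ on the Tate curve, so $\frac{1}{12}E_{p+1}\equiv_p -t_1(q)$ after specialization. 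Matching this with the direct computation of $c_{p-2}(q)$ is the analytic heart of the argument.

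The main obstacle will be this second congruence: unlike the first, there is no off-the-shelf identification of $c_{p-2}$ with $\frac{1}{12}E_{p+1}$, so the matching of $q$-expansions has to be done by hand from the combinatorial expansion of $(4x^3-t_2x-t_3)^{(p-1)/2}$. A cleaner conceptual route that I would pursue in parallel, inspired by the compatibility between the Cartier operator and the Gauss--Manin connection in characteristic $p$ (essentially $C\circ\nabla\circ F=\nabla$ modulo the relevant filtration), is to show that both $12c_{p-2}$ and $B(t_2,t_3)$ satisfy the same first-order ODE along the Ramanujan vector field $\vf$ modulo $p$ and agree to lowest order at the cusp $q=0$; by uniqueness of solutions this would conclude. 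This second route would also clarify the analogous statement needed for the Calabi--Yau generalization advertised in the introduction.
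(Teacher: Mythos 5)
Your reduction to $q$-expansion identities is the right structural move, and your handling of the first congruence — invoke the Deligne/Igusa identification of the Hasse--Witt invariant with $E_{p-1}$ modulo $p$ via the Tate curve, as in \cite{Serre1973}, \cite[p.~90]{ka73} and \cite[p.~23]{Swinnerton1973} — is precisely what the paper's proof of \cref{05062024bita} does. One small correction: $E_4$ and $E_6$ are \emph{not} algebraically independent modulo $p$ (they satisfy $A(E_4,E_6)\equiv 1$); what you actually need, and what is true, is that each fixed-weight homogeneous piece of $\Ff_p[E_4,E_6]$ injects into $\Ff_p[[q]]$ under $q$-expansion. Phrase it that way.

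The genuine gap is the second congruence, which you correctly flag as ``the analytic heart'' but do not close. Your direct route — expand $(4x^3-t_2x-t_3)^{(p-1)/2}$, extract $c_{p-2}$, and compare $q$-expansions with $\frac{1}{12}E_{p+1}$ on the Tate curve — \emph{is} exactly the hard part, and you give no handle on the combinatorics or on why the Cartier action on the second-kind differential $\frac{xdx}{y}$ produces the right constant. Your alternative route (show $12c_{p-2}$ and $B$ solve the same first-order ODE modulo $p$ and agree at the cusp, then invoke uniqueness) is conceptually the Igusa-style argument that the paper quotes from \cite{Swinnerton1973} as the second way to prove the Hasse-invariant identity, and could plausibly be made to work, but you neither derive the candidate ODE for $c_{p-2}$ nor check that $B$ satisfies it. The paper resolves this half of \cref{05062024bita} by invoking \cite[Theorem~3.1]{Katz2021}, a result you do not appear to know, after situating it against earlier work (\cite{Robert1980}, \cite[Prop.~7.2]{Edixhoven1992}, \cite[p.~57]{Katz1977}) in which the relevant objects appear without this congruence being recorded; it also supplies a computer verification. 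To complete your second route you would need to compute the derivative of $c_{p-2}$ along the Ramanujan vector field from the coefficient recursion of $(4x^3-t_2x-t_3)^{(p-1)/2}$ and match it against the differential relation for $B$ in \cite[Theorem~2]{Swinnerton1973}; as written, your proposal stops just short of that step.
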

\begin{proof}
 A hint for the proof of the first congruence can be found in
 \cite[last paragraph of page 23]{Swinnerton1973}. In this article we read ''This may be proved in one of two ways.
On the one hand Deligne has shown that the $q$-series expansion of the Hasse invariant reduces to 1; and Theorem 2 shows that this property characterizes $A$ among polynomials of weight $l-1$.
On the other hand the differential equation derived from (ii) is just that which the Hasse invariant is
known to satisfy, see Igusa."
The first proof is reproduced in \cite[page 90]{ka73}, see also \cite[Theorem 3]{Serre1973}. The second congruency at first seemed to be a novelty which has not deserved the attention of masters. The author consulted F. Voloch regarding this which resulted in the following comments. Serre in a note (Algèbre et géométrie, page 81) mentions the article \cite{Robert1980} in which a multiplication by $E_{p+1}$ map is characterized uniquely with a certain property related to Hecke operators. This is also reproduced in \cite[Proposition 7.2]{Edixhoven1992}.  At no point its relation with the Cartier map and the differential form of the second kind $\frac{xdx}{y}$  is discussed.
From another perspective, Katz in \cite[page 57]{Katz1977} describes the action of Frobenius map (dual to Cartier) on the de Rham cohomology of elliptic curves. He uses the letter $B$ to denote a coefficient which must be essentially $c_{p-2}$, but no relation with $E_{p+1}$ is discussed. After a personal communication with N. Katz, the author came to know that the puzzle has been only solved many decades later in \cite[Theorem 3.1]{Katz2021}. This might partially justify the author's rediscovery of the second congruency.  
The reader who dislikes proofs using some complicated language of Algebraic Geometry and prefers
experimental verification, might use the following  computer code in {\sc Singular}
which verifies the proposition for as much as primes that the computer can handle. Here is the code for primes $\leq 300$.
{\tiny
\begin{verbatim}
LIB "foliation.lib"; int np=300;
intvec prli=primes(1,np); int i; int j; 
 for (i=3;i<=size(prli);i=i+1)
   { "*", prli[i];
   ring r=0,(x,t_2,t_3),dp;
   poly A=Eisenstein(prli[i]-1, t_2, t_3); poly B=Eisenstein(prli[i]+1, t_2, t_3 );
   A=subst(subst(A, t_2, 12*t_2),t_3, -216*t_3); B=subst(subst(B, t_2, 12*t_2),t_3, -216*t_3);
   ring rr=int(prli[i]),(x,t_2,t_3),dp;
   poly A=imap(r,A);  poly B=imap(r,B);
   int p12=(prli[i]-1) div 2;
   poly P=(4*x^3-t_2*x-t_3)^p12;
   matrix M=coeffs(P,x);
   M[prli[i],1]-A;
   M[prli[i]-1,1]-1/12*B;
   poly Q=4*x^3-t_2*x-t_3; poly Vx=(1/2)*diff(Q,x);
   for (j=4; j<=prli[i]-1;j=j+2){ Vx=(1/2)*diff(Q,x)*diff(Vx,x)+Q*diff(diff(Vx,x),x);}
   Vx=diff(Vx,x); Vx-A; 
  }
\end{verbatim}
}

\end{proof}


\begin{proof}{(of \cref{21may2024raisimord-2})}
The curve we are looking for in \cref{21may2024raisimord-2} is the one explicitly described in  \cref{05062024lopes}.
Let $(t_1,t_2,t_3)$ be a $\sk$-rational point of ${\rm Zero}(\IS_p)$, and hence, $A(t_2,t_3)=1,\ -\frac{1}{12}B(t_2,t_3)=t_1$.
We write the family of elliptic curves in \cref{21may2024raisimord} in the affine coordinates and write it in the form $y^2=4x^3-t_2x-t_3,\ \alpha=\frac{dx}{y},\ \omega=\frac{(x+t_1)dx}{y}$. By \cref{05062024bita} we know that
$$
C(\alpha)=A^{\frac{1}{p}}\alpha=\alpha,\ \
$$
$$
C(\omega)=(\frac{1}{12}B)^{\frac{1}{p}}\alpha+t_1^{\frac{1}{p}}A^{\frac{1}{p}}\alpha=(\frac{1}{12}B+t_1)^{\frac{1}{p}}\alpha=0.
$$

\end{proof}

\begin{proof}{(of \cref{23052024jea})} 1) implies 2): 
We have an algebraic curve $C={\rm Zero}(I)$ in $\T/\Ff_p$ and its smooth point $a$ in the discriminant loci $\Delta=0$ and we know that it is tangent to $\vf$, that is, $\vf(I)\subset I$. Moreover, we have the vector $b$ given in \cref{23m2024hamid} tangent to $C$ at $a$.
Since $C$ is smooth at $a$, we can parameterize it, that is, there are formal power series $\check t_i(q):=\sum_{i=0}^\infty \check t_{i,n}q^n\in\Ff_p[[q]],\ \ i=1,2,3$ such that 
$$
(\check t_{1,0},\check t_{2,0},\check t_{3,0})=a, \ (\check t_{1,1},\check t_{2,1},\check t_{3,1})=b
$$ 
and any element in the ideal $I$ of $C$ annihilates $\check t(q):=(\check t_1(q),\check t_2(q),\check t_3(q))$. Both vectors $\frac{\partial \check t(q)}{\partial q}$  and $\vf(\check t(q)):=(\vf_1(\check t(q)),\ \vf_2(\check t(q)),\vf_3(\check t(q)) )$, where $\vf=\sum_{i=1}^3\vf_i\frac{\partial}{\partial t_i}$ are tangent to the curve $C$, that is,  
$$
\sum_{i=1}^3 A_i\frac{\partial f}{\partial t_i}=0,\ \forall f\in I,
$$
for $A$ one of these vectors.
Since the constant term of  $\frac{\partial \check t}{\partial q}$ is the non-zero vector $b$, we have
$$
\vf(\check t(q))=a(q)\frac{\partial \check t}{\partial q}, \hbox{ where  } a(q)=\sum_{i=0}^\infty a_iq^i\in\Ff_p[[q]].
$$
Moreover, from $\frac{\partial \check t}{\partial q}(a)=b\not=0$ and $\vf(a)=0$, we conclude that $a_0=0$. We compute the coefficient of $q$ in $\vf(\check t(q))$ and see that it is $-b$, that is,
$$
\left[\frac{\partial \vf_j}{\partial t_j}\right]_{3\times 3}\Big|_{q=0}b^{\tr}=-b^{\tr},
$$
see \cite[Page 333]{ho13}. This implies that $a_1=-1$. We claim that there is a formal change of variables $q\mapsto \tilde q:=f(q)\in\Ff_p[[q]]$ which maps the derivation $a(q)\frac{\partial}{\partial q}$ to  $-\tilde q\frac{\partial}{\partial \tilde q}$. For this we use $\vf^p=\vf$ restricted to the curve $C$ and we restate and prove it in \cref{30102025guerranorio}.  If this is the case,  we can assume that $a(q)=-q$. Therefore,
$q\frac{\partial t}{\partial q}=\vf(t)$ and this implies the $p$-integrality of the solution over $\Q$.

1) implies 2): We use the fact that $t_i=a_iE_{2i}$ and the curve $C$ is given by $A-1=B+12t_1A=0$. 
\end{proof}

Let $\A^{1,\for}_{\Ff_p}$ be the spectrum of $\Ff_p[[q]]$. 
\begin{prop}\rm 
\label{30102025guerranorio}
\footnote{This proposition is a particular case of the so called Poincar\'e linearization theorem in characteristic $p$, see \cref{30102025shanghai}.} 
Let $\vf=a(q)\frac{\partial }{\partial q}=(\lambda q+\cdots)\frac{\partial }{\partial q}, \lambda\in \Ff_p$ be a vector field in $\A^{1,\for}_{\Ff_p}$. There is a coordinate system $\tilde q$ in $\A^{1,\for}_{\Ff_p}$ such that $\vf= \lambda \tilde q\frac{\partial }{\partial \tilde q}$ if and only if  $\vf^p=\vf$. 
\end{prop}
\begin{proof}
The direction $\Rightarrow$ is easy: Let $\tilde q=f(q):=q+\cdots\in\Ff_p[[q]]$. The map 
$$
\A^{1,\for}_{\Ff_p}\to \A^{1,\for}_{\Ff_p},\ q\mapsto \tilde q, 
$$
maps $a(q)\frac{\partial}{\partial q}$ to $\lambda \tilde q\frac{\partial }{\partial \tilde q}$ and the the second vector field  satisfies $\vf^p=\vf$ and hence the first vector field satisfies too. Note that we use Fermat's little theorem $\lambda^p=\lambda$. 

For $\Leftarrow$ we argue as follows: We have to find $f(q):=q+\cdots\in\Ff_p[[q]]$ such that $\vf(f)=\lambda f$.
We claim that 
$$
f=-\sum_{i=1}^{p-1} (\lambda^{-1}\vf)^{i}q
$$
satisfies this. The equality $\vf(f)=\lambda f$ follows from $(\lambda^{-1}\vf)^p=(\lambda^{-1}\vf)$. Moreover, the coefficient of $q^1$ in $P$ is $1=-(1+1+\cdots+1)$, $(p-1)$-times.  
\end{proof}

\begin{proof}{(of \cref{08062024drywall})}
We claim that the $B+12t_1A$ is the first integral of $\vf$, that is, $\vf(B+12t_1A)=0$.
For this we use the differential equations of $A$ and $B$ in \cite[Theorem 2 (ii), page 22]{Swinnerton1973}, see also \cite{Serre1973}. Under the transformation $(t_1,t_2,t_3)\to (kt_1,k^2t_2,k^3t_3)$ with $k\in \sk$, $\vf$ is mapped to $k^{-2}\vf$ and the curve $C_1: A=1,\ B=t_1$ is mapped to another curve $C_{k}$ passing through $(ka_1,k^2a_2,k^3a_3)$. All these curves lie in the hypersurface $B+12 t_1A=0$ which intersects the discriminant hypersurface $\Delta=0$ at two components $\sing(\vf)$ and $t_2=t_3=0$.
This shows that $\vf$ restricted to $B+12t_1A$ has the first integral $A=-\frac{B}{12 t_1}$.
\end{proof}

\begin{rem}\rm
 The vector field $\vf$ is tangent to both $\Delta=0$ and $B+12 t_1A$ and its leaves inside them are algebraic.
 It is interesting to know that these are the only algebraic leaves of $\vf$ over finite fields. This follows from \cref{13112023tifi}, \cref{06062024ihes} which can be verified by computer for examples of $p$.
\end{rem}

\begin{rem}\rm
 Let $E: y^2=P(x),\deg(P)=3$ be an elliptic curve in the Weierstrass format  over a perfect field of characteristic $p\not=2$. 
 It can be easily shown that 
 \begin{equation}
\label{23062024consumismo-2}
\vf:=y\frac{\partial }{\partial x}+ \frac{1}{2}P'(x)\frac{\partial }{\partial y}
\end{equation}
is a derivation/vector field on  $E$. 
Over complex numbers, the Weierstrass uniformization  $z\mapsto[\wp(z):\wp'(z):1]$ maps $\frac{\partial }{\partial z}$ to $\vf$.
For the vector field \eqref{23062024consumismo-2}, it is known that
$$
\vf^p= {\rm HW}(E,\frac{dx}{y}) \vf.
$$
see  for instance \cite[12.4.1.2]{KatzMazur1985}, \cite[3.2.1]{Katz1973}. This gives the following algorithm to compute the Hasse-Witt invariant. The polynomials $V_{2n}:=\vf^{2n}x$ satisfy the recursion:
$$
V_{2n+2}=PV_{2n}''+\frac{1}{2}P'V_{2n}',\ \ V_0=x. 
$$
It turns out that $V_{p-1}$ is a degree one polynomial in $x$ and  
the Hasse-Witt invariant ${\rm HW}(E,\frac{dx}{y})$ is the coefficient of $x$ in $V_{p-1}$. The experimental verification of this fact for many primes is done at the end of the computer code in the proof of \cref{05062024bita}.  
\end{rem}

For an elliptic curve $E$ and a regular differential $1$-form $\alpha$ both defined over $\Ff_p$  with $p\not= 2$, it is a classical fact that the Hasse-Witt invariant satisfies ${\rm HW}(E,\alpha)\equiv_p p+1-\#E(\Ff_p)$. 
If we write $E$ in the Weierstarss format $y^2=4x^3-t_2x-t_3$ then we conclude that 
$$
A(t_2,t_3)\equiv_p p+1-\#E(\Ff_p),
$$
that is, the $A$ polynomial obtained from the Eisenstein series $E_{p-1}$ through the equality \eqref{07102025bimsa-1},  is responsible for $\Ff_p$-point counting of elliptic curves. One might ask whether $B$ in \eqref{07102025bimsa-2} also plays a similar role. Here is the answer, for which the author was not able to find it in the literature. 
\begin{theo}\rm
\label{22072024decouvert}
 Let $E$ be an elliptic curve in the Weierstrass format $y^2=4x^3-t_2x-t_3$ over $\Ff_p,\ \ p\not=2$.
 We have
 $$
\sum_{P\in E(\Ff_p),\ P\not=O}x(P)^{j-1}\equiv_p -c_{p-j},\ \hbox{ for }  0\leq j-1<\frac{p-1}{2},
 $$
 where $c_{p-j}$ is the coefficient of $x^{p-j}$ in $(4x^3-t_tx-t_3)^{\frac{p-1}{2}}$.
 In particular, for $j=2$ and $p\geq 5$ we get
 $$
 B(t_2,t_3)\equiv_p 12 \sum_{P\in E(\Ff_p),\ P\not=O}x(P).
 $$
\end{theo}
\begin{proof}
 The proof is elementary and generalizes \cite[Section 2.11]{Clemens1980}.
 Let $f(x)^{\frac{p-1}{2}}=\sum_{i=0}^{\frac{3(p-1)}{2}} c_jx^j$.
 We have the following equalities modulo $p$:
 \begin{eqnarray*}
  \sum_{P\in E(\Ff_p),\ P\not=O}x(P)^{j-1} &= & \sum_{x\in \Ff_p} (f(x)^{\frac{p-1}{2}}+1 )x^{j-1}\\
  &= &
  \sum_{i=0}^{\frac{3(p-1)}{2}}c_i\left( \sum_{x\in\Ff_p }x^{i+j-1}\right)+ \sum_{x\in\Ff_p }x^{j-1}\\
  &=& -c_{p-j}.
\end{eqnarray*}
The first equality follows from the following.
If for $x\in \Ff_p$  we have $f(x)=0$ then $(x,0)\in E(\Ff_p)$ and $f(x)^{\frac{p-1}{2}}+1=1$ which amounts to counting once.  If $f(x)\not=0$ and there is $y\in \Ff_p$ such that $y^2=f(x)$ then  $f(x)^{\frac{p-1}{2}}\equiv_p1$. We have two points $(x,y),(x,-y)\in  E(\Ff_p)$ and  $f(x)^{\frac{p-1}{2}}+1\equiv_p 2$.
This amounts to counting two points. If there is no such $y$ then  $f(x)^{\frac{p-1}{2}}\equiv_p-1$ and
$f(x)^{\frac{p-1}{2}}+1\equiv_p 0$ and so we have not counted those $x$'s.

In the third equality we have used the classical formula
\begin{equation}
 \sum_{x\in\Ff_p}x^k=\left\{
                 \begin{array}{ll}
                  0 & \ \ p-1\nmid k,\ \ \hbox{or}\ \ k=0\\
                  -1 & \  \ p-1\mid k.
                 \end{array}
                 \right.
\end{equation}
Note that by this equality only $i+j-1=k(p-1)$ for some $k\in\N$ contributes to the sum and for $k\geq 2$ we have $i=2(p-1)-j+1>\frac{3}{2}(p-1)$.
The last statement follows from \cref{05062024bita}. 
\end{proof}

\section{Conjugate filtration}
\label{22102025shanghai}
\cref{21may2024raisimord-2} can be reformulated in terms of conjugate filtration of the de Rham cohomology of elliptic curves  over fields of positive characteristic, for definitions and some results on this, see \cite{Katz1972, DeligneIllusie1987}. This paves the way for generalizations of this theorem for Calabi-Yau varieties. For some partial results related to conjugate filtrations for Calabi-Yau varieties see \cite{Ogus2001, GeerKatsura2002}.

The curve $K$ in \cref{21may2024raisimord-2} has a more natural moduli space interpretation in the following way. For missing definitions the reader is referred to \cite{Katz1970}.  For an elliptic curve $E$ defined over a perfect field, apart from the Hodge filtration $\{0\}=F^2\subset F^1\subset F^0=H^1_\dR(E)$, we have also the conjugate filtration $\{0\}=G_{-1}\subset G_0\subset G_1=H^1_\dR(E)$ such that 
$$
F^1=H^0(E, \Omega^1_{E}),\ \ G_0=H^1(E, {\cal H}^0(\Omega^\bullet_E)), \ \ H^1_\dR(E)/G_0\cong H^0(E, {\cal H}^1(\Omega^\bullet_E)).
$$
This last isomorphism  composed with Cartier isomorphism  gives us 
$$
C: H^1_\dR(E)\to F^1,
$$
which we denote again by $C$.  For a covering $\{U_0,U_1\}$ of $E$ if we write $\omega\in H^1_\dR(E)$ in this covering as 
$$
(\omega_0,\omega_1,f_{10}),\ \omega_1-\omega_0=df_{10},\ \omega_i\in\Omega^1_E(U_i),\ f_{10}\in \Omega^0_E(U_0\cap U_1),\ \ 
$$ 
then $C(\omega)$ is simply the gluing of $C(\omega_0)$ and $C(\omega_1)$,  as in $U_1\cap U_0$ we have $C(\omega_0)=C(\omega_1)$.  The curve $K$ is the moduli space of triples $(E,\alpha,\omega)$, such that 
\begin{equation}
\label{15nov2025russia}
C(\alpha)=\alpha,\ \ C(\omega)=0.
\end{equation}
A more intrinsic way to see these equalities adapted for generalizations is the following: 
\begin{prop}
The equalities \eqref{15nov2025russia} are equivalent to the following:
the composition of isomorphisms 
\begin{equation}
\label{15112025moscow-1}
G_1/G_{0} \to   H^{0}(E,{\mathcal H}^1(\Omega^\bullet_{E}))  \xrightarrow{C}  H^{0}(E,\Omega^1_{E})  \leftarrow  F^1/F^2,
\end{equation}
maps $\alpha$ to $\alpha$  and the composition of isomorphisms 
\begin{equation}
\label{15112025moscow-2}
G_0/G_{-1} \to   H^{1}(E,{\mathcal H}^0(\Omega^\bullet_{E})) \xrightarrow{C}  H^{1}(E,\Omega^0_{E})   \leftarrow  F^0/F^{1},
\end{equation}
maps $\omega$ to $\omega$. 
\end{prop}
\begin{proof}
We only prove $\Rightarrow$. The equality $C(\alpha)=\alpha$ is easily translated to the statement about \eqref{15112025moscow-1}. We prove that the statement about \eqref{15112025moscow-2} follows from $C(\omega)=0$.  In order to see this, we write $\omega:=(\omega_0,\omega_1,f_{10}),\ \ \omega_1-\omega_0=df_{10}$, relative to a covering $\{U_0,U_1\}$ of $E$. Since $C(\omega)=0$, we have $\omega_0=df_0,\ \omega_1=df_1$, and so,  there is $g_{10}$ 
such that $f_{10}=g_{10}^p+f_1-f_0$. We have $\{g_{10}^p\}\in H^{1}(E,{\mathcal H}^0(\Omega^\bullet_{E})) $ and the Cartier map in \eqref{15112025moscow-2} maps this to $\{g_{10}\}$. From another side, $\omega$ in $H^1(E,\Omega^0_E)$ is represented by $\{f_{10}\}=\{g_{10}^p\}\in  H^{1}(E,\Omega^0_{E})$. Therefore, we must prove that  $\{g_{10}\}=\{g_{10}^p\}$ in  $H^{1}(E,\Omega^0_{E})$. This follows from $C(\alpha)=\alpha$ and the fact that under Serre duality the Frobenius map $F: H^1(E,\Omega^0_E)\to H^1(E,\Omega^0_E)$ is identified with the Cartier map $C: H^0(E,\Omega^1_E)\to H^0(E,\Omega^1_E)$ and $\alpha$ is identified with $\{g_{10}\}$, see for instance the discussions in the proof of \cite[Propostion 12.3.6]{KatzMazur1985}.   
\end{proof}

The equalities \eqref{15nov2025russia} imply that $E$ is an ordinary elliptic curve and $H^1_\dR(E)=F^1\oplus G_0$. Note that if we look at the representation of $\omega=\frac{(x+t_1)dx}{y}$ in the algebraic de Rham cohomology relative to a covering containing with   $U_0=E-\{O\}$ then the differential form $\omega$ must be exact in $U_0$. This implies the following statement. Let us consider the affine elliptic curve 
$U_0: y^2=4x^3-t_2x-t_3$ with $t_2,t_3\in \Ff_p$ with $p\not=2,3$ and $\Delta=27t_3^2-t_2^3\not=0$. Let also  $A,B\in \Ff_p[X,Y]$ be the polynomials of degree $p-1$ and $p+1$, respectively,  such that $E_{p-1}=A(\frac{1}{12}E_4,-\frac{1}{216}E_6)$ and $E_{p+1}=B(\frac{1}{12}E_4,-\frac{1}{216}E_6)$, where $E_i$'s are the Eisenstein series. If $A(t_2,t_3)=1$ (and so the elliptic curve is ordinary) then the  differential 
$1$-form $\left (x-\frac{1}{12}B(t_2,t_3)\right)\frac{dx}{y}$ is exact. It turns out that we have  a better statement.
\begin{prop}[Conjecture]
Let us consider the affine elliptic curve $E: y^2=4x^3-t_2x-t_3$ over $\Q[t_2,t_3]$, with $t_2,t_3$ as unknown variables. It is easy to show that on $E$ we have 
 \begin{equation}
 \label{22092025sara}
 (2n-1)\frac{x^{n}dx}{y}=a_0(t_2,t_3)\frac{dx}{y}+a_1(t_2,t_3)\frac{xdx}{y}+d(yQ(x)),
 \end{equation}
 for some homogeneous polynomials $a_0,a_1\in\Q[t_2,t_2],\ \deg(t_2)=4,\ \deg(t_3)=6$ of degree $2n$ and $2n-2$, and  a polynomial
 $Q\in\Q[t_2,t_3][x]$ of degree in $x$ $\leq n-2$. For a prime $p\not=2,3$ and $n=\frac{p+1}{2}$, $p$ does not appear in the denominator of $a_0,a_1$ and $Q$ and
$$
a_0\equiv_p -\frac{1}{12}B,\ \ \ a_1\equiv_p A,
$$
where $A,B\in \Q[X,Y]$ are polynomials of degree $p-1$ and $p+1$, respectively,  such that $E_{p-1}=A(\frac{1}{12}E_4,-\frac{1}{216}E_6)$ and $E_{p+1}=B(\frac{1}{12}E_4,-\frac{1}{216}E_6)$. 
In particular,  
\begin{equation}
\label{22sep2025kirk}
A(t_2,t_3) \frac{xdx}{y}-\frac{1}{12}B(t_2,t_3)\frac{dx}{y},
\end{equation}
is an exact form on the curve $E$ over $\Ff_p[t_2,t_3]$. 
\end{prop}
\begin{proof}
As we mentioned earlier, for ordinary  elliptic curves with $A(t_2,t_3)=1$ we know that the right-hand side of \eqref{22sep2025kirk} is exact and the left-hand side modulo $p$ is zero. Note also that if we choose $t_2,t_3$ from a perfect field $\sk$, then by \cref{05062024bita} the Cartier map evaluated at the differential form \eqref{22sep2025kirk} is zero, and hence, by Cartier isomorphism it must be exact.   The proposition can be verified for as much as primes the computer can handle. Here is the computer code.  
{\tiny 
\begin{verbatim}
proc linearw(poly A, number t(2), number t(3))
      {
      int i=deg(A) div deg(var(1)); poly B=A; poly exact;  poly LM; list Bl;
          while (i>1)
                {LM=lasthomo(B);
                 B=B-LM+leadcoef(LM)*((i-2+1/2)/number(4*(i-2)+6)*t(2)*var(1)^(i-2));
                 if (i>2){B=B+leadcoef(LM)*(i-2)/number(4*(i-2)+6)*t(3)*var(1)^(i-2-1);}
                 exact=exact+leadcoef(LM)*(1/number(4*(i-2)+6)*var(1)^(i-2));
                 i=deg(B) div deg(var(1));
                }
           Bl=subst(B,var(1),0), subst(B,var(1),1)-subst(B,var(1),0);
          return(list(Bl, exact));
      }
LIB "foliation.lib"; int np=40;
intvec prli=primes(1,np); int i; int j; list ll;
for (i=4;i<=size(prli);i=i+1)
   { "*", prli[i]; list ll; 
   ring r=(0,s_2,s_3), (x,t_2,t_3),dp;
   poly A=Eisenstein(prli[i]-1, t_2, t_3); poly B=Eisenstein(prli[i]+1, t_2, t_3 );
   A=subst(subst(A, t_2, 12*t_2),t_3, -216*t_3); B=subst(subst(B, t_2, 12*t_2),t_3, -216*t_3);
   ll=linearw(x^((prli[i]+1) div 2 ),s_2,s_3)[1];
   poly Bt=substpar(substpar(prli[i]*ll[1],s_2,t_2),s_3,t_3); poly At=substpar(substpar(prli[i]*ll[2],s_2,t_2),s_3,t_3);

   ring rr=int(prli[i]),(x,t_2,t_3),dp;
   poly A=imap(r,A);  poly B=imap(r,B);
   poly At=imap(r,At);  poly Bt=imap(r,Bt);
   prli[i], A-At,  Bt+(1/12)*B;
  }
\end{verbatim}
}
\end{proof}



\section{Other aspects of the Ramanujan vector field}
In this section we gather some other arithmetic aspects of the Ramanujan vector field. We consider the Eisenstein series
$E_2,E_4$ and $E_6$  (without $a_i$ constants), and in particular $A, B$ are the original ones in the literature. 
The corresponding differential equation in the vector field format is:
\begin{equation}
\label{16112023yau-2}
\vf:=\frac{1}{12}(t_1^2-t_2)\frac{\partial}{\partial t_1}+\frac{1}{3}(t_1t_2-t_3)\frac{\partial}{\partial t_2}+
\frac{1}{2}(t_1t_3-t_2^2)\frac{\partial}{\partial t_3}.
\end{equation}
All the conjectures in this section must be easy exercises and the main evidence for them is their verification for many prime numbers by computer. The author has not put any effort to prove them theoretically. 
They are motivated by some general discussions for vector fields. 
\begin{conj}
\label{06062024doubles}
 Let $a=(a_1,a_2,a_3)\in\C^3$ with $a_2^3-a_3^2\not=0$ and $\sring=\Z[\frac{1}{6}, a_1,a_2,a_3]$ (polynomial ring in $a_1,a_2,a_3$  and with coefficients in $\Z[\frac{1}{6}]$). Let also $\vf$ be the Ramanujan vector field \eqref{16112023yau-2} in $\A^3_\sring=\spec(\sring[t_1,t_2,t_3])$.
 For an infinite number of primes $p$, $\vf$ is not collinear with $\vf^p$ at $a$, in the scheme $\T_p:=\T\times_{\sring}\spec(\sring/p\sring)$ (that is modulo prime $p$).
 \footnote{This conjecture follows from a generalization of Grothendieck-Katz conjecture for vector fields
in \cref{07092024omidviolento} and the fact that the solutions of $\vf$ passing through $a$  is a transcendental curve, see for instance \cite[Theorem 1]{ho06-3}).
}
\end{conj}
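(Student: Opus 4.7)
The plan is a contradiction argument that reduces the statement to a local--global principle and then to a known transcendence result. Suppose toward contradiction that for all but finitely many primes $p$, the vector $\vf^p(a)$ is a scalar multiple of $\vf(a)$ in $\T_p$. Let $L \subset (\T^{\hol}_{\sring_\Q}, a)$ be the germ of the convergent solution of $\vf$ through $a$, regarded as a smooth one-dimensional leaf scheme of the foliation $\F(\vf)$. Collinearness at $a$ is equivalent to saying that the collinearness scheme $\vf\,\|\,\vf^p$ contains the point $a$ in $\T_p$, which is precisely the hypothesis of the single-point, single-vector-field reformulation of the main local--global principle stated in \cref{07092024omidviolento} (the specialisation of \cref{07092024omidviciado} to the rank-one case $\Theta_{\T,L} = \O_\T\cdot \vf$). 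Granting that principle, $L$ would be algebraic.

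On the other hand, the Ramanujan vector field is completely understood via its modular interpretation of \cref{21may2024raisimord}: by \cite[Theorem~1]{ho06-3}, cited also in the proof of \cref{08062024drywall}, every algebraic leaf of $\vf$ is contained in the discriminant hypersurface $\{\Delta := 27t_3^2 - t_2^3 = 0\}$, while outside this locus the Eisenstein-series solution $(-\tfrac{1}{12}E_2,\, \tfrac{1}{12}E_4,\, -\tfrac{1}{216}E_6)$ and its $\BG$-orbit trace transcendental curves. The genericity hypothesis $a_2^3 - a_3^2 \neq 0$ places $a$ off the relevant degenerate locus, so $L$ is transcendental, contradicting the algebraicity derived above. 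Modulo the local--global principle, this completes the proof.

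The principal obstacle is that the pointwise form of the principle invoked above is known to be false in general: the Litt--Lam counterexample $\vf = zy\partial_y + \partial_z$ exhibits a transcendental solution through $(0,1)$ for which $\vf^p(0,1) = 0$ for every odd prime $p$, so the conjecture cannot be used as a black box. To turn the reduction into an unconditional proof one must exploit the modular origin of $\vf$ and compute $\vf^p$ modulo $p$ directly on $\T_p$. Via \cref{21may2024raisimord} this amounts to describing the $p$-curvature of the Gauss--Manin connection of the universal elliptic curve $\mathsf{E}/\T$, which by \cref{05062024bita} is controlled by the Hasse--Witt invariant $A(t_2,t_3)$, its conjugate $B(t_2,t_3)+12t_1$, and the identity $\vf^p = {\rm HW}(E,dx/y)\cdot \vf$ for the fibrewise vector field recalled at the end of the section. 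Non-collinearness of $\vf(a)$ and $\vf^p(a)$ then becomes the non-vanishing modulo $p$ of an explicit polynomial expression in $a_1, a_2, a_3$ whose coefficients involve the reductions of $E_{p-1}$ and $E_{p+1}$.

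The hard part will be turning this explicit description into an infinitude statement. The natural route is a Chebotarev-type density argument: for $a_2, a_3$ algebraically independent, the elliptic curve $E_{a_2, a_3}$ is ordinary at a density-one set of primes, and one expects the value of $A(a_2,a_3)$ (which governs the collinearness defect) to be generically non-unit modulo $p$ along such a set. Alternatively, one can argue $q$-expansion theoretically: any identity $\vf^p(a) \parallel \vf(a) \pmod p$ valid for all sufficiently large $p$ translates, via the Eisenstein-series substitution $t_1 = -\tfrac{1}{12}E_2,\ t_2 = \tfrac{1}{12}E_4,\ t_3 = -\tfrac{1}{216}E_6$ and the identity $-q\partial_q \equiv (-q\partial_q)^p \pmod p$, into an infinite family of mod-$p$ congruences for the $q$-expansions, and these can be contradicted by explicit inspection of Bernoulli-number valuations as in the proof of \cref{05062024lopes}.
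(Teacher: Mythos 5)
You should know that the paper does not contain a proof of \cref{06062024doubles}: it is stated as a conjecture, the surrounding text explicitly says the author ``has not put any effort to prove them theoretically'', and the only supporting reasoning is the footnote, which you have faithfully reconstructed in your first paragraph. Your diagnosis of that reasoning in your second paragraph is correct and, if anything, sharper than the footnote itself: the pointwise local--global principle that the argument actually requires is exactly the version falsified by the Litt--Lam example $\vf:=zy\frac{\partial}{\partial y}+\frac{\partial}{\partial z}$, recorded by the paper inside \cref{07092024omidviolento}, and the leaf-wise version preserved there does not descend to a single $\sring$-point such as $a$ without a further argument. So the footnote is not a valid derivation, and your instinct to bypass it in favour of a direct computation of $\vf^p$ via the Hasse--Witt invariant is the right one.

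What your plan overlooks is a simplification hiding in the clause ``$\sring=\Z[\frac{1}{6},a_1,a_2,a_3]$ (polynomial ring)'': the coordinates $a_i$ are algebraically independent indeterminates, so the $\sring$-valued point $a$ sits over the \emph{generic} point of $\A^3_{\Ff_p}$ for every $p$. At the generic point, ``$\vf$ is collinear with $\vf^p$ at $a$'' is exactly ``$\vf$ is $p$-closed'', so \cref{06062024doubles} collapses to (a weakening of) \cref{26102023omaiorladrao}. Granting \cref{13062024sam}, the collinearity locus modulo $p$ is the hypersurface $\Delta\cdot(B-t_1A)=0$; the generic point avoids it for every good $p$ simply because the Hasse invariant $A$ is a nonzero polynomial in $\Ff_p[t_2,t_3]$, whence $B-t_1A$ is nonzero in $\Ff_p[t_1,t_2,t_3]$. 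No Chebotarev-density argument or Bernoulli-valuation case analysis is needed at this level of generality; those tools would only become relevant if $a$ were a fixed algebraic point of $\C^3$, which the ``polynomial ring'' clause rules out. The genuine remaining step, which the paper itself supports only by computer verification, is therefore the finite-characteristic assertion underlying \cref{26102023omaiorladrao} and \cref{13062024sam}: that for each $p\not=2,3$ the Ramanujan vector field is not $p$-closed in $\A^3_{\Ff_p}$.
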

Recall that the Ramanujan vector field
leaves the discriminant locus $\Delta: t_2^3-t_3^2=0$ invariant and its solutions in this locus are algebraic.
\begin{conj}\rm
\label{13112023tifi}
For the Ramanujan vector field $\vf$ and any prime $p\not=2,3$, we have the following statements about ideals in
$\Ff_p[t_1,t_2,t_3]$.
\begin{enumerate}
 \item
 \label{18june2024hugo}
 The radical of the ideal of $\vf^p=0$, that is $\langle \vf^pt_1,\vf^pt_2,\vf^pt_3\rangle$, is generated by $\Delta$.
\item \label{06062024ihes}
The primary decomposition of the ideal of the equality $\vf^p=\vf$, that is $\langle \vf^pt_1-\vf t_1,\vf^pt_2-\vf t_2,\vf^pt_3-\vf t_3\rangle$,  consists of three components: The first two are
$\langle A-1,B-t_1\rangle$ and $\langle A+1, B+t_1\rangle$ mentioned in the proof of \cref{08062024drywall}  and the third component is $\sing(\vf):=\langle t_1^2-t_2,t_1^3-t_3\rangle$ which is inside $\Delta=0$.
\item
\label{13062024sam}
The radical of the collinearness ideal between $\vf^p$ and $\vf$, that is  $\langle \vf^pt_i\vf t_j-  \vf^pt_j\vf^pt_i \mid i,j=1,2,3\rangle$,
is generated by $\Delta\cdot (B-t_1A)$.
\end{enumerate}
\end{conj}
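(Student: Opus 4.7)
The plan is to treat all three parts uniformly via the moduli interpretation of $\T = \spec\Ff_p[t_1,t_2,t_3,\tfrac{1}{\Delta}]$ from \cref{21may2024raisimord}, combined with the Cartier computation \cref{05062024bita} and the Eisenstein-series divisibilities of \cref{05062024lopes}. Throughout, the key observation is that $\vf^p$ is itself an $\Ff_p$-linear derivation on $\O_\T$, so it is determined by the three regular functions $\vf^pt_1, \vf^pt_2, \vf^pt_3$, which are quasihomogeneous of weights $2p, 4p, 6p$.

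For part 1, I would show the two inclusions separately. For $\sqrt{\langle \vf^pt_i\rangle}\subseteq \langle\Delta\rangle$: the discriminant locus $\Delta=0$ is $\vf$-invariant and is a rational curve (the cuspidal model admits the parametrization $(s,3s^2,s^3)$, conjugate under the $\Gm$-action to the nodal strata). Restricting $\vf$ to this rational curve yields a polynomial vector field with algebraic solutions, so by \cref{12112023rik} each $\vf^pt_i$ vanishes on $\Delta=0$ modulo $p$, which gives the inclusion. For the reverse inclusion, at every ordinary point away from $\Delta=0$ the Serre--Tate canonical coordinate $q$ trivializes $\vf$ to a nonzero multiple of $q\partial_q$, and $(q\partial_q)^p=q\partial_q\neq 0$ in characteristic $p$; at supersingular points one argues by a direct formal-coordinate computation using that $\vf$ is a regular vector field with isolated zeros all lying inside $\Delta=0$. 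Together these show $V(\langle \vf^pt_i\rangle)=V(\Delta)$.

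For part 2, note that two of the three components are immediate: $\sing(\vf)$ is where $\vf$ vanishes, so trivially $\vf^p=\vf$; and $\IS_p=\langle A-1,B-t_1\rangle$ contains each $\vf^pf-\vf f$ by \cref{05062024lopes}\,(2), supplying the first component. The subtle component is $\langle A+1,B+t_1\rangle$. Here the plan is to produce it via a symmetry of $(\T,\vf)$ over $\Ff_p$. The involution $\sigma(t_1,t_2,t_3)=(-t_1,t_2,-t_3)$ satisfies $\sigma^*\vf=-\vf$ and hence $\sigma^*\vf^p=-\vf^p$, so it preserves $\{\vf^p=\vf\}$. One then combines this with the parity of $A,B$ in $t_3$ (dictated by the weight $p-1$ of $E_{p-1}$ in $E_4,E_6$, with a second twist coming from replacing $(E,\alpha,\omega)$ by its pullback along the Frobenius--Verschiebung dual) to send $\langle A-1,B-t_1\rangle$ to $\langle A+1,B+t_1\rangle$. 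Finally, to show these exhaust all primary components I would use the explicit weighted degrees to bound the components of $V(\vf^pt_i-\vf t_i)$ and apply a dimension count.

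For part 3, the containment $\Delta\cdot(B-t_1A)\in\sqrt{\text{coll.~ideal}}$ is straightforward: on $\Delta=0$, $\vf^p=0$ by part 1, hence is trivially collinear with $\vf$; on the hypersurface $B-t_1A=0$ the two curves from part 2 lie inside, and a weight-plus-continuity argument (the module $\Theta_{\T,L}/\O_\T\vf$ has rank $1$ along each curve) shows $\vf^p=\lambda\vf$ there, with $\lambda$ specializing to $A^2$ (so that $\lambda=1$ on the two branches $A=\pm 1$ of part 2). The opposite inclusion reduces to exhibiting a single point with $\Delta\neq 0$, $B\neq t_1A$ at which the $2\times 3$ matrix with rows $\vf t_i$ and $\vf^pt_i$ has rank $2$. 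I expect the main obstacle to be the second part of the proof of part 2 --- namely rigorously identifying the component $\langle A+1,B+t_1\rangle$ and ruling out additional embedded components --- because unlike $\IS_p$ it has no direct $q$-expansion description; a clean solution will likely require exploiting a second $q$-expansion at another cusp, or Katz's theory of the unit-root splitting on the ordinary locus together with a careful sign analysis.
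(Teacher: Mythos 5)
First, an important caveat: the paper supplies \emph{no proof} of this statement. It is labeled a conjecture, and the surrounding text is explicit: ``All the conjectures in this section must be easy exercises and the main evidence for them is their verification for many prime numbers by computer. The author has not put any effort to prove them theoretically.'' The only attached material is a {\sc Singular} script that verifies it prime by prime. So there is no ``paper's proof'' to compare against; what follows is a review of your proposal on its own merits.

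The sharpest gap is the involution step in part~2. You take $\sigma=g_{-1}\colon (t_1,t_2,t_3)\mapsto(-t_1,t_2,-t_3)$, the $k=-1$ specialization of the weighted $\Gm$-action $\phi_k\colon t\mapsto(kt_1,k^2t_2,k^3t_3)$. Under $\phi_k$ the field scales, $\phi_{k*}\vf=c(k)\vf$, so $\phi_{k*}(\vf^p-\vf)=c(k)^p\vf^p-c(k)\vf$, and the equality locus $\{\vf^p=\vf\}$ is preserved precisely when $c(k)^{p-1}=1$, i.e.\ for $k\in\Ff_p^*$. For such $k$, because $A$ and $B$ are quasihomogeneous of modular weight $p-1$ and $p+1$ and $t_1$ has weight $2$, one computes $\phi_k(C_1)=\{A=k^{(p-1)/2},\,B=k^{(p-1)/2}t_1\}$, and $k^{(p-1)/2}$ is the Legendre symbol of $k$. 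So $\phi_k(C_1)=\{A=-1,B=-t_1\}$ exactly when $k$ is a \emph{quadratic nonresidue}. Your $k=-1$ gives that component only when $-1$ is a nonresidue, i.e.\ $p\equiv3\pmod4$. When $p\equiv1\pmod4$, the exponent parities of $t_3$ in $A$ and $B$ flip, $\sigma$ fixes $C_1$ pointwise, and your argument produces nothing. The cure is elementary: use any quadratic nonresidue $k\in\Ff_p^*$ in place of $-1$; the ``Frobenius--Verschiebung duality'' you gesture at is not needed.

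There are two further weaknesses. In part~1 the inclusion $\langle\vf^pt_i\rangle\subseteq\langle\Delta\rangle$ should not be routed through \cref{12112023rik}, since that statement only delivers vanishing for all but finitely many primes (the exceptional set depends on a choice of finitely generated ring of definition), whereas the conjecture is for every $p\neq2,3$. A clean uniform argument is available: on the cuspidal parametrization $(t_2,t_3)=(s^2,s^3)$ of $\Delta=0$ the restricted field, in coordinates $(w,s)$ with $w=t_1-s$, becomes $\vf=\tfrac{w}{12}E$ with $E=w\partial_w+2s\partial_s$; induction gives $\vf^n=\tfrac{w^n}{12^n}E(E+1)\cdots(E+n-1)$, and over $\Ff_p$ one has $\prod_{j=0}^{p-1}(E+j)=E^p-E$, which annihilates $\Ff_p[w,s]$ because $E$ acts on a monomial of weight $m$ by $m$ and $m^p=m$. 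Hence $\vf^p=0$ on $\Delta=0$ intrinsically in characteristic $p$, for every $p\geq5$. For the reverse inclusion you also need to handle the supersingular stratum (where $A=0$), which a Serre--Tate coordinate alone does not see; you acknowledge this but give no argument. Finally, in part~3 the reverse inclusion does \emph{not} ``reduce to exhibiting a single point'' where the matrix has rank $2$: a single rank-two point only shows the collinearness locus is proper. To identify the radical as $\langle\Delta\cdot(B-t_1A)\rangle$ you need to exclude additional divisorial components, e.g.\ by a weighted degree count against the generators $\vf^pt_i\vf t_j-\vf^pt_j\vf t_i$, or by arguing as in the proof of \cref{08062024drywall} that any further $\vf$-invariant hypersurface would give rise to an extra algebraic first integral.
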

One can check the above conjecture for any prime $p\not=2,3$ using the following code.
{\tiny
\begin{verbatim}
LIB "foliation.lib"; int np=100;
intvec prli=primes(1,np); int n=10; int pr=prli[n]; pr;
ring r=pr, (t_1,t_2,t_3),dp;
list vecfield=1/12*(t_1^2-t_2), 1/3*(t_1*t_2-t_3), 1/2*(t_1*t_3-t_2^2);
list vf; int i; int k; int j; poly Q; int di=size(vecfield);
for (i=1; i<=di;i=i+1){vf=insert(vf, var(i),size(vf));}
for (k=1; k<=di;k=k+1)
    {for (i=1; i<=pr;i=i+1)
         {Q=0;
          for (j=1; j<=di;j=j+1){Q=Q+diff(vf[k], var(j))*vecfield[j];}
          vf[k]=Q;
         }
    }
poly Delta=t_2^3-t_3^2; ideal I=vf[1..size(vf)]; I=radical(I); "vp=0"; I;
ideal K=vf[1]-vecfield[1], vf[2]-vecfield[2],vf[3]-vecfield[3];  "vp=v"; primdecGTZ(K);

matrix CL[2][3]=vecfield[1..di],vf[1..di]; ideal J=minor(CL,2); J=radical(J); "vp cllinear v";   J;
ring rr=0,(x,t_2,t_3),dp;
poly A=Eisenstein(pr-1, t_2, t_3); poly B=Eisenstein(pr+1, t_2, t_3 );
setring r;
poly A=imap(rr,A);  poly B=imap(rr,B);
(Delta*(B-t_1*A)/J[1])*J[1]-Delta*(B-t_1*A);
//---Experminetal verification of the fact that B-t_1A is a first integral
list lv=t_1,t_2,t_3;
Diffvf(B-t_1*A, lv, vecfield);
//----ideal generated by A, B----
ideal I=A,B; radical(I);
I=B-t_1*A,Delta; primdecGTZ(I);
//-----Investigating F_g--------
poly P=(10*B^3-6*B*t_2-4*t_3)/103680;
ideal I=A-1,P; primdecGTZ(I);
pr;
\end{verbatim}
}
\cref{13112023tifi}, \cref{13062024sam} implies the following.
\begin{conj}
\label{26102023omaiorladrao}
For all primes $p\not=2,3$ the Ramanujan vector field in $\A^3_{\Ff_p}$ is not $p$-closed, that is,
$\vf^p$ is not collinear to $\vf$ at a generic point.
\end{conj}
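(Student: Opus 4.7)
The plan is to show that the collinearity scheme $\vf||\vf^p\subset \A^3_{\Ff_p}$, defined by the $2\times 2$ minors of the matrix $[\vf t_j,\vf^p t_j]_{j=1,2,3}$, is a proper subscheme; this is exactly the statement that $\vf^p$ is not collinear with $\vf$ at the generic point. For this it suffices to exhibit a single closed point $t_0\in \A^3(\Ff_p)$ at which $\vf(t_0)$ and $\vf^p(t_0)$ are linearly independent vectors in $\Ff_p^3$.

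I would take the candidate $t_0=(0,0,1)$, for which $\Delta'(t_0) = t_2^3-t_3^2\big|_{t_0} = -1\neq 0$, so $t_0$ lies off the invariant discriminant, and for most $p$ also off the invariant algebraic curve $C_p = V(\langle A-1,B-t_1\rangle)$ of \cref{05062024lopes}. Direct evaluation using \eqref{16112023yau-2} gives $\vf(t_0)=(0,-1/3,0)$, a non-zero vector in the $\partial_{t_2}$-direction. The main ingredient is a weighted-degree count: assigning $\deg t_i=i$, the Ramanujan vector field raises weighted degree by $1$, so $\vf^p t_i$ is weighted-homogeneous of degree $p+i$. Evaluation at $(0,0,1)$ retains only a monomial $c\cdot t_3^n$ with $3n=p+i$, so $\vf^p t_i(t_0)=0$ unless $3\mid (p+i)$. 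For $p\equiv 2\pmod 3$ only $i=1$ contributes, and non-collinearity at $t_0$ reduces to the non-vanishing of the single coefficient $c_p:=(\vf^p t_1)(0,0,1)\in\Ff_p$. For $p\equiv 1\pmod 3$ only $\vf^p t_2(t_0)$ can be non-zero, giving spurious collinearity at this particular $t_0$; I would then replace it by a perturbed test point such as $(0,\tau,1)$ with $\tau\in\Ff_p^\ast$ chosen so that $t_0$ still lies off $\Delta'=0$ and $C_p$, and rerun the weighted-degree analysis, now with $t_2$ and $t_3$ both contributing. As a sanity check, for $p=5$ this strategy works explicitly: a direct iteration gives $\vf^5 t_1 \equiv 3 t_3^2+2 t_2^3\pmod 5$, hence $\vf^5(0,0,1)=(3,0,0)$, which is manifestly not collinear with $\vf(0,0,1)=(0,3,0)$.

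The principal obstacle will be the uniform non-vanishing of $c_p$ (and its analogue for $p\equiv 1\pmod 3$) for all primes $p\neq 2,3$. The recursion $\vf^{k+1}t_i=\vf(\vf^k t_i)$ only introduces denominators that are powers of $2$ and $3$, invertible in $\Ff_p$, so $c_p$ is a well-defined element of $\Ff_p$; proving $c_p\neq 0$ uniformly in $p$ requires a structural input. My expectation is to compute $c_p$ via a closed-form expression involving the coefficients of the Eisenstein polynomials $A=E_{p-1}$ and $B=E_{p+1}$, using the congruence $\vf^p t_i\equiv \vf t_i\pmod{\IS_p}$ from \cref{05062024lopes}(2) together with the Cartier-theoretic formulas of \cref{05062024bita} and the differential relations for $A,B$ in the style of Swinnerton-Dyer. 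Failing a uniform conceptual proof, one can at least verify $c_p\neq 0$ prime-by-prime using the {\sc Singular} code described in \cref{13112023tifi}, which establishes the conjecture computationally for any fixed finite set of primes; any approach of the above form also sheds light on the more refined \cref{13112023tifi}(3) describing the full collinearity scheme as $\Delta\cdot(B-t_1 A)=0$.
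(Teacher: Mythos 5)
There is a genuine gap, and you identify it yourself: your argument reduces the assertion to the uniform non-vanishing in $\Ff_p$ of a single coefficient $c_p=(\vf^p t_1)(0,0,1)$ (for $p\equiv 2\pmod 3$), or of an analogous coefficient at a perturbed test point (for $p\equiv 1\pmod 3$), and you do not prove this non-vanishing. The weighted-degree bookkeeping is correct --- with $\deg t_i=i$ the Ramanujan field raises weight by $1$, so at $(0,0,1)$ only $\vf^p t_i$ with $3\mid(p+i)$ can survive, and for $p\equiv 2\pmod 3$ this collapses the collinearity test to $c_p\neq 0$ --- but the reduction is not a proof. The treatment of $p\equiv 1\pmod 3$ is particularly thin: the spurious collinearity you flag at $(0,0,1)$ is real (both $\vf(t_0)$ and $\vf^p(t_0)$ point in the $\partial_{t_2}$-direction there), and ``perturb to $(0,\tau,1)$ and rerun the analysis'' loses the single-monomial rigidity that made the $p\equiv 2$ case clean, since several monomials of $\vf^p t_i$ then contribute simultaneously.

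You should also know that the paper does not prove this statement either: it is stated as a \texttt{conj} environment, explicitly presented as a \emph{consequence} of the (also conjectural) item~3 of \cref{13112023tifi}, which asserts that the radical of the collinearity ideal of $\vf$ and $\vf^p$ is generated by the degree-$(p+4)$ hypersurface $\Delta\cdot(B-t_1A)$; the paper only offers {\sc Singular} code verifying this for individual primes. So the situation is symmetric: both you and the paper fall back on prime-by-prime computer checks. That said, \cref{05062024lopes}(2), which \emph{is} proved, gives $(\vf^p-\vf)\O_{\T}\subset\IS_p$, hence $\vf^p=\vf$ generically along the one-dimensional curve $C_p=V(\IS_p)$; any would-be global collinearity factor $g$ (with $\vf^p=g\vf$ generically on $\A^3$) must therefore equal $1$ on $C_p$, while on the invariant discriminant hypersurface $\Delta=0$ the algebraicity of leaves forces a (generically different) collinearity factor via \cref{12112023rik-3}. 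Trying to propagate this incompatibility to rule out generic collinearity on $\A^3$ would be a more structural route than isolating $c_p$, and is more in the spirit of what a proof of \cref{13112023tifi}(3) would have to do; but as written, your proposal leaves the central non-vanishing unproved.
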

The above  statement can be also verified by the following computer code.
{\tiny
\begin{verbatim}
     LIB "foliation.lib";
     ring r=0, (t_1,t_2,t_3),dp;
     list vf=1/12*(t_1^2-t_2), 1/3*(t_1*t_2-t_3), 1/2*(t_1*t_3-t_2^2);
     int ub=200;
     BadPrV(vf, ub);
\end{verbatim}
}
The fact that the variety given by $A=B=0$ is $t_2=t_3=0$ has been noticed in the literature, see \cite[Theorem 3.1]{Katz2021}. This implies that
the radical of the ideal $\langle A,B\rangle$ is $\langle t_2,t_3\rangle$. Moreover, we can also easily prove that the primary decomposition of the ideal
$\langle \Delta, B-t_1A\rangle$ consists of two components $\langle t_2,t_3\rangle$ and $\sing(\vf)$.  Both facts can be verified experimentally.

After the first draft of the present section was written, Frederico Bianchini in his Ph.D. thesis, was able to prove the following theorem. This can be used in order to prove most of the experimental conjectures in this section.
\begin{theo}
Let $\vf$ be the Ramanujan vector field in $\A^3_{\Ff_p}$, where $p$ is  a prime number different form $2$ and $3$. Then 
\[
\vf^p =A^2\cdot \vf -\left(\frac{1}{12} B + t_1  A\right)^2 \cdot f
+ 
A\cdot \left(\frac{1}{12}  B+ t_1 A\right) 
\cdot h
\]
where $A$ and $B$ are given by \eqref{07102025bimsa-1}  and \eqref{07102025bimsa-2}
and 
$$
h=6t_3\frac{\partial }{\partial t_3}+4t_2\frac{\partial }{\partial t_2}+2t_1\frac{\partial}{\partial t_1},\ f=\frac{\partial}{\partial t_1}.
$$
\end{theo}	

{\tiny
}

\section{CM elliptic curves}
It is interesting to know whether it is possible to use the Ramanujan vector field to characterize CM elliptic curves in characteristic zero or supersingular elliptic curves in characteristic $p$.
\begin{theo}
\label{22102025mauricio}
If  an elliptic curve $y^2=4x^3-g_2x-g_3,\ g_2,g_3\in\Z[\frac{1}{N}]$ for some $N\in\N$ and $27g_3^2-g_2^3$ invertible in $\Z[\frac{1}{N}]$ is CM,  then there is $g_1\in \Z[\frac{1}{N}]$ such that for an infinite number of primes $p$, the vector field  $\vf^p$ is collinear with $\vf$ at $(g_1,g_2,g_3)$ modulo $p$ in $\A_{\Ff_p}^3$ for the Ramanujan vector field $\vf$. 
\end{theo}
\begin{proof}
Let us take a CM elliptic curve over $\Z[\frac{1}{N}]$ by an order $\O=\Z+n\O_{\Q(\sqrt{-D})}$ for some square-free $D\in\N$. Let also $1,u$ be a basis  of $\O$ as $\Z$-module. The element $u$ acts as $f: E/\bar\Q\to E/\bar \Q$, and hence in the de Rham cohomology  $f^*: H^1_\dR(E/\bar\Q)\to H^1_\dR(E/\bar\Q)$  by pull-back of differential forms. According to \cite[Lemma 4.1]{Katz2021}, we can choose  $g_1\in \Z[\frac{1}{N}]$ such that the action of $u$ in the basis  $\alpha:=\frac{dx}{y},\ \ \omega= \frac{xdx}{y}+g_1\frac{dx}{y}$ is diagonalized, that is,  
\footnote{It is a well-known fact $f$ is defined  over the Hilbert class field $K$ of $\Q(\sqrt{-D})$, and hence,  
$f^*: H^1_\dR(E/K)\to H^1_\dR(E/K)$ is defined over this larger field. However, in the proof of  \cite[Lemma 4.1]{Katz2021}, the author assumes that $f^*$ and in particular the $a$-number in his proof is defined over $\Q(\sqrt{-D})$.   
}
\begin{equation}
\label{06102025scheideger}
f^*\alpha=u\alpha,\ f^*\omega=\bar u \omega. 
\end{equation}
Note that $u\bar u$ is the degree of $f$. 
Let us now consider modulo $p$-reduction of $E,f$. Since $f$ is defined over the Hilbert class field $K$ of $\Q(\sqrt{-D})$, we have to take prime ideal $\mathfrak p$ in $\O_K$ whose residue field is of characteristic $p$ and consider reduction modulo $\mathfrak p$. We discard finite number of primes for which $u=\bar u$ modulo $\mathfrak p$. Since $u$ is not a rational number, this follows from Chebotarev density theorem. In characteristic $p$ we have also a one dimensional subspace $G_0=H^1(E, {\cal H}^0(\Omega^\bullet_E)) \subset H^1_\dR(E/\Ff_p)$ of the conjugate filtration which is described in \cref{22102025shanghai}. It is a well-known fact the density of primes $p$ for which $E/\Ff_p$ is ordinary, that is, $E/\Ff_p$ is not a supersingular elliptic curve,  is $1/2$ for $CM$ elliptic curves and it is $1$ otherwise, see for instance the references in \cite{Elkies1987} or \href{https://swc-math.github.io/aws/2024/PAWSLi/2023PAWSLiNotes6.pdf}{this link}. In particular, we have infinite number of primes for which $E/\Ff_p$ is ordinary. In this case, $G_0$ is also characterized as the kernel of the Cartier map $C: H^1_\dR(E/\Ff_p)\to F^1$. It follows from \cref{05062024bita}  that this kernel is given by $\omega:=\frac{xdx}{y}-\frac{B}{12 A}\frac{dx}{y}$. This implies that        
\begin{equation}
\label{22102025Atourhotel}
g_1\equiv_p -\frac{B(g_2,g_2)}{12\cdot A(g_2,g_3)} 
\end{equation}
for infinite number of primes. This observation actually appears in the proof of \cite[Lemma 4.1]{Katz2021}. Katz has also computed $g_1$ (in his notation $A$) for many examples of $CM$ elliptic curves using the above congruency. It turns out that the point $(g_1,g_2,g_3)$ modulo $p$ lies in the hypersurface $B+12t_1A=0$. In the proof of \cref{08062024drywall} we have seen that $B+12t_1A=0$ is invariant under the Ramanujan vector field $\vf$ and all its solutions in this hypersurface are algebraic curves. This implies that $\vf^p$ is collinar with $\vf$ at all points of this hypersurface, and in particular the point $g$. 
\end{proof}
It seems to the author that the converse of \cref{22102025mauricio} is true. Since in its proof we have actually seen that the mentioned property is valid for primes of density $\geq 1/2$, it might be wiser to formulate its converse in the following way: 
\begin{conj}
If $g=(g_1,g_2,g_3)\in\Q^3$ with $27g_3^2-g_2^3\not=0$ and the density of primes $p$ such that $\vf^p$ is collinear with $\vf$ at $g\in\Ff_p^3$ is bigger than or equal to  $1/2$ then $y^2=4x^3-g_2x-g_3$ is CM.  
\end{conj}
This conjecture might be proved in the following way. Assume that $\vf^p$ is collinear with $\vf$ at $g\in\Q^3$. It might not be hard to prove that this is equivalent to the existence of a curve in $\A^3_{\Ff_p}$ passing through $g$ and tangent to the Ramanujan vector field. But we expect that the only algebraic solutions to Ramanujan vector field in $\A^3_{\Ff_p}$ and outside the discriminant $\Delta=0$ are $B+12t_1A=0,\ A=\text{ constant }$. This implies the that we have \eqref{22102025Atourhotel} for a set of primes of density $\geq 1/2$. Our problem is reduced to prove that this implies that the underlying elliptic curve is CM.    
\begin{rem}\rm
It is natural to define for each $u\in \O_{\Q(\sqrt{-D})},\ a\not\in \Z$ the locus in $\T$ parameterizing $(E,\alpha,\omega)$ for which $E$ is $CM$ with $f\in\End(E)$ which satisfies \eqref{06102025scheideger}. It turns out that this is a union of  orbits of the multiplicative subgroup $\Gm$ of $\BG$ acting on $\T$. They are also   tangent to  the vector field $2t_1\frac{\partial }{\partial t_1}+4t_2\frac{\partial }{\partial t_2}+6t_3\frac{\partial }{\partial t_3}$.  
\end{rem}

\section{Final remarks}
As the main goal of the present article has been to prepare the ground for similar investigations in the case of Calabi-Yau varieties, one might try to describe modulo prime properties of topological string partition functions $F_g$ in the case of elliptic curves, see the articles of Dijkgraaf, Douglas, Kaneko and Zagier in \cite[Appendix B]{ho14}. These are homogeneous polynomials in $E_2,E_4$ and $E_6$ of weight $6g-6$, for instance 
$F_2=\frac{1}{103680}(10E_2^3-6E_2E_4-4E_6)$.  For Calabi-Yau threefolds, and in particular for mirror quintic, there is an ambiguity problem for $F_g$'s which has been only established for lower genus, see the references in \cite{GMCD-MQCY3}. Modulo primes investigation of $F_g$'s might give some insight to this problem. For instance, in the last lines of the computer code of \cref{13112023tifi} we have investigated the zero locus of $F_2$ restricted to the curve ${\rm Zero}(\IS_p)$. We found many such points with coordinates in $\Ff_p$, but no pattern for different primes were found. 

The articles of Swinnerton-Dyer and Serre used in this article have originated the Serre conjecture on the modularity of two dimensional Galois representations. It would be of interest to see similar conjectural statements  in the case of Calabi-Yau modular forms.

Let $E$ be an elliptic curve over $\Z$ and assume that its reduction $E/\Ff_p$ modulo $p$ is smooth. We consider $E$ over $\Z_p$, and we know that $H^1_\dR(E/\Z_p)$ is a free $\Z_p$-module of rank $2$. Moreover, by a comparison theorem of Berthelot, $H^1_\dR(E/\Z_p)$ is isomorphic in a canonical way to
$H^1_{\rm cris}(E/\Ff_p)$. The later, has the Frobenius map which lifts to a map $F:H^1_\dR(E/\Z_p)\to H^1_\dR(E/\Z_p)$, for details and references see \cite{ked08}. When we started to write the present paper, we wanted to  formulate \cref{21may2024raisimord-2} using $F$, but later we realized that only the Cartier operator is sufficient.

\chapter{On a Hodge locus}
\label{29082024tanha}
\def\codnum{{\sf C}}
\def\Has{{\mathcal C}}

{\it Abstract:}
There are many instances such that deformation space of the homology class of an algebraic cycle as a Hodge cycle is larger than its deformation space as algebraic cycle. This phenomena can occur for algebraic cycles inside hypersurfaces, however, we are only able to gather evidences for it by computer experiments.  In this article we describe one example of this for cubic hypersurfaces. The verification of the mentioned phenomena in this case is proposed as the first GADEPs problem. 
The main goal is either to verify the (variational) Hodge conjecture in such a case  or gather evidences that it might produce a counterexample to the Hodge conjecture.

\section{Introduction}
\label{sec1}

Let $\T$ be the space of homogeneous polynomials $f(x)$ of degree $d$ in $n+2$ variables 
$x=(x_0,x_1,\ldots,x_{n+1})$ and with coefficients in $\C$ such that the induced hypersurface $X:=\P\{f=0\}$ in $\P^{n+1}$ is smooth. We assume that $n\geq 2$ is even and $d\geq 3$. Consider the   subvariety  of $\T$ parametrizing hypersurfaces containing two projective subspaces $\P^{\frac{n}{2}}, \check\P^{\frac{n}{2}}$ (we call them linear cycles) with  
$\P^{\frac{n}{2}}\cap \check\P^{\frac{n}{2}}=\P^{m}$ for a fixed $-1\leq m\leq \frac{n}{2}-1$ ($\P^{-1}$ is the empty set). We are actually interested in a local analytic branch $V_Z$ of this space which parametrizes deformations of a fixed $X$ together with such two linear cycles. We consider the algebraic cycle 
\begin{equation}
\label{06102022bolsolula}
Z=\cf \P^{\frac{n}{2}}+\check \cf \check \P^{\frac{n}{2}},\ \ \cf\in\N,0\not=\check\cf\in\Z
\end{equation}
and its cohomology class
$$
\delta_0=[Z]\in H^{\frac{n}{2}, \frac{n}{2}}(X)\cap H^n(X,\Z).
$$
Note that $V_Z$ does not depend on $\cf$ and $\check\cf$ and it is $V_Z=V_{\P^\frac{n}{2}}\cap V_{\check \P^\frac{n}{2}}$, where $V_{\P^\frac{n}{2}}$ and $V_{\check \P^\frac{n}{2}}$ are two branches of the subvariety of $\T$ parameterizing hypersurfaces containing  a linear cycle, see \cref{TwoPn2}.
From now on we use the notation $t\in\T$ and denote the corresponding polynomial and hypersurface  by $f_t$ and $X_t$ respectively, being clear that $f_0=f$ and  $X_0=X$. 
 The monodromy/parallel transport $\delta_t\in H^n(X_t,\Z)$ is well-defined for all $t\in(\T,0)$, a small neighborhood of $t$ in $\T$ with the usual/analytic  topology, and it is not necessarily supported in algebraic cycles like the original $\delta_0$. We arrive at the set theoretical definition of the Hodge locus 
 \begin{equation}
  V_{[Z]}:=\left \{t\in(\T,0)\ \  | \ \ \delta_t \hbox{ is a Hodge cycle, that is } \ \ \delta_t\in H^{\frac{n}{2}, \frac{n}{2}}(X_t)\cap H^n(X_t,\Z) \right \}.
 \end{equation}
 We have $V_Z\subset V_{[Z]}$ and claim that 
\begin{conj}
\label{zemarceneiro2022}
For $d=3, \ n=6,8, \ m=\frac{n}{2}-3$ and all $\cf\in\N,0\not=\check\cf\in\Z$, the Hodge locus $V_{[Z]}$ is of dimension  $\dim(V_Z)+1$, and so, $V_{Z}$ is a codimension one subvariety of $V_{[Z]}$.   Moreover, the Hodge conjecture for the Hodge cycle $\delta_t,\ t\in V_{[Z]}$ is true. 
\end{conj}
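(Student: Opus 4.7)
The conjecture splits into two independent assertions — the codimension statement for $V_{[Z]}$ and the Hodge conjecture for its general member — and the plan would tackle them separately. First I would identify $V_{[Z]}$ locally at $0$ with the leaf scheme of $\F(\cpe)$ in \cref{21062024fred} and compute its Zariski tangent space via Griffiths' Jacobian ring description. Writing $R = \C[x_0,\ldots,x_{n+1}]/J(f)$, one has $H^{n-p,p}_{\mathrm{prim}}(X)\cong R_{(p+1)d-(n+2)}$ with the IVHS map realized by polynomial multiplication, so
\[ T_0 V_{[Z]} = \ker\!\bigl(\mu_{\bar\delta}\colon R_d \to R_{(n/2+2)d-(n+2)}\bigr), \]
where $\bar\delta\in R_{(n/2+1)d-(n+2)}$ is a primitive representative of $\delta_0$. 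For $d=3$ this becomes $R_3\to R_7$ when $n=6$ and $R_3\to R_9$ when $n=8$. Analogously $T_0V_Z=T_0V_{\P^{n/2}}\cap T_0V_{\check\P^{n/2}}$ is the intersection of the two kernels for $[L_1]$ and $[L_2]$ separately. Using Villaflor's explicit residue formula for a linear cycle in the Jacobian ring I would write $\bar\delta$, $[L_1]$, $[L_2]$ concretely and verify $\dim T_0V_{[Z]}-\dim T_0V_Z=1$ by direct linear algebra on a symmetric test cubic in a computer algebra system, then propagate the equality to the generic member of $V_Z$ via upper-semicontinuity of ranks.

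Second, for the Hodge conjecture part, I would exploit the fact that when $m=n/2-3$ the two linear cycles $L_1, L_2$ span a linear subspace $\Pi\subset\P^{n+1}$ of dimension $n/2+3$ — a hyperplane for $n=6$, a codimension-two plane for $n=8$. Hence $Y:=X\cap\Pi$ is a complete intersection of cubic type containing $L_1\cup L_2$, and classical liaison inside $\Pi$ (or a Koszul-type residual intersection when $\codim\Pi>1$) produces a residual cycle $W\subset Y$ whose class in $X$ agrees with a specific integer combination $\cf[L_1]+\check\cf[L_2]$; this exhibits an algebraic representative of $\delta_0$ along $V_Z$. To handle $V_{[Z]}\setminus V_Z$, the plan is to deform $W$ inside $X_t$ along a component of the Hilbert (or Fano) scheme of $n/2$-dimensional subschemes of $X_t$ of the correct Hilbert polynomial, even when the planes $L_1, L_2$ and their span $\Pi$ no longer exist algebraically.

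The hard part is precisely this last step. In the direction transversal to $V_Z$ neither $L_1$ nor $L_2$ survives, so the desired algebraic cycle must be constructed without any reference to $L_1$, $L_2$, or $\Pi$. Identifying the correct Hilbert scheme component — or ruling out its existence — is the main geometric challenge; as the author explicitly observes in the introduction, a negative answer would produce a counterexample to the Hodge conjecture, so either outcome is of substantial interest.
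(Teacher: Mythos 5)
Your tangent-space computation is essentially the paper's own approach: Proposition~\ref{pro:2301}, due to Villaflor, identifies $T_0V_{[Z]}$ and $T_0V_Z$ with colon ideals $(J^F:P_1+P_2)_3$ and $(J^F:P_1)_3\cap(J^F:P_2)_3$ in the Jacobian ring of the Fermat cubic and shows their difference in dimension is exactly $1$, which is what you propose via the IVHS multiplication map. So that part is on solid ground and matches the paper. However, there are two genuine gaps.

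First, the equality $\dim T_0V_{[Z]}=\dim T_0V_Z+1$ does \emph{not} by itself yield the conjecture's dimension statement. $V_{[Z]}$ is an analytic \emph{scheme} and its Zariski tangent space is only an upper bound for its dimension. It is entirely possible that $V_{[Z]}$ is non-reduced or singular at $0$, with the set-theoretic locus still equal to $V_Z$; in fact the paper lists several cases $(d,m)$ where precisely this happens (see the discussion around \cref{02082023barato}). To promote the tangent-space count to a dimension count one must control the higher-order structure: this is what \cref{22112022protests} does---it verifies by computer that the truncations $V_{[Z]}^N$ are smooth up to order $M$---and even that remains numerical evidence, not a proof. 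The ``upper-semicontinuity of ranks'' step you add is also delicate here, since Kloosterman's work (recorded in \cref{30112023camera}) shows the tangent-space jump at the Fermat point disappears at a generic cubic once $n\geq 10$; semicontinuity runs in the wrong direction for the claim you want.

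Second, the liaison strategy for the Hodge-conjecture half has a structural mismatch with the statement being proved. The conjecture is for \emph{all} $(\cf,\check\cf)$ with $\cf\in\N$, $\check\cf\in\Z\setminus\{0\}$, and as the paper emphasizes, these give a whole pencil of Hodge loci $V_{[Z]}$ intersecting along $V_Z$ (see \cref{TwoPn2}). A single residual intersection of $L_1\cup L_2$ inside $Y=X\cap\Pi$ produces one specific integral cycle class (typically $[L_1]+[L_2]$ up to a hyperplane multiple), not a family realizing $\cf[L_1]+\check\cf[L_2]$ for arbitrary ratios. Even for the one class it would produce, the crucial step---deforming the residual cycle in directions transversal to $V_Z$, where $L_1$, $L_2$, and the span $\Pi$ all cease to be algebraic---is not an obstacle to be finessed but the entire open content of the conjecture; the paper explicitly flags it as potentially a counterexample to the Hodge conjecture. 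Your honest acknowledgment of this is correct, but it means the proposal does not reduce the problem, and in particular the liaison setup does not identify a candidate Hilbert-scheme component any more than the naive deformation of $L_1$, $L_2$ does.
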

For $n=4$ the Hodge conjecture is a theorem and the first part of the above  conjecture for $n=4$ is true for trivial reasons.  
If the first part of the above conjecture is true then one might try to verify the Hodge conjecture for the Hodge cycle $\delta_t,\ t\in V_{[Z]}$ which is absolute, see Deligne's lecture in \cite{dmos}. 
It is only verified for $t\in V_Z$ using the algebraic cycle $Z$. By Cattani-Deligne-Kaplan theorem $V_{[Z]}$ for fixed $\cf$ and $\check\cf$ is a union of branches of an algebraic set in $\T$ and we will have the challenge of verifying a particular case of Grothendieck's variational Hodge conjecture. It can be verified easily that the tangent spaces of $V_{[Z]}$ intersect each other in the tangent space of $V_Z$, and hence, we get a pencil of Hodge loci depending on the rational number $\frac{\cf}{\check \cf}$, see \cref{TwoPn2}.  
 Similar computations as for \cref{zemarceneiro2022} in the case of surfaces result in a conjectural counterexample to a conjecture of J. Harris for degree $8$ surfaces, see \cite{ho2019}. 

The seminar \href{https://w3.impa.br/~hossein/GADEPs/GADEPs.html}{"Geometry, Arithmetic and Differential Equations of Periods" (GADEPs)}, started in the pandemic year 2020 and its aim is to gather people in different areas of mathematics around the notion of periods which are certain multiple integrals.  \cref{zemarceneiro2022} is the announcement of the first GADEPs' problems. 
\begin{figure}
\begin{center}
\includegraphics[width=0.6\textwidth]{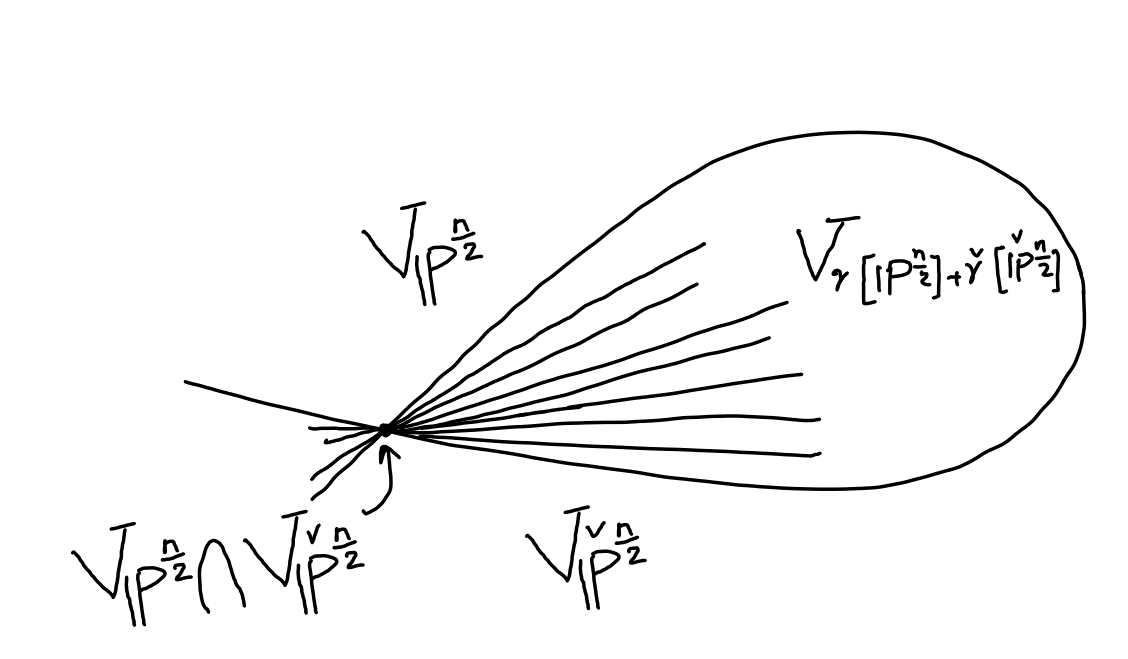}
\caption{A pencil of Hodge loci}
\label{TwoPn2}
\end{center}
\end{figure}

\section{The path to \cref{zemarceneiro2022}}
\label{sec2}

The computational methods introduced in \cite{ho13} can be applied to an arbitrary combination of linear cycles, for some examples see \cite[Chapter 1]{Headaches},  however, for simplicity the author focused mainly in the sum of two linear cycles as announced earlier. We note that $V_{[Z]}$ carries a natural analytic scheme/space structure, that is,  there is an ideal $I=\langle f_1,f_2,\ldots,f_k\rangle \subset \O_{\T,0}$ of holomorphic functions $f_i$ in a small neighborhood $(\T,0)$ of $0$ in $\T$, and the ring structure of $V_{[Z]}$ is $\O_{\T,0}/I$. The holomorphic functions $f_i$ are periods $\int_{\delta_t}\omega_i$, where $\omega_i$'s are global sections of the $n$-th cohomology bundle 
$\cup_{t\in (\T,0)}H^n_\dR(X_t)$ such that for fixed $t$ they form a basis of the piece $F^{\frac{n}{2}+1}H^n_\dR(X_t)$ of Hodge filtration (form now on all Hodge cycles will be considered in homology and not cohomology).  For hypersurfaces, using Griffiths work \cite{gr69}, the holomorphic functions $f_i$'s are 
\begin{equation}
\label{02082023gago}
\mathlarger{\mathlarger{\int}}_{\delta_t}\Resi\left(\frac{x^\beta\Omega}{f^{k}_t}\right),\ \  k=1,2,\cdots,\frac{n}{2},\ \ \ x^\beta\in (\C[x]/\jacob(f_t))_{kd-n-2}
\end{equation}
and $x^\beta$ is a basis of monomials for the degree $kd-n-2$ piece of the Jacobian ring $\C[x]/\jacob(f_t)$ and $\Omega:=\sum_{i=0}^{n+1}x_idx_0\wedge dx_1\wedge\cdots \wedge dx_{i-1}\wedge dx_{i+1}\wedge\cdots \wedge dx_{n+1}$.  The Taylor series of such integrals can be computed and implemented in a computer, however, for simplicity we have done this around the Fermat variety. 

Let us consider the hypersurface $X_t$ in the projective space 
$\Pn {n+1}$ given by the homogeneous polynomial:
\begin{equation}
\label{15dec2016}
f_t:=x_0^{d}+x_1^{d}+\cdots+x_{n+1}^d-\sum_{\alpha}t_\alpha x^\alpha=0,\ \ 
\end{equation}
$$
t=(t_\alpha)_{\alpha\in I}\in(\T,0), 
$$
where $\alpha$ runs through a finite subset $I$ of $\N_0^{n+2}$ with $\sum_{i=0}^{n+1} \alpha_i=d$. 
From now on for all statements and conjectures $X_0$ is the Fermat variety.  The Taylor series for the Fermat variety $X_0$ can be computed explicitly,  see \cite[18.5]{ho13}.
\footnote{For particular cases of such series see \eqref{300620224pilar} and \eqref{09082024david}.}
It is also implemented in computer, see \cite[Section 20.11]{ho13}. Its announcement takes almost a full page and we only content ourselves to the following statement:
\begin{prop}
Let $\delta_0\in H_n(X_0,\Q)$ be a Hodge cycle and $x^\beta$ be a monomial of degree $kd-n-2$. The integral  $\frac{1}{(2\pi i)^{\frac{n}{2}}}\mathlarger{\mathlarger{\int}}_{\delta_t}\Resi\left(\frac{x^\beta\Omega}{f^{k}_t}\right)$ can be written as a power series in $(t_\alpha)_{\alpha\in I}$ with coefficients in an abelian extension of $\Q(\zeta_{d})$. If $\delta_0$ is a sum of linear cycles $\P^{\frac{n}{2}}$ then such an abelian extension is $\Q(\zeta_{2d})$.   
\end{prop}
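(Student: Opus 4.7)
\medskip

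\textbf{Proof plan.} My strategy is to treat the two assertions separately, handling the sum-of-linear-cycles statement first as a direct consequence of Theorem~\ref{InLabelNadasht?}, and then bootstrapping to arbitrary Hodge cycles via the monomial group action on the Fermat variety and its cohomology.

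For the second, more specific assertion, the plan is to start from the closed formula \eqref{15.12.16}. Inspecting that identity, the coefficient of $t^a$ on the right-hand side is $\frac{(-1)^{E_{\beta+a^*}} D_{\beta+a^*}}{a!}$, which is manifestly rational; dividing by $(-1)^{n/2} d^{n/2+1}(k-1)!$ keeps the coefficients in $\Q$. Thus, for the specific linear cycle $\licy{n/2}: x_0-x_1=x_2-x_3=\cdots=0$ inside the ``alternating'' Fermat \eqref{15dec2016-2}, the Taylor coefficients of the normalized period are in $\Q \subset \Q(\zeta_{2d})$. To transfer this to the Fermat of the proposition \eqref{15dec2016}, I would apply the coordinate change $x_i \mapsto \zeta_{2d}^{\epsilon_i} x_i$ with $\epsilon_i \in \{0,1\}$ alternating, which identifies the two forms of the equation; under this substitution each parameter is rescaled by $t_\alpha \mapsto \zeta_{2d}^{\sum_i \epsilon_i \alpha_i}\, t_\alpha$, and the form $x^\beta\Omega/f^k$ acquires an overall factor in $\mu_{2d}$. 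An arbitrary linear cycle on \eqref{15dec2016} is the image of $\licy{n/2}$ under such a sign change composed with a monomial automorphism $x_i\mapsto \zeta_d^{a_i} x_i$; tracking both substitutions produces Taylor coefficients in $\Q(\zeta_{2d})$. Finally, a $\Q$-linear combination of such cycles preserves this field, establishing the second assertion.

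For the first assertion, I would decompose $\delta_0$ using the monomial group action. Let $G = \mu_d^{n+2}/\mu_d$ act on $X_0$ by diagonal multiplication; this action preserves integral homology and diagonalizes the primitive cohomology into one-dimensional eigenspaces indexed by characters $a\in(\Z/d)^{n+2}$ (Deligne, Katz). Any Hodge class $\delta_0\in H_n(X_0,\Q)$ decomposes as a $\Q(\zeta_d)$-linear combination of dual eigenclasses. For each eigenclass, the corresponding period of $x^\beta\Omega/f^k$ can be expressed, via the Griffiths residue description and a Jacobi-sum / beta-integral computation, as a product of a Gamma-function ratio (an algebraic number in $\Q(\zeta_d)$ after dividing by the appropriate power of $2\pi i$) and a rational function of the $t_\alpha$. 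Following the monodromy of each eigenclass and summing, the Taylor coefficients lie in the compositum of $\Q(\zeta_d)$ with the field of definition of the Galois orbit of $\delta_0$, which is an abelian extension of $\Q(\zeta_d)$ because the $G$-action on integral homology factors through an abelian group.

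\textbf{Main obstacles.} The genuine technical work will be in the general case: identifying the precise abelian extension of $\Q(\zeta_d)$ that appears requires carefully tracking the Galois orbit of $\delta_0$ under the action of $\Gal(\overline{\Q}/\Q(\zeta_d))$, and checking that Griffiths' residue recipe intertwines this Galois action with the monomial action on $H^n_\dR(X_0)$ and its Hodge filtration. In the sum-of-linear-cycles case the analogous delicate point is purely bookkeeping: making sure that both the sign change $x_i\mapsto \zeta_{2d}^{\epsilon_i}x_i$ and the monomial permutation that produce a general linear $\P^{n/2}$ on \eqref{15dec2016} are simultaneously reflected in the substitution rule for the $t_\alpha$, and that no extra roots of unity outside $\mu_{2d}$ sneak in through the factor $(2\pi i)^{n/2}$ or the Hodge-theoretic normalization.
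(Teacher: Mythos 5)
The paper does not actually supply a proof of this proposition: it is announced immediately after the sentence "Its announcement takes almost a full page and we only content ourselves to the following statement" and is left as a consequence of the external references \cite[Section 18.5]{ho13}. So a comparison of routes can only be against the machinery the paper does develop, namely \cref{InLabelNadasht?} and \cref{05082024retaliation}. Your handling of the second assertion is faithful to that machinery: starting from \cref{InLabelNadasht?}, whose Taylor coefficients for the alternating Fermat \eqref{15dec2016-2} and for $\licy{n/2}$ are visibly rational, passing to the standard Fermat \eqref{15dec2016} by $x_i \mapsto \zeta_{2d}^{\epsilon_i}x_i$ with $\epsilon_i$ alternating, and then reaching an arbitrary linear cycle by a coordinate permutation composed with diagonal scaling by $\mu_d \subset \mu_{2d}$, does keep every Taylor coefficient in $\Q(\zeta_{2d})$: the parameters transform by $t_\alpha \mapsto \zeta_{2d}^{\sum_i \epsilon_i\alpha_i} t_\alpha$ and the residue form itself picks up a single overall $\mu_{2d}$-factor. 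That is the right mechanism, and $\Q$-linear combinations of linear cycles stay in $\Q(\zeta_{2d})$.

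The gap is in your argument for the first assertion. You claim that the Taylor coefficients lie in an abelian extension of $\Q(\zeta_d)$ "because the $G$-action on integral homology factors through an abelian group." That does not by itself give the conclusion. The monomial action of $\mu_d^{n+2}$ being abelian implies that the Galois action of $\Gal(\overline{\Q}/\Q(\zeta_d))$ on each one-dimensional eigenspace of the $\ell$-adic or de Rham cohomology is through a character, but what the proposition needs is a statement about the \emph{values} $\frac{1}{(2\pi i)^{n/2}}\int_{\delta_0}\Resi\bigl(\frac{x^\beta\Omega}{f_0^k}\bigr)$ (and their Gauss--Manin derivatives, via \cref{05082024retaliation}): that these specific complex numbers lie in $\Q(\zeta_d)^{\mathrm{ab}}$. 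Both that these numbers are algebraic at all (Hodge $\Rightarrow$ absolute Hodge for Fermat hypersurfaces, via the Shioda--Katsura reduction to CM abelian varieties and Deligne's theorem) and that they then land in an abelian extension of $\Q(\zeta_d)$ (Jacobi-sum/Gross--Koblitz arithmetic of the Beta-integral periods) are nontrivial inputs, and your proposal gestures at them ("Jacobi-sum / beta-integral computation") without actually using them to close the argument. Without this, the step from "commuting abelian symmetry group" to "abelian field of coefficients" is unjustified: abelianness of the group action controls the shape of the Galois representation, not directly the arithmetic of the period pairing between a topological Hodge class and an algebraic de Rham class.
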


In \cref{zemarceneiro2022} we have considered $V_{[Z]}$ as an analytic variety. As an analytic scheme and for $X_0$ the Fermat variety, we even claim that $V_{[Z]}$ is smooth which implies  that it is also reduced.  
The first goal is to compare the dimension of Zariski tangent spaces
$\dim(T_tV_{[Z]})$ and $\dim(T_t V_Z)$. 
Computation of $TV_{[Z]}$ is done using the notion of infinitesimal variation of Hodge structures developed by P. Griffiths and his coauthors in \cite{CGGH1983}. In a down-to-earth terms, this is just the data of the linear parts of $f_i$'s. 
It turns out that 
\begin{theo}
\label{02082023guapemirim}
For $m<\frac{n}{2}-\frac{d}{d-2}$ we have $T_0V_{[Z]}=T_0V_Z$, and hence, $V_{[Z]}=V_Z$.
\end{theo}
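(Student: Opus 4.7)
The plan is to reduce the theorem, via the infinitesimal variation of Hodge structures, to a purely algebraic statement in the Jacobian ring $R = \C[x_0,\ldots,x_{n+1}]/(x_0^{d-1},\ldots,x_{n+1}^{d-1})$ of the Fermat polynomial $f_0 = \sum_i x_i^d$. Griffiths' identifications give $T_0 \T \cong R^d$ and $H^{\frac{n}{2},\frac{n}{2}}_{\mathrm{prim}}(X_0) \cong R^{(n/2+1)(d-2)}$, under which the IVHS is multiplication in $R$. Translating the vanishing of the first-order periods \eqref{02082023gago} gives
\[
T_0 V_{[Z]} \;=\; \mathrm{Ann}_{R^d}\bigl(\cf\,[\P^{n/2}]_{\mathrm{prim}} + \check\cf\,[\check\P^{n/2}]_{\mathrm{prim}}\bigr),
\]
while applied to a single linear cycle it gives $T_0 V_{\P^{n/2}} = \mathrm{Ann}_{R^d}([\P^{n/2}]_{\mathrm{prim}})$ and similarly for $\check\P^{n/2}$, since the Hodge locus of a single linear cycle coincides locally with its algebraic deformation locus. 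Because $V_Z = V_{\P^{n/2}} \cap V_{\check\P^{n/2}}$ is a transverse intersection of smooth branches at the Fermat point, $T_0 V_Z = T_0 V_{\P^{n/2}} \cap T_0 V_{\check\P^{n/2}}$, and the theorem is reduced to the algebraic implication
\[
v \cdot \bigl(\cf\,[\P^{n/2}]_{\mathrm{prim}} + \check\cf\,[\check\P^{n/2}]_{\mathrm{prim}}\bigr) = 0 \ \Longrightarrow\ v \cdot [\P^{n/2}]_{\mathrm{prim}} = v \cdot [\check\P^{n/2}]_{\mathrm{prim}} = 0
\]
for every $v \in R^d$. Once the two tangent spaces agree, smoothness of $V_Z$ together with the inclusion $V_Z \subseteq V_{[Z]}$ yields $V_Z = V_{[Z]}$ as germs at $0$.

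After placing both linear cycles in Fermat coordinates, the primitive classes are represented (up to nonzero scalars) by the monomials
\[
[\P^{n/2}]_{\mathrm{prim}} = \prod_{i \in I} x_i^{d-2}, \qquad [\check\P^{n/2}]_{\mathrm{prim}} = \prod_{i \in \check I} x_i^{d-2}
\]
in $R^{(n/2+1)(d-2)}$, where $I, \check I \subset \{0,\ldots,n+1\}$ have cardinality $n/2+1$ and $|I \cap \check I| = m+1$ is imposed by $\P^{n/2} \cap \check\P^{n/2} = \P^m$. Writing $v = \sum_\beta a_\beta x^\beta$ in the monomial basis of $R^d$, a monomial $x^S$ of target degree $(n/2+1)(d-2)+d$ appears in $v \cdot \prod_{i \in I} x_i^{d-2}$ precisely when $S_i = d-2$ for every $i \in I$ (and $S_i \le d-2$ elsewhere), and analogously for $\check I$. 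A \emph{collision}, meaning that the same monomial $x^S$ arises from both products and so permits nontrivial cancellation, forces $S_i = d-2$ on all of $I \cup \check I$, whence
\[
(d-2)(n+1-m) \;=\; (d-2)\,|I \cup \check I| \;\le\; \sum_i S_i \;=\; (n/2+1)(d-2) + d,
\]
which simplifies to $m \ge n/2 - d/(d-2)$. Under the standing hypothesis $m < n/2 - d/(d-2)$ no collision can occur, so the monomial-by-monomial analysis of the vanishing equation forces every $a_\beta$ contributing to either product to vanish individually. This proves the reduced implication above and therefore the theorem.

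The main obstacle is justifying the clean monomial representatives of the two primitive classes in a single coordinate system: in general one cannot simultaneously diagonalize two linear cycles sharing a common $\P^m$, so the primitive classes are actually sums of monomial terms indexed by choices of $d$-th roots of unity, and one must show that the off-diagonal cross-terms do not introduce new collisions. This is handled by decomposing $R$ and the cohomology under the torus action of $(\mu_d)^{n+2}$ acting on the Fermat and reducing each isotypic component to the monomial analysis above. The inequality $m < n/2 - d/(d-2)$ is sharp for this method: it is exactly the threshold at which the monomial supports of $v \cdot [\P^{n/2}]_{\mathrm{prim}}$ and $v \cdot [\check\P^{n/2}]_{\mathrm{prim}}$ first overlap, which is precisely why the critical case $m = n/2 - 3$ (for $d=3$) of \cref{zemarceneiro2022} falls just beyond the reach of this tangent-space argument.
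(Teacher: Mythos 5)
The overall strategy you use — translating tangent spaces of Hodge loci into annihilators in the Fermat Jacobian ring via the IVHS pairing, and reducing the theorem to showing that $\mathrm{Ann}(\cf P_1 + \check\cf P_2) = \mathrm{Ann}(P_1)\cap\mathrm{Ann}(P_2)$ in degree $d$ — is the right framework and matches how this is handled in \cite{RobertoThesis}, to which the paper delegates the full proof (the paper itself only reports computer verification for the cases in \cite[Theorem~18.1]{ho13} and gives no argument for \cref{02082023guapemirim}).

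The gap is your description of the primitive classes. You take $[\P^{n/2}]_{\mathrm{prim}}=\prod_{i\in I}x_i^{d-2}$ and $[\check\P^{n/2}]_{\mathrm{prim}}=\prod_{i\in\check I}x_i^{d-2}$ as single monomials supported on index sets $I,\check I$ with $|I\cap\check I|=m+1$, and your whole collision argument is then a \emph{support-disjointness} statement: the two products $v\cdot P_1$ and $v\cdot P_2$ cannot share a monomial once $m<\tfrac{n}{2}-\tfrac{d}{d-2}$. But linear cycles inside the Fermat do not sit as coordinate $\P^{n/2}$'s; in Fermat coordinates they are defined by pairing up variables, and their primitive classes are \emph{products of binomial linear forms}, e.g.\ $P_1=c_1\prod_j(x_{2j}+\zeta_j x_{2j+1})^{d-2}$. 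Crucially, when two such cycles share the common $\P^m$ (as in the setup of \cref{sec4}), they use the \emph{same} pairing of variables and differ only in the roots of unity on the $n/2-m$ unshared pairs. Consequently $P_1$ and $P_2$ have \emph{identical} monomial supports in the Jacobian ring — only their coefficients differ. A pure support analysis therefore produces collisions for every $m$, never none, and the inequality you derive is not a threshold for disjointness of supports. Your closing appeal to the $(\mu_d)^{n+2}$ isotypic decomposition does not repair this: the classes $P_i$ are not torus-homogeneous (each expands into $2^{\,n/2+1}$ monomials in distinct isotypic pieces), and the coefficient cancellations that decide whether $v\cdot(\cf P_1+\check\cf P_2)=0$ happen \emph{within} a single target monomial, where the two contributions differ only by roots of unity, not by support.

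What the actual argument needs instead — visible in \cref{pro:2301} (Villaflor's proof of the borderline case $m=\tfrac{n}{2}-3$, $d=3$) — is a comparison of colon ideals $(J^F:P_1)$, $(J^F:P_2)$, $(J^F:P_1+P_2)$ using Macaulay duality for Artinian Gorenstein rings and explicit socle-degree bookkeeping, which tracks exactly the coefficient dependence your support count discards. The numerology you produce, $m<\tfrac{n}{2}-\tfrac{d}{d-2}$, does recover the correct threshold (and your closing sanity check that $m=\tfrac{n}{2}-3$, $d=3$ is the first failure is consistent with the paper), but the mechanism you propose for it — monomials failing to collide — is not what is happening in the Fermat Jacobian ring; the proof would have to be re-grounded in the colon-ideal calculus.
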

This is proved in \cite[Theorem 18.1]{ho13} for 
\begin{equation}
\label{02.08.2023chiam}
0<\cf\leq |\check\cf|\leq 10
\end{equation}
and $(n,d)$ in the list
\begin{eqnarray*}
(2,d),\ \ \ d\leq 14,\ \ & &  (4, 3), (4,4),(4,5),(4,6),\ (6,3), (6,4), (8,3),\\
& & (8,3), (10,3), (10,3), (10,3),
\end{eqnarray*}
using computer. 
For the proof of \cref{02082023guapemirim}  we have computed both $\dim T_0V_{[Z]}$ and $\dim(V_Z)$ and we have verified that these dimensions are equal.  The full proof of \cref{02082023guapemirim}  is done in \cite[Theorem 1.3]{RobertoThesis}.
Throughout the paper, the condition \eqref{02.08.2023chiam} is needed for all statements whose proof uses computer, however, note that the number $10$ is just the limit of the computer and the author's patience for waiting the computer produces results. All the conjectures that will appear in this section  are not considered to be so difficult and their proofs or disproofs are in the range of available methods in the literature. 
\begin{conj}
For  $m=\frac{n}{2}-1, (\cf,\check\cf)\not=(1,1)$, the Hodge locus $V_{[Z]}$ as a scheme  is not smooth, and hence the underlying variety of $V_{[Z]}$ might be $V_Z$ itself.
\end{conj}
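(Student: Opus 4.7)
\medskip

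\noindent\textbf{Proof proposal.} The plan is to establish non-smoothness of the scheme $V_{[Z]}$ at $t=0$ by exhibiting a strict inequality
\[
\dim T_0 V_{[Z]} \;>\; \dim V_Z \;=\; \dim V_{[Z]}^{\mathrm{red}},
\]
the second equality being the (conjectured) interpretation of the ``might be $V_Z$ itself'' phrase. First I would compute the Zariski tangent space via the IVHS map $\alpha$ of \eqref{08082024decathlon}. If $\bar Z \in H^{n/2}(X_0,\Omega^{n/2}_{X_0})$ denotes the $(n/2,n/2)$-projection of $[Z]$, then
\[
 T_0 V_{[Z]} \;=\; \ker\!\bigl(\alpha(\,\cdot\,,\bar Z) : T_0\T \to H^{n/2+1}(X_0,\Omega^{n/2-1}_{X_0})\bigr),
\]
while
\[
 T_0 V_Z \;=\; \ker \alpha(\,\cdot\,, \overline{[\P^{n/2}]})\,\cap\,\ker \alpha(\,\cdot\,, \overline{[\check\P^{n/2}]}).
\]
Since $\bar Z = \cf\,\overline{[\P^{n/2}]} + \check\cf\,\overline{[\check\P^{n/2}]}$, the first-order Hodge condition is a single linear combination, \emph{a priori} strictly weaker than requiring each summand to vanish.

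Next I would produce, via the Jacobian-ring description of $\alpha$ for hypersurfaces and the presentation of $\overline{[\P^{n/2}]}$, $\overline{[\check\P^{n/2}]}$ in $\C[x]/\jacob(f)$ recalled in \cite[Ch.~17--18]{ho13}, explicit tangent vectors $v\in T_0\T$ with $\alpha(v,\overline{[\P^{n/2}]}) = -\tfrac{\check\cf}{\cf}\alpha(v,\overline{[\check\P^{n/2}]}) \neq 0$. The point is that for $m=\tfrac{n}{2}-1$ the two planes span a $\P^{n/2+1}$ on which their union is a degenerate quadric $L\!\cdot\!\check L$; this Koszul-type relation forces the annihilators of $\overline{[\P^{n/2}]}$ and $\overline{[\check\P^{n/2}]}$ in $T_0\T$ to coincide \emph{only} under the balanced combination $(\cf,\check\cf)=(1,1)$, because the residual class equals the restriction of the linear span. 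For any other ratio the induced single linear condition has strictly larger kernel, producing tangent vectors in $T_0V_{[Z]}\setminus T_0V_Z$.

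The main step, and the main obstacle, is to show that these extra tangent directions are \emph{obstructed}, i.e.\ do not extend to honest arcs in $V_{[Z]}$. For this I would expand the period integrals \eqref{02082023gago} at the Fermat point to second order using the explicit Taylor series of \cite[Thm.~18.9]{ho13} (cf.\ \eqref{15.12.16}) and verify that for each candidate $v \in T_0V_{[Z]} \setminus T_0V_Z$ the second-order IVHS pairing $\alpha_2(v\otimes v,\bar Z)\in H^{n/2+2}(X_0,\Omega^{n/2-2}_{X_0})$ is nonzero; equivalently, the ideal generated by the periods truncated to order two is strictly larger than the ideal generated by their linear parts. A computer-assisted verification, in the spirit of \cref{02082023guapemirim} and \cite[\S 20.11]{ho13}, would be carried out in the ranges $0<\cf\le|\check\cf|\le 10$ and $(n,d)\in\{(2,d)_{d\le 14},(4,3),(4,4),(6,3),(8,3)\}$; in parallel, a conceptual argument would run as follows: the only algebraic deformations of $Z$ are componentwise, since each linear cycle deforms in its own Hilbert-scheme component and the rigidity of the incidence condition forces these deformations to move independently, so $V_Z$ is the full underlying variety of $V_{[Z]}^{\mathrm{red}}$ (assuming variational Hodge, which is known at least in the rigidity regime relevant here). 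Combined with the strict inequality of tangent dimensions, this forces a nonzero nilpotent structure on $V_{[Z]}$ at $0$, hence non-smoothness. The hardest part will be the quadratic obstruction calculation: one must simultaneously kill the linear term (by the choice of $v$) and demonstrate a nonzero quadratic term, a cancellation-sensitive combinatorial computation in the Fermat Taylor expansion; if the symbolic manipulation proves unwieldy, an alternative route is to compute the primary decomposition of the order-two truncation of the ideal of periods directly, case by case.
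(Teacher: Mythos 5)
This statement is a \emph{conjecture} in the paper, not a proved theorem. The paper's only support for it is the citation ``In \cite[Theorem 18.3, part 1]{ho13} we have proved the above conjecture by computer for $(n,d)$ in the list $(2,d),\ 5\leq d\leq 9, (4,4), (4,5), (6,3),(8,3)$'' — i.e.\ a finite list of explicit truncation-and-Gr\"obner computations with the Fermat Taylor series from \cite[Thm.\ 18.9]{ho13}. For $(n,d)$ outside that list the conjecture is open. Your proposal is, at bottom, the same numerical scheme (tangent space via IVHS, then a second-order truncation of the period ideal), so it is not a new route; but as written it contains two substantive gaps that prevent it from being a proof sketch even in principle.

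First, your claim that $\dim T_0 V_{[Z]} > \dim V_Z$ for every $(\cf,\check\cf)\neq(1,1)$ is asserted from a vague ``Koszul-type relation'' heuristic and is not established. In fact the direction is suspicious: when $(\cf,\check\cf)=(1,1)$ the extra tangent vectors are precisely the ones deforming $\P^{n/2}\cup\check\P^{n/2}$ into a smooth quadric, i.e.\ vectors $v$ with $\alpha(v,\overline{[\P^{n/2}]})=-\alpha(v,\overline{[\check\P^{n/2}]})\neq 0$; and such $v$ satisfy $\alpha(v,\cf\overline{[\P^{n/2}]}+\check\cf\overline{[\check\P^{n/2}]})=(\cf-\check\cf)\alpha(v,\overline{[\P^{n/2}]})\neq 0$ the moment $\cf\neq\check\cf$. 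So the quadric-deforming directions are \emph{not} in $T_0V_{[Z]}$ for the excluded ratios, and it is genuinely unclear whether $T_0V_{[Z]}\supsetneq T_0V_Z$ at all. If $T_0V_{[Z]}=T_0V_Z$, your argument never starts; non-smoothness would then have to come from \emph{non-reducedness} of the scheme over $V_Z$, which is a different check entirely (you must produce a minimal generator of $\cal I(\delta)$ of order $\ge 2$ that is not in the ideal generated by the linear parts). The paper's own parenthetical ``might be $V_Z$ itself'' explicitly leaves both possibilities open, so you cannot assume the first without computation.

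Second, your ``conceptual argument'' that $V_Z$ is the full underlying reduced variety of $V_{[Z]}$ — ``the only algebraic deformations of $Z$ are componentwise \ldots assuming variational Hodge'' — is circular. Variational Hodge for $\delta_t$ (the statement that $V_{[Z]}^{\mathrm{red}}$ is cut out by algebraic cycles) is precisely the open issue that \cref{zemarceneiro2022} and this conjecture are probing, and the whole point of the section is that one does \emph{not} know it in this range. The quadratic obstruction $\alpha_2(v\otimes v,\bar Z)$ you mention (and which \cite{mclean2005} treats partially) is indeed what the paper's computer verification encodes at truncation level $N=2$; but you have not carried it out, and without either that computation or a valid replacement for the rigidity argument, the proposal is a restatement of the paper's evidence, not a proof.
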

In \cite[Theorem 18.3, part 1]{ho13} we have proved the above conjecture by computer  for  $(n,d)$ in the list
\begin{eqnarray*}\label{5.12.2017}
& &(2,d),\ 5\leq d\leq 9, (4, 4), (4,5), (6,3),(8,3),
\end{eqnarray*}
see also \cite{Dan2017} for many examples of this situation in the case of surfaces, that is, $n=2$. 
\begin{theo}
\label{02082023barato}
For $m=\frac{n}{2}-1, (\cf,\check\cf)=(1,1)$, $V_{[Z]}$ parameterizes hypersurfaces containing a complete intersection of type $(1,1,\cdots,1,2)$, where $\cdots$ means $\frac{n}{2}$ times. 
\end{theo}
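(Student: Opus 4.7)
\medskip
\noindent\textbf{Proof proposal for \cref{02082023barato}.}

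The starting point is the scheme-theoretic identification of $Z$ when $\cf=\check\cf=1$. Since $\P^{\frac{n}{2}}\cap\check\P^{\frac{n}{2}}=\P^{\frac{n}{2}-1}$, the two components span a common $\P^{\frac{n}{2}+1}\subset\P^{n+1}$, cut out by $\frac{n}{2}$ linearly independent linear forms $L_1,\ldots,L_{n/2}$. Inside this $\P^{\frac{n}{2}+1}$ the two linear cycles are hyperplanes $\{L=0\}$ and $\{\check L=0\}$, so their union is the zero locus of the single quadric $Q:=L\cdot\check L$. Thus $Z$ is, with reduced structure and multiplicity one on each component, the complete intersection $V(L_1,\ldots,L_{n/2},Q)$ of type $(1,\ldots,1,2)$. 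Let $W\subset\T$ denote the locally closed subvariety of smooth hypersurfaces containing a complete intersection $C$ of this type deforming $Z$; note that $V_Z\subsetneq W$, because in $W$ the quadric $Q$ is allowed to be smooth (not just a product of two linear forms). The plan is to prove $V_{[Z]}=W$ as germs at the Fermat point $0\in\T$.

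The inclusion $W\subset V_{[Z]}$ is the easy direction. For $t\in W$ the complete intersection $C_t\subset X_t$ is a flat deformation of $Z$, so its cohomology class $[C_t]\in H^n(X_t,\Z)$ varies continuously with $[C_0]=[Z]$. On the other hand the monodromy $\delta_t$ is also continuous with $\delta_0=[Z]$, hence $\delta_t=[C_t]$ is algebraic, in particular Hodge.

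For the reverse inclusion I would work at the level of Zariski tangent spaces at the Fermat-like point $0$, using the Griffiths/Carlson--Donagi description of the infinitesimal variation of Hodge structure. Let $R=\C[x_0,\ldots,x_{n+1}]/J(f_0)$ be the Jacobian ring, which is Artinian Gorenstein with socle degree $\sigma=(n+2)(d-2)$, and let $[Z]^*\in R_{(\frac{n}{2}+1)d-n-2}$ be the Jacobian-ring representative of the primitive class of $Z$ modulo Hodge filtration. Then
\[
T_0V_{[Z]}=\bigl\{\dot f\in R_d\ \bigm|\ \dot f\cdot [Z]^*=0\text{ in }R_{(\frac{n}{2}+2)d-n-2}\bigr\}.
\]
Parameterizing $W$ by the data $(L_i,Q,g_i,q)$ with $f=\sum_i L_ig_i+Qq$ and differentiating at $0$ yields
\[
T_0W=\bigl((L_1,\ldots,L_{n/2},Q)\cap\C[x]_d\bigr)\ \text{mod}\ J(f_0)_d.
\]
The crucial algebraic step is the equality $T_0V_{[Z]}=T_0W$. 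I would deduce it in two moves: (i) from the Koszul resolution of $\O_Z$, the class $[Z]^*$ in $R$ is an explicit element in the ideal dual, via Macaulay duality, to $(L_1,\ldots,L_{n/2},Q)\cdot R$; (ii) Macaulay duality on the Gorenstein ring $R$ then identifies the annihilator of $[Z]^*$ in $R_d$ with the degree-$d$ piece of the ideal $(L_1,\ldots,L_{n/2},Q)$ modulo $J(f_0)$. The containment $W\subset V_{[Z]}$ already forces one inclusion of tangent spaces, so the nontrivial content is the matching of dimensions, which is where the Koszul/Macaulay computation is needed.

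Finally I would conclude by a smoothness/irreducibility argument. The variety $W$ is smooth at $0$: its deformation theory is controlled by the normal bundle of the complete intersection $C_0$ in $\P^{n+1}$ restricted to $X_0$, which is unobstructed by an easy Bott-vanishing calculation on $C_0\cong\P^{\frac{n}{2}+1}\cap Q$. The scheme $V_{[Z]}$ is known (for the Fermat variety and the present $m$) to be reduced, as a consequence of the same Macaulay-duality computation that proves the tangent space equality; so $V_{[Z]}\subset W$ follows from $T_0V_{[Z]}=T_0W$ and $W\subset V_{[Z]}$ by comparing the two germs. The main obstacle is step (i) above—an explicit enough description of $[Z]^*$ in the Jacobian ring, and matching it with the ideal $(L_1,\ldots,L_{n/2},Q)$ via Macaulay duality—since both sides involve shifts of degrees that must conspire correctly, and the argument uses crucially that $\cf=\check\cf=1$ so that $[Z]^*$ is the class of a genuine complete intersection rather than a weighted sum.
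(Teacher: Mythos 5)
The paper gives no self-contained argument here: the entire ``proof'' consists of the observation that $Z=\P^{\frac{n}{2}}+\check\P^{\frac{n}{2}}$ with $m=\frac n2-1$ is precisely a complete intersection of type $(1,\ldots,1,2)$, followed by a citation to Dan's paper on Noether--Lefschetz loci of complete intersections in hypersurfaces and to Villaflor's thesis. Your proposal is, in effect, an attempt to unpack that citation, and the overall skeleton matches what those references do: identify $Z$ scheme-theoretically as a complete intersection (your step is correct, including that the ideal $(L_1,\ldots,L_{n/2},L\check L)$ is already radical), build the locus $W$ of hypersurfaces carrying such a complete intersection, prove $W\subset V_{[Z]}$ by continuity of cohomology classes, and then pin down $V_{[Z]}$ by an infinitesimal computation in the Jacobian ring. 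So you are on the same road as the paper, just much further along it.

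There are, however, concrete gaps in your steps that would need repair. First, your formula
\[
T_0W = (L_1,\ldots,L_{n/2},Q)_d \pmod{J(f_0)_d}
\]
is incomplete. Writing $f_0=\sum_i L_i g_i+Q q_0$ and differentiating the parameterization $(L_i,Q,g_i,q)\mapsto\sum L_i g_i+Qq$ gives
\[
\dot f=\sum_i L_i\dot g_i+Q\dot q_0+\sum_i \dot L_i\,g_i+\dot Q\,q_0 ,
\]
so the tangent space also contains $\sum_i\C[x]_1\cdot g_i+\C[x]_2\cdot q_0$. Either these extra terms must be shown to lie in $(L_1,\ldots,L_{n/2},Q)+J(f_0)$ in degree $d$ (this is not obvious and is part of what the Macaulay-duality computation has to deliver), or the correct $T_0W$ must be used throughout. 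Second, the smoothness of $W$ cannot be obtained by ``Bott vanishing on $C_0$,'' because at the Fermat point $Q=L\check L$ and $C_0$ is the \emph{singular} union of two hyperplanes inside $\P^{\frac n2+1}$; the normal-bundle vanishing you invoke applies to a smooth quadric section, not to $C_0$. The standard repair is to work instead with the flag Hilbert scheme: the parameter space of complete intersections of type $(1,\ldots,1,2)$ is a tower of Grassmannian/projective bundles, hence smooth, and $W$ is the image of a projective bundle over it; one then argues that the projection is generically an isomorphism near our point. Third, your ``Koszul resolution plus Macaulay duality'' step is correctly identified as the crux, but note that it requires proving that the image of $(L_1,\ldots,L_{n/2},Q)$ in $R$ is a \emph{Gorenstein} ideal with socle in the right degree and that its Macaulay dual equals the Carlson--Griffiths adjoint class $[Z]^*$ (for the Fermat this is the product expression used later in the paper's Proposition~\ref{pro:2301}); both statements are genuine theorems, essentially the content of the references the paper cites, and not consequences of a formal Koszul computation. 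Minor point: you need not hypothesize reducedness of $V_{[Z]}$ at the end --- once $W\subset V_{[Z]}$, $W$ is smooth, and $\dim T_0W=\dim T_0V_{[Z]}$, the chain $\dim W\leq\dim V_{[Z]}\leq\dim T_0V_{[Z]}=\dim W$ forces $V_{[Z]}$ to be smooth, hence reduced, and equal to $W$.
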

Note that in the situation of \cref{02082023barato}, $\P^{\frac{n}{2}}+\check\P^{\frac{n}{2}}$ is a complete intersection of the mentioned type. In this way \cref{02082023barato} follows from \cite{Dan-2014}, see also \cite[Chapter 11]{ho13-Roberto}. In our search for a Hodge locus $V_{[Z]}$ bigger than $V_Z$ we arrive at the cases 
$$
(d,m)=(3,\frac{n}{2}-3),(3, \frac{n}{2}-2), (4, \frac{n}{2}-2). 
$$
\begin{conj}
In  the case $(d,m)=(3, \frac{n}{2}-2)$ and $(\cf,\check\cf)\not=(1,-1)$, the Hodge locus $V_{[Z]}$ is not smooth.
\end{conj}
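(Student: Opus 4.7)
The plan is to reduce non-smoothness of the analytic scheme $V_{[Z]}$ at the Fermat point to a finite computation on the infinitesimal variation of Hodge structure (IVHS), exactly analogous to the treatment of the case $m=\frac{n}{2}-1,\ (\cf,\check\cf)\not=(1,1)$ in \cite[Theorem 18.3]{ho13}. Concretely, $V_{[Z]}$ is the germ at $0$ cut out in $(\T,0)$ by the holomorphic functions $f_i(t):=\int_{\delta_t}\omega_i-\int_{\delta_0}\omega_0$ for a basis $\omega_i$ of $F^{\frac{n}{2}+1}H^n_\dR(X_t)$ in the Griffiths representation \eqref{02082023gago}, with $\delta_t$ the parallel transport of $\delta_0=\cf[\P^{\frac{n}{2}}]+\check\cf[\check\P^{\frac{n}{2}}]$. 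Non-smoothness will be established by exhibiting a first-order direction $v\in T_0V_{[Z]}$ with non-zero second-order obstruction in the ideal $\langle f_1,f_2,\dots,f_k\rangle$.

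First I would compute $T_0V_{[Z]}$ using the linear part of the period series \eqref{15.12.16} specialized to $d=3$, $m=\frac{n}{2}-2$, and the prescribed $\delta_0$. Because $\delta_0$ decomposes as $\cf[\P^{\frac{n}{2}}]+\check\cf[\check\P^{\frac{n}{2}}]$, the linear part of each $f_i$ splits as $\cf L_i(t)+\check\cf\check L_i(t)$. The tangent space $T_0V_Z$ is the common zero locus of both $\{L_i\}$ and $\{\check L_i\}$, whereas $T_0V_{[Z]}$ is the zero locus of $\{\cf L_i+\check\cf\check L_i\}$. For $d=3$, $m=\frac{n}{2}-2$ a direct Jacobian-ring count on the Fermat cubic (done in \cite[Theorem 1.3]{RobertoThesis} for neighbouring parameter ranges and extendable here) should show $\dim T_0V_{[Z]}=\dim T_0V_Z+c(\cf,\check\cf)$ with $c(\cf,\check\cf)\geq 1$ for all $(\cf,\check\cf)\neq(1,-1)$, the extra directions coming from syzygies between $\{L_i\}$ and $\{\check L_i\}$ generated by the degree three relations in the $(\frac{n}{2}+2)$-plane spanned by $\P^{\frac{n}{2}}\cup\check\P^{\frac{n}{2}}$. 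The antisymmetric combination $\cf=-\check\cf$ kills these syzygies, explaining why it is excluded.

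Next I would match this with the geometric dimension of $V_{[Z]}$. Since $V_Z\subset V_{[Z]}$ and since in this range $(d,m)$ the author's experiments predict $\dim V_{[Z]}=\dim V_Z$ (the ``extra'' Hodge directions at $V_Z$ come from $(\cf,\check\cf)=(1,-1)$, which is excluded), it then suffices to show $\dim T_0V_{[Z]}>\dim V_Z$, which is exactly the excess computed in the previous paragraph. To confirm $\dim V_{[Z]}=\dim V_Z$ I would inspect the quadratic term of $f_i$: expanding \eqref{15.12.16} to second order along a test vector $v\in T_0V_{[Z]}\setminus T_0V_Z$, the obstruction has the form $\cf Q_1(v,v)+\check\cf Q_2(v,v)+\cf\check\cf M(v,v)$ with $Q_1,Q_2$ the individual quadratic forms and $M$ a mixed term. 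By construction of $v$ from the shared syzygy, $Q_1(v,v)=Q_2(v,v)=0$ but $M(v,v)\neq 0$, which both exhibits a nilpotent in $\O_{V_{[Z]},0}$ and shows the quadratic obstruction vanishes only on the antisymmetric line $\cf+\check\cf=0$.

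The main obstacle is the closed-form evaluation of $M(v,v)$: the second-order coefficient in \eqref{15.12.16} is a sum over pairs of multi-indices $a:I\to\N_0$ with $|a|=2$ satisfying \eqref{TheLastMistake2017}, and identifying this pairing intrinsically with a Jacobian-ring pairing that is manifestly proportional to $\cf\check\cf$ is the combinatorial heart of the problem. Following the strategy of \cref{02082023guapemirim}, I would first verify the non-smoothness for all $(n,d,m,\cf,\check\cf)$ in the computational range \eqref{02.08.2023chiam} using \texttt{Singular}/\texttt{foliation.lib}; the uniform pattern of the output should then suggest the intrinsic identification of $M$ with an explicit IVHS pairing, from which the general case would follow.
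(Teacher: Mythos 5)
The statement you are trying to prove is a conjecture in the paper: the only ``proof'' offered is computer verification for $n=6,8$ in \cite[Theorem 18.3 part 2]{ho13}, done by truncating the defining period series of $V_{[Z]}$ near the Fermat point and observing that the truncated scheme $V^N_{[Z]}$ is already singular for small $N\in\{2,3\}$. Your proposal has the right general shape --- it aims to turn that finite-order check into a uniform argument via the IVHS and a second-order obstruction --- but the core mechanism you invoke is wrong, and the key dimension claim is asserted rather than proved, so this is a research plan rather than a proof.

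The concrete error is in the form of the obstruction. You write the quadratic term as $\cf Q_1(v,v)+\check\cf Q_2(v,v)+\cf\check\cf M(v,v)$ and then argue that the cross term $\cf\check\cf M(v,v)$ is the thing that survives and produces the nilpotent. But periods are linear in the homology class: $\delta_t$ is the parallel transport of $\cf[\P^{\frac{n}{2}}]+\check\cf[\check\P^{\frac{n}{2}}]$, so for any fixed tangent direction $v$ the Taylor coefficient of $\int_{\delta_t}\omega_i$ in $v$ is exactly $\cf Q_i(v,v)+\check\cf\check Q_i(v,v)$. There is \emph{no} $\cf\check\cf$ bilinear term. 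The $(\cf,\check\cf)$-dependence you are looking for can only enter indirectly, through the fact that the excess tangent direction $v\in T_0V_{[Z]}\setminus T_0V_Z$ is itself selected by the condition $\cf L_i(v)=-\check\cf\check L_i(v)$ and therefore varies with the ratio $\cf:\check\cf$; any putative ``cross term'' must come from substituting that $(\cf,\check\cf)$-dependent $v$ into the linear expressions $\cf Q_i(v,v)+\check\cf\check Q_i(v,v)$. You have not set up that calculation, and the argument as written ascribes the exclusion of $(1,-1)$ to a quantity that does not exist.

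Beyond this, the pivotal claim $\dim V_{[Z]}=\dim V_Z$ for $(\cf,\check\cf)\neq(1,-1)$ is what makes tangent-space excess imply non-smoothness, and you only support it by appeal to ``the author's experiments.'' That is precisely the hard part: one has to rule out that $V_{[Z]}$ is geometrically larger, as it manifestly \emph{is} for $(1,-1)$ (the generalized cubic scroll locus). Likewise, the claimed excess $\dim T_0V_{[Z]}-\dim T_0V_Z=c(\cf,\check\cf)\geq 1$ is not established; the Jacobian-ring computation you cite (\cref{pro:2301}, Remark ref:01112022) is written for $(d,m)=(3,\frac{n}{2}-3)$ and $(4,\frac{n}{2}-2)$, not for $(3,\frac{n}{2}-2)$, and whether and how its conclusion depends on $(\cf,\check\cf)$ needs a separate analysis. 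Your final paragraph concedes that the combinatorial heart is not done and proposes computer experiments to suggest the identity --- which is honest, but it confirms that you have reproduced the strategy behind \cref{22112022protests}, not a proof of the conjecture.
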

This conjecture for $n=6,8$ is proved in  \cite[Theorem 18.3 part 2]{ho13}. 
The same conjecture for $(n,d,m)=(4,4,0)$ is also proved there. 
\begin{conj}
For $(d,m)=(3,\frac{n}{2}-2)$ with $(\cf,\check\cf)=(1,-1)$,  the Hodge locus $V_{[Z]}$ is smooth and it   parameterizes hypersurfaces containing generalized cubic scroll (for the definition see \cref{18082023-jinmiad} and \cite[Section 19.6]{ho13}) .
\end{conj}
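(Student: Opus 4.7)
\subsection*{Proof proposal}

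The plan is to identify $V_{[Z]}$ with the locus $V_W$ of cubic hypersurfaces containing a generalized cubic scroll $W$ of dimension $\frac{n}{2}$, and then verify scheme-theoretically that $T_0 V_{[Z]}=T_0 V_W$. The first step is to make the incidence $Z\subset W$ geometrically explicit: in the situation $(d,m)=(3,\frac{n}{2}-2)$, the two linear spaces $\P^{\frac{n}{2}}$ and $\check\P^{\frac{n}{2}}$ meet in a $\P^{\frac{n}{2}-2}$, i.e.\ in codimension two in each. One shows (as in \cite[Section~19.6]{ho13}) that $\P^{\frac{n}{2}}\cup\check\P^{\frac{n}{2}}$ is a flat specialization of a smooth generalized cubic scroll $W$ whose general fibre of the ruling is a $\P^{\frac{n}{2}-1}$ sweeping a conic in a transverse plane; equivalently, $W$ is cut out by the $2\times 2$ minors of a suitable $2\times 3$ matrix of linear forms. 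From this description one reads off $\dim V_W$: any cubic $f\in V_W$ is determined by prescribing $f|_W=0$ on the scroll, so $V_W$ is a linear subspace of $\T$ whose codimension is $\dim H^0(W,\O_W(3))$, and direct computation of this dimension gives $\mathrm{codim}_\T V_W=\mathrm{codim}_\T V_Z-1$.

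The second step is to show $V_W\subset V_{[Z]}$ and that the cohomology class $[Z]=[\P^{\frac{n}{2}}]-[\check\P^{\frac{n}{2}}]$ is carried by $W$ in the obvious way: the two linear subspaces are rulings of type $\P^{\frac{n}{2}}$ in the degenerate scroll, and their difference is the ``primitive'' part of $[W]$ with respect to the hyperplane class (so the degree $3$ does not obstruct $\delta_t$ being represented on every deformation). Concretely, on a smoothing $W_t\subset X_t$ of $W_0$ the two rulings persist as algebraic $\frac{n}{2}$-cycles whose difference is the parallel transport $\delta_t$, verifying algebraicity (hence the Hodge conjecture on $V_W$) and the inclusion $V_W\subseteq V_{[Z]}$ as analytic varieties.

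The decisive step is the opposite inclusion, which I propose to carry out at the level of tangent spaces via the IVHS/Jacobian ring calculation used already to prove \cref{02082023guapemirim}. Writing the defining equations of $V_{[Z]}$ as the periods \eqref{02082023gago} and linearizing at $0$, one obtains $T_0V_{[Z]}$ as the kernel of the cup-product map against $[Z]$ acting on the Jacobian piece $R_{d}$ of the Fermat cubic. The task is to compute this kernel explicitly: the tangent vectors to $V_Z$ are the obvious polynomials vanishing on $\P^{\frac{n}{2}}\cup\check\P^{\frac{n}{2}}$, and one identifies the exactly one extra direction as the first-order deformation smoothing the node $\P^{\frac{n}{2}}\cap\check\P^{\frac{n}{2}}=\P^{\frac{n}{2}-2}$ to the smooth scroll $W$. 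This is a finite linear-algebra verification (parallel in spirit to \cite[Theorem 18.1]{ho13}) which proves $\dim T_0 V_{[Z]}=\dim V_Z+1=\dim V_W$. Combined with $V_W\subseteq V_{[Z]}$ and $\dim V_W=\dim T_0V_{[Z]}$, this forces $V_{[Z]}=V_W$ and shows $V_{[Z]}$ is smooth at $0$, completing the proof.

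The main obstacle I anticipate is the scheme-theoretic, not merely set-theoretic, identification $V_{[Z]}=V_W$: the period map could in principle vanish to first order along a larger analytic direction that turns out to be obstructed at second order, producing nilpotents in $\O_{V_{[Z]}}$ as in \cref{02082023barato} for $(\cf,\check\cf)=(1,1)$. Ruling this out requires either a second-order IVHS computation showing no obstruction, or, more conceptually, a direct construction of the smoothing $W_t\subset X_t$ over the full $V_W$ so that algebraicity forces reducedness. The pattern of cases already computed in \cite{ho13} strongly suggests the former route succeeds, but a uniform argument valid for all even $n\geq 4$ remains the key technical point.
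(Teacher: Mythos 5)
The paper does not prove this statement; it is stated as a conjecture, and the only hint the author gives is the parenthetical remark ``(comparing two tangent spaces)'' immediately after it. Your proposal fleshes out precisely that strategy --- identify the locus $V_W$ of cubic hypersurfaces containing a generalized cubic scroll, show $V_W\subseteq V_{[Z]}$ via the degeneration of $W$ to $\P^{\frac{n}{2}}\cup\check\P^{\frac{n}{2}}$, and check by an IVHS/Jacobian-ring computation that $\dim T_0V_{[Z]}=\dim V_W$ --- so you are following the route the author intends. Two points you should address. First, the degeneration must carry the class $[Z]=[\P^{\frac{n}{2}}]-[\check\P^{\frac{n}{2}}]$ and not the sum; in other words, you need the class of interest to be the \emph{primitive} combination of the two rulings rather than the total class $[W]$, which is exactly what distinguishes this case from the $(\cf,\check\cf)=(1,1)$ situation of \cref{02082023barato}. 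Second, the caveat you raise at the end is a genuine gap and not merely a formality: a tangent-space equality $T_0V_{[Z]}=T_0V_W$ gives $V_{[Z]}=V_W$ only if one also knows $V_{[Z]}$ has no embedded or nilpotent structure, and establishing that requires either higher-order truncations in the style of \cref{22112022protests} or a flat family of scrolls $W_t\subset X_t$ over the whole of $V_W$ (so that $V_W\subseteq V_{[Z]}$ with equality of dimensions forces scheme-theoretic equality). Neither is carried out in the paper, so your proposal is aligned with the paper's sketch but, like the paper, it leaves the statement open at exactly this reducedness step.
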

This conjecture is obtained after a series of email discussions with P. Deligne in 2018, see \cite[Chapter 1]{Headaches}  and  \cite[Section 19.6]{ho13}. The proof of this must not be difficult (comparing two tangent spaces). The case $(n,d,m)=(4,4,0)$ with $(\cf,\check\cf)=(1,-1)$ is still mysterious, however, it might be solved by similar methods as in the mentioned references.  
The only remaining cases are the case of  \cref{zemarceneiro2022}  for arbitrary even number $n\geq 4$  and $(d,m)=(4,\frac{n}{2}-2),\ n\geq 6$. It turns out \cref{zemarceneiro2022} is false for $n\geq 10$, see \cref{30112023camera}.


If the verification of the (variational) Hodge conjecture is out of reach for $\delta_t$, a direct verification of  the first part of \cref{zemarceneiro2022} might be possible by developing Grobner basis theory  for ideals of formal power series $f_i$ which are not polynomially generated. Such formal power series satisfy polynomial differential equations (due to Gauss-Manin connection), and so, this approach seems to be quite accessible.

\section{Evidence 1}
The first evidence to  \cref{zemarceneiro2022} comes from computing the Zariski tangent spaces of both $V_Z$ and $V_{[Z]}$, for the Fermat variety $X_0$,  and observing that $\dim(\T_tV_{[Z]})=\dim(\T_t V_Z)+1$.  This has been verified by computer for many examples of $n$ in \cite[Chapter 19]{ho13} and the full proof can be found in \cref{sec4}.   However, this is not sufficient as $V_{[Z]}$ carries a natural analytic scheme structure. 
Moreover, $V_{[Z]}$ as a variety  might be singular, even though, the author is not aware of an example.
The Zariski tangent space is only the first approximation of a variety, and one can introduce the $N$-th order approximations $V^N_{[Z]},\ \ N\geq 1$ which we call it the $N$-th infinitesimal Hodge locus,  such that $V^1_{[Z]}$ is the Zariski tangent space. The algebraic variety $V^N_{[Z]}$ is obtained by truncating the defining holomorphic functions of $V_Z$ up to degree $N$.  The non-smoothness results as above follows from the non-smoothness of $V^N_{[Z]}$ for small values of $N$ like $2,3$ (the case $N=2$ has been partially  treated in cohomological terms in \cite{mclean2005}). The strongest evidence to  \cref{zemarceneiro2022} is the following theorem  in \cite[Theorem 19.1, part 2]{ho13} which is proved by heavy computer calculations.
\begin{theo}
\label{22112022protests}
In the context of  \cref{zemarceneiro2022}, for $\cf\in\N,\check\cf\in\Z,\ 1\leq \cf, |\check\cf|\leq 10$,
the infinitesimal Hodge locus $V_{[Z]}^N, N\leq M$
is  smooth  for all $(n,M)=(6,14), (8,6),(10,4),(12,3)$.
\end{theo}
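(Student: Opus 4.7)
The proof plan combines the explicit Taylor expansion of periods around the Fermat point with a finite, albeit heavy, computation carried out in \textsc{Singular}.

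\textbf{Step 1: explicit defining equations.} I would fix the Fermat cubic $X_0$ in the deformation \eqref{15dec2016} with parameters $(t_\alpha)_{\alpha\in I}$, together with the pair $\P^{n/2},\check\P^{n/2}$ meeting in $\P^m$, $m=\tfrac{n}{2}-3$. A Griffiths basis $\omega_1,\dots,\omega_h$ of $F^{n/2+1}H^n_\dR(X_t)$ is provided by the residues \eqref{02082023gago}. The scheme-theoretic Hodge locus $V_{[Z]}$ is then cut out in $(\T,0)$ by the ideal
\[
I_{[Z]} \;=\; \bigl\langle\, f_i(t) \;:=\; \tfrac{1}{(2\pi i)^{n/2}}\!\int_{\delta_t}\!\omega_i \;-\; c_i \;\bigr\rangle_{i=1,\dots,h},
\]
where $\delta_t$ is the monodromy of $\delta_0=\cf[\P^{n/2}]+\check\cf[\check\P^{n/2}]$ and $c_i$ the value at $t=0$. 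Applying Theorem 18.9 of \cite{ho13}, as in \eqref{15.12.16}, expresses each $f_i(t)$ as an explicit power series in $(t_\alpha)$ with coefficients in $\Q(\zeta_{2d})=\Q(\zeta_6)$, polynomial in the multiplicities $\cf,\check\cf$.

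\textbf{Step 2: truncation and smoothness test.} Let $I^N_{[Z]}$ be the ideal in $\C[t_\alpha]$ generated by the truncations of the $f_i$'s at total degree $\leq N$; by definition $V^N_{[Z]}=\Zero(I^N_{[Z]})$. Smoothness of $V^N_{[Z]}$ at the origin is equivalent to
\[
\dim_\C T_0 V^N_{[Z]} \;=\; \dim V^N_{[Z]}.
\]
The left-hand side is controlled by the linear parts of the $f_i$, i.e.\ by the IVHS map \eqref{08082024decathlon}, and equals $\dim V_Z+1$; this is exactly the first-evidence calculation that produced \cref{zemarceneiro2022} in the first place. The content of \cref{22112022protests} is therefore that the Krull dimension of $\C[t_\alpha]/I^N_{[Z]}$, localized at the origin, is also equal to $\dim V_Z+1$, which I would verify by a Gröbner-basis computation in \textsc{Singular} for each $(n,N)$ in the list.

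\textbf{Step 3: organizing the calculation.} Two structural observations keep the computation tractable. First, $\T$ admits a $\C^*$-action induced by $x_i\mapsto\lambda x_i$ which makes the $f_i$'s weighted-homogeneous, so $I^N_{[Z]}$ decomposes into finitely many weighted-graded pieces that can be handled in parallel. Second, for the sum of two linear cycles the coefficients in \eqref{15.12.16} are combinatorial and lie in the much smaller field $\Q(\zeta_{2d})$, as recorded in \cref{sec2}; they depend on $(\cf,\check\cf)$ only polynomially. One generates the truncated ideal with the implementation of \cite[\S20.11]{ho13} and runs {\tt std} followed by {\tt dim}/{\tt vdim}. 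The finiteness of the ranges $1\leq\cf,|\check\cf|\leq 10$ and $N\leq M$ makes this a finite verification.

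\textbf{Main obstacle.} The obstruction is purely computational: $\#I=\binom{d+n+1}{n+1}$ grows rapidly with $n$, as does the number of generators of $I^N_{[Z]}$ and the size of their support up to degree $N$. Gröbner-basis complexity is doubly exponential in the number of variables in the worst case, and this is what forces the allowed truncation degree $M$ to decrease as $n$ grows, producing the sequence $(6,14),(8,6),(10,4),(12,3)$. Pushing beyond this list would require either exploiting more symmetry (e.g.\ the finite group acting on the Fermat variety and permuting the $t_\alpha$'s) or identifying an explicit geometric parametrization of $V_{[Z]}$, analogous to the generalized cubic scroll of \cite[\S19.6]{ho13}, which would make the smoothness statement structural rather than computational.
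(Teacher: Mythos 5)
Your overall strategy—expand the period integrals around the Fermat point via \cite[Theorem 18.9]{ho13}, truncate at degree $N$, and certify smoothness of the resulting scheme at the origin by machine—is exactly what the paper does; it is a heavy computer verification, and the paper cites it directly to \cite[Theorem 19.1, part 2]{ho13}. However, there are two points where the paper's actual implementation diverges from your sketch, and both matter precisely because the computation is at the edge of feasibility (the paper records $170$ GB of swap and a $21$-day run for $(n,N)=(12,3)$). First, the paper does \emph{not} work in the full parameter space $\T$: it first cuts down to a linear slice transversal to $V_Z$ of complementary dimension (so that $V_Z=\{0\}$ in the slice and \cref{zemarceneiro2022} becomes the statement that $V_{[Z]}$ is a smooth \emph{curve}), and proves \cref{22112022protests} in that slice before deducing the statement in $\T$. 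This drops the ambient dimension from $\dim\T=\binom{n+2}{3}$ to roughly $\codim V_{[Z]}$—for $n=12$ that is the difference between $364$ variables and about $66$—and is essential for the Gr\"obner/standard-basis step you envision to terminate at all. Second, the smoothness test itself is not an opaque $\texttt{std}$+$\texttt{dim}$ over the truncated ideal but a structured certificate: the code selects $r$ generators of $I^N_{[Z]}$ whose linear parts span the cotangent space of the expected codimension (\texttt{MinGenF}), then verifies that every remaining generator reduces to zero modulo these up to order $N$ (\texttt{DivF}). Success of this division shows that, to order $N$, the ideal is generated by $r$ elements with independent linear parts, hence that $V^N_{[Z]}$ is a smooth complete intersection germ of the predicted codimension—which is the definition being used, rather than your "local Krull dimension equals tangent dimension" reformulation (correct, but requiring a local ordering for \texttt{std} and typically much costlier). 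Your version would presumably give the same answer, but it is unlikely to be runnable for the $(10,4)$ and $(12,3)$ cases without the slice reduction, so it would be worth making that step explicit. The observation about the weighted $\C^*$-action and the coefficient field $\Q(\zeta_{2d})$ is correct and matches the paper's setup.
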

For $n=4$, the Hodge locus $V_{[Z]}$ itself is smooth  for trivial reasons.
There is abundant examples of Hodge cycles for which we know neither to verify the Hodge conjecture (construct algebraic cycles) nor give evidences that they might be counterexamples to the Hodge conjecture, see \cite{Deligne-HodgeConjecture} and \cite[Chapter 19]{ho13}. Finding Hodge cycles for hypersurfaces is extremely difficult, and the main examples in this case are due to T. Shioda for Fermat varieties \cite{Shioda1979}.

We  have proved \cref{22112022protests}  by computer with processor 
{\tt  Intel  Core i7-7700}, $16$ GB Memory plus $16$ GB swap memory and the operating system {\tt Ubuntu 16.04}. 
It turned out that for many cases  such as $(n, N)=(12,3)$, we get the  `Memory Full' error. 
Therefore,  we had to increase the swap memory up to $170$ GB. 
Despite the low speed of the swap which slowed down the computation, 
the computer was  able to use the data and give us the desired output. The computation for this example  took more than $21$ days. 
We only know that at least $18$ GB of the swap were used. 

\section{Evidence 2}
\label{15082023guapiacu}
The main project behind  \cref{zemarceneiro2022} is to discover new Hodge cycles for hypersurfaces by deformation. Once such Hodge cycles are discovered, there is an Artinian Gorestein ring attached to such Hodge cycles which contains some partial data of the defining ideal of the underlying algebraic cycle (if the Hodge conjecture is true), see \cite{voisin89, Otwinowska2003, ho13-Roberto}. In the case of lowest codimension for a Hodge locus, this is actually enough to construct the algebraic cycle (in this case a linear cycle) from the topological data of a Hodge cycle, see \cite{voisin89} for $n=2$ and \cite{Villaflor2022} for arbitrary $n$ but near the Fermat variety, and \cite{EmreHossein2018}. It turns out that in the case of surfaces ($n=2$) the next minimal codimension for Hodge loci (also called Noether-Lefschetz loci) is achieved by surfaces containing a conic, see \cite{voisin89, voisin90} .
Therefore, it is expected that components of Hodge loci of low codimension parametrize hypersurfaces with rather simple algebraic cycles. In our case,  it turns out that $\dim(V_Z)$ grows like the minimal codimension for Hodge loci.  This is as follows.    A formula for the dimension of $V_Z$ for arbitrary $m$ in terms of binomials can be found  in  \cite[Proposicion 17.9]{ho13}:  
  \begin{equation}
 \label{6dec2016}
  \codim(V_{Z})=
  2\codnum_{1^{\frac{n}{2}+1},(d-1)^{\frac{n}{2}+1}}-\codnum_{1^{n-m+1}, (d-1)^{m+1}}.
 \end{equation}
where for a sequence of natural numbers $\underline a=(a_1,\ldots,a_{2s})$ we define
\begin{equation}
 \label{1julio2016-bonn}
 \codnum_{\underline{a}}=
 \bn{n+1+d}{n+1}-
 \sum_{k=1}^{2s}(-1)^{k-1} \sum_{a_{i_1}+a_{i_2}+\cdots+a_{i_k}\leq d }\bn{n+1+d-a_{i_1}-a_{i_2}-\cdots-a_{i_k}}{n+1}
\end{equation}
and the second sum runs through all $k$ elements (without order) of $a_i,\ \ i=1,2,\ldots,2s$.
{\tiny
}
For $d=3$ and $k=\frac{n}{2}$ we have 
\begin{eqnarray*}
\codnum_{1^{k+1+x}, 2^{k+1-x}} &=& \frac{1}{6}(2k+4)(2k+3)(2k+2)-(k+1+x)\frac{1}{2}(2k+3)(2k+2)\\
                                         && -(k+1-x)(2k+2)+\frac{1}{2}(k+1+x)(k+x)(2k+2)\\
                                         & & +(k+1-x)(k+1+x)
                                         -\frac{1}{6}(k+1+x)(k+x)(k+x-1)\\
                                         &=& \frac{1}{6}k^3-\frac{1}{2}k^2x+(\frac{1}{2}x^2-\frac{1}{6})k -\frac{1}{6}x(x-1)(x+1)
\end{eqnarray*}
and so in our case $x=3$ we have 
$$
\codim(V_{Z})=\frac{1}{6}k^3+\frac{3}{2}k^2-\frac{14}{3}k+4
$$
which grows like the minimum codimension $\frac{1}{6}(k+1)k(k-1)$ for Hodge loci.  This minimum codimension is achieved by the space of cubic hypersurfaces containing a linear cycle.
The conclusion is that if the Hodge conjecture is true for $\delta_t,\ t\in V_{[Z]}$ then \cref{zemarceneiro2022} must be an easy exercise. 
Therefore, the author's hope is that  \cref{zemarceneiro2022} and its generalizations will flourish new methods to construct algebraic cycles.

\section{Evidence 3}
\label{18082023-jinmiad}
There is a very tiny evidence that the Hodge cycle in  \cref{zemarceneiro2022} might be a counterexample to the Hodge conjectures. All the author's attempts to produce new components of Hodge loci with the same codimension as of $V_{[Z]}$ has failed. This is summarized in  \cite[Table 19.5]{ho13} which we explain it in this section. 
\begin{defi}\rm
\label{03082023spe}
Let us consider a linear subspace $\P^{\tilde n}\subset \P^{n+1}$, a linear rational surjective map $\pi: \P^{\tilde n}\dashrightarrow \P^{r}$ with indeterminacy set $\P^ {\tilde n-r-1}$, an algebraic cycle $\tilde Z\subset \P^{r}$ of dimension $\frac{n}{2}+r-\tilde n$. The algebraic cycle $Z:=\pi^{-1}(\tilde Z)\subset \P^{\tilde n}\subset \P^{n+1}$ is of dimension $\frac{n}{2}$. If the algebraic cycle $\tilde Z$ is called X then we call $Z$ a generalized X. 
\end{defi}
By construction, it is evident that if $\tilde Z$ is inside a cubic hypersurface $\tilde X$, or equivalently if the ideal of $\tilde Z$ contains a degree $3$ polynomial then $Z$ is also inside a cubic hypersurface $X$. It does not seem to the author that $r=1,2,3,4$ produces a component of Hodge loci of the same codimension as in \cref{zemarceneiro2022}, however, it might be interesting to write down a rigorous statement.  The first case such that the algebraic cycles $\tilde Z\subset \tilde X$  produce infinite number of components of Hodge loci, is the case of two dimensional cycles inside cubic fourfolds, that is, $\dim(\tilde Z)=2, \dim(\tilde X)=4$.   Therefore, we have used algebraic cycles in the above definition for  $r=5$ and $\tilde n=\frac{n}{2}+3$.

For cubic fourfolds, Hodge loci is a union of  
codimension one irreducible subvarieties $\Has_D,\ D\equiv_6 0,2, D\geq 8$ of $\T$, see 
\cite{Hassett2000}. Here, $D$ is the discriminant of the saturated lattice generated 
by $[Z]$ and the polarization $[Z_\infty]=[\P^3 \cap X] $ in $H_4(X,\Z)$ (in \cite{Hassett2000} notation $[Z_\infty]=h^2$), where   
$Z$ is an  algebraic cycle $Z\subset X, \ X\in \Has_D$ whose homology class together $[Z_\infty]$ form a rank two lattice. The loci of cubic fourfolds containing a plane $\P^2$ is 
$\Has_8$.  It turns out that the generalized $\P^2$ is just the linear cycle $\P^{\frac{n}{2}}$ and the space of cubic $n$-folds containing a linear cycle has the smallest possible codimension. These codimesnions are listed under $L$ in \cref{12mar2017-2-corona}.
The loci of cubic fourfolds containing  a cubic ruled surface/cubic scroll is $\Has_{12}$.  The codimension of the space of cubic $n$-folds containing a generalized cubic scroll is listed   in $CS$ in \cref{12mar2017-2-corona}. Under $M$ we have listed the codimension of our Hodge loci in \cref{zemarceneiro2022}. Next comes, $\Has_{14}$ and $\Has_{20}$ for cubic $n$-folds. 
The loci $\Has_{14}$ parametrizes cubic fourfolds with a quartic scroll. For generalized quartic scroll we get codimensions under $QS$.  
The loci of cubic fourfolds with a Veronese surface is $\Has_{20}$ and for generalized Veronese we get the codiemsnions under $V$.  
One gets the impression that as $D$ increases the codimension of any possible generalization of ${\mathcal C}_D$ for  cubic hypersurfaces of dimensions $n$ 
gets near to the maximal codimension, and so, far away from the codimension in \cref{zemarceneiro2022}.

\begin{table}[!htbp]
\centering
\begin{tabular}{|c|c|c|c|c|c|c|c|c|}
\hline
{\tiny $\dim(X_0)$}    &  {\tiny $\dim(\T)$} & {\tiny range of codimensions} &L & CS &M &QS& V & Hodge numbers    \\ \hline
$n$     & $\bn{n+2}{3}$  & {\tiny $\binom{\frac{n}{2}+1}{3}$, $\binom{n+2}{{\rm min }\{3, \frac{n}{2}-2\}}$}  
        &&&&&   &    \tiny $h^{n,0}, h^{n-1,1},\cdots, h^{1,n-1}, h^{0,n}$ \\  \hline \hline 
 $4$    & $20$ &  $1,1$ &1&1&\fbox{1}& 1& 1  & \tiny $0,1,21,1,0$\\  \hline  
 $6$    & $56$ &  $4,8$  &4&6&\fbox{7}& 8 & \fbox{10} & \tiny $0,0,8,71,8,0,0$ \\ \hline  
  $8$   & $120$ & $10,45$  &10&16&\fbox{19}& 23 & 25 &  \tiny $0, 0, 0, 45, 253, 45, 0, 0, 0$ \\  \hline 
  $10$  & $220$ & $20, 220$ &20&32&\fbox{38}& 45 & 47  & \tiny $0, 0, 0, 1, 220, 925, 220, 1, 0, 0, 0$ \\  \hline
  $12$  & $364$ & $35,364$  &35&55&\fbox{65}& 75& 77  &   \tiny $0,0,0,0, 14,1001, 3432, 1001, 14,0,0,0,0$   \\ \hline 
\end{tabular}
\caption{Codimensions of the components of the Hodge/special loci for cubic hypersurfaces.} 
\label{12mar2017-2-corona}
\end{table}

\section{Artinian Gorenstein ideals attached to Hodge cycles}
\label{sec8}
In order to constuct an algebraic cycle $Z$ from its topological class we must compute its ideal $I_Z$ which might be a complicated task. However, we may aim to compute at least one element $g$ of $I_Z$ which is not in the ideal $I_X$ of the ambient space $X$. In the case of surfaces $X\subset \P^3$ this is actually almost the whole task, as we do the intersection $X\cap \P\{g=0\}$, and the only possiblity for $Z$ comes from the irreducible components of this intersection. In general this is as difficult as the original job, and a precise formulation of this has been done in \cite{Thomas2005}.  
The linear part of the Artinian-Gorenstein ideal  of a Hodge cycle  of a hypersurface  seems to be part of the defining ideal of the underlying algebraic cycle, and in this section we aim to explain this.

Let $X=\{f=0\}\subset \Pn {n+1}$ be a smooth hypersurface of degree $d\geq 3 $ and even dimension $n\geq 2$ defined over $\C$, and 
$$
\sigma:=(\frac{n}{2}+1)(d-2).
$$
\begin{defi}\rm
For every Hodge cycle $\delta\in H_n(X,\Z)$ we define its associated Artinian Gorenstein ideal  as the homogeneous ideal
$$
I(\delta)_a:=\left \{Q\in \C[x]_a \Bigg| \mathlarger{\int}_{\delta} \res \left(\frac{QP\Omega}{F^{\frac{n}{2}+1}}\right) =0,\ \ 
\forall P\in \C[x]_{\sigma-a} \right\}. 
$$
By definition $I(\delta)_{m}=\C[x]_m$ for all $m\geq \sigma+1$.
\end{defi}
Let $\plc$ be the intersection of a linear $\Pn {\frac{n}{2}+1}$ with $X$ and
$[\plc]\in H_n(X,\Z)$ be the induced element in homology (the polarization).
We have $I([Z])=\C[x]$ and for an arbitrary Hodge cycle $\delta$, $I(\delta)$ depends only on the equivalence class of $\delta \in H_n(X,\Z)/\Z[\plc]$.  
The main purpose of the present section is to investigate the following: 
\begin{conj}
 \label{5nov2022}
 Let $\delta\in H_n(X,\Z)/ \Z[\plc]$ be a non-torsion Hodge cycle such that $V_\delta$ is smooth. Assume that there is a non-zero linear polynomial $g\in (I_\delta)_1$.  Then $\delta$ is supported in  the hyperplane section  $Y:=\P\{g=0\}\cap X$. 
\end{conj}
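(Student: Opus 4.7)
The strategy is to transport the statement into Jacobian ring terms via Griffiths' residue theorem and then read off a geometric statement from the long exact sequence of the pair $(X,Y)$.

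First, I would use Griffiths' isomorphism $H^{n/2,n/2}_{\mathrm{prim}}(X)\cong R_f^\sigma$, where $R_f=\C[x]/\jacob(f)$ is the Jacobian ring. This ring is Artinian Gorenstein with socle in degree $T=(n+2)(d-2)=2\sigma$, and the primitive part of $\delta$ corresponds to an element $P_\delta\in R_f^\sigma$. Under Macaulay--Matlis duality the Poincar\'e pairing identifies with the multiplication pairing $R_f^\sigma\otimes R_f^\sigma\to R_f^{2\sigma}\cong\C$. The hypothesis $g\in(I_\delta)_1$ thus reads: for every $P\in\C[x]_{\sigma-1}$, $g\cdot P_\delta\cdot P=0$ in $R_f^{2\sigma}$. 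By non-degeneracy of the pairing $R_f^{\sigma+1}\otimes R_f^{\sigma-1}\to R_f^{2\sigma}$ this is equivalent to the single condition
\[
g\cdot P_\delta=0 \quad\text{in } R_f^{\sigma+1},
\]
i.e.\ $gP_\delta\in\jacob(f)_{\sigma+1}$.

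Second, I would give this condition a geometric reading. The residue form $\eta_P:=\res(gP\Omega/f^{n/2+1})$, viewed as a meromorphic $n$-form on $X$, carries $g|_X$ as a factor, hence vanishes identically along $Y=X\cap\{g=0\}$. By the long exact cohomology sequence of the pair $(X,Y)$, its class lies in the image of $H^n(X,Y)\to H^n(X)$, equivalently in $\ker(i^*:H^n(X)\to H^n(Y))$. So the hypothesis says that $\delta$, viewed in $H^n(X)$ via Poincar\'e duality, annihilates a distinguished subspace of $\mathrm{Im}(H^n(X,Y)\to H^n(X))\cap H^{n/2,n/2}_{\mathrm{prim}}$.

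The crucial step is to upgrade this to the statement that $\delta$ annihilates the \emph{entire} image of $H^n(X,Y)\to H^n(X)$. Since $\delta$ is of pure Hodge bitype $(n/2,n/2)$, it pairs trivially with classes of other bitypes, so only the $(n/2,n/2)$-piece is at issue. The smoothness of $V_\delta$ is used here via the infinitesimal variation of Hodge structure (Carlson--Green--Voisin): it forces the Artinian Gorenstein quotient $R_f/\mathrm{Ann}(P_\delta)$ to have the expected Koszul behavior against $g$, so that $g\cdot R_f^{\sigma-1}$ exhausts $\ker(g:R_f^\sigma\to R_f^{\sigma+1})^\perp$ in the sense needed to span the kernel-of-restriction $\ker(i^*)\cap H^{n/2,n/2}_{\mathrm{prim}}$ via the $\eta_P$. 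Granted this spanning, the hypothesis implies $\langle\delta,\cdot\rangle=0$ on all of $\mathrm{Im}(H^n(X,Y)\to H^n(X))$, hence $\delta\mapsto 0$ in $H_n(X,Y)$, and the long exact sequence
\[
H_n(Y)\to H_n(X)\to H_n(X,Y)\to H_{n-1}(Y)
\]
places $\delta$ in the image of $i_*:H_n(Y)\to H_n(X)$, which is the assertion that $\delta$ is supported in $Y$.

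The main obstacle is the spanning step. The interpretation of $\eta_P$ as lying in $\mathrm{Im}(H^n(X,Y)\to H^n(X))$ is clean, but showing that these classes exhaust the relevant $(n/2,n/2)$-part is delicate, because $Y$ must be singular for the conjecture to have content: if $Y$ were smooth, Lefschetz forces $H_n(Y)$ to reduce to the polarization, contradicting the non-torsion assumption on $\delta$ modulo $\Z[\plc]$. A concrete route is to resolve the singularities $\tilde Y\to Y$ (which in the motivating examples contains the very linear cycles that realize $\delta$) and redo the Hodge comparison on $\tilde Y$. The role of the smoothness of $V_\delta$ should then enter through a Bloch-type semi-regularity statement, guaranteeing that obstructions coming from $H^1(X,\Theta_X)$ are controlled by the Jacobian ring, so that the algebraic condition $gP_\delta=0$ in $R_f^{\sigma+1}$ really captures the full geometric kernel.
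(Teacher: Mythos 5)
The paper's \cref{5nov2022} is posed as a \emph{conjecture}, not a theorem: the paper offers no proof for it, only derives consequences (e.g.\ that $Y$ must be singular) and states that the surrounding framework is ``inspired by'' related conjectures such as \cref{21f2023bimsa}.  So there is no paper proof to compare yours against, and the most useful thing I can do is scrutinize your sketch as a candidate approach to an open problem.

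Your Jacobian-ring translation of $g\in (I_\delta)_1$ is correct: by Griffiths and Macaulay duality, $g\in (I_\delta)_1$ is indeed equivalent to $g\cdot P_\delta=0$ in $R_f^{\sigma+1}$.  After that the argument has two real gaps.  First, the geometric reading ``$\eta_P=\res(gP\Omega/f^{n/2+1})$ carries $g|_X$ as a factor, hence vanishes along $Y$'' is only visibly true for pole order one.  For $k=\frac{n}{2}+1>1$ the Griffiths reduction of pole order pushes $gP$ through the Koszul complex of $\mathrm{jac}(f)$, and the $C^\infty$ (or even algebraic) representative of the class in $H^n_{\mathrm{prim}}(X)$ that results need not retain $g$ as a factor.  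What you actually know is an identity in $R_f^{\sigma+1}$, and the passage from that to ``the class lies in $\ker(i^* : H^n(X)\to H^n(Y))$'' is a substantive claim, comparable in depth to the Voisin--Green--Otwinowska restriction results in Noether--Lefschetz theory, which you would have to reprove or cite in a form applicable to the singular section $Y$.  Second, you yourself flag the spanning step as ``the main obstacle''; the appeal to IVHS/Koszul behaviour and to ``a Bloch-type semi-regularity statement'' is a plan, not an argument.  Since $Y$ is necessarily singular whenever the conjecture has content, the Lefschetz hyperplane theorem is unavailable on $Y$ and passing to a resolution $\tilde Y\to Y$ changes both $H_n(Y)$ and the meaning of ``supported in $Y$'', so even this reduction needs to be set up carefully (the linear cycles one hopes to find sit on $Y$, not on $\tilde Y$, in general).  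In short: a plausible programme, with the correct first reduction, but not a proof, and the conjecture remains open as stated in the paper.
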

If the Hodge conjecture is true then \cref{5nov2022}  says that the linear polynomial $g$  is in the defining ideal of an algebraic cycle $Z$ such that $\delta=[Z]$.  We have the following statement which is stronger than the converse to \cref{5nov2022}. 
Let $\delta=[Z]\in H_n(X,\Z)$ be  an algebraic cycle. Then the defining ideal of $Z$ is inside $I_\delta$.  The proof is the same as \cite[Proposition 11.3]{ho13-Roberto}.

If we take a basis $g_1,g_2,\cdots,g_k$ of $(I_\delta)_1$ and apply the above conjecture for $g=\sum_{i=1}^k t_ig_i$ with arbitrary $t_i\in\C$ then we may conclude that $\delta$ is supported in $\P\{(I_\delta)_1=0\}\cap X$. A rigorous argument for this is needed, but it does not seem to be difficult.   In particular,  $\dim_\C (I_\delta)_1\leq \frac{n}{2}+1$. For $X$ the Fermat variety this consequence is easy and it can be reduced to an elementary problem as \cite[Problem 21.3]{ho13}. 
\cref{5nov2022} is mainly inspired by the following conjecture for which we have more evidences.

  \begin{conj}
  \label{21f2023bimsa}
 If $V_\delta$ is smooth and $\dim_\C (I_\delta)_1=\frac{n}{2}+1$ then $\P\{(I_\delta)_1=0\}=\P^{\frac{n}{2}}$ is inside $X$ and  modulo $\Z[\plc]$  we have $\delta=[\P^\frac{n}{2}]$. 
 \end{conj}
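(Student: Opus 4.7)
The plan is to show that the $\frac{n}{2}+1$ independent linear forms spanning $(I_\delta)_1$ cut out a linear $\Pn{\frac{n}{2}}$ contained in $X$, and then to identify the Hodge class $\delta$ with the class of this linear cycle modulo polarization. The framework is an Artinian Gorenstein / Macaulay--Matlis duality analysis together with the smoothness of $V_\delta$ playing the role of an integrability statement.

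First I would fix a basis $g_1,\ldots,g_{\frac{n}{2}+1}$ of $(I_\delta)_1$. Since $d\geq 3$, the Jacobian ideal $\jacob(f)$ has no linear part, so each $g_i$ lifts uniquely to a linear form in $\C[x]$; after a linear change of coordinates we may take $g_i=x_{\frac{n}{2}+i}$. Let $L:=V(x_{\frac{n}{2}+1},\ldots,x_{n+1})\cong\Pn{\frac{n}{2}}$ and let $\tilde I_\delta\subset\C[x]$ be the preimage of $I_\delta\subset R$. Then $A_\delta:=\C[x]/\tilde I_\delta$ is Artinian Gorenstein with socle in degree $\sigma$. By Macaulay--Matlis duality, $\tilde I_\delta=\mathrm{Ann}(F_\delta)$ for a homogeneous polynomial $F_\delta\in\C[y_0,\ldots,y_{n+1}]_\sigma$ under the contraction action $x_i\mapsto\partial/\partial y_i$, and the membership $x_{\frac{n}{2}+i}\in\tilde I_\delta$ is equivalent to $\partial F_\delta/\partial y_{\frac{n}{2}+i}=0$, forcing $F_\delta\in \C[y_0,\ldots,y_{\frac{n}{2}}]_\sigma$.

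The key step is to prove $L\subset X$. Write $f=h+r$ where $h:=f|_L\in\C[x_0,\ldots,x_{\frac{n}{2}}]_d$ and $r=\sum_{k>\frac{n}{2}}x_k R_k$ with $R_k\in\C[x]_{d-1}$. Since $F_\delta$ depends only on $y_0,\ldots,y_{\frac{n}{2}}$, the action of any element of $\C[x]$ on $F_\delta$ depends only on its reduction modulo $(x_{\frac{n}{2}+1},\ldots,x_{n+1})$. The containment $\jacob(f)\subset\tilde I_\delta$ then gives $\partial h/\partial x_k\in\mathrm{Ann}(F_\delta)\cap \C[x_0,\ldots,x_{\frac{n}{2}}]$ for $k\leq\frac{n}{2}$ and $R_k|_L\in\mathrm{Ann}(F_\delta)\cap\C[x_0,\ldots,x_{\frac{n}{2}}]$ for $k>\frac{n}{2}$. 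The crucial numerical coincidence is that $\sigma=(\frac{n}{2}+1)(d-2)$ is simultaneously the degree of the Griffiths representative of the Hodge class for $X\subset\Pn{n+1}$ and the Macaulay socle degree of the Jacobian ring of a smooth degree-$d$ hypersurface in $\Pn{\frac{n}{2}}$. Hence if $h\neq 0$ and $V(h)\subset\Pn{\frac{n}{2}}$ is smooth, then $\jacob(h)\subset\C[x_0,\ldots,x_{\frac{n}{2}}]$ is itself Artinian Gorenstein of socle degree exactly $\sigma$; the inclusion $\jacob(h)\subset\mathrm{Ann}(F_\delta)$ of ideals with the same Gorenstein socle degree forces equality, and the additional generators $R_k|_L$ must already lie in $\jacob(h)$. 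Translating this back to $\C[x]$ gives an over-determined system on $(f,R_k)$ that, combined with the smoothness of $V_\delta$ (which rules out higher-order Hodge-theoretic obstructions), is incompatible with $h\neq 0$. Hence $h=0$ and $L\subset X$.

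With $L\subset X$, the Hodge class $[L]\in H_n(X,\Z)$ satisfies $(I_{[L]})_1=(I_\delta)_1$ by a direct Griffiths residue computation against linear forms, and the Macaulay dual $F_{[L]}$ lies in the same space $\C[y_0,\ldots,y_{\frac{n}{2}}]_\sigma$ as $F_\delta$, annihilated by the same Jacobian-derived ideal $\jacob(f)|_L=\langle R_k|_L\,:\,k>\frac{n}{2}\rangle$. Since for generic $f\in V_L$ this ideal is a regular sequence with one-dimensional socle in degree $\sigma$, one gets $F_\delta=c\,F_{[L]}$ for some $c\in\C$, whence $\delta\equiv c\cdot[L]\pmod{\Z[\plc]}$ in primitive cohomology. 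Integrality of $\delta$ and $[L]$ modulo polarization forces $c\in\Q$, and comparing the intersection pairing on the rank-two saturated sublattice of $H_n(X,\Z)$ spanned by $\delta$ and $[\plc]$ with that spanned by $[L]$ and $[\plc]$ gives $c=\pm 1$.

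The main obstacle is the third paragraph: proving $L\subset X$. The sketched argument hinges on the numerical coincidence of socle degrees and a Gorenstein comparison between $\jacob(h)$ and $\mathrm{Ann}(F_\delta)\cap\C[x_0,\ldots,x_{\frac{n}{2}}]$, and it must be made rigorous in the degenerate cases where $V(h)$ is singular or where the extra generators $R_k|_L$ accidentally lie in $\jacob(h)$. An alternative route, closer in spirit to \cite{Villaflor2022} and to Voisin's proof in \cite{voisin89} for $n=2$, is to first verify the statement at the Fermat point $X_0$ using the explicit formulas for $F_\delta$ coming from the Taylor series of periods and then to integrate the infinitesimal statement via the smoothness of $V_\delta$; the comparison of codimensions via \eqref{6dec2016} with the minimal codimension $\binom{\frac{n}{2}+d-1}{d-1}-\binom{\frac{n}{2}+1}{d-1}$ of Hodge loci parametrizing hypersurfaces containing a linear cycle would then force equality of $V_\delta$ and a component of $V_L$.
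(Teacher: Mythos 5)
There is no ``paper's own proof'' to compare against: the statement is an open conjecture in the text, and the paragraph immediately after it records the state of the art --- it is proved for $X_0$ the Fermat variety, $d\not=3,4,6$, and \emph{without} the smoothness hypothesis in \cite[Theorem 1.2]{Villaflor2022}, while for $d=3,4,6$ the smoothness hypothesis is known to be necessary because of the non-smooth components constructed in \cite{DuqueVillaflor}. So you should not expect a proof to check your argument against, and any proposal has to be judged on its own.

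Your set-up (normalize $(I_\delta)_1=\langle x_{\frac{n}{2}+1},\ldots,x_{n+1}\rangle$, pass to the Macaulay dual socle generator $F_\delta$ of degree $\sigma$, observe $F_\delta\in\C[y_0,\ldots,y_{\frac{n}{2}}]_\sigma$, and note that $\jacob(f)\subset\tilde I_\delta={\rm Ann}(F_\delta)$ restricts on $L=V(x_{\frac{n}{2}+1},\ldots,x_{n+1})$ to $\langle\partial h/\partial x_k\rangle+\langle R_k|_L\rangle\subset{\rm Ann}_L(F_\delta)$) is correct, and the Gorenstein comparison --- two graded Gorenstein ideals with the same socle degree and an inclusion must coincide --- is a valid principle. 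But this is exactly where the argument stops producing anything. If $h\ne0$ and $V(h)$ is smooth then you have deduced $\jacob(h)={\rm Ann}_L(F_\delta)$ and $R_k|_L\in\jacob(h)$: these are perfectly consistent constraints, not a contradiction. The sentence claiming an ``over-determined system on $(f,R_k)$ that, combined with the smoothness of $V_\delta$, is incompatible with $h\ne 0$'' has no content behind it --- you never extract from smoothness of $V_\delta$ any condition that rules out $F_\delta$ being the dual socle generator of $\jacob(h)$ for some nonzero smooth $h$. You flag this gap yourself, and I agree it is the whole problem; in its current form the proposal does not prove $L\subset X$, and the singular-$V(h)$ case noted at the end is a second, untreated case.

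Two further points. First, even granting $L\subset X$, your final step needs more: having $F_\delta=cF_{[L]}$ only identifies the Artinian--Gorenstein ideals; passing from $I_\delta=I_{[L]}$ to $\delta\equiv c[L]$ in $H_n(X,\Z)/\Z[\plc]$ and then pinning down $c=\pm1$ is precisely the hard content of results like \cite[Theorem 1.2]{Villaflor2022} and is not an easy integrality remark. Your ``generic $f\in V_L$'' genericity hypothesis is also not available --- the conjecture is about an arbitrary smooth $V_\delta$, not about a generic member. Second, the alternative route you sketch in the closing paragraph (verify at Fermat via explicit periods, then propagate via smoothness) is in spirit what \cite{Villaflor2022} already does in the cases where the result is known; what is missing in the literature, and in your proposal, is an argument for $d=3,4,6$ \emph{with} the smoothness hypothesis, or a general argument away from Fermat.
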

 For $d\not=3,4,6$, $X_0$ the Fermat variety and without the smoothness condition this theorem is proved in \cite[Theorem 1.2]{Villaflor2022}. For $d=3,4,6$ smoothness is necessary as in \cite{DuqueVillaflor} the authors have described many non-smooth components for which the theorem is not true. 

  \begin{prop}
  If \cref{5nov2022} is true then  the hyperplane  $\P\{g=0\}$ is not transversal to $X$ and hence $Y:=\P\{g=0\}\cap  X$ is not smooth.
  \end{prop}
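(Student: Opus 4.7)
The plan is to argue by contradiction. Assume $H := \P\{g = 0\}$ is transversal to $X$, so that $Y = H \cap X$ is a smooth hypersurface of degree $d$ in $H \cong \P^n$, of complex dimension $n-1$. By \cref{5nov2022}, the class $\delta$ is supported in $Y$, that is, there exists $\tilde\delta \in H_n(Y, \Q)$ with $i_* \tilde\delta = \delta$, where $i : Y \hookrightarrow X$ is the inclusion. Passing to Poincar\'e duals, the class $[\delta]^\vee \in H^n(X, \Q)$ then lies in the image of the Gysin morphism $i_* : H^{n-2}(Y, \Q) \to H^n(X, \Q)$.

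The key is to identify this image by a double application of the Lefschetz hyperplane theorem. First, for $Y \subset X$ with $n-2 < \dim_\C Y = n-1$, the restriction $i^* : H^{n-2}(X, \Q) \to H^{n-2}(Y, \Q)$ is an isomorphism. Combined with the projection formula $i_* \circ i^* = L$, where $L$ denotes cup product with the hyperplane class $[Y] \in H^2(X, \Q)$, this gives
$$
\operatorname{im}\!\bigl(i_* : H^{n-2}(Y, \Q) \to H^n(X, \Q)\bigr) \;=\; L \cdot H^{n-2}(X, \Q).
$$
Second, for $X \subset \P^{n+1}$ with $n-2 < n$, the Lefschetz hyperplane theorem yields $H^{n-2}(X, \Q) \cong H^{n-2}(\P^{n+1}, \Q) = \Q \cdot \xi^{(n-2)/2}$, where $\xi \in H^2(\P^{n+1}, \Q)$ is the hyperplane class. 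Consequently $L \cdot H^{n-2}(X, \Q) = \Q \cdot \xi^{n/2}$ is the one-dimensional subspace of $H^n(X, \Q)$ Poincar\'e dual to $\Q \cdot [\plc]$.

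It follows that $\delta$ is a rational multiple of $[\plc]$, hence torsion in $H_n(X, \Z) / \Z[\plc]$, contradicting the standing hypothesis that $\delta$ is non-torsion there. Therefore $H$ cannot be transversal to $X$, and $Y$ must be singular. The main step is the identification of the Gysin image with $L \cdot H^{n-2}(X, \Q)$; once that is in place, the one-dimensionality coming from Lefschetz on $X \subset \P^{n+1}$ is immediate, so I do not anticipate any serious obstacle beyond careful bookkeeping of Poincar\'e duality and of the Gysin/restriction adjunction.
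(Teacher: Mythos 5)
Your proposal is correct, but it follows a genuinely different route from the paper's. The paper applies the Lefschetz hyperplane theorem once, directly to the smooth hypersurface $Y\subset\P^n:=\P\{g=0\}$, to get $H_n(Y,\Z)\cong H_n(\P^n,\Z)=\Z$; it then works \emph{inside} $Y$, noting that $\plc=\P^{\frac n2+1}\cap X\subset Y$ and that $[\plc]=d[\P^{\frac n2}]$ in $H_n(\P^n,\Z)$, so the polarization is $d$ times the generator of $H_n(Y,\Z)$, forcing any class supported on $Y$ to be torsion in $H_n(X,\Z)/\Z[\plc]$. You instead stay in the cohomology of the ambient $X$: you invoke Lefschetz twice (once for $Y\subset X$ to identify $H^{n-2}(Y,\Q)\cong H^{n-2}(X,\Q)$ and once for $X\subset\P^{n+1}$ to identify $H^{n-2}(X,\Q)\cong\Q\,\xi^{(n-2)/2}$), and combine the projection formula $i_*\circ i^*=L$ with surjectivity of $i^*$ to pin down the Gysin image exactly as $\Q\,\xi^{n/2}$. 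Both arguments are sound. The paper's is more elementary and works with integral coefficients, extracting the precise relation $d\delta=a[\plc]$; yours is more structural, stays in $\Q$-cohomology, and substitutes the degree computation $[\plc]=d[\P^{\frac n2}]$ with the formal Gysin/restriction adjunction. Either buys what the other buys — a contradiction with the non-torsion hypothesis in \cref{5nov2022}.
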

  \begin{proof} 
  If $Y\subset \P^n:=\P\{g=0\}$ is smooth then by Lefschetz' hyperplane section theorem $H_{n}(Y,\Z)\cong H_n(\P^{n},\Z)$  and the latter  is generated by any $\P^{\frac{n}{2}}\subset \P^n$. From another side if we take any $\P^{\frac{n}{2}+1}\subset \P^n\subset \P^{n+1}$  we have $\plc\subset Y\subset \P^n$, and $[\plc]=d[\P^{\frac{n}{2}}]$ in $H_n(\P^n,\Z)$. This implies that a $d$ multiple of the generator of $H_n(Y,\Z)$ is $[\plc]$, and so $\delta$ must be a torsion in  $H_n(X,\Z)/ \Z[\plc]$.
   \end{proof}

\section{Singular cubic hypersurfaces}
If \cref{5nov2022} is true then the Hodge cycle $\delta$ is supported in a singular cubic hypersurface of dimension $n$, and our analysis of $\delta$ reduces to the study of singularities of cubic hypersurfaces.  
Cubic hypersurfaces have many linear subspaces and  it is worth to mention the following result:
\begin{theo}[\cite{Borcea1990}]
	\label{dim k planes}
Let $X=\{f_1=f_2=\cdots=f_r=0\}\subset \P^{n+r}$ be a complete intersection of dimension $n$, where $f_1,f_2,\ldots,f_r,\ \ \deg(f_i)=d_i$ are
homogeneous polynomials in the projective coordinates of $\P^{n+r}$.
For a generic $X$,  the variety $\Omega_X(k)$ of $k$-planes inside $X$ is non-empty and smooth of pure dimension
$\delta=(k+1)(n+r-k)-\sum_{i=1}^r\binom{d_j+k}{k}$, provided $\delta\geq 0$ and $X$ is not a quadric. In the case $X$ a quadric, we require
$n\geq 2k$. Furthermore, if $\delta>0$ or if in the case $X$ a quadric, $n>2k$, then  $\Omega_X(k)$ is
connected (hence irreducible).
\end{theo}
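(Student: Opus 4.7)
The plan is to realize $\Omega_X(k)$ as the zero locus of a section of a natural vector bundle on the Grassmannian and then apply standard Bertini and connectedness results. Let $G:=G(k+1,n+r+1)$ be the Grassmannian of $k$-planes in $\P^{n+r}$, and let $\mathcal{S}$ be the tautological subbundle of rank $k+1$. For each $d_i$ the restriction $f_i|_\Lambda$ defines a section of $\mathrm{Sym}^{d_i}(\mathcal{S}^*)$, so the tuple $(f_1,\ldots,f_r)$ determines a section $s=s_X$ of $E:=\bigoplus_{i=1}^r \mathrm{Sym}^{d_i}(\mathcal{S}^*)$, whose zero locus is exactly $\Omega_X(k)$. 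Note $\dim G=(k+1)(n+r-k)$ and $\mathrm{rank}(E)=\sum_i\binom{d_i+k}{k}$, so the expected dimension of $\{s=0\}$ is $\delta$.

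First, I would consider the total incidence variety $\mathcal{I}\subset\mathcal{P}\times G$, where $\mathcal{P}$ is the affine parameter space of tuples $(f_1,\ldots,f_r)$. The second projection $\mathcal{I}\to G$ is a vector bundle (linear fibers: the conditions $f_i|_\Lambda\equiv 0$ are linear of codimension $\binom{d_i+k}{k}$), so $\mathcal{I}$ is smooth and irreducible of dimension $\dim\mathcal{P}+\delta$. Since $E$ is globally generated by the universal family of polynomials, the bundle $E$ has enough sections to apply Bertini: for generic $X\in\mathcal{P}$, $s_X$ is transverse to the zero section, hence $\Omega_X(k)$ is smooth of pure dimension $\delta$ whenever $\delta\geq 0$. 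Non-emptiness, equivalently dominance of $\pi_1:\mathcal{I}\to\mathcal{P}$, follows once we exhibit a single $X_0$ with $\Omega_{X_0}(k)\neq\emptyset$: take, for instance, a Fermat-type or diagonal complete intersection containing an explicit coordinate $k$-plane, which is trivial to construct, and then the proper closed image of $\pi_1$ must be all of $\mathcal{P}$ since $\dim\mathcal{I}\geq\dim\mathcal{P}$.

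For connectedness (hence irreducibility, given smoothness) the key input is that $\mathrm{Sym}^{d}(\mathcal{S}^*)$ is an ample vector bundle on $G$ for every $d\geq 1$: indeed $\mathcal{S}^*$ is globally generated and ample on $G$ (it is a quotient of the trivial bundle of rank $n+r+1$ by $\mathcal{Q}^*$, and its top Chern class gives the Plücker polarization), and symmetric powers and direct sums of ample bundles are ample. I would then invoke the Fulton--Lazarsfeld connectedness theorem (see Fulton--Lazarsfeld, \emph{Ample vector bundles}): the zero locus of a section of an ample vector bundle $E$ on a smooth projective variety $G$ is connected as soon as its dimension is positive, i.e.\ $\delta>0$. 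The quadric case $r=1$, $d_1=2$ is truly special because $\mathrm{Sym}^2(\mathcal{S}^*)$ fails the required positivity on some strata and $\Omega_X(k)$ can split into two components; the bound $n\geq 2k$ (resp.\ $n>2k$) is exactly the classical threshold ensuring non-emptiness (resp.\ irreducibility) for quadrics, which I would treat separately by the well-known description of $\Omega_X(k)$ as a homogeneous space under the orthogonal group.

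The step I expect to be the main obstacle is the connectedness assertion: verifying ampleness of $\bigoplus_i\mathrm{Sym}^{d_i}(\mathcal{S}^*)$ and carefully applying Fulton--Lazarsfeld (which requires a compactness/projectivity setup and a precise dimension count) needs care, and the quadric exceptions have to be isolated and analyzed by hand. Smoothness and dimension are comparatively routine via the incidence construction.
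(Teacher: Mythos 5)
The paper does not prove this statement — it simply cites \cite{Borcea1990} and uses it as a black box — so there is no proof in the text to compare against. Your setup is the standard one that does appear in the literature (incidence variety over the parameter space of tuples, sections of $\bigoplus_i\mathrm{Sym}^{d_i}(\mathcal{S}^*)$ on the Grassmannian, Bertini for generic smoothness), and that part is sound. But two of the steps as written have real problems.

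First, the non-emptiness argument is broken. You assert that ``the proper closed image of $\pi_1$ must be all of $\mathcal{P}$ since $\dim\mathcal{I}\geq\dim\mathcal{P}$.'' Irreducibility of $\mathcal{I}$ plus $\dim\mathcal{I}\geq\dim\mathcal{P}$ does \emph{not} force $\pi_1$ to be dominant; $\pi_1$ could collapse $\mathcal{I}$ onto a proper closed subvariety with positive-dimensional generic fibers. Exhibiting one $X_0$ with $\Omega_{X_0}(k)\neq\emptyset$ only shows one point is in the image; you would need, for instance, an $X_0$ at which the fiber has the \emph{expected} dimension $\delta$ to conclude by upper semicontinuity of fiber dimension. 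The clean route, and the one normally used, is a positivity argument: $E=\bigoplus_i\mathrm{Sym}^{d_i}(\mathcal{S}^*)$ is globally generated, so the top Chern class $c_{\mathrm{top}}(E)$ is represented by an effective cycle, and one shows it is nonzero in the Chow ring of $G$ (by a direct Schubert/Giambelli computation); this is precisely what gives non-emptiness when $\delta\geq 0$, including the boundary case $\delta=0$.

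Second, the connectedness argument rests on a false claim: $\mathrm{Sym}^d(\mathcal{S}^*)$ is \emph{not} ample on $G(k+1,n+r+1)$ for $k\geq 1$. Fix a vector $v$ in the ambient space and restrict to the sub-Grassmannian $G_v\cong\P^{n+r-k}$ of $k$-planes through $[v]$. Over $G_v$ the tautological subbundle $\mathcal{S}$ contains the trivial line subbundle spanned by $v$, so $\mathcal{S}^*|_{G_v}$ has a trivial line-bundle quotient, and hence so does $\mathrm{Sym}^d(\mathcal{S}^*)|_{G_v}$. An ample bundle can have no trivial quotient on a positive-dimensional subvariety, so $\mathrm{Sym}^d(\mathcal{S}^*)$ is not ample, and the Fulton--Lazarsfeld connectedness theorem as you invoke it does not apply. (It \emph{does} work for $k=0$, where $\mathcal{S}^*=\O_{\P^{n+r}}(1)$ is ample — that is the degenerate case.) Connectedness for $k\geq 1$ requires a finer argument, e.g.\ a careful monodromy/Stein-factorization analysis of $\pi_1:\mathcal{I}\to\mathcal{P}$ or the precise positivity refinements in Debarre--Manivel; simply summing symmetric powers does not buy ampleness. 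Your closing remark correctly flags connectedness as the main obstacle, but the underlying ampleness assertion is wrong, not merely delicate.
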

For the case of our interest $r=1,d=3$, and one dimension below linear cycles that is $k=\frac{n}{2}-1$, we have $$
\delta=\frac{k+1}{6}\left(6(n+1-k)-(k+3)(k+2)\right)=\frac{n}{12}(\frac{n}{2}+2)\left(5-\frac{n}{2}\right).
$$
It follows that the number of $\P^4$'s in a generic cubic tenfold is finite. It turns out that such a number is  $1812646836$, see  \cite{HashimotoKadets}.    
For $n=10$ and $k=\frac{n}{2}-2=3$ we have $\delta=8$, that is, the variety of $\P^3$'s inside a generic cubic tenfold is of dimension $8$.
Next, we focus on singular cubic hypersurfaces. 
  \begin{prop}
  Any line passing through two distinct points of $\sing(X)$ is inside $X$.
  \end{prop}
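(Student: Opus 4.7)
The plan is to restrict the defining cubic $f$ of $X$ to the line $L$ through $p,q\in\sing(X)$ and count vanishing orders. First I would parametrize $L$ in homogeneous form as $[sp+tq]$ with $[s:t]\in\P^1$, so that $f|_L(s,t):=f(sp+tq)$ is a homogeneous polynomial of degree at most $3$ in $(s,t)$. The key observation is that a singular point contributes a double root: since $p\in \sing(X)$, the linear form $\sum_i (q_i-p_i)\frac{\partial f}{\partial x_i}(p)$ vanishes, which together with $f(p)=0$ means that $[1:0]$ is at least a double root of $f|_L$, and symmetrically $[0:1]$ is at least a double root because $q\in\sing(X)$.

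Then I would finish by a degree count. The polynomial $f|_L$ has degree $\leq 3$ but has zeros of multiplicity $\geq 2$ at two distinct points of $\P^1$, yielding at least $4$ roots counted with multiplicity. Therefore $f|_L\equiv 0$, i.e.\ $L\subset X$. There is essentially no obstacle here; the only thing to be slightly careful about is distinguishing the affine/projective parametrization so that ``double root'' is interpreted correctly, but this is immediate. The same argument explains why cubic hypersurfaces are so rich in lines through singular loci, and it is the geometric input that makes the generalized scrolls of \cref{03082023spe} (which are swept out by lines through a linear space of singular points of an auxiliary cubic) produce components of Hodge loci.
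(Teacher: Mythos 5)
Your argument is correct and is essentially the paper's own proof, just unpacked in coordinates: the paper counts intersection multiplicities of the line with $X$ (each singular point contributes multiplicity $\geq 2$, so at least $4 > 3 = \deg f$), which is exactly the vanishing-order computation you perform on $f|_L$. Nothing further is needed.
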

  \begin{proof}
   If $p$ and $q$ are two distinct singular points of $X$ then the line passing through $p$ and $q$ intersects $X$ in more than four points (counting with multiplicity) and hence it must be inside $X$.
  \end{proof}
 \begin{prop}
 \label{23nov2022}
A singular cubic hypersurface $X\subset \P^{n+1}$ is either a cone over another cubic hypersurface of dimension $n-1$ or it is birational to $\P^n$.
 \end{prop}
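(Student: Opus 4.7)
\smallskip

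The plan is to use projection from a singular point, a classical tool for cubics. Pick a singular point $p\in\sing(X)$ and, after choosing projective coordinates, place it at $p=[1:0:\cdots:0]$. Write the defining cubic as
$$
f(x_0,x_1,\ldots,x_{n+1})=x_0^3 f_0+x_0^2 f_1(x_1,\ldots,x_{n+1})+x_0\,f_2(x_1,\ldots,x_{n+1})+f_3(x_1,\ldots,x_{n+1}),
$$
where $f_i$ is homogeneous of degree $i$ in $x_1,\ldots,x_{n+1}$. The condition $p\in X$ forces $f_0=0$, and the condition $p\in\sing(X)$ (i.e.\ $\nabla f(p)=0$) forces $f_1\equiv 0$. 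So $f=x_0\,f_2+f_3$.

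Next, parametrize an arbitrary line $\ell_v$ through $p$ by $t\mapsto [1:tv_1:\cdots:tv_{n+1}]$ and substitute to get $f(1,tv)=t^2\bigl(f_2(v)+t\,f_3(v)\bigr)$, which shows algebraically that every such line meets $X$ at $p$ with multiplicity at least $2$. I would then split into two cases according to whether $f_2\equiv 0$ or not.

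\emph{Case $f_2\not\equiv 0$.} For generic $v$ the residual root $t=-f_2(v)/f_3(v)$ is a single, simple additional intersection point. Fix a hyperplane $H\cong\P^n$ not passing through $p$ (e.g.\ $H=\{x_0=0\}$), and consider the linear projection $\pi_p:X\dashrightarrow H$ from $p$. The above computation gives a rational inverse $H\dashrightarrow X$ sending a generic $[0:v_1:\cdots:v_{n+1}]\in H$ to the residual intersection point, so $\pi_p$ is birational and $X$ is birational to $\P^n$.

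\emph{Case $f_2\equiv 0$.} Then $f=f_3(x_1,\ldots,x_{n+1})$ is independent of $x_0$, so $X=V(f_3)$ is the projective cone with vertex $p$ over the cubic hypersurface $X':=V(f_3)\cap H\subset H\cong\P^n$ of dimension $n-1$ (note $f_3\not\equiv 0$ since $\deg f=3$). This concludes the dichotomy.

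The only genuinely delicate point is verifying that in the first case $\pi_p$ has degree $1$ rather than a higher inseparable/multiple degree; but this is immediate from the explicit inverse constructed above, since Bezout on $\ell_v\cdot X$ gives exactly one residual point for generic $v$. Everything else is bookkeeping in local coordinates, so I do not foresee further obstacles.
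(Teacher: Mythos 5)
Your proof is correct and is essentially the coordinate form of the argument the paper gives: both project from a singular point $p$ and note that $f=x_0 f_2+f_3$, with the dichotomy $f_2\equiv 0$ (cone) versus $f_2\not\equiv 0$ (projection from $p$ birational, with explicit inverse $v\mapsto [f_3(v):v\,f_2(v)]$). The paper phrases the birational map in the other direction as the residual-intersection map from the space of lines through $p$ to $X$, and then writes out exactly your coordinate computation right after the proof; there is no substantive difference.
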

 \begin{proof}
  Let $p\in X$ be any singularity of $X$. We define $\P^n_p$ to be the space of lines in $\P^{n+1}$ passing through $p$ and
  $$
  X_1:=\{l\in\P^n_p\mid l\subset X\}.
  $$
  We have the  map
  $$
  \alpha: \P^n_p\backslash X_1\to X,  \ \ \
  l\mapsto \hbox{The third intersection point of $l$ with $X$.}
  $$
  If for all point $q\in X$ the line passing through $p$ and $q$ lies in $X$ then the image of $\alpha$ is the point $p$. In this case $X$ is a cone over another cubic hypersurface of dimension $n-1$ and $p$ is the vertex of the cone.
  Let us assume that this is not the case. Then $\alpha$ is a birational map between $\P^n_p$ and $X$.
\end{proof}
  It is useful to rewrite the above proof in a coordinate system $[x_0:x_1:\cdots:x_{n+1}]$. We take the affine chart $x=(x_1,x_2,\ldots,x_{n+1})\in\C^n$ given by $x_0=1$ and assume that the singularity $p$ is at the origin $0\in\C^{n+1}$. The hypersurface $X$ is given by $f=x_0f_2-f_3$, where $f_i$' are homogenuous polynomials of degree $i$ in $x$. If $f_2=0$ then $X$ is a cone over the cubic hypersurface $\P\{f_3=0\}\subset \P^n$. Otherwise, we have the birational map
  $$
  \alpha: \P^n\dashrightarrow X,\ \ [x]\mapsto [f_3(x):xf_2(x)].
  $$
 We would like to describe $\sing(X)$ and do the desingularization of $X$. In the following we consider $\{f_i=0\},\ \ i=2,3$ as affine subvarieties of $\C^{n+1}$ and $\P\{f_i=0\},\ \ i=2,3$ as projective varieties in $\P^{n}_\infty$.
 \begin{prop}
  We  have
  \begin{eqnarray}
& &  \sing\{f_2=0\}\cap\sing\{f_3=0\}\subset \sing(X)\cap \C^{n+1}\subset \{f_2=0\}\cap\{f_3=0\} \label{24112022-1}\\
& & \sing(X)\cap \P^n_\infty=\sing\P\{f_3=0\}\cap \P\{f_2=0\}  \label{24112022-2}.
\end{eqnarray}
Moreover, any line between $0\in\C^{n+1}$ and $p\in \sing(X)\cap \C^{n+1}$ either lies in $\sing(X)$ for which $p\in\sing(f_2=0)\cap\sing(f_3=0)$ or it intersects $\sing(X)$ only at $0$ and $p$.
 \end{prop}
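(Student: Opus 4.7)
The statement is a direct computation in the affine chart $x_0=1$ (giving the equation $f=f_2-f_3$ in the coordinates $x=(x_1,\ldots,x_{n+1})$) together with the chart $x_0=0$ at infinity, using only Euler's identity and homogeneity. I would proceed in four steps, in roughly the order the conclusions are listed.

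\textbf{Step 1 (first inclusion in \eqref{24112022-1}).} If $p\in\sing\{f_2=0\}\cap\sing\{f_3=0\}$, then $f_2(p)=f_3(p)=0$ and $\partial_if_2(p)=\partial_if_3(p)=0$ for all $i\geq 1$. Since $\partial_{x_0}f=f_2$ and $\partial_{x_i}f=x_0\partial_if_2-\partial_if_3$, at $(x_0,x)=(1,p)$ all partials of $f$ vanish, hence $p\in\sing(X)\cap\C^{n+1}$.

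\textbf{Step 2 (second inclusion in \eqref{24112022-1}).} If $p\in\sing(X)\cap\C^{n+1}$, then $f_2(p)=0$ and $\partial_if_2(p)=\partial_if_3(p)$ for every $i\geq 1$. Applying Euler's identity to $f_2$ (homogeneous of degree $2$) gives $2f_2(p)=\sum_ix_i(p)\partial_if_2(p)$, which vanishes since $f_2(p)=0$; applying it to $f_3$ (degree $3$) gives $3f_3(p)=\sum_ix_i(p)\partial_if_3(p)=\sum_ix_i(p)\partial_if_2(p)=2f_2(p)=0$, so $f_3(p)=0$. This yields $p\in\{f_2=0\}\cap\{f_3=0\}$.

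\textbf{Step 3 (equation \eqref{24112022-2}).} On $\P^n_\infty=\{x_0=0\}$, the partials of $f=x_0f_2-f_3$ are $\partial_{x_0}f=f_2$ and $\partial_{x_i}f=-\partial_if_3$. Hence a point $p\in\P^n_\infty$ lies in $\sing(X)$ iff $f_2(p)=0$ and $\partial_if_3(p)=0$ for all $i\geq 1$. By Euler's identity the latter forces $f_3(p)=0$ as well, so these conditions are equivalent to $p\in\sing\P\{f_3=0\}\cap\P\{f_2=0\}$.

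\textbf{Step 4 (the ``moreover'' statement).} For $p\in\sing(X)\cap\C^{n+1}$, parametrize the line through $0$ and $p$ by $t\mapsto tp$. By Step 2, $f_2(p)=f_3(p)=0$, so $f_2(tp)=t^2f_2(p)=0$ and $f_3(tp)=t^3f_3(p)=0$; thus the whole line sits in $X$. A point $tp$ with $t\neq 0$ belongs to $\sing(X)$ precisely when $\partial_if_2(tp)=\partial_if_3(tp)$ for all $i$, i.e.\ $t\,\partial_if_2(p)=t^2\,\partial_if_3(p)$ (using that $\partial_if_2,\partial_if_3$ are homogeneous of degrees $1,2$). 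Dividing by $t$, this reads $\partial_if_2(p)=t\,\partial_if_3(p)$ for all $i$. Since $p$ is singular we already know $\partial_if_2(p)=\partial_if_3(p)$ for all $i$, so the existence of a solution $t\neq 0,1$ forces $(t-1)\partial_if_3(p)=0$ and hence $\partial_if_3(p)=\partial_if_2(p)=0$ for every $i$, which means $p\in\sing\{f_2=0\}\cap\sing\{f_3=0\}$; conversely, in that case every $t$ is a solution and the full line lies in $\sing(X)$. This is exactly the required dichotomy.

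\textbf{Main obstacle.} There is no real obstacle: the proposition reduces to bookkeeping with Euler's formula and the homogeneity degrees of $f_2$ and $f_3$. The only mild subtlety is remembering, in Step 2, to use Euler for \emph{both} $f_2$ and $f_3$ to deduce $f_3(p)=0$ from the singularity equations (one might naively expect $f_3=0$ only from $f=0$, which requires $x_0f_2=f_3$; Euler is what replaces $x_0=1$ by an intrinsic argument and also adapts cleanly to the point-at-infinity computation in Step 3).
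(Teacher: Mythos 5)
Your proof is correct and follows exactly the computational strategy the paper has in mind---writing out the partials $\partial_{x_0}f = f_2$, $\partial_{x_i}f = x_0\partial_i f_2 - \partial_i f_3$ of $f = x_0 f_2 - f_3$ in each chart and invoking Euler's identity---so it supplies precisely the details the paper declares ``immediate''. The paper gives no argument at all for the ``moreover'' dichotomy and your Step~4 handles it cleanly; the only point left implicit is the line's point at infinity $[0:p]$, which by your Step~3 lies in $\sing(X)$ exactly when all $\partial_i f_3(p)$ vanish, i.e.\ exactly in the first branch of the dichotomy, so the projective statement holds as well.
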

 \begin{proof}
 The variety $X$ is given by $x_0f_2(x)-f_3(x)=0$ and hence $\sing(X)$ is given by $x_0f_2(x)-f_3(x)=f_2=x_0\frac{\partial f_2}{\partial x_i}-\frac{\partial f_3}{\partial x_i}=0, \ i=1,2,\ldots,n+1$.
  The inclusions \eqref{24112022-1} and \eqref{24112022-2} are immediate.
 \end{proof}

\section{Computing Artinian Gorenstein ring over formal power series}

The hypersurface $X_t,\ \ t\in V_{[Z]}\backslash V_Z$ is not given explicitly, as its existence is conjectural. Therefore, it might be difficult to study its Artinian Gorenstein ring. However, as we can write the Taylor series of the periods of $X_t, t\in (\T,0)$ explicitely, see \cite[Sections 13.9, 13.10, 18.5]{ho13} we might try to study such rings over, not only over $\C$, but also over formal power series. 
In this section we explain this idea. 

In \cite[Section 19.3]{ho13}, we have taken a parameter space which is transversal to $V_Z$ at $0$ and it has the complimentary dimension. Therefore, it intersects $V_Z$ only at $0$. From now on we use $V_Z$ and $V_{[Z]}$ for this new parameter space, and hence by our construction $V_Z=\{0\}$. \cref{zemarceneiro2022} is equivalent to the follwing: The Hodge locus $V_{[Z]}$ is a smooth curve ($\dim(V_{[Z]})=1$). We note that \cref{22112022protests} is proved first for this new parameter space.  In particular, this implies that the new parameter space is also transversal to $T_0V_{[Z]}$.

For a smooth hypersurface defined over the ring $\O_{\T,0}$ of holomorphic functions in a neighborhood of $0$,  and a continuous family of cycles $\delta=\delta_t\in H_n(X_t,\Z)/\Z [\plc],\ t\in (\T,0)$, the Hodge locus $V_{\delta}$ is given by the zero locus of an ideal ${\mathcal I}(\delta)\subset \O_{\T,0}$.
\begin{defi}
Let $\sigma:=(\frac{n}{2}+1)(d-2)$. We define  the Artinian Gorenstein ideal of the Hodge locus $V_{\delta_t}$ as the homogeneous ideal
\begin{equation}
\label{22nov2022AB}
I(\delta)_a:=\left \{Q\in \O_{\T,0}[x]_a \Bigg| \mathlarger{\int}_{\delta_t} \res \left(\frac{QP\Omega}{F_t^{\frac{n}{2}+1}}\right)\in {\mathcal I}(\delta),\ \ 
\forall P\in \C[x]_{\sigma-a} \right\}. 
\end{equation}
We define the Artinian Gorenstein algebra of the Hodge locus  as $R(\delta):=\O_{\T,0}[x]/ I(\delta)$. By definition $I(\delta)_{m}=\O_{\T,0}[x]_m$ for all $m\geq \sigma+1$ and so $R(\delta)_a=0$.
\end{defi}
Note that we actually need that the integral in \eqref{22nov2022AB} vanishes identically over ${\rm Zero}({\mathcal I}(\delta))$. Since $\mathcal I(\delta)$ might not be reduced, these two definitions might not be equivalent. Since in \cref{zemarceneiro2022} we expect that $V_{[Z]}$ is smooth, these two definitions are the same. 
In a similar way we can replace $\O_{\T,0}$ with the ring $\check \O_{\T,0}$ of formal power series, and in particular, with the truncated rings $\O_{\T,0}^N:=\O_{\T,0}/m_{\T,0}^{N+1}\cong \check\O_{\T,0}/\check m_{\T,0}^{N+1}$. 
\begin{conj}
\label{26mar2023huairou}
 For all even number $n\geq 6$ the linear part $I(\delta)_1$ of $I(\delta)$ is not zero.
\end{conj}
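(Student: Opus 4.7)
The plan is to construct an explicit non-zero element of $I(\delta_0)_1$ at the Fermat hypersurface and then to lift it along $V_{[Z]}$ by formal deformation. Set $d=3$, $\sigma=\frac{n}{2}+1$, $m=\frac{n}{2}-3$, and $\delta_0=\cf[\P^{\frac{n}{2}}]+\check{\cf}[\check{\P}^{\frac{n}{2}}]$ with $\P^{\frac{n}{2}}\cap\check{\P}^{\frac{n}{2}}=\P^{m}$. The linear span of $\P^{\frac{n}{2}}$ and $\check{\P}^{\frac{n}{2}}$ inside $\P^{n+1}$ has dimension $\frac{n}{2}+\frac{n}{2}-(\frac{n}{2}-3)=\frac{n}{2}+3$, so the space $(I_{Z_0})_1$ of linear forms vanishing on $\P^{\frac{n}{2}}\cup\check{\P}^{\frac{n}{2}}$ has dimension $(n+1)-(\frac{n}{2}+3)=\frac{n}{2}-2$, which is at least $1$ for $n\geq 6$. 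Since $Z_0$ is an algebraic cycle with class $\delta_0$, the statement following \cref{5nov2022} (``the defining ideal of $Z$ is inside $I_\delta$'') gives $(I_{Z_0})_1 \subseteq I(\delta_0)_1$, proving the conjecture at $t=0$.

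Next, I would attempt to lift any non-zero $g_0 \in (I_{Z_0})_1$ to a $g \in \O_{\T,0}[x]_1$ satisfying $\int_{\delta_t}\res(gP\Omega/F_t^{\sigma}) \in \mathcal{I}(V_{[Z]})$ for every $P \in \C[x]_{\sigma-1}$. Granting \cref{zemarceneiro2022}, $V_{[Z]}$ is smooth of dimension $\dim V_Z + 1$; choose a transversal coordinate $s$ so that $V_{[Z]}$ is parameterized near $0$ by a smooth arc $t(s)$ with $t(0)=0$ and $v:=t'(0)\in T_0V_{[Z]}\setminus T_0V_Z$. On $V_Z$ the algebraic cycle $Z_t$ deforms and its defining linear forms extend as elements of $\O_{V_Z,0}[x]_1$, so the $V_Z$-directions of the lift are handled automatically. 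The nontrivial content is the extension in the transversal $s$-direction, which I would construct inductively: assuming $g(s)=g_0+sg^{(1)}+\cdots+s^Ng^{(N)}$ with $g^{(k)}\in\O_{V_Z,0}[x]_1$ satisfies the condition modulo $s^{N+1}$, find $g^{(N+1)}$ extending the congruence modulo $s^{N+2}$.

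The main obstacle is the vanishing of the $(N+1)$-st order obstruction. By iterated Griffiths transversality this obstruction takes the form $P \mapsto \int_{\delta_0}\res\bigl(g_0 P \cdot (\dot F)^{N+1} \Omega/F_0^{\sigma+N+1}\bigr)$ (modulo already-cancelled lower-order terms), where $\dot F=\partial_s f_{t(s)}|_{s=0}$ represents $v$ in the Jacobian ring $\C[x]/\jacob(f_0)$. The tangency condition $v\in T_0V_{[Z]}$ amounts, in the IVHS language of \cite{CGGH1983}, to the vanishing of $\dot F\cdot\bar\delta_0$ in the degree-$\sigma$ piece of this ring modulo the polarization class; I would try to show that this vanishing propagates through the power $(\dot F)^{N+1}$ so that the obstruction lies in the image of multiplication by linear forms, allowing $g^{(N+1)}$ to absorb it. The verification demands a detailed combinatorial analysis inside the Fermat Jacobian ring using its explicit monomial basis and Macaulay pairing (cf.~\cite[Chapter 17]{ho13}) together with the specific configuration $\P^{\frac{n}{2}}\cap\check{\P}^{\frac{n}{2}}=\P^{\frac{n}{2}-3}$. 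A more ambitious alternative would bypass the order-by-order analysis by directly producing a hypothetical algebraic cycle supporting $\delta_{t(s)}$ for $s\neq 0$ (such as a generalized scroll in the spirit of \cref{18082023-jinmiad}), but this would settle the Hodge conjecture for $\delta_t$ as a byproduct and thus lies beyond a purely infinitesimal approach.
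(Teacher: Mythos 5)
This is labeled a \emph{conjecture} in the paper, not a theorem, and the text supplies no proof: immediately after the statement the author only remarks that it ``seems quite possible to prove'' via \cite[Section~3]{voisin1988} and \cite[Theorem~3, Proposition~6]{Otwinowska2002}, and then reports computer verification of the truncated versions $I^N(\delta)_1\neq 0$ for $(n,N)=(6,7)$ and $(8,3)$. So there is no in-paper argument to compare against.

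Your opening step, the identification at $t=0$, is correct and in fact slightly more precise than anything written in the paper: the span of $\P^{\frac{n}{2}}\cup\check\P^{\frac{n}{2}}$ is a $\P^{\frac{n}{2}+3}$, so $(I_{Z_0})_1$ has dimension $\frac{n}{2}-2\geq 1$ for $n\geq 6$, and the inclusion $I_{Z_0}\subseteq I(\delta_0)$ from the paragraph after \cref{5nov2022} gives $I(\delta_0)_1\neq 0$ at the Fermat point. That part is fine as far as it goes, but it only establishes the conjecture for the fiber $t=0$, which is not the content of \cref{26mar2023huairou}: the conjecture concerns the ideal $I(\delta)\subset\O_{\T,0}[x]$ over the whole (formal neighbourhood of the) Hodge locus, and at $t=0$ one already knows an algebraic cycle realizing $\delta_0$, so the statement there is unsurprising.

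The remainder of your proposal is a plan, not a proof, and you acknowledge this repeatedly (``I would try to show\ldots'', ``the verification demands a detailed combinatorial analysis\ldots''). The inductive lifting in the transversal direction $s$ is precisely where the difficulty lives, and you have not shown that the order-$(N+1)$ obstruction vanishes, nor that it lies in the image of multiplication by linear forms on the degree-$\sigma$ piece of the Fermat Jacobian ring. In fact the heuristic you offer — that the IVHS tangency condition $\alpha(v,\bar\delta_0)=0$ propagates through powers $(\dot F)^{N+1}$ — is not a consequence of Griffiths transversality by itself; obstructions at order $\geq 2$ involve cross terms with the deformed cycle class $\delta_{t(s)}$ and the higher-order pieces of $\mathcal{I}(\delta)$, and it is not apparent they land in the Macaulay dual of a linear form. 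Two further issues: (i) your argument is conditional on \cref{zemarceneiro2022}, which is itself a conjecture and, by \cref{30112023camera}, is \emph{false} for $n\geq 10$, so your reasoning does not cover ``all even $n\geq 6$'' as stated (for $n\geq 10$ the setup changes and you would need a separate argument, even if ultimately easier there); (ii) the paper's own suggested route is different from yours — it would adapt Voisin--Otwinowska's codimension estimates (tied to the minimal-codimension bound, cf.~\cref{15082023guapiacu}) rather than an order-by-order formal lifting — but the paper does not carry that out either. So the conjecture remains open and your proposal, while containing the correct initial observation, does not close the gap.
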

It seems quite possible to prove this conjecture using \cite[Section 3]{voisin1988} and \cite[Theorem 3, Proposition 6]{Otwinowska2002}. In these reference the authors prove that if a Hodge locus $V_\delta$ has minimal codimension then $\dim I(\delta)=\frac{n}{2}+1$. Note that the codimension of our Hodge locus as a function in $n$ grows as the
minimal codimension for a Hodge loci, see \cref{15082023guapiacu}.   
Despite this, we want to get some evidence for \cref{26mar2023huairou}. The main goal of this section is to explain the computer code which verifies the following statement: The linear part $I^N(\delta)_1$ of $I^N(\delta)$ is not zero for $(n,N)=(6,7),(8,3)$.
\href
{https://w3.impa.br/~hossein/WikiHossein/files/Singular%20Codes/2024_09_2024_On_A_Hodge_Locus_Linear_Part_Of_Artinian_Gorenstein.txt}
{For the computer code for experimental verification of  \cref{26mar2023huairou} see the tex file of the present text in arxiv or the author's webpage.} 
{\tiny
}

We fix the canonical basis $x^I$ of the Jacobian ring $S_0:=\C[x]/\jacob(F_0)$, where $F_0:=x_0^{d}+x_1^{d}+\cdots+x_{n+1}^d$ is the Fermat polynomial. This is also the basis for  $\C[x]/\jacob(F_t)$ in a Zariski neighborhood of $0\in\T$. From this basis we take out the basis for $(S_0)_1$ and $(S_0)_{\sigma-1}$, where $\sigma=(d-2)(\frac{n}{2}+1)$. These are:
\begin{eqnarray*}
 (S_0)_1 &:&  x_0,x_1,\cdots,x_{n+1}\\
 (S_0)_{\sigma-1} &:&   x_0^{i_0}x_1^{i_1}\cdots x_{n+1}^{i_{n+1}},\ \ \sum i_j=\sigma-1,\ 0\leq i_j\leq d-2. 
\end{eqnarray*}
Let $a_1:=n+1=\#(S_0)_1$ and $b_1:= \#(S_0)_{\sigma-1}$. For a Hodge cycle  $\delta_0\in H_n(X_0,\Z)$, we define the $a_1\times b_1$ matrix in the following way:
$$
A_t:=\left[ \int_{\delta_t} \omega_{PQ}\right],\ \ P\in  (S_0)_1,\ \ Q\in (S_0)_{\sigma-1}. 
$$
For the Hodge cycle in \cref{zemarceneiro2022} we want to compute $I(\delta)_1$ which is equivalent to compute the kernel of $A_t$ modulo ${\mathcal I}(\delta)$ from the left, that is $1\times a_1$ vectors $v$ with $vA_t=0$ modulo ${\mathcal I}(\delta)$. At first step we aim to compute the rank of $A_t$.  Let $\mu$ be the rank of $A_t$ over $\O_{\T,0}/{\mathcal I}(\delta)$. This means that the determinant of all $(\mu+1)\times(\mu+1)$ minors of 
$A_t$ are in the ideal modulo ${\mathcal I}(\delta)$, but there is a $\mu\times\mu$ minor whose determinant is not in ${\mathcal I}(\delta)$. Recall that  ${\mathcal I}(\delta)$ is conjecturally reduced! These statements can be experimented by computer after truncating the entries of $A_t$.

\section{Kloosterman's work}
\label{30112023camera}
 In 2023 R. Kloostermann sent the author the preprint \cite{Kloosterman2023} in which among many other things proves that \cref{zemarceneiro2022} is not true for $n\geq 10$. It turns out that for a generic cubic hypersurface $X_0$ of dimension $n\geq 10$, we have $T_0V_{Z}=T_0V_{[Z]}$, even though for Fermat variety this is not true. Actually according to \cref{22112022protests} at Fermat variety for $n=10$ and $12$ we have the equalities of fourth order and third order neighborhoods, respectively. The equality $T_0V_{Z}=T_0V_{[Z]}$ can be easily checked by computer  using Villaflor's elegant formula in \cite{RobertoThesis}. I could have done this in 2019 when Villaflor defended his thesis. However, I was too exhausted by my computer calculations in \cite{ho13}. 
 \href{https://w3.impa.br/~hossein/WikiHossein/files/Singular%20Codes/2024_09_07_On_A_Hodge_Locus_Kloosterman.txt}
 {The computer code for this verification which uses an example of hypersurface of dimension $10$ in \cite{Kloosterman2023} can be obtained in the tex file of the present text in arxiv  or the author's webpage.}
{\tiny 
}
Recently in \cite[Corollary 1.6]{BKU} the authors have proved that the Hodge loci  corresponding to all tensor product of $H^n(X_t,\Z)$ and its dual and of positive dimension in the Griffiths period domain is algebraic for  $d=3$ and $n\geq 10$. This Hodge loci is formulated in terms of Mumford-Tate groups and it is much larger than the Hodge loci considered in the present text. Even though \cref{zemarceneiro2022} fails for $n\geq 10$, there might be infinite number of special components $V_i,\ i\in\N$ of Hodge loci in our context which might lie inside a proper algebraic subset of $V$ of $\T$ (in their terminology maximal element for inclusion). 
It might be helpful to construct such a $V$ explicitly. If we consider the variation of Hodge structures over $V$ then \cite[Corollary 1.6]{BKU} imply that the generic period domain for $V$ becomes a product of monodromy invariant factors $D_j$ and either the level of Hodge decomposition attached to one of these factors is $\leq 2$ 
or for all except a finite $i\in \N$, the Hodge locus $V_i$ has zero dimension projected to one of $D_j$. A similar discussion must be also valid for \cref{zemarceneiro2022}, that is $n=6,8$,  as the Hodge locus $V_{[Z]}$ is atypical in the sense of \cite{BKU}. For the summary of results on Hodge loci in this general framework see \cite{Klingler}.


\section{\cref{zemarceneiro2022} for tangent spaces ( By R. Villaflor)}
\label{sec4}

Let $X=\{x_0^3+x_1^3+\cdots+x_{n+1}^3=0\}\subseteq\P^{n+1}$ be the cubic Fermat variety of even dimension $n$. Let 
$$
\P^{\frac{n}{2}+3}:=\{x_{6}-\zeta_{2d}x_{7}=x_{8}-\zeta_{2d}x_{9}=\cdots=x_n-\zeta_{2d}x_{n+1}=0\},
$$
$$
\P^\frac{n}{2}:=\{x_0-\zeta_{2d}x_1=x_2-\zeta_{2d}x_3=x_4-\zeta_{2d}x_5=0\}\cap \P^{\frac{n}{2}+3},
$$
$$
\check\P^\frac{n}{2}:=\{x_0-\zeta_{2d}^{\alpha}x_1=x_2-\zeta_{2d}^{\alpha}x_3=x_4-\zeta_{2d}^{\alpha}x_5=0\}\cap \P^{\frac{n}{2}+3},
$$ 
where $\alpha\in \{3,5,7,\ldots,2d-1\}$. Then 
$$
\P^{\frac{n}{2}-3}:=\P^\frac{n}{2}\cap\check\P^\frac{n}{2}=\{x_0=x_1=x_2=x_3=x_4=x_{5}=0\}\cap\P^{\frac{n}{2}+3}.
$$
For $Z$ as in \eqref{06102022bolsolula}, let $V_{[Z]}$, $V_{[\P^\frac{n}{2}]}$ and $V_{[\check\P^\frac{n}{2}]}$ be their corresponding Hodge loci. 
\begin{prop}
	\label{pro:2301}
    We have $\dim T_0V_{[Z]}=\dim \T_0V_Z+1$.
\end{prop}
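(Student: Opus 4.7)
The plan is to compute both tangent spaces explicitly at the Fermat point via the Jacobian-ring formalism for IVHS. After the standard identification of $T_0\T$ (modulo the $\mathrm{PGL}$-action) with the degree-$d$ piece $R_d$ of the Jacobian ring $R:=\C[x_0,\ldots,x_{n+1}]/\jacob(F_0)$, where $F_0=\sum x_i^3$ and $d=3$, each Hodge cycle $\delta\in H^n(X_0,\Z)$ induces, via the first-order variation of the periods \eqref{02082023gago}, a linear map $\phi_\delta:R_d\to W_\delta$ whose kernel is exactly $T_0V_{[\delta]}$. Concretely $\phi_\delta(t)$ records the residue pairings $\langle t\cdot x^\beta,\delta\rangle$ as $x^\beta$ ranges over a basis of the appropriate degree piece of $R$, via Macaulay duality in $R$.

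Next I would apply the explicit formula for $\phi_\delta$ proved in Villaflor's thesis \cite{RobertoThesis} and reviewed in \cite[Chapter 18]{ho13}, when $\delta=[\P^{n/2}]$ is a linear cycle cut out by linear forms $\ell_i$. Plugging in the concrete defining forms of $\P^{n/2}$ and $\check\P^{n/2}$ from the setup yields closed-form descriptions of $\phi_1:=\phi_{[\P^{n/2}]}$ and $\phi_2:=\phi_{[\check\P^{n/2}]}$. Since $V_Z$ is cut out as the intersection of the two smooth subvarieties $V_{\P^{n/2}}$ and $V_{\check\P^{n/2}}$, whose tangent spaces at the Fermat point are $\ker\phi_1$ and $\ker\phi_2$ (linear cycles being the unique minimal-codimension components of the Hodge locus near the Fermat point), one has
\[T_0V_Z=\ker\phi_1\cap\ker\phi_2,\qquad T_0V_{[Z]}=\ker(\cf\phi_1+\check\cf\phi_2),\]
the latter by linearity of the IVHS pairing in the homology class.

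The proposition then reduces to the rank identity
\[\rank\bigl((\phi_1,\phi_2):R_d\to W_1\oplus W_2\bigr)-\rank\bigl(\cf\phi_1+\check\cf\phi_2\bigr)=1,\]
i.e.\ to showing that $\Im\phi_1$ and $\Im\phi_2$ share exactly a one-dimensional subspace. Geometrically this shared relation is detected by a single residue class supported on the common ambient $\P^{n/2+3}\supset\P^{n/2}\cup\check\P^{n/2}$: the Macaulay-dual cubic monomial records the same invariant of both linear cycles. Because $\cf,\check\cf\neq 0$, this shared relation survives in $\ker(\cf\phi_1+\check\cf\phi_2)$ as precisely one extra direction beyond $\ker\phi_1\cap\ker\phi_2$.

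The hard part will be verifying that this overlap is \emph{exactly} one-dimensional. The upper bound (overlap of dimension at most one) amounts to a combinatorial count of monomials in $R_{\sigma-d}$ supported inside the six variables $x_0,\ldots,x_5$ along which the two planes differ by the $\zeta_{2d}^{\alpha}$-twist, and reduces to a short calculation in the Artinian Gorenstein ring of a Fermat cubic fivefold. The lower bound (overlap at least one-dimensional) requires producing an explicit nonzero element in $\Im\phi_1\cap\Im\phi_2$, which I expect to construct as a product of a cubic monomial in $x_0,\ldots,x_5$ invariant under $\Gal(\Q(\zeta_{2d})/\Q)$ with a monomial of complementary degree in $x_6,\ldots,x_{n+1}$; the non-vanishing of the residue pairing will then follow from the eigenspace decomposition of the Fermat cohomology used throughout \cite{ho13}. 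Once the rank identity holds, the standard formula for kernels of pencils of linear maps yields the proposition uniformly in even $n\ge 6$, in contrast with the failure of the analogous statement for \emph{generic} cubics in dimensions $n\ge 10$ discussed in \cref{30112023camera}, where the extra direction disappears because the Fermat-specific $\zeta_{2d}$-structure is absent.
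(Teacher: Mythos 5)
Your opening identifications are correct and consistent with the paper's own starting point: $T_0V_{[\delta]}=\ker\phi_\delta$ via Villaflor's colon-ideal formula $(J^F:P_\delta)_3$, linearity in the cycle class giving $T_0V_{[Z]}=\ker(\cf\phi_1+\check\cf\phi_2)$ and $T_0V_Z=\ker\phi_1\cap\ker\phi_2$, and hence the rank identity
\[
\dim T_0V_{[Z]}-\dim T_0V_Z=\rank\bigl((\phi_1,\phi_2)\bigr)-\rank\bigl(\cf\phi_1+\check\cf\phi_2\bigr).
\]
The gap is the ``i.e.'' that follows it. Write $\psi:=\cf\phi_1+\check\cf\phi_2$. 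The right-hand side above equals $\dim\phi_1(\ker\psi)$, the dimension of the space of $w$ for which a \emph{single} $t\in R_d$ satisfies $\phi_1(t)=w$ and $\phi_2(t)=-(\cf/\check\cf)w$ simultaneously. That space sits inside $\Im\phi_1\cap\Im\phi_2$, but in general properly so: membership in the intersection asks only for two \emph{separate} preimages $t_1,t_2$. The two quantities diverge in both directions already for small matrices (one can arrange $\dim(\Im\phi_1\cap\Im\phi_2)=1$ with rank difference $0$, and rank difference $1$ with $\dim(\Im\phi_1\cap\Im\phi_2)=2$). So showing that $\Im\phi_1$ and $\Im\phi_2$ share exactly a one-dimensional subspace neither implies nor is implied by the proposition. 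In particular your proposed lower bound, producing one nonzero $w\in\Im\phi_1\cap\Im\phi_2$, does not yield an element of $(J^F:P_1+P_2)_3$ outside $(J^F:P_1)_3\cap(J^F:P_2)_3$, which is what is actually required.

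The paper's proof never passes to images; it stays in the colon-ideal picture and its substantive step is the isomorphism \eqref{keyiso}, which divides out the common factor $Q$ and identifies $(J^F:P_1+P_2)_3/\bigl((J^F:P_1)_3\cap(J^F:P_2)_3\bigr)$ with the corresponding quotient in the six-variable ring $\C[x_0,\ldots,x_5]/\langle x_0^2,\ldots,x_5^2\rangle$. Establishing \eqref{keyiso} requires decomposing a general $q\in(J^F:P_1+P_2)_3$ as $q=s+t+u$ and using the lower-degree equalities $(I:R_1+R_2)_e=(I:R_1)_e\cap(I:R_2)_e$ for $e\neq 3$ from \cite[Proposition 2.1]{RobertoThesis}; this is not captured by a ``combinatorial count of monomials supported in $x_0,\ldots,x_5$.'' Once reduced, the three colon ideals are Artin--Gorenstein of socle degree $3$, so their degree-$3$ parts are hyperplanes and the quotient is visibly one-dimensional. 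A repaired version of your plan would need to prove a statement equivalent to \eqref{keyiso}, and should target $\dim\phi_1(\ker\psi)$ rather than $\dim(\Im\phi_1\cap\Im\phi_2)$.
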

\begin{proof}
In fact, by \cite[Proposition 17.9]{ho13} we have $\dim V_Z=\dim T_0V_{[\P^\frac{n}{2}]}\cap T_0V_{[\check\P^\frac{n}{2}]}$ and so we are reduced to show that
$$
\dim \frac{T_0V_{[Z]}}{T_0V_{[\P^\frac{n}{2}]}\cap T_0V_{[\check\P^\frac{n}{2}]}}=1.
$$
By \cite[Corollaries 8.2 and 8.3]{RobertoThesis} this is equivalent to show that
$$
\dim \frac{(J^F:P_1+P_2)_3}{(J^F:P_1)_3\cap(J^F:P_2)_3}=1,
$$
where $J^F=\langle x_0^2,x_1^2,\ldots,x_{n+1}^2\rangle$ is the Jacobian ideal of $X$, $P_1:=R_1Q$, $P_2:=R_2Q$,
$$
Q:=\prod_{k\ge 6\text{ even}}{(x_k+\zeta_{6}x_{k+1})},
$$
$$
R_1:=c_1\cdot{(x_0+\zeta_{6}x_{1})}{(x_2+\zeta_{6}x_{3})}{(x_4+\zeta_{6}x_{5})},
$$
and 
$$
R_2:=c_2\cdot{(x_0+\zeta_{6}^{\alpha}x_{1})}{(x_2+\zeta_{6}^{\alpha}x_{3})}{(x_4+\zeta_{6}^{\alpha}x_{5})},
$$
for some $c_1,c_2\in\C^\times$. Let $I:=\langle x_0^2,x_1^2,x_2^2,x_3^2,x_4^2,x_5^2\rangle\subseteq\C[x_0,x_1,x_2,x_3,x_4,x_5]$. We claim that the natural inclusion
$$
(I:R_1+R_2)_3\hookrightarrow (J^F:P_1+P_2)_3
$$
induces an isomorphism of $\C$-vector spaces
\begin{equation}
\label{keyiso}
\frac{(I:R_1+R_2)_3}{(I:R_1)_3\cap(I:R_2)_3}\simeq\frac{(J^F:P_1+P_2)_3}{(J^F:P_1)_3\cap(J^F:P_2)_3}.
\end{equation}
Note first that 
$$
(J^F:Q)=\langle x_0^2,x_1^2,x_2^2,x_3^2,x_4^2,x_5^2,x_6-\zeta_{6}x_7,x_7^2,x_8-\zeta_{6}x_9,x_9^2,\ldots,x_n-\zeta_{6}x_{n+1},x_{n+1}^2\rangle
$$ 
since both are Artin Gorenstein ideals of socle in degree $\frac{n}{2}+4$ (here we use Macaulay theorem \cite[Theorem 2.1]{RobertoThesis}) and the right hand side is clearly contained in $(J^F:Q)$. In order to prove \eqref{keyiso}, let $r\in (I:R_1+R_2)_3$ such that $r\in (J^F:P_i)_3=((J^F:Q):R_i)_3$ for both $i=1,2$, then $r\cdot R_i\in (J^F:Q)\cap \C[x_0,x_1,x_2,x_3,x_4,x_5]=I$ and so $r\in (I:R_i)_3$ for each $i=1,2$. Conversely, given $q\in (J^F:P_1+P_2)_3$ write it as $q=s+t+u$, 
where $s\in \C[x_0,x_1,x_2,x_3,x_4,x_5]$, $t\in \langle x_6-\zeta_6x_7,x_8-\zeta_6x_9,\ldots,x_n-\zeta_6x_{n+1}\rangle\subseteq\C[x_0,x_1,\ldots,x_{n+1}]$ and $u\in \langle x_7,x_9,\ldots,x_{n+1}\rangle\subseteq\C[x_0,x_1,x_2,x_3,x_4,x_5]\otimes\C[x_7,x_9,x_{11},\ldots,x_{n+1}]$. Since $q\cdot(R_1+R_2)\in (J^F:Q)$, letting $x_6=x_7=\cdots=x_{n+1}=0$ it follows that $s\cdot (R_1+R_2)\in I$, i.e. $
s\in (I:R_1+R_2)$.
On the other hand is clear that $t\in (J^F:P_1)\cap (J^F:P_2)$, then in order to finish the claim it is enough to show that $u\in (J^F:P_1)\cap (J^F:P_2)$. Note that this is clearly true for all monomials appearing in the expansion of $u$ divisible by some $x_i^2$ for $i>6$ odd. Hence we may assume that
$$
u=\sum_{i>6\text{ odd}} p_i(x_0,x_1,\ldots,x_5)\cdot x_i+\sum_{j>i>6\text{ both odd}}p_{ij}(x_0,\ldots,x_5)\cdot x_ix_j
$$
$$
+\sum_{k>j>i>6\text{ all odd}}p_{ijk}(x_0,\ldots,x_5)\cdot x_ix_jx_k.
$$
Note also that
$$
(J^F:Q)\cap \C[x_0,x_1,x_2,x_3,x_4,x_5]\otimes\C[x_7,x_9,x_{11},\ldots,x_{n+1}]=
$$
$$
\langle x_0^2,x_1^2,\ldots,x_5^2,x_7^2,x_9^2,\ldots,x_{n+1}^2\rangle
$$
is a monomial ideal. From here it is clear that $u\cdot(R_1+R_2)\in (J^F:Q)$ if and only if $p_i\cdot(R_1+R_2)\in I$, $p_{ij}\cdot(R_1+R_2)\in I$ and $p_{ijk}\cdot(R_1+R_2)\in I$  for all $k>j>i>6$ odd numbers. Then $p_i\in (I:R_1+R_2)_2$, $p_{ij}\in (I:R_1+R_2)_1$ and $p_{ijk}\in (I:R_1+R_2)_0=0$. By \cite[Proposition 2.1]{RobertoThesis} we know $(I:R_1+R_2)_e=(I:R_1)_e\cap (I:R_2)_e$ for all $e\neq 3$, then $p_i\in (I:R_1)_2\cap(I:R_2)_2$ and $p_{ij}\in (I:R_1)_1\cap (I:R_2)_1$ for all $j>i>6$ both odd and so $u\in (J^F:P_1)\cap (J^F:P_2)$ as claimed. This proves \eqref{keyiso}. Finally, since $(I:R_1+R_2)$, $(I:R_1)$ and $(I:R_2)$ are all Artin Gorenstein ideals of socle in degree 3 but they are not equal, we get that $(I:R_1+R_2)_3$ is a hyperplane of $\C[x_0,\ldots,x_5]_3$ while $(I:R_1)_3\cap(I:R_2)_3$ is a codimension 2 linear subspace of $\C[x_0,\ldots,x_5]_3$, hence
$$
\dim \frac{(I:R_1+R_2)_3}{(I:R_1)_3\cap (I:R_2)_3}=1.
$$
\end{proof}  

\begin{rem}\rm
\label{01112022}
The proof of the above proposition works in general for any degree $d$ such that the intersection of both linear cycles is $m$-dimensional with $(d-2)(\frac{n}{2}-m)=d$. It is easy to see that this is only possible for $(d,m)=(3,\frac{n}{2}-3)$ and $(d,m)=(4,\frac{n}{2}-2)$. We expect a similar property as in \cref{zemarceneiro2022} for the later case, see \cite[Section 19.8]{ho13}.
\end{rem}

\chapter{Vector fields modulo primes}
\label{10082025sharr}
{\it I recall a story (...) about a man whose canoe flipped in turbulent waters. He fought and struggled to get free from the currents but he failed, drowned and died. Shortly after, his body surfaced near by. Apparently, had he surrendered to the currents, instead of fighting them, he had a good chance of making out alive, (from the   \href{https://iamronen.com/blog/2010/02/28/life-currents/}{blog  iamronen.com }). 

}
\section{Introduction}
In this chapter  we  aim to study a local-global principle for vector fields which generalizes the Grothendieck-Katz conjecture on linear differential equations. 
Vector fields from an algebraic point of view are simple, however, the foliations induced by them might show complicated dynamical behavior. The Lorenz attractor 
shows this dynamical behavior in an algebraically simple example. 
We investigate modulo primes behavior of vector fields. This study is mainly for detecting algebraic solutions of vector fields and we do not know yet whether it is related to the dynamics of solutions of vector fields.
\footnote{We use the notations in \cref{26sept2024baddahan} and for simplicity we do not reproduce them here.}

\section{Derivations or vector fields}
For simplicity we can take $\T:=\A^n_\sring=\spec(\sring[z_1,z_2,\ldots,z_n]/I)$, where $I$ is an ideal in $\sring[z_1,z_2,\ldots,z_n]$. A vector field in $\T$ is written in the form 
$$
\vf=\vf_i(z)\frac{\partial }{\partial z_1}+\vf_2(z)\frac{\partial }{\partial z_2}+\cdots+ \vf_n(z)\frac{\partial }{\partial z_n},\ \ \vf_i\in\sring[z], 
$$
where $\frac{\partial }{\partial z_i}$ is the unique vector field in $\T$ with  $\frac{\partial }{\partial z_i}(dz_j)=1$ if $i=j$ and $=0$ otherwise. Moreover, $\vf(I)\subset I$.   The $\O_\T$-module of vector fields $\Vec_{\T}$ is isomorphic to the $\O_\T$-module 
of derivations.
\rm\index{Derivation}\index{$\vf$, a vector field}
A map $\vf: \O_{\T}\to \O_{\T}$ is called a derivation
if it is $\sring$-linear and it satisfies the Leibniz rule
$$
\vf(fg)=f\vf(g)+\vf(f)g,\ \ f,g\in\O_{\T}.
$$
We denote by $\Der(\O_\T)$ the $\O_\T$-module of derivations. \index{$\Der(\O_\T)$, the sheaf of derivations in $\T$ }
\begin{prop}
 We have an isomorphism of $\O_\T$-modules
 \begin{eqnarray*}
& &  \Vec_{\T}\cong \Der(\O_\T),\\
 & & \vf\mapsto (f\mapsto \vf(df)).
 \end{eqnarray*}
 \end{prop}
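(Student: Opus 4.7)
The plan is to exhibit an explicit two-sided inverse in the affine coordinate setting, and then verify that the stated assignment and the inverse are mutually inverse $\O_\T$-module maps. Since $\T=\A^n_\sring=\spec(\sring[z_1,\ldots,z_n])$, both $\Vec_\T$ and $\Der(\O_\T)$ are free $\O_\T$-modules, and the comparison reduces to checking compatibility on the coordinates $z_1,\ldots,z_n$ and on $1$ alone.

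First I would check that the prescription $\Phi(\vf)\colon f\mapsto \vf(df)$ actually lands in $\Der(\O_\T)$. The $\sring$-linearity follows at once from $\sring$-linearity of $d$ and of the pairing with $\vf$. The Leibniz rule is the computation
\begin{equation*}
\Phi(\vf)(fg)=\vf(d(fg))=\vf(f\,dg+g\,df)=f\vf(dg)+g\vf(df)=f\,\Phi(\vf)(g)+g\,\Phi(\vf)(f),
\end{equation*}
using that $\vf$ is $\O_\T$-linear on $1$-forms. The map $\Phi\colon \Vec_\T\to \Der(\O_\T)$ is manifestly $\O_\T$-linear, since multiplication by $h\in\O_\T$ acts coefficient-wise on a vector field and on a derivation.

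Next I would construct the inverse $\Psi\colon\Der(\O_\T)\to\Vec_\T$ by
\begin{equation*}
\Psi(D):=\sum_{i=1}^n D(z_i)\,\frac{\partial}{\partial z_i}.
\end{equation*}
By definition of $\frac{\partial}{\partial z_i}$ as the dual basis to $dz_1,\ldots,dz_n$, we get $\Phi(\Psi(D))(z_j)=\Psi(D)(dz_j)=D(z_j)$ for every $j$. Both $\Phi(\Psi(D))$ and $D$ are derivations of $\sring[z_1,\ldots,z_n]$ with the same value on each generator $z_j$ and on $1$ (where any derivation vanishes), hence they agree on every polynomial by the Leibniz rule and $\sring$-linearity, so $\Phi\circ\Psi=\Id$. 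Conversely, for $\vf=\sum_i \vf_i\frac{\partial}{\partial z_i}\in\Vec_\T$ we have $\Phi(\vf)(z_i)=\vf(dz_i)=\vf_i$, so $\Psi(\Phi(\vf))=\sum_i\vf_i\frac{\partial}{\partial z_i}=\vf$, giving $\Psi\circ\Phi=\Id$.

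There is no serious obstacle here; the only point to be a little careful about is the argument that two derivations agreeing on the $z_j$ agree on all of $\O_\T$, which is why the proof is cleanest for $\T$ affine and becomes a local statement in general, glued over an affine cover. The plan above is exactly what one sketches, and I would simply remark at the end that the global case follows by covering $\T$ with opens of the form $\spec(A)$ and checking that both the assignment $\Phi$ and the inverse $\Psi$ are compatible with restriction, so the local isomorphism sheafifies to the desired isomorphism of $\O_\T$-modules.
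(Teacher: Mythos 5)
Your argument is correct and is essentially the same construction as the paper's, but it is carried out only in the free case $\T=\A^n_\sring$, and the difference matters. The paper's proof takes $\T=\spec(\sring[z]/I)$ for an arbitrary ideal $I\subset\sring[z]$; there $\Omega^1_\T$ is no longer free on $dz_1,\ldots,dz_n$ but is the quotient of $\bigoplus_i\sring[z]\,dz_i$ by the submodule generated by $dI$, $I\,dz_i$ and the Leibniz relations, and a vector field on $\T$ is an $\O_\T$-linear functional on this quotient. In that setting your formula $\Psi(D)=\sum_i D(z_i)\,\partial/\partial z_i$ is not automatically a vector field on $\T$: one must check that the assignment $\sum_i g_i\,df_i\mapsto\sum_i g_i D(f_i)$ kills every relation, which is exactly the well-definedness step the paper isolates (``if $\sum g_i\,df_i=0$ then $\sum g_i\check\vf(f_i)=0$''). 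Your closing remark about covering a general $\T$ by affines and gluing does not resolve this, because those affine pieces are $\spec$ of quotients $\sring[z]/I$, not polynomial rings, so the non-free case still has to be handled head on. The trade-off is clear: your dual-basis formulation is cleaner and self-contained for $\A^n$, while the paper's version pays the price of tracking relations in order to cover an arbitrary affine $\sring$-scheme, which it needs later when studying foliations on schemes with non-trivial ideals.
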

\begin{proof}
This isomorphism maps the vector field $\vf$  to the corresponding derivation $\check \vf$  obtained by
$\check\vf(f)=\vf(df),\ \ f\in\O_\T$.
This equality  also defines its inverse $\check \vf\mapsto \vf$, that is, if $\check \vf\in \Der(\O_\T)$ then $\vf\in\Theta_\T$ is defined through
$\vf(\sum g_idf_i):=\sum g_i \check \vf(f_i)$. We have to show that this map is well-defined, that is, if $\sum g_idf_i=0$ then $\sum g_i \check \vf(f_i)=0$. For this, recall that $\T:=\spec(\sring[z]/I)$ is affine, where $I\subset \sring[z]$ is an ideal. In this case a derivation in $\T$ is given by $\check\vf:=\sum_{i=1}^n \vf_i(z)\frac{\partial }{\partial z_i}$ with $\check\vf(I)\subset I$. Moreover, $\Omega^1_\T$ is the quotient of $\sring[z]dz_1+\sring[z]dz_2+\cdots+\sring[z]dz_n$ with the $\sring[z]$-module generated by $dI, Idz_i, d(fg)-fdg-gdf,\ f,g\in\sring[z]$.
\end{proof}

\section{Fundamental theorems of ODE's}
In a course in ordinary differential equations one first learns the existence and uniqueness of solutions. Let $\A^1_\sring=\spec(\sring[z])$ be the affine line over $\sring$ and $\frac{\partial}{\partial z}$ be the vector field on it such that $\frac{\partial}{\partial z}(z)=1$. Let also $\sring_{\Q}:=\sring\otimes_\Z\Q$.
The next two theorems  are classical in the theory of ordinary differential equations, however, they are not usually written in an algebro-geometric context, that is why we reproduce them here.  We would like to point out the fact that in order to define the underlying holomorphic objects, we only need to be able to invert any natural number in the ring $\sring$, and hence the usage of $\sring_{\Q}$ instead of $\sring$ is justified.
\begin{theo}
\label{5jan2015}
Let $\t$ be  a smooth $\sring$-valued  point of $\T$, $\vf$ be a vector field in $\T$  with $\vf(\t)\not =0$ and $A=\A^1_{\sring}$ be the one dimensional affine scheme.
There is a unique holomorphic map
$$
\varphi: (A^\hol_{\sring_{\Q}},0) \to (\T_{\sring_{\Q}},\t)\ \ \  \varphi(0)=\t
$$
such that $\varphi$ maps the vector field $\frac{\partial }{\partial z}$ to $\vf$.
\end{theo}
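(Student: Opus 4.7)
The plan is to work in analytic local coordinates at $\t$ and construct $\varphi$ first as a formal power series, then verify convergence. Since $\t$ is a smooth $\sring$-valued point of $\T$, one may choose holomorphic local coordinates $w=(w_1,\ldots,w_n)$ in $\T^\hol_{\sring_\Q}$ centered at $\t$; in these coordinates the vector field takes the form $\vf=\sum_{i=1}^n \vf_i(w)\frac{\partial}{\partial w_i}$ with $\vf_i$ holomorphic in a polydisc around the origin. The sought map $\varphi$ is encoded by $n$ holomorphic functions $\varphi_i(z)$ with $\varphi_i(0)=0$, and the intertwining condition translating the vector field $\frac{\partial}{\partial z}$ to $\vf$ becomes the classical ODE system
\[
\varphi_i'(z)=\vf_i(\varphi_1(z),\ldots,\varphi_n(z)),\qquad \varphi_i(0)=0,\quad i=1,\ldots,n.
\]

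First I would solve this system in the formal category. Expand $\varphi_i(z)=\sum_{k\geq 1}a_{i,k}z^k$ with unknown coefficients in $\sring_\Q$ and substitute into the ODE. Comparing coefficients of $z^{k-1}$ on the two sides yields a recursion of the shape
\[
k\,a_{i,k}=Q_{i,k}\bigl(a_{j,\ell}\mid 1\leq \ell<k,\ 1\leq j\leq n\bigr),
\]
where $Q_{i,k}$ is an explicit polynomial with coefficients in $\sring_\Q$ assembled from the Taylor coefficients of $\vf_i$ at the origin. Since $k\in\N$ is invertible in $\sring_\Q$, the recursion determines each $a_{i,k}$ uniquely, giving at once the existence and the uniqueness of a formal solution, and explaining why the theorem is stated over $\sring_\Q$ rather than $\sring$: over $\sring$ itself the division by $k$ can fail.

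The main obstacle is to upgrade this formal solution to a convergent, hence holomorphic, germ. For this I would appeal to the classical majorant method of Cauchy: for any archimedean embedding $\sring_\Q\hookrightarrow\C$ used to define $\T^\hol_{\sring_\Q}$, pick $M,r>0$ with $|\vf_i(w)|\leq M$ on the polydisc $\|w\|<r$; the Cauchy estimates then dominate the Taylor coefficients of the $\vf_i$ by $Mr^{-|\alpha|}$, and the induction that produces $a_{i,k}$ produces a termwise bound for $|a_{i,k}|$ by the coefficients of the scalar majorant problem $\Phi'(z)=\frac{nM}{1-\Phi(z)/r}$, $\Phi(0)=0$, which is integrable by quadrature and has a positive radius of convergence. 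This radius transfers to each $\varphi_i$ and produces the desired holomorphic map $\varphi:(A^\hol_{\sring_\Q},0)\to (\T_{\sring_\Q},\t)$. Uniqueness in the holomorphic category follows at once from uniqueness of formal Taylor expansions at $0$, which has already been established by the recursion. The hypothesis $\vf(\t)\neq 0$ is then reflected in the fact that $\varphi$ is an immersion at the origin, since $\varphi'(0)=\vf(\t)$.
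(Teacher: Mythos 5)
Your proof is correct and follows the same strategy as the paper: pass to a local coordinate system at $\t$, translate the intertwining condition $\varphi_*\frac{\partial}{\partial z}=\vf$ into the first-order ODE system $\varphi_i'=\vf_i(\varphi)$, solve it formally by matching Taylor coefficients (which produces the recursion $k\,a_{i,k}=Q_{i,k}(a_{j,\ell},\ \ell<k)$, forcing the division by $k$ and hence the passage to $\sring_\Q$), and then invoke a standard convergence argument. The only stylistic difference is in that last step, where you carry out Cauchy's majorant method while the paper instead appeals to the Picard operator and the contracting-map principle (citing Ilyashenko--Yakovenko); these are equivalent classical tools and either one settles convergence.
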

\begin{proof}
Let us take an algebraic  coordinate system $z=(z_1,z_2,\ldots,z_n)$ in $(\T,t)$ and set $\vf_i:=\vf(z_i)$. We write 
$\vf_i$ as formal power series in $z$ with coefficients in $\sring$.
We denote by $\varphi_i$ the pull-back of $z_i$ by $\varphi$.    The fact that $\varphi$ maps $\frac{\partial}{\partial z}$  to $\vf$ is translated into the following ordinary differential equation:
\begin{equation}
\label{09112023anit}
\left\{
\begin{array}{lll}
\frac{\partial\varphi_1}{\partial z} &=&\vf_1(\varphi_1(z), \varphi_1(z),\cdots, \varphi_1(z))   \\
\frac{\partial\varphi_2}{\partial z} &=& \vf_2 (\varphi_1(z), \varphi_1(z),\cdots, \varphi_1(z)) \\
&\vdots& \\
\frac{\partial\varphi_n}{\partial z} &=& \vf_n(\varphi_1(z), \varphi_1(z),\cdots, \varphi_1(z))
\end{array}
\right. .
\end{equation}
From now on we use $\varphi$ for $(\varphi_1,\varphi_2,\ldots,\varphi_n)$, and so, the above differential equation can be written as
$\frac{\partial \varphi}{\partial z}=\vf(\varphi)$, where $\vf$ is identified with $(\vf_1,\vf_2,\ldots,\vf_n)$. Let $\vf=p_0+p_1+\cdots$, where $p_i$ is the homogeneous piece of degree $i$ of $\vf$. Moreover, let us write:
$$
\varphi=\sum_{i=0}^\infty \varphi_iz^i,\ \varphi_i\in\sring_{\Q}^n,\ \varphi_0:=0
$$
and substitute all these in the above differential equation.  It turns out that
$i\cdot \varphi_i$ can be written in a unique way in
terms of $\varphi_j,\ j<i$ with coefficients in $\sring$. We need to invert $i$, that is why in the statement of theorem we have to use $\sring_{\Q}$.  This guaranties the existence of a unique
formal power series $\varphi$.  Note that if $\vf(\t)=0$ then $\varphi_i=0$ for all $i\geq 1$
and so $\varphi$ is the constant map. It is not at all clear why $\varphi$ must be convergent. For this we use Picard operator associated with the differential equation \eqref{09112023anit} and the contracting map principle. For more details see \cite[\S 1.4, page 4]{IlyashenkoYakovenko}.
\end{proof}
\begin{theo}
\label{21jan2015}
Let $\t$ be  a smooth $\sring$-valued point of $\T$  and $\vf$ be a vector field in $\T$  with $\vf(\t)\not =0$.
There is a holomorphic coordinate system  $(z_1,z_2,\ldots,z_n)$ in $(\T^\hol_{\sring_\Q},\t)$
such that  $\vf=\frac{\partial }{\partial z_1}$.
\end{theo}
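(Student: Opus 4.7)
The strategy is the classical flow-box (rectification) argument, adapted so that every construction makes sense over $\sring_\Q$.

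First I would choose algebraic coordinates $(w_1,\ldots,w_n)$ on a neighborhood of $\t$; since $\t$ is smooth and $\vf(\t)\neq 0$, an $\sring_\Q$-linear change of coordinates lets us assume $\vf(\t)=\frac{\partial}{\partial w_1}$ at $\t$. Let $\t=(\t_1,\ldots,\t_n)$ and consider the transverse hyperplane $H:=\{w_1=\t_1\}$, parametrized by $(w_2,\ldots,w_n)$ with base point $\t$.

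Next I would construct the flow with transverse parameters. For each point $p=(\t_1,w_2,\ldots,w_n)$ of $H$ near $\t$, \cref{5jan2015} yields a unique holomorphic solution $\varphi_p:(A^\hol_{\sring_\Q},0)\to (\T_{\sring_\Q},p)$ of $\frac{\partial \varphi}{\partial z}=\vf(\varphi)$ with $\varphi_p(0)=p$. Running the recursion in the proof of \cref{5jan2015} simultaneously for the whole family — i.e. writing $\varphi=\sum_{i\geq 0}\varphi_i(w_2,\ldots,w_n)\,s^i$ and solving order by order — shows the coefficients $\varphi_i$ are polynomials in $w_2,\ldots,w_n$ with entries in $\sring_\Q$, and a standard majorant estimate shows the resulting power series converges on a polydisc. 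Thus we obtain a holomorphic map
\[
\Phi:(A^1_{\sring_\Q}\times H,\,(0,\t))\longrightarrow (\T^\hol_{\sring_\Q},\t),\qquad \Phi(s,w_2,\ldots,w_n):=\varphi_{(\t_1,w_2,\ldots,w_n)}(s).
\]

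Then I would compute the Jacobian of $\Phi$ at $(0,\t_2,\ldots,\t_n)$: by construction $\frac{\partial\Phi}{\partial s}(0,\cdot)=\vf(\Phi)=\vf(\t)=\frac{\partial}{\partial w_1}$, while for $j\geq 2$ we have $\frac{\partial\Phi}{\partial w_j}(0,\cdot)=\frac{\partial}{\partial w_j}$ since $\Phi(0,\cdot)$ is just the inclusion $H\hookrightarrow \T$. Hence the Jacobian is the identity. The inverse function theorem (valid for holomorphic maps with coefficients in $\sring_\Q$ — only denominators that appear are inverses of integers coming from the Neumann series for the inverse matrix) produces a holomorphic inverse $\Phi^{-1}=(z_1,z_2,\ldots,z_n)$, giving a coordinate system on $(\T^\hol_{\sring_\Q},\t)$.

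Finally, the pushforward computation: in the source $A^1\times H$ the obvious vector field $\frac{\partial}{\partial s}$ has the flow $(s,w)\mapsto (s+s_0,w)$. Applying $\Phi$ maps this to the $\vf$-flow by the defining property $\frac{\partial \Phi}{\partial s}=\vf(\Phi)$, i.e.\ $\Phi_\ast\frac{\partial}{\partial s}=\vf$. Therefore in the coordinates $(z_1,\ldots,z_n)$ we have $\vf=\frac{\partial}{\partial z_1}$, as required. The one point that requires attention is the second step — guaranteeing that the joint Taylor series in $(s,w_2,\ldots,w_n)$ converges uniformly and has coefficients in $\sring_\Q$ — but this is handled by the same Picard/contracting-map argument cited at the end of the proof of \cref{5jan2015}, now applied to the parametric differential equation.
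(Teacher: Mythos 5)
Your proof is correct and follows essentially the same path as the paper's: both build the flow map $F$ (your $\Phi$) off a transverse hyperplane by solving $\frac{\partial F}{\partial z_1}=\vf\circ F$ with the slice as initial condition, observe the Jacobian at $\t$ is invertible, and conclude via the inverse function theorem over $\sring_\Q$. The only cosmetic difference is that you normalize $\vf(\t)=\frac{\partial}{\partial w_1}$ by a linear change at the outset, whereas the paper performs the analogous linear change at the very end to ensure $\vf_1(0)\neq 0$, and you spell out somewhat more explicitly the parametrized Picard argument and the final pushforward identity.
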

\begin{proof}
Let $z$ be an algebraic  coordinate system in $(\T,t)$. We are looking for  a coordinate system $F$ such that
the push forward of the vector field $\frac{\partial}{\partial z_1}$ by $F$ is $\vf$. This is equivalent to
$$
\begin{pmatrix}
 \frac{\partial F_1}{\partial z_1} & \frac{\partial F_1}{\partial z_2}&\cdots& \frac{\partial F_1}{\partial z_n}\\
 \frac{\partial F_2}{\partial z_1} & \frac{\partial F_2}{\partial z_2}&\cdots& \frac{\partial F_2}{\partial z_n}\\
 \vdots & \vdots & \cdots &\vdots\\
\frac{\partial F_n}{\partial z_1} & \frac{\partial F_n}{\partial z_2}&\cdots& \frac{\partial F_n}{\partial z_n}\\
\end{pmatrix}
\begin{pmatrix}
 1 \\ 0 \\ \vdots \\ 0
\end{pmatrix}=
\begin{pmatrix}
 \vf_1\circ F \\ \vf_2\circ F \\ \vdots \\ \vf_n\circ F
\end{pmatrix},
$$
where $F=(F_1,F_2,\ldots,F_n)$.
In a similar way as in \cref{5jan2015} we have a unique solution $F$
to the above differential equation with
$$
F(0,\tilde z)=(0,\tilde z),
$$
where $\tilde z=(z_2,\ldots,z_n)$.
We have
$$
\begin{bmatrix}
 \frac{\partial F_i}{\partial z_j}(0)
\end{bmatrix}=
\begin{bmatrix}
\vf_1(0) & 0\\
*& I_{(n-1)\times (n-1)}
\end{bmatrix}
$$
By a linear change of coordinates in $z$, we may assume that $\vf_1(0)\not =0$,
and so $F$ is the desired coordinate system.
\end{proof}
\begin{rem}\rm
\label{10112023rik}
 In the proof of \cref{21jan2015} observe that if $\vf_i$'s does not depend on $z_{i_j},\ j=1,2,\ldots$ then we can assume that $F_{i_j}=z_{i_j}$, that is, we do not need to change the coordinate $z_{i_j}$.
\end{rem}

\section{Algebraic solutions}

\begin{defi}
\label{27062024aahoo}
Let $\varphi$ be the solution in \cref{5jan2015}.
The Zariski closure of $\varphi$ is a subscheme of $\T$ given by the ideal which consists of all regular functions $f\in \O_{\T}$ such that  the pull-back of $f$ by $\varphi$ is zero:
$$
\T_\varphi:=\Zero\left(\IS_\varphi\right),\ \ \IS_\varphi:=\left\{   f\in \O_{\T} \ | \ f\circ\varphi=0 \right\}.
$$
This is an $\sring$-scheme by definition.
We say that $\varphi$ is algebraic if for all $f\in\O_{\T}$, $f\circ\varphi$ are algebraic functions in $z$.
We say that the image of $\varphi$ is algebraic if $\dim(\T_\varphi)=1$. Equivalently,
for all $f,g\in\O_{\T}$, there is a polynomial in $P(x,y)\in\sring[x,y]$ such that $P(f,g)=0$.
\end{defi}
Note that in the above definition it does not make any difference to use $P(x,y)\in\sring[x,y]$ or $P\in\C[x,y]$, see \cref{27jan2024toop}.
The $\sring$-scheme $\T_\varphi$ is tangent to the vector field $\vf$: 
For $f$ in the ideal sheaf of $\T_\varphi$, since $\frac{\partial}{\partial z}$  is mapped to $\vf$ under $\varphi$,  we have
$$
\vf(f)\circ\varphi =\frac{\partial}{\partial z} (f\circ\varphi)=0.
$$
We say that a solution $\varphi$ of a vector field $\vf$ is 
defined over $\sring$ if $
\varphi=\sum_{i=0}^\infty \varphi_iz^i,\ \varphi_i\in\sring^n$, 
that is,  we do not need to invert all integers in $\sring$.
The fact that a solution of a vector field is still  defined over $\sring$, that is we do not need to invert integers in $\sring$,  has strong consequence.
\begin{conj}
\label{11july2024autoescola}
Let $\T$ be an $\sring$-scheme, $\vf$ be a vector field on $\T$ and $\t$ be a smooth $\sring$-valued point of $\T$.
If the solution $\varphi$ of $\vf$ through $\t$ is defined over a finitely generated subring $\sring$ of $\C$ and $\vf(\t)\not=0$ then $\varphi$ is algebraic. 
\footnote{If a solution $\varphi$ of $\vf$ is defined over $\sring$ then the ideal $\IS_\Q:=\{ f\in \O_{\T^\hol_{\sring_\Q},\t} \ \Big| \ f(\varphi)=0\}$
is also defined over $\sring$, that is, if we define $\IS$ in a similar way, replacing $\sring_\Q$ with $\sring$, then  $\IS_\Q=\IS\otimes_\Z\Q$. In this way, \cref{11july2024autoescola} is a particular case of \cref{19062024kontsevich}.
}

\end{conj}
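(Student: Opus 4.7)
The plan is to convert the given integrality of Taylor coefficients into integrality of each component $\varphi_i(w)$ in the sense of \cref{15122023chikarmikoni}, and then invoke the (still conjectural) converse \cref{15july2024impatech} to conclude algebraicity. Choose affine coordinates $z=(z_1,\ldots,z_n)$ on $\T$ centered at $\t$, so that $\vf=\sum_{i=1}^n\vf_i(z)\partial_{z_i}$ with $\vf_i\in\sring[z]$, and write $\varphi(w)=\sum_{k\geq 0}\varphi_k w^k$ with $\varphi_k\in\sring^n$ by hypothesis. The target is a finitely generated $\Z$-subalgebra of holomorphic functions on a disk containing $w$ and each $\varphi_i(w)$ which is closed under the operators $P\mapsto P^{[k]}$.

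First I would try the natural candidate $A:=\Z[R,w,\varphi_1(w),\ldots,\varphi_n(w)]$, where $R$ is a finite generating set of $\sring$. Among its generators only the $\varphi_i$ require checking against $P\mapsto P^{[k]}$. Iterating the derivation $\vf$ produces polynomials $P_{i,k}\in\sring[x_1,\ldots,x_n]$ with $\varphi_i^{(k)}(w)=P_{i,k}(\varphi(w))$, so $\varphi_i^{[k]}(w)=\frac{1}{k!}P_{i,k}(\varphi(w))$. Evaluating at $w=0$ and using $\varphi_{i,k}=P_{i,k}(\t)/k!\in\sring$ gives the \emph{pointwise} divisibility $k!\mid P_{i,k}(\t)$ in $\sring$. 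To land $\varphi_i^{[k]}$ in $A$, however, one needs the strictly stronger \emph{polynomial} divisibility $k!\mid P_{i,k}$ in $\sring[x_1,\ldots,x_n]$.

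To bridge this gap I would enlarge $A$ to $\tilde A:=\Z[R,w,\varphi_1,\ldots,\varphi_n,\varphi_1^{[1]},\ldots,\varphi_n^{[K]}]$ for a suitable $K=K(\vf)$, and try to show by induction that $\varphi_i^{[k+1]}\in\tilde A$ for $k\geq K$, using the recursion obtained from differentiating $P_{i,k}\circ\varphi$ and dividing by $(k+1)!$, together with the bare fact that $\varphi_i^{[k]}(w)\in\sring[[w]]$: this last follows from $\varphi_i\in\sring[[w]]$ and $(z^n)^{[k]}=\binom{n}{k}z^{n-k}$ applied term-by-term. The crux --- and what I expect to be the principal obstacle --- is precisely this recursive division by $(k+1)$: in the purely algebraic setting there is no a priori reason an integer should divide a polynomial expression merely because it divides the value at a single point $\t$, and the content of the conjecture is exactly that the flow of $\vf$ through $\t$ transports this point-wise integrality into a generic integrality.

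If the direct integrality route blocks, a complementary plan mimics the Grothendieck--Katz philosophy developed earlier in this section. The hypothesis $\varphi\in\sring[[w]]^n$ permits reducing $\varphi$ and $\vf$ modulo every prime ideal $\mathfrak{p}\subset\sring$, so the formal flow through $\t$ exists in every residue characteristic, which is the nonlinear analog of vanishing $p$-curvature. One would then linearize --- for instance by letting $\vf$ act on the pullback $\varphi^{*}\O_{\T}$ regarded as an $\sring[[w]]$-module --- and try to apply \cref{05112023tina} or an analogous Grothendieck--Katz-type statement for the resulting connection. As the author's footnote to \cref{11july2024autoescola} makes clear, the statement is subsumed by a broader conjecture of Kontsevich, so no unconditional proof is realistically in reach; the best I can honestly promise is a precise reduction of \cref{11july2024autoescola} either to \cref{15july2024impatech} via the integrality approach, or to Grothendieck--Katz via the linearization approach.
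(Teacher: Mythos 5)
The statement is a conjecture and the paper does not prove it; the only argument in the text is the footnote beneath \cref{11july2024autoescola}, which observes that if $\varphi$ has coefficients in $\sring$, then the ideal $\IS_\Q$ of holomorphic germs vanishing along $\varphi$ is the scalar extension of an ideal $\IS$ already defined over $\sring$, so the leaf scheme through $\t$ is defined over $\sring$ and \cref{19062024kontsevich} applies. Your proposal therefore aims at a genuinely different target, \cref{15july2024impatech}, rather than the paper's intended \cref{19062024kontsevich}.

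You have correctly located the point at which your route to \cref{15july2024impatech} breaks, and the break is fatal rather than a loose end. The hypothesis of \cref{11july2024autoescola} gives only the pointwise fact that for every $f\in\O_{\T}$ the power series $f\circ\varphi$ lies in $\sring[[w]]$, i.e.\ that each $\frac{\vf^{m}(f)(\t)}{m!}\in\sring$. To exhibit a given $\varphi_i$ as an integral function in the sense of \cref{15122023chikarmikoni} you must produce a \emph{finitely generated} $\Z$-algebra of holomorphic functions closed under all $P\mapsto P^{[k]}$. The candidate $\Z[R,w,\varphi_1,\ldots,\varphi_n]$ is not visibly closed: that would require $\frac{1}{k!}P_{i,k}(\varphi)$ to lie in it, for which the hypothesis supplies no $\sring[x]$-polynomial divisibility, only divisibility of the value at the single point $\t$; and adjoining $\varphi_i^{[1]},\ldots,\varphi_i^{[K]}$ for a fixed $K$ does not fix this either, because there is no algebraic equation for $\varphi_i$ (that being what one is trying to prove) from which a self-closing recursion could be extracted as in \cref{09122023sonymuseum}. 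So as it stands the proposal does not even give a conditional reduction of \cref{11july2024autoescola} to \cref{15july2024impatech}.

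A cleaner conditional reduction exists inside the same section: the observation above that $f\circ\varphi\in\sring[[w]]$ for every $f\in\O_{\T}$ is \emph{precisely} the hypothesis of \cref{11072024graphofscience} at $\t$ with $N=1$, and the paper already remarks that \cref{11072024graphofscience} implies \cref{11july2024autoescola}. If you want a route that stays inside the Grothendieck--Katz circle of \cref{18112023restraintinghappiness} rather than passing to the leaf-scheme formalism, that is the reduction to write down. Your alternative linearization idea (letting $\vf$ act on $\varphi^{*}\O_{\T}$ and invoking \cref{05112023tina}) would have to specify which connection on which finite rank module you intend to produce before it can be assessed; as phrased it is a program, not a step.
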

 The condition $\vf(\t)\not=0$ cannot be dropped from the above conjecture. We have an abundant literature  on holomorphic functions  of the form $y(z):=\sum_{n=0}^\infty y_nz^n\in\Z[[z]]$ which satisfies a linear differential equation $\sum_{i=0}^kP_i(z)y^{(i)}=0$  with $P_i\in\Z[z]$ and $P_k\not=0,\ P_k(0)=0$, see for instance \cite{zu02}. Writing this as a vector field in 
 $(z,y,y',\cdots, y^{k-1})$, we observe that it is meromorphic in $z=0$ which contains the point 
 $\t:=(0,y(0),y'(0),\cdots, y^{k-1}(0))$. Even if we multiply this vector field with $P_k(z)$ in order to get a polynomial expression, we observe that $\t$ is a singular point of the new vector field. 

\begin{rem}\rm
In \cref{04112025dodokhtarbimsa} we have  written a particular case of \cref{11july2024autoescola} in order to make it accessible to a general audience. We will also introduce \cref{11072024graphofscience} which in this case is equivalent to say that if there is $N\in\N$ such that $N^ny_n\in\Z$ for all $n\in\N_0$ then $y(x)$ is algebraic. 
\end{rem}

Before, introducing local-global principles for vector fields, let us first introduce the analog of \cref{19102023chinagain-2} and \cref{19102023chinagain} in this case. Recall that $\sring\subset \C$ is a finitely generated $\Z$-algebra and $\sk$ is its quotient field.
\begin{theo}
\label{12112023rik-2}
Let $\T$ be an $\sring$-scheme, $\t$ be an $\sring$-valued smooth point of $\T$ and $\vf$ be a vector field in $\T$ with $\vf(\t)\not=0$. Moreover, assume that there is a function $z\in\O_{\T}$ such that
$z(\t)=0,\ \vf(z)=1$. 
If the  solution $\varphi$ of $\vf$ through $\t$ is algebraic then there is $N\in \sring$ such that the composition
$$
\frac{N^m\vf^m}{m!}(\t): \O_{\T_\sk,\t}\to \O_{\T_{\sk},\t}\stackrel{\t}{\to}\sk, \ \forall m\in\N
$$
is defined over $\sring$, that is, it sends  $\O_{\T}$ to $\sring$. If all the solutions of $\vf$ are algebraic then there is $N\in\N$ such that
$$
\frac{N^m\vf^m}{m!}: \O_{\T}\to \O_\T,\ \forall m\in\N
$$
is well-defined, that is, its image is in $\O_\T$.
\end{theo}
\begin{proof}
The proof is almost the same as the proof of \cref{19102023chinagain}.
For the first part, the theorem is of local nature, and so, we can assume that $\T:=\spec(\ring)$ is affine and $\ring$ is generated over $\sring$ by $y_0,y_1,\ldots,y_n,\ y_0=z$. We know that  $\varphi(A^\hol,0)$ is parameterized over $\sring_\Q$, see \cref{5jan2015}, and it is  algebraic. Therefore, there are  polynomials $P_i\in\C[x,y]$ in two variables and coefficients in $\C$ such that $P_i(z\circ \varphi,y_i\circ\varphi)=0$. Using a similar argument as in \cref{27jan2024toop} we can assume that $P_i$ has coefficients in $\sring_\Q$, and after a multiplication with an integer, it has coefficients in $\sring$.
We write the Taylor series of $ \check y_i:=y_i\circ \varphi$ in the variable $\check z:=z\circ \varphi$ and by \cref{26012024margetifi?-2} we have $N\in\N$ such that 
$$
\frac{N^m\vf^m}{m!}(\t)=\frac{N^m\frac{\partial^m \check y_i}{\partial\check z^m}}{m!}(0) \in\sring.
$$
By \eqref{26nov2023zendegi-1}, we get the statement for an arbitrary element of $\O_{\T}$. For the second part instead of  \cref{5jan2015}   we use \cref{21jan2015}. We get polynomials $P_i$ in two variables with coefficients in $\O_{\T}$ and the rest of the proof is similar.
\end{proof}
\begin{rem}\rm
 The condition on the existence of $z$ can be achieved by taking any $z\in\O_{\T}$ with $\vf(z)(\t)\not=0$ and replacing $\vf$ with $\frac{\vf}{z}$. It would be interesting to formulate a similar statement without this condition.
 For this we might try to divide the ideal of the  collinear  scheme $\vf||\vf^m$ 
 with $m!$. It seems to be necessary  to find a characteristic zero version of the following identity in characteristic $p$:
$$
(f\vf)^p=f^p\vf^p+f\vf^{p-1}(f^{p-1})\vf \hbox{ modulo } p,\ f\in\O_{\T},
$$
see \cite[5.4.0]{Katz1970}.
\end{rem}
The  analog of  \cref{05112023tina} can be written immediately.
\begin{conj}
\label{11072024graphofscience}
 Let $\T$ be an $\sring$-scheme, $\t$ be a $\sring$-valued smooth point of $\T$ and $\vf$ be a vector field in $\T$ with $\vf(\t)\not=0$. 
If for some $N\in \sring$ the map
$$
\frac{N^m\vf^m}{m!}(\t): \O_{\T}\to \sring, \ \forall m\in\N
$$
is well-defined, then  the solution of  $\vf$ through $\t$ is algebraic. Moreover, if 
$$
\frac{N^m\vf^m}{m!}: \O_{\T}\to \O_\T, \ \forall m\in\N
$$
is well-defined then  all the solutions of $\vf$ are algebraic.
\end{conj}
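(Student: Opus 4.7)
The plan is to reduce both halves of the conjecture to \cref{15july2024impatech} by showing that the pull-backs $f\circ\varphi$ of elements $f\in\O_\T$ are integral in the sense of \cref{15122023chikarmikoni}. A preliminary simplification: since $N\in\sring$ is a constant, the rescaled field $\tilde\vf:=N\vf$ is still defined over $\sring$ and its solution through $\t$ is $\tilde\varphi(z):=\varphi(Nz)$, so algebraicity of $\varphi$ is equivalent to that of $\tilde\varphi$, while the hypothesis becomes $\frac{\tilde\vf^m(f)(\t)}{m!}\in\sring$ (resp.\ $\frac{\tilde\vf^m(f)}{m!}\in\O_\T$) for all $f\in\O_\T$ and $m\in\N$. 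We may therefore assume $N=1$. By \cref{5jan2015} the formal solution satisfies $\varphi_*(\partial/\partial z)=\vf$, and induction on $m$ gives the Taylor-coefficient identity $\frac{d^m(f\circ\varphi)}{dz^m}\big|_{z=0}=\vf^m(f)(\t)$; so the $m$-th Taylor coefficient of $f\circ\varphi$ at $0$ equals $\frac{\vf^m(f)(\t)}{m!}\in\sring$, and consequently $f\circ\varphi\in\sring[[z]]$ for every $f\in\O_\T$.

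For the global (flow) form, choose $\sring$-algebra generators $x_1,\ldots,x_k$ of $\O_\T$ and set $\eta_i:=x_i\circ\varphi\in\sring[[z]]$. The identity $(f\circ\varphi)^{[n]}=\frac{\vf^n(f)\circ\varphi}{n!}$ together with $\vf^n(x_i)/n!\in\O_\T$ shows that $\eta_i^{[n]}$ is the pull-back by $\varphi$ of an element of $\O_\T$, hence lies in the $\sring$-algebra $\sring[z,\eta_1,\ldots,\eta_k]$. By \cref{15122023hossein} this algebra is integral, and \cref{15july2024impatech} then yields algebraicity of each $\eta_i$, hence of $\varphi$. For the pointwise form the same recipe runs into a genuine snag: $\vf^n(x_i)/n!$ need not lie in $\O_\T$, only its value at $\t$ does, so the candidate algebra is no longer obviously closed under the Taylor operators $(\cdot)^{[n]}$. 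Here one would have to either enlarge the generating set by the formal series $\eta_i^{[n]}$ and prove the enlargement stays finitely generated, or first propagate the pointwise integrality into a full neighborhood of $\t$ via \cref{17dec2023handcrafts} (combining the $\sring_\Q$-convergent solution with denominator bounds) before invoking the integrality argument.

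The main obstacle is that this strategy does not eliminate the difficulty; it relocates it onto \cref{15july2024impatech}, which is itself an unresolved converse of Eisenstein. In the linear specialization $\T=\A^{n+1}_\sring$ with $\vf$ encoding $y'=\Am y$, the conjecture contains \cref{05112023tina} and hence Grothendieck-Katz in its strengthened form. Any unconditional proof must therefore genuinely combine modulo-$p^k$ information across infinitely many primes, in the spirit of \cref{10022024carnaval} and Zudilin's cancellation-of-factorials phenomenon, rather than appealing to a single characteristic-zero criterion. A sensible testing ground would be vector fields whose solutions are algebraic by independent means---for instance Gauss-Manin vector fields arising from families with finite monodromy, where the machinery of \cite{Katz1972} applies---so that the integrality-to-algebraicity mechanism can be calibrated before attacking the general case.
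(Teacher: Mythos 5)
The statement you are trying to prove is a \emph{conjecture} (\cref{11072024graphofscience}); the paper does not prove it, so there is no authorial proof to compare against. What the paper does establish, in the proposition immediately following the conjecture, is the opposite implication: \cref{11072024graphofscience} implies \cref{15july2024impatech}. Your proposal, if it worked, would therefore show that the global half of the conjecture is \emph{equivalent} to \cref{15july2024impatech}, which is worth stating explicitly.

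The reduction you give for the global/flow form is correct as a conditional argument. The rescaling $\vf\mapsto N\vf$ is legitimate and does transform the hypothesis into $\frac{\vf^m(f)}{m!}\in\O_\T$. The Taylor-coefficient identity $(f\circ\varphi)^{[n]}=\bigl(\frac{\vf^n(f)}{n!}\bigr)\circ\varphi$ is correct (it is just iterated application of $\varphi_*(\partial/\partial z)=\vf$), and since $\frac{\vf^n(x_i)}{n!}\in\O_\T$ is an $\sring$-polynomial in the $x_j$, its pullback is an $\sring$-polynomial in the $\eta_j$; together with the constant generators of $\sring$ and with $z$, and using \cref{15122023hossein}, this makes $\sring[z,\eta_1,\ldots,\eta_k]$ an integral $\Z$-algebra in the paper's sense. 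Invoking \cref{15july2024impatech} then gives algebraicity of each $\eta_i$. That chain is valid. Your observation that this specializes to \cref{05112023tina} when $\T=\A^1_\sring\setminus\{\Delta=0\}\times\A^\Nn_\sring$ with $\vf(z)=1$, $\vf(x)=\Am x$ is also correct.

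There are two genuine gaps, however. First and decisively, the argument does not prove the conjecture; it transfers its entire content onto \cref{15july2024impatech}, which is itself an open converse to Eisenstein's theorem and is at least as hard. A reduction to an open problem is not a proof, and you say as much in your last paragraph; I agree with your diagnosis that nothing short of a genuine multi-prime argument (in the spirit of \cref{10022024carnaval}) is likely to break the circularity. Second, the pointwise form of the conjecture is not handled even conditionally. You correctly identify the obstacle: from $\frac{\vf^m(f)(\t)}{m!}\in\sring$ you only get $f\circ\varphi\in\sring[[z]]$, which (by \cref{26012024margetifi?}) is a necessary consequence of integrality but not equivalent to it; the candidate $\Z$-algebra $\sring[z,\eta_1,\ldots,\eta_k]$ has no reason to be closed under $(\cdot)^{[n]}$ unless $\frac{\vf^n(x_i)}{n!}$ actually lies in $\O_\T$, not merely has $\t$-value in $\sring$. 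The suggestion to ``propagate pointwise integrality to a neighborhood via \cref{17dec2023handcrafts}'' does not work as stated: that proposition only produces coefficients in $\check\sring_\Q$, that is, with all integers already inverted, so it gives nothing about denominators. So for the first half you would need an independent idea, and the present proposal does not supply one.

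In summary: a correct and slightly illuminating conditional reduction of the second part of the conjecture, with an honest but unfilled gap for the first part, and no unconditional content in either half.
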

Note that in \cref{11072024graphofscience} we do not assume the existence of $z\in\O_{\T}$ such that
$z(\t)=0,\ \vf(z)=1$. We think that the hypothesis of this conjecture is strong enough to imply the existence of
a function $z\in\O_{\T^\hol_{\sring_\Q},\t}$ algebraic over $\O_{\T}$. \cref{11072024graphofscience} implies
\cref{11july2024autoescola}, however, note that the hypothesis of 
\cref{11july2024autoescola} is weaker that the hypothesis of \cref{11072024graphofscience}.

\begin{prop}
The first and second part of \cref{12112023rik-2} imply respectively \cref{19102023chinagain-2} and \cref{19102023chinagain}.
\end{prop}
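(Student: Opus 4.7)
The plan is to recast the scalar linear ODE $y' = \Am(z) y$ as a vector field to which \cref{12112023rik-2} applies directly. Consider the smooth $\sring$-scheme
$$
\T \;:=\; \spec\bigl(\sring[z,\, 1/\Delta(z),\, y_1, \ldots, y_\Nn]\bigr)
$$
and the polynomial vector field
$$
\vf \;:=\; \frac{\partial}{\partial z} + \sum_{i=1}^{\Nn} (\Am(z) y)_i \, \frac{\partial}{\partial y_i}.
$$
A straightforward induction using the recursion $\Am_{m+1} = \partial_z \Am_m + \Am_m \Am$ yields $\vf^m(y_i) = (\Am_m y)_i$ for every $m \in \N$, while $\vf(z) = 1$. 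This is the main computational input.

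For \cref{19102023chinagain-2}, adjoin $1/\Delta(z_0)$ to $\sring$ (allowed since $\Delta(z_0)\neq 0$) and consider the $\sring$-valued point $\t := (z_0, y_{0,1}, \ldots, y_{0,\Nn})$. Then $\t$ is smooth on $\T$, $\vf(\t)\neq 0$ since its $\partial/\partial z$-component is $1$, and $\tilde z := z - z_0 \in \O_\T$ satisfies $\tilde z(\t) = 0$ together with $\vf(\tilde z) = 1$. The unique integral curve of $\vf$ through $\t$ is $\varphi(t) = (z_0 + t,\, y(z_0+t))$, whose algebraicity is equivalent to that of $y(z)$. Applying the first part of \cref{12112023rik-2} to the coordinate functions $y_i$ gives an $N \in \sring$ with
$$
\frac{N^m \vf^m(y_i)}{m!}\bigg|_\t \;=\; \frac{N^m (\Am_m(z_0) y_0)_i}{m!} \;\in\; \sring,
$$
which is exactly \eqref{06032024bitavisa-2}.

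For \cref{19102023chinagain}, the hypothesis that every solution of $y' = \Am y$ is algebraic is exactly the statement that every integral curve of $\vf$ through a smooth $\sring$-valued point of $\T$ is algebraic. The second part of \cref{12112023rik-2} then produces $N$ such that $\frac{N^m \vf^m(y_i)}{m!} \in \O_\T$ for all $m, i$, which translates to: the entries of $\frac{N^m \Am_m}{m!}$ lie in $\sring[z,\, 1/\Delta]$.

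The only step requiring a little care is matching this last conclusion to the precise form of \eqref{06032024bitavisa}. An easy induction on the recursion for $\Am_m$ shows that $\Delta^m \Am_m \in \sring[z]^{\Nn \times \Nn}$, so $\Am_m$ has pole order at most $m$ along $\Delta$. Writing $\frac{N^m \Am_m}{m!} = Q/\Delta^d$ with $Q \in \sring[z]^{\Nn \times \Nn}$ and $d$ minimal, we obtain $d \leq m$, hence
$$
\frac{(N\Delta)^m \Am_m}{m!} \;=\; \Delta^{m-d} Q \;\in\; \sring[z]^{\Nn \times \Nn}.
$$
I expect this bookkeeping of denominators along $\Delta$ to be the only mildly technical point; everything else is a direct dictionary between the linear differential equation and its associated first-order polynomial vector field.
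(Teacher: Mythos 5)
Your construction is exactly the paper's: $\T := \spec\bigl(\sring[z,\,1/\Delta,\,y_1,\ldots,y_\Nn]\bigr) = (\A^1_\sring\setminus\{\Delta=0\})\times\A^\Nn_\sring$, the vector field $\vf$ with $\vf(z)=1$ and $\vf(y_j)=(\Am y)_j$, and the identity $\vf^m(y_j)=(\Am_m y)_j$, which is the one line the paper records; the extra verifications you supply (smoothness of $\t$, the coordinate $\tilde z = z-z_0$, the translation of algebraicity of $\varphi$ into algebraicity of $y$) are all consistent with it.

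The step I would push back on is the final bookkeeping. From the second part of \cref{12112023rik-2} you obtain that the entries of $\frac{N^m\Am_m}{m!}$ lie in $\O_\T=\sring[z,1/\Delta]$, and you claim that writing $\frac{N^m\Am_m}{m!}=Q/\Delta^d$ with $Q\in\sring[z]^{\Nn\times\Nn}$ and $d$ minimal forces $d\le m$. This does not follow from ``$\Delta^m\Am_m\in\sring[z]^{\Nn\times\Nn}$'' alone: the minimal such $d$ is the smallest $d$ with $\Delta^d\cdot\frac{N^m\Am_m}{m!}\in\sring[z]^{\Nn\times\Nn}$, and multiplying by higher powers of $\Delta$ can clear integer denominators coming from $m!$ when $\Delta$ has a non\nobreakdash-unit leading coefficient. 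Equivalently, what you actually know is that the entries of $\frac{(N\Delta)^m\Am_m}{m!}$ lie in $\sring_\Q[z]\cap\sring[z,1/\Delta]$, and this intersection equals $\sring[z]$ (by the division algorithm) only when $\Delta$ is monic or its leading coefficient is a unit in $\sring$. The patch is cheap --- absorb the leading coefficient of $\Delta$ into $N$, or pass from $\sring$ to $\sring$ localized at that coefficient, which only inverts finitely many primes and so does not affect either \eqref{06032024bitavisa} or the ``in particular'' congruence statement of \cref{19102023chinagain} --- but as written the inference ``$d\le m$'' is unjustified. (The paper's one-line proof does not address this either, so the gap is genuine but small.)
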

\begin{proof}
We consider $\T:=\A^1_\sring\backslash\{\Delta=0\}\ \times\  \A^{\Nn}_\sring$ with the coordinate system $(z,x)$  and the following vector field in $\T$:
$$
\vf(z)=1,\ \ \vf(x)=\Am x.
$$
We have $\vf^m(z)=0,\ \ \vf^m(x)=\Am_mx$.
\end{proof}

\begin{prop}
\cref{11072024graphofscience} implies \cref{15july2024impatech}.
\end{prop}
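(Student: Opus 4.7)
The plan is to encode the integral function as the holomorphic solution of a polynomial vector field on an affine space over a finitely generated $\Z$-subring of $\C$, and then invoke \cref{11072024graphofscience}. Starting from an integral function $y \colon U \to \C$ with associated integral $\Z$-algebra $\Z[x] = \Z[x_1, \ldots, x_N]$, I may enlarge the generating set so that $x_1 = z$; this preserves integrality by \cref{15122023hossein}, since $z^{[0]} = z$, $z^{[1]} = 1$, and $z^{[m]} = 0$ for $m \geq 2$. Because $\Z[x]$ is closed under $P \mapsto P'$, there exist polynomials $F_i \in \Z[X_1, \ldots, X_N]$ with $x_i'(z) = F_i(x_1(z), \ldots, x_N(z))$ and $F_1 \equiv 1$.

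Next, I fix any $z_0 \in U$, let $\sring := \Z[z_0, x_1(z_0), \ldots, x_N(z_0)] \subset \C$ (a finitely generated $\Z$-subring), and set $\T := \A^N_{\sring} = \spec \sring[X_1, \ldots, X_N]$ with the polynomial vector field $\vf := \sum_{i=1}^N F_i(X) \frac{\partial}{\partial X_i}$. The $\sring$-valued point $\t := (x_1(z_0), \ldots, x_N(z_0))$ is smooth, and $\vf(\t) \neq 0$ because $F_1 \equiv 1$. Uniqueness in \cref{5jan2015} then identifies the holomorphic solution $\varphi$ of $\vf$ through $\t$ with $s \mapsto (x_1(z_0 + s), \ldots, x_N(z_0 + s))$, since this analytic curve satisfies the same ODE and the same initial condition.

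It remains to verify the hypothesis of \cref{11072024graphofscience}: that $\frac{N^m \vf^m(f)}{m!}(\t) \in \sring$ for some $N \in \sring$, all $f \in \O_\T$, and all $m \in \N$. For $f \in \O_\T = \sring[X_1, \ldots, X_N]$, iterating $\partial_s(g \circ \varphi) = (\vf g) \circ \varphi$ and evaluating at $s = 0$ gives
\[
\frac{\vf^m(f)}{m!}(\t) \;=\; \bigl(f(x_1(z), \ldots, x_N(z))\bigr)^{[m]}\Big|_{z = z_0}.
\]
The $\sring$-subalgebra $\sring[x_1, \ldots, x_N]$ of holomorphic functions on $U$ inherits closure under $P \mapsto P^{[m]}$ from $\Z[x]$ (constants in $\sring$ have derivative zero, and Leibniz reduces the general case to monomials in the $x_i$, for which closure holds because $x_i^{[k]} \in \Z[x]$), so the right-hand side lies in $\sring[x_1(z_0), \ldots, x_N(z_0)] = \sring$. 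Thus $N = 1$ works, and \cref{11072024graphofscience} yields that $\varphi$ is algebraic in the sense of \cref{27062024aahoo}; taking $f = X_i$ shows that each $x_i$ is algebraic over $\C(z)$, and hence so is $y \in \Z[x_1, \ldots, x_N]$. The only subtle point I anticipate is the scalar-extension closure claim, but it follows at once from the vanishing of derivatives on constants; otherwise the argument is essentially a translation between the combinatorial closure condition defining integral algebras and the factorial-divisibility condition hypothesized by \cref{11072024graphofscience}.
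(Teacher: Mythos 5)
Your argument is essentially the paper's: encode the integral $\Z$-algebra as a polynomial vector field on $\A^N_\sring$, identify the solution through $\t=x(z_0)$ with the curve $s\mapsto x(z_0+s)$, and invoke \cref{11072024graphofscience}; your explicit calculation $\frac{\vf^m(f)}{m!}(\t)=(f\circ x)^{[m]}(z_0)\in\sring$ usefully spells out what the paper only asserts ("the fact that $\Z[x]$ is a $\Z$-integral algebra implies the hypothesis"). Adjoining $z$ to the generating set so that $F_1\equiv 1$ and hence $\vf(\t)\neq 0$ holds automatically is a clean replacement for the paper's case distinction (constant $x$ versus picking a generic $z_0$ where $x'(z_0)\neq 0$), but it is the same proof.
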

\begin{proof}
 Let $x_1(z)$ be an integral function and $\Z[x]$ be the associated integral $\Z$-algebra such that $x_1$ is the first entry of $x$. We have $\frac{\partial x}{\partial z}=\vf(x)$, where $\vf\in\Z[x]^N$. We consider $\vf$ as a vector field in $\A^N_\Z$, and hence, $\varphi(z):=x(z)$ is a solution of $\vf$. The entries of $\vf(x(z))$ are not identically zero, otherwise, $x$ and in particular $x_1(z)$, are constant and hence algebraic. Therefore, we can choose $z_0\in\C$ in the domain of definition of $\varphi(z)$ such that $\t:=\varphi(z_0)\not=0$. We define $\sring:=\Z[t]$, and consider the vector field $\vf$ and the $\sring$-valued point $t$ in $\A_\sring^N$. The fact that $\Z[x]$ is a $\Z$-integral algebra implies the hypothesis of \cref{11072024graphofscience}. 
\end{proof}

\section{Local-Global principle for vector fields}

For a prime number $p\in\N$, we consider the ring $\sring_p:=\sring/p\sring$ which might not be a field ($p$ might not be prime in the larger ring $\sring$) and the reduction modulo $p$, $\T_p:=\T\times_{\sring} \spec(\sring_p)$.
For a vector field $\vf$ in $\T$, we consider it as a derivation $\vf: \O_\T\to \O_\T$ and its iteration
$$
\vf^p: \O_\T\to \O_\T,\ \ p\in\N.
$$
It satisfies $\vf^p(fg)=\sum_{i=0}^p\binom{n}{i}\vf^if\vf^{p-i}g$. 
For $p$ a prime number this equality modulo $p$ is  $\vf^p(fg)=(\vf^pf)g+f(\vf^pg)$ which means that $\vf^p$ is a vector field in $\T_p$.
\begin{prop}
\label{12112023rik}
Let $\T$ be an $\sring$-scheme, $\t$ be an $\sring$-valued point of $\T$ and $\vf$ be a vector field in $\T$ with $\vf(\t)\not=0$.  If the  solution $\varphi$ of $\vf$ through $\t$ is defined over $\sring$ then for any good prime $\vf^p$ in $\T_p$ vanishes at $\t$. In other words,  the subscheme $\check\T$ of $\T_p$ defined by the ideal generated by $\vf^p\O_{\T_p}$  contains the point $\t$.
\footnote{
Inspired by \cref{12112023rik}, we have analyzed the locus $\vf^p=0$ for the Ramanujan vector field and we have got \cref{13112023tifi}, \cref{18june2024hugo}.
}
\end{prop}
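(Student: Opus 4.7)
The plan is to combine \cref{5jan2015} with the tautological identity that links iterated directional derivatives along $\vf$ to Taylor coefficients of the pullback by $\varphi$. Since $\varphi$ maps $\frac{\partial}{\partial z}$ to $\vf$, a one-step induction gives $\frac{\partial^m}{\partial z^m}(f\circ\varphi) = (\vf^m f)\circ\varphi$ for every $f\in\O_{\T,\t}$, and evaluating at $z=0$ yields
\[
\vf^m(f)(\t)\;=\;m!\cdot[z^m]\,(f\circ\varphi).
\]

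The hypothesis is that $\varphi=\sum_{i\ge 0}\varphi_i z^i$ has all $\varphi_i\in\sring^n$, i.e.\ the Taylor expansion of $\varphi$ is integral over $\sring$ without inverting any natural number. Consequently $f\circ\varphi\in\sring[[z]]$ for every local regular function $f$ at $\t$, so each Taylor coefficient $[z^m](f\circ\varphi)$ lies in $\sring$, and therefore
\[
\vf^m(f)(\t)\;\in\;m!\cdot\sring\qquad\text{for all } m\ge 0.
\]

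Next I would specialize to $m=p$, where I take \emph{good prime} to mean a rational prime $p$ for which $p\sring$ is a proper ideal of $\sring$, so that $\T_p$ is nonempty and $\t$ reduces to an $\sring_p$-valued point. For $p$ prime in $\Z$ the binomials $\binom{p}{i}$ with $0<i<p$ vanish modulo $p$, so $\vf^p$ descends to a genuine derivation of $\O_{\T_p}$. Since $p\mid p!$, the inclusion $\vf^p(f)(\t)\in p!\cdot\sring\subset p\sring$ shows that $\vf^p(f)(\t)$ reduces to $0$ in $\sring_p$. As this holds for every $f$, the linear functional $f\mapsto \vf^p(f)(\t)$ on $\O_{\T_p,\t}$ is identically zero, which is exactly the statement that $\vf^p$ vanishes at $\t$ and that $\t\in\check\T$.

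The only real obstacle is definitional: making ``$\varphi$ defined over $\sring$'', ``$\vf^p$ a vector field on $\T_p$'', and ``vanishes at the $\sring$-valued point $\t$'' precise and mutually compatible, including the choice of what ``good prime'' should mean. Once those conventions are fixed, the content of the argument is just the two-line Taylor-coefficient computation above; in particular no appeal to a Cartier-type isomorphism or to the characteristic-$p$ identity for $(f\vf)^p$ recalled after \cref{12112023rik-2} is needed here, because the integrality of $\varphi$ has already been built into the hypothesis. In fact one obtains the strictly stronger conclusion that $\vf^m(f)(\t)\equiv 0\pmod{p^k}$ whenever $\mathrm{ord}_p(m!)\ge k$, exactly in parallel with \cref{19102023chinagain}.
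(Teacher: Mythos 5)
Your proof is correct and takes essentially the same route as the paper's: both rest on the intertwining identity $\varphi^*\circ\vf^m=\frac{\partial^m}{\partial z^m}\circ\varphi^*$ combined with the observation that $(\partial/\partial z)^p$ annihilates $\sring$-integral power series modulo $p$, which you express as $p!\mid\vf^p(f)(\t)$ while the paper draws the reduced-mod-$p$ commutative square. The only cosmetic difference is that you work in $\sring[[z]]$ and reduce at the end, whereas the paper reduces the whole diagram first; along the way you also make explicit the stronger $m!$-divisibility of $\vf^m(f)(\t)$, which the paper's argument yields implicitly but does not state.
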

\begin{proof}
We know that $\varphi$ maps $\frac{\partial}{\partial z}$ to $\vf$. This is the same as to say that
\begin{equation}
 \begin{array}{ccc}
\O_{\T}&\stackrel{\vf}{\to}& \O_{\T} \\
\downarrow && \downarrow\\
\O_{A^\hol,0} &\stackrel{\frac{\partial}{\partial z}}{\to} & \O_{A^\hol,0}
\end{array},
\end{equation}
commutes. Making modulo $p$ we have
\begin{equation}
 \begin{array}{ccc}
\O_{\T_p}&\stackrel{\vf^p}{\to}& \O_{\T_p} \\
\downarrow && \downarrow\\
\O_{A^\hol_p,0} &\stackrel{(\frac{\partial}{\partial z})^p}{\to} & \O_{A^\hol_p,0}
\end{array}.
\end{equation}
Note that we use the fact that $\varphi$ is defined over $\sring$, and so, the above commutative diagram makes sense.
It says that  $\varphi$ maps $(\frac{\partial}{\partial z})^p$ to $\vf^p$ in $\T_p$.
But $ (\frac{\partial}{\partial z})^p=\frac{\partial^p}{\partial z^p}$ is zero in $\A^1_{\sring_p}$.
This implies that $\vf^p(\O_{\T_p})$ is mapped to $0$ under $\O_{\T_p}\to \O_{A^\hol_p,0}$. This is a morphism of $\sring_p$-algebras, and hence, $\vf^p(\O_{\T_p})\subset \mk_{\T_p}$.  This means that $\vf^p\O_{\T_p}$ vanishes at $\t$.
\end{proof}

\begin{defi}
\label{22june2024piri}
Let $\vf$ and $\wf$ be two vector fields on $\T$, all defined over $\sring$. The collinear or parallel scheme  $\vf||\wf$ of $\vf$ and $\wf$ is a subscheme of $\T$ given by the ideal generated by
$$
\left|\begin{matrix}
\vf(P) & \vf(Q)\\
\wf(P) & \wf(Q)
\end{matrix}\right|,\ \ P,Q\in\O_\T.
$$
\end{defi}
\begin{prop}
 \label{12112023rik-3}
 Let $\T$ be an $\sring$-scheme, $\t$ be an $\sring$-valued point of $\T$ and $\vf$ be a vector field in $\T$ with $\vf(\t)\not=0$.  If the  image of the solution $\varphi$ of $\vf$ through $\t$ is algebraic then for all but a finite number of primes $p$, $\vf^p$ and $\vf$ are collinear at $\t$. In other words, the collinear scheme $\vf||\vf^p$ contains the point $\t$.
\end{prop}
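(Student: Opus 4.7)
The plan is to reduce the statement to \cref{12112023rik} by reparametrizing $\varphi$ so that a regular function on $\T$ plays the role of time, and then to transfer the vanishing of the reparametrized $\tilde\vf^p$ back to the collinearity of $\vf^p$ with $\vf$ via the characteristic-$p$ identity $(h\vf)^p\equiv h^p\vf^p+h\vf^{p-1}(h^{p-1})\vf\pmod p$ highlighted in the remark after \cref{12112023rik-2}.

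Since $\vf(\t)\neq 0$, some coordinate $z_1$ on $\T$ satisfies $\vf(z_1)(\t)\neq 0$. After enlarging $\sring$ by inverting this element (which excludes only finitely many primes), set $h:=1/\vf(z_1)$ and $\tilde\vf:=h\vf$, a regular vector field on $\T':=\T\setminus\{\vf(z_1)=0\}$ with $\tilde\vf(z_1)=1$. The integral curve $\tilde\varphi$ of $\tilde\vf$ through $\t$ is a reparametrization of $\varphi$ in which $w:=z_1-z_1(\t)$ plays the role of time: $z_1\circ\tilde\varphi(w)=z_1(\t)+w$, and its image is the same one-dimensional curve $X=\T_\varphi$. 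Because $z_1|_X$ is a non-constant regular function on the irreducible 1-dimensional scheme $X$, the function field $\sring(X)$ is a finite algebraic extension of $\sring(z_1|_X)\cong\sring(w)$. Hence for every coordinate $z_i$ on $\T$, $z_i|_X$ satisfies a polynomial relation with $z_1|_X$ (defined over $\sring$ by \cref{27jan2024toop}), so $z_i\circ\tilde\varphi(w)$ is algebraic in $w$. By \cref{26012024margetifi?-2} applied to each coordinate there exists $N\in\sring$ such that $\tilde\varphi\in\sring[1/N][[w]]^n$; after replacing $\sring$ by $\sring[1/N]$, $\tilde\varphi$ is defined over $\sring$.

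\cref{12112023rik} now applies to $(\T',\tilde\vf,\t,\tilde\varphi)$ and yields, for every good prime $p$, that $\tilde\vf^p(f)(\t)\equiv 0\pmod p$ for every $f\in\O_{\T}$. Expanding $\tilde\vf^p=(h\vf)^p$ by the Jacobson--Katz identity gives
\[
0\;\equiv\;h(\t)^p\,\vf^p(f)(\t)\;+\;\bigl(h\vf^{p-1}(h^{p-1})\bigr)(\t)\,\vf(f)(\t)\pmod p.
\]
Since $h(\t)=1/\vf(z_1)(\t)$ is a unit modulo $p$, one can solve to obtain
\[
\vf^p(f)(\t)\;\equiv\;A\cdot\vf(f)(\t)\pmod p,\qquad A:=-h(\t)^{-p}\bigl(h\vf^{p-1}(h^{p-1})\bigr)(\t)\in\sring_p,
\]
with the scalar $A$ independent of $f$. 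Consequently every generator $\vf(P)\vf^p(Q)-\vf(Q)\vf^p(P)$ of the ideal defining $\vf\|\vf^p$ becomes $A\bigl(\vf(P)(\t)\vf(Q)(\t)-\vf(Q)(\t)\vf(P)(\t)\bigr)=0$ modulo $p$, which is precisely the statement that $\t$ lies in $\vf\|\vf^p$ modulo $p$.

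The main obstacle is the Jacobson--Katz identity, which is the only nonformal input; it is a standard characteristic-$p$ computation (referenced as [Katz 1970, 5.4.0]) that can be verified by induction on $n$ by expanding $(h\vf)^n$ and collecting the terms that survive modulo $p$. Everything else is essentially formal: the hypothesis $\vf(\t)\neq 0$ is used precisely to produce the reparametrizing function $z_1$ which, combined with the one-dimensionality of $X$, forces $\tilde\varphi$ to be algebraic over $\sring(w)$ and hence defined over $\sring[1/N]$ by Eisenstein.
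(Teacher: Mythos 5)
Your proof is correct but takes a genuinely different route from the paper's. The paper argues in three lines: since $\T_\varphi$ is algebraic, its ideal $\IS$ satisfies $\vf(\IS)\subset\IS$, hence $\vf^p(\IS)\subset\IS$, so modulo $p$ both $\vf$ and $\vf^p$ are tangent to the one-dimensional scheme $(\T_\varphi)_p$ at $\t$, and collinearity at $\t$ follows for almost all primes. You instead reduce to \cref{12112023rik}: rescale to $\tilde\vf:=\vf/\vf(z_1)$ so that $z_1$ plays the role of time, use one-dimensionality of the image to show each coordinate of $\tilde\varphi$ is algebraic in $w$, apply Eisenstein (\cref{26012024margetifi?-2}) to conclude $\tilde\varphi$ is defined over $\sring[1/N]$, invoke \cref{12112023rik} to get $\tilde\vf^p(\t)\equiv 0$, and convert back via the Jacobson--Katz identity $(h\vf)^p\equiv h^p\vf^p + h\vf^{p-1}(h^{p-1})\vf\pmod p$. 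Your version is longer and imports heavier machinery (Eisenstein and the characteristic-$p$ identity), but it makes transparent how \cref{12112023rik} and the present proposition are related --- ``image algebraic'' becomes ``parametrization defined over a finitely generated ring'' after a time change --- and it produces an explicit scalar $A$ with $\vf^p(\t)\equiv A\vf(\t)\pmod p$. The paper's ideal-theoretic proof is more economical, though it leaves implicit the verification that $(\T_\varphi)_p$ stays one-dimensional at $\t$ for almost all primes, a step your calculation bypasses. One minor misstep: when you cite \cref{27jan2024toop} to place the polynomial relation over $\sring$, that remark presupposes the series already lies in $\sring[[z]]$, which you have not yet established at that point; it is cleaner to simply note that a polynomial relation over the quotient field $\sk(X)$ can be cleared of denominators to yield one over $\sring$.
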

\begin{proof}
Let $\IS$ be the ideal of $\T_\varphi$.
 If $\vf$ is tangent to  $\T_\varphi$ in $\T$ then by definition $\vf(\IS)\subset \IS$, and hence, $\vf^p(\IS)\subset \IS$. This implies that  $\vf^p$ is tangent to $(\T_\varphi)_p$ in $\T_p$. Since $\T_\varphi$ is one dimensional we get the result. 
\end{proof}
It is natural to ask whether the converse of  \cref{12112023rik-3} is true. Namely,
let $\T$ be an $\sring$-scheme, $\t$ be an $\sring$-valued point of $\T$, and $\vf$ be a vector field  in $\T$ with $\vf(\t)\not=0$.
If for all but a finite number of primes $p$, $\vf$ is collinear with $\vf^p$ at the point $\t$, then the image of the  solution $\varphi$  of $\vf$ through $\t$ is algebraic.
\footnote{Unfortunately this statement is not true, see \cref{07092024omidviolento}.}
The following conjecture has been already treated in the literature. 
\begin{conj}
\label{02112023mainconj-vv}\footnote{This is a particular case of \cref{14112023tifi}}.
Let $\T$ be an $\sring$-scheme  and $\vf\not=0$ be a vector field  in $\T$.
If for all but a finite number of primes $p$, $\vf$ is collinear with $\vf^p$, then all the solutions of
$\vf$ are algebraic.
\end{conj}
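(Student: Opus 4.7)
The plan is to reduce this to the Grothendieck--Katz conjecture in disguise, using the machinery already assembled in the paper. Since the statement is local on $\T$, I would first replace $\T$ by an affine open on which $\vf$ is nonvanishing, and invoke \cref{21jan2015} to straighten $\vf$ to $\partial/\partial z_1$ in a holomorphic coordinate system $(z_1,\ldots,z_n)$ around a smooth $\sring$-valued point $\t$. In these coordinates, a leaf is the graph $z_i = \varphi_i(z_1)$, $i \ge 2$, for some $\varphi_i \in \sring_\Q[[z_1]]$, and our goal becomes: show each $\varphi_i$ is algebraic over $\sring(z_1)$.

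Next, I would translate the collinearity hypothesis into a divisibility statement on Taylor coefficients. Collinearity $\vf \,\|\, \vf^p$ at all points means $\vf^p = h_p \vf$ for some $h_p \in \O_\T$, modulo $p$. Since $\vf = \partial/\partial z_1$ in our coordinate patch, this says $\partial^p \varphi_i/\partial z_1^p \equiv h_p \cdot \partial \varphi_i/\partial z_1 \pmod p$ on the leaf. Iterating and using the argument in the proof of \cref{24092024velinho} (applied at the level of the auxiliary linear system obtained by differentiating the defining equations of the leaf), one constructs for each good prime $p$ a formal solution $\check\varphi(p) \in \check\sring[[z_1-z_{1,0}]]$ with $\check\varphi(p) \equiv \varphi \pmod p$. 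If this can be made sufficiently uniform in $p$, one obtains a power-series expansion of $\varphi$ with entries in some $\sring[\tfrac{1}{N}]$, that is, $\varphi$ is \emph{defined over $\sring$} in the sense of \cref{11july2024autoescola}.

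Once one has such denominator control, the concluding step is to apply \cref{11july2024autoescola} (or, equivalently, \cref{11072024graphofscience}) which upgrades bounded denominators to algebraicity; alternatively, by the characterization of integrality in \cref{15122023chikarmikoni}, one can show that the $\Z$-algebra generated by the $\varphi_i$ and the iterated $\vf^k(\varphi_i)/k!$ is finitely generated, and then invoke \cref{15july2024impatech}. For the \emph{all} solutions clause, one repeats the argument at a generic point and uses the second half of \cref{12112023rik-2} in reverse.

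The genuinely hard step is the uniformity across primes. Producing a mod-$p$ solution $\check\varphi(p)$ for each good $p$ is routine from the collinearity hypothesis, but lifting the whole family to a single integral series is exactly the content of the Grothendieck--Katz conjecture. As emphasized by \cref{26062024kamar} and the discussion following it, any single-prime argument is insufficient: one can have $\Am_p \equiv 0 \pmod p$ while $\Am_{p^2}$ fails to vanish modulo $p^6$, so denominators can creep in at the coefficient of $(z_1 - z_{1,0})^{p^2}$. A successful proof would therefore need either a genuinely mixed-prime estimate on the growth of $p$-adic denominators (in the spirit of André's $\tau$-invariant and \cite{Andre2004}), or a geometric argument identifying the leaves as motivic cycles to which the Katz results on Gauss--Manin connections apply. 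In the absence of such input this proposal stalls at exactly the same obstacle as the original $p$-curvature conjecture.
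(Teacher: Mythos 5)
The statement you were asked to prove is labelled \textbf{Conjecture}~\ref{02112023mainconj-vv} in the paper, and the paper gives no proof of it: the only remark attached to it is the footnote identifying it as a particular case of \cref{14112023tifi}, which is itself the open Ekedahl--Shepherd-Barron--Taylor conjecture. So there is no ``paper proof'' to compare against, and your proposal is in good company when its last paragraph admits it stalls at the same obstruction as the Grothendieck--Katz $p$-curvature conjecture. That admission is honest and accurate.

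Even setting aside the acknowledged stall, two structural issues in the earlier steps are worth naming. First, the straightening coordinate system $(z_1,\ldots,z_n)$ obtained from \cref{21jan2015} is only guaranteed to be \emph{holomorphic and defined over $\sring_\Q$}, not over $\sring$: the construction of the conjugating map $F$ requires inverting all integers. Consequently the translation of the collinearity hypothesis $\vf^p = h_p\vf$ into a congruence $\partial^p\varphi_i/\partial z_1^p \equiv h_p\,\partial\varphi_i/\partial z_1 \pmod p$ in the straightened chart is not available until \emph{after} you have already established the denominator control on $\varphi$ that you are trying to prove --- so the argument as written is circular. (The same issue would infect the plan to run \cref{24092024velinho} on the leaf, since its hypotheses are about the algebraic coordinate patch, not the holomorphic one.) Second, even granting the mixed-prime estimate, the concluding step invokes \cref{11july2024autoescola}, \cref{11072024graphofscience} or \cref{15july2024impatech}, all of which are themselves open conjectures in the paper; so the ``proof'' at best reduces one open conjecture to a pair of others, without an unconditional endpoint. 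If you wanted to present this honestly as a \emph{reduction} you would need to state clearly which conjectures it is conditional on and ensure the implications do not run in a loop --- note that the paper's own \cref{11112023omid} and the proof of the proposition ``\cref{11072024graphofscience} implies \cref{15july2024impatech}'' go in a direction that makes a non-circular chain possible, but your sketch does not verify this.
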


\section{$p$-curvature and vector fields}
In this section we discuss the $p$-curvature of connections in terms of vector fields and foliations.

Let $\V$ be a  smooth $\sring$-scheme, $H$ be a vector bundle on $\V$ and $\nabla: H\to \Omega_V^1\otimes_{\O_\V} H$ be a connection, all defined over $\sring$. We can regard this as
$$
\Theta_\V\to \End_{\sring}(H),\ \ \vf\mapsto \nabla_\vf. 
$$
\begin{defi}
 The $p$-curvature of $\nabla$ is the map 
 $$
 \psi: \Theta_\V\to \End_{\sring}(H),\ \ \psi(\vf):=\nabla_\vf^p-\nabla_{\vf^p},
 $$
 where we have taken reduction modulo $p$ (that is over $\sring_p:=\sring/p\sring$) of the underlying objects. 
\end{defi}
It is easy to see  that  $\psi: \Theta_\V\to \End_{\O_\V}(H)$, see \cite[Section 5]{Katz1970}, that is, the image of $\psi$ lies in $\O_\V$-linear endomorphisms of $H$. It is well-known that Gauss-Manin connections have nilpotent p-curvatures. 
 \begin{theo}
 [P. Deligne, N. Katz, \cite{Katz1970} Corollary 7.5, page 383] 
  Let $X\to \V $ be a family of smooth projective varieties over a field of characteristic $p$ and 
  $$
  m=\#\left \{(p,q) |  p+q=n,\ h^{p,q}(X_t)\not=0\right \}.
  $$
  Then the $m$-th power of the $p$-curvature 
  $$
  \psi(\vf)^m: H^n(X/V)\to H^n(X/V),\ \ 
  $$
  is identically zero for all $\vf\in \Theta$. 
\end{theo}
Let $s_1,s_2,\ldots,s_n :U\to H$ be trivializer sections of $H$ in an open set $U\subset\V$, 
that is, we have an isomorphism  
$$
U\times \A_\sring^n\to \pi^{-1}(U),\ \ (\v, x)\mapsto \sum_{i=1}^n  x_is_i(\v).
$$ 
As we are using the language of $\sring$-schemes we must say what is this in the level of functions. 
Let us consider the section $x\cdot s$, where  $x=[x_1,x_2,\ldots,x_n]$ and $s=[s_1,s_2,\ldots,s_n]^\tr$ and write 
$$
\nabla s=dx\otimes s+x\nabla s=(dx+x\gma)\otimes s, 
$$
where $\gma$ is computed via $\nabla s=\gma\otimes s$ and it is called the connection matrix. It is an $n\times n$ matrix with entries in $\Omega^1_U$.
Let us define  the matrix $\gma(\vf)_n$ through the equality: 
$$
\nabla_{\vf}^ns=\gma(\vf)_n\otimes s. 
$$
We can check easily that it is given by the recursion 
$$
\gma(\vf)_1=\gma(\vf),\ \ \gma(\vf)_{n+1}=\vf (\gma(\vf)_n)+\gma(\vf)_n\gma(\vf).
$$
The $p$-curvature can be written as
$$
\psi(\vf)=\gma(\vf)_p-\gma(\vf^p). 
$$
We would like to reinterpret $p$-curvature in terms of vector fields. 
We have the projection $\pi: H\to\V$ which is a morphism of $\sring$-schemes and gives the inclusion 
$\O_\V\subset \O_\T$ by pull-back of  functions via $\pi$.
A section $s: U\to H$  of $H$ in an open subset $U\subset\V$ is a map  $s^{*}: \O_H\to \O_U$ which is identity restricted to  $\O_U\subset \O_H$. Let $\T$ be the total space of the vector bundle $H$. This is an $\sring$-scheme in a natural way and it is just $H$ without its vector bundle structure. 
 We have a submodule $\Omega\subset\Omega^1_\T$ which can be constructed in the following way.
In the local chart $\pi^{-1}(U)$, $\Omega$ is the $\O_{\pi^{-1}(U)}$-module generated by the entries of 
$dx+x\gma$. We can easily check that $\Omega$ does not depend on the trivilizing sections $s$. If we  
choose another one $\check s$ then we  write $\check s=S s$, where $S$ is a $ n \times n $ matrix with entries in $\O_{U}$. We have $\check\gma=dSS^{-1}+S\gma S^{-1}$, $x=yS$ and 
$$
dx+x\gma=d(yS)+yS\gma=(dy)S+ydS+yS\gma=(dy+ y\check \gma)S.
$$
By local description of $\Omega$, it follows that it is integrable in the strongest format, that is $d\Omega\subset \Omega\wedge \Omega^1_\T$. Therefore, it defines a foliation $\F(\Omega)$ in $\T$. 
Let $\Theta$ be the dual of $\Omega$

\begin{theo}(\cite[Theorem 6.23]{ho2020})
\label{29ag2024vicio}
We have a canonical isomorphism $f:\Theta_\V\to\Theta$ which is given by the following identities in local charts: 

 \begin{eqnarray}\label{02102025yanqi-1}
& &f(\vf)(g)= \vf(g),\ g\in\O_\V, \\ \label{02102025yanqi-2}
& & f(\vf)(x) =x\gma(\vf),
\end{eqnarray}
\end{theo}
We have used $f$ to construct modular vector fields in \cite[Theorem 6.23]{ho2020}. 
In geometric terms, this is the lifting of vector fields from $\V$ to $\T$ such that they are tangent to solutions of $dx=-x\gma$. 

\begin{prop}
\label{29082024naotenhointernete}
The vector field  $\wf:=f(\vf)^p-f(\vf^p)$ is tangent to the fibers of $\T\to\V$, that is, $\wf(g)=0,\ g\in\O_\V$. 
Moreover, in a local chart  
$$
\wf(x)=x\psi(\vf):=x(\gma(\vf)_p-\gma(\vf^p)).
$$
\end{prop}
\begin{proof}
 This follows from \cref{29ag2024vicio}. For $g\in\O_V$ we have 
 \begin{eqnarray*}
 \wf(g) & = & f(\vf)^p(g)-f(\vf^p)(g)\\
 &\stackrel{\eqref{02102025yanqi-1}}{=}  & \vf^p(g)-\vf^p(g)=0. 
 \end{eqnarray*}
 In a local chart with $x$-coordinates as above we have:
 \begin{eqnarray*}
 \wf(x) & = & f(\vf)^p(x)-f(\vf^p)(x)\\
 &\stackrel{\eqref{02102025yanqi-2}, \eqref{02102025yanqi-1}}{=}  & f(\vf)^{p-1}(x\gma(\vf))-x\gma(\vf^p) \\ 
 &=& \cdots\\
 &=& x(\gma(\vf)_p-\gma(\vf^p)).
 \end{eqnarray*}

\end{proof}

\section{Examples of vector fields}
In this section we would like to highlight that vector fields can be rich from both dynamical and arithmetic point of view, even though the intersection of these two aspects must yet to be discovered, and for an expert in dynamical system the whole theory developed in the present text might be a useless tool (specially for experts in \cref{22032024lorenz} and \cref{11072024fred}).  We have already seen arithmetic aspects of the Ramanujan vector field in \cref{Ramanujan27062024}. 

\begin{exam}\rm
\label{22032024lorenz}
Let us consider the ordinary differential equation in $(x,y,z)$ and depending on three parameters $\sigma,\rho$ and $\beta$:
\begin{equation}
\label{27062024lorenz}
{\rm R}:
 \left \{ \begin{array}{l}
\dot x= \sigma(y-x)\\
\dot y=x(\rho-z)-y \\
\dot z=xy-\beta z
\end{array} \right.,
\end{equation}
which is called the Lorenz system and and it has a set of chaotic solutions known under the name Lorenz attractor. A simple web search will lead us to its fascinating acpects and we avoid them here. A simple plot of its trajectories in $\R^3$, for instance for $\sigma=10,\ \rho=28,\ \beta=8/3$,  shows its chaotic behaviour, see for instance the \href{https://en.wikipedia.org/wiki/Lorenz_system}{wikipedia webpage for 'Lorenze system'},  and tells us that its solutions are far from being algebraic. Let us take $\sigma,\rho,\beta\in\sring$ and consider \eqref{27062024lorenz} as a vector field $\vf$ in $\A^3_\sring$ which we call it Lorenz vector field. 
We have verified that the Lorenz  vector field, with parameters as above, 
is not $p$-closed  for all $p\not=3$ and $p\leq 200$, that is, $\vf$ and $\vf^p$ are not collinear at a generic point. 
{\tiny
\begin{verbatim}
     LIB "foliation.lib";
     ring r=0, (x,y,z),dp;
     number sigma=10; number rho=28; number beta=8/3;
     list vf=sigma*(y-x), x*(rho-z)-y, x*y-beta*z;
     int ub=200;
     BadPrV(vf, ub);
\end{verbatim}
}

\end{exam}

\begin{exam}\rm
The following vector field in $\A^4_\sring:=\spec(\sring[x_2,x_3,y_2,y_3])$ with $\sring=\Z[\frac{1}{6}]$
\begin{align}
\label{14March2018}
\vf:=&\left(2x_2-6x_3+\frac{1}{6}(x_2-y_2)x_2\right)\frac{\partial}{\partial x_2}+
 \left(3x_3-\frac{1}{3}x_2^2+\frac{1}{4}(x_2-y_2)x_3\right)\frac{\partial}{\partial x_3}\\
 -&\left(2y_2-6y_3+\frac{1}{6}(y_2-x_2)y_2\right)\frac{\partial}{\partial y_2}-
 \left(3y_3-\frac{1}{3}y_2^2+\frac{1}{4}(y_2-x_2)y_3\right)\frac{\partial}{\partial y_3},\nonumber
\end{align}
has been introduced in \cite{ho2018}. It has two invariant hypersurfaces $\{27x_3^2-x_2^3=0\}$ and $\{27y_3^2-y_2^3=0\}$, and it is proved that outside these hypersurfaces 
it has an enumerable set of algebraic leaves birational to modular curves $X_0(N)$ and these are the only algebraic solutions outside the mentioned hypersurfaces. 
We have verfied this  vector field is not $p$-closed  for all $p\not=2,3$ and $p\leq 50$,
{\tiny
\begin{verbatim}
LIB "foliation.lib";
ring r=0, (x_2,x_3,y_2,y_3),dp;
list vf=2*x_2-6*x_3+(1/6)*(x_2-y_2)*x_2,   3*x_3-(1/3)*x_2^2+(1/4)*(x_2-y_2)*x_3,
             -(2*y_2-6*y_3+(1/6)*(y_2-x_2)*y_2),-(3*y_3-(1/3)*y_2^2+(1/4)*(y_2-x_2)*y_3);
int ub=50; BadPrV(vf, ub);
\end{verbatim}
}

{\tiny
}
\end{exam}

\begin{exam}\rm
\label{11072024fred}
 The differential equation
\begin{equation}
\label{21juin07}
\left \{ \begin{array}{l}
\dot x=2y+ \frac{x^2}{2}\\
\dot y=3x^2-3+\frac{9}{10}y
\end{array} \right.
\end{equation}
has a limit cycle which is depicted in \cref{limitcycle}. This is related to Hilbert's 16th problem, see \cite{il02, Movasati-Uribe}. It turns out that only modulo $p=3$, $\vf$ and $\vf^p$ are collinear and in other cases different from $2,3$, these vector fields are never collinear. 
{\tiny 
\begin{verbatim}
LIB "foliation.lib";
ring r=0, (x, y),dp;
list vf=2*y+1/2*x^2, 3*x^2-3+9/10*y;
int ub=100; BadPrV(vf, ub);
\end{verbatim}
}
\end{exam}
\begin{figure}[h!]
\begin{center}
\includegraphics[width=0.6\textwidth]{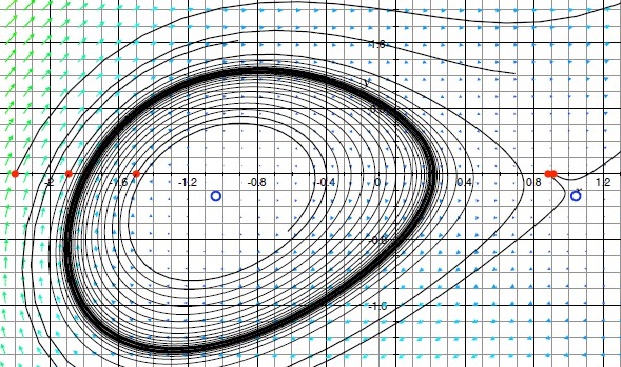}
\caption{A limit cycle crossing $(x,y)\sim (-1.79,0)$}
\label{limitcycle}
\end{center}
\end{figure}

\section{Vector fields on elliptic curves}
In this section we recall \cref{15072024urbanik} and \cref{04092025artan}  and we prove that the first theorem is basically equivalent to \cref{02112023mainconj-vv} for certain class of vector fields. This is mainly inspired by \cite[Corollary 2.5]{bost01}.

Let $E: y^2=p(x),\ \deg(p)=3,\ p\in\sring[x],\ \sring:=\Z[\frac{1}{6}]$ be an elliptic curve in the Weierstrass format. The vector field  
 \begin{equation}
\label{23062024consumismo}
\vf:=y\frac{\partial }{\partial x}+ \frac{1}{2}p'(x)\frac{\partial }{\partial y}
\end{equation}
in $\A^2_\sring$ is tangent to $E$, and hence, it induces a vector field on $E$. Over complex numbers, the Weierstrass uniformization  $z\mapsto[\wp(z):\wp'(z):1]$ maps $\frac{\partial }{\partial z}$ to $\vf$.
The tangent bundle of $E$ is trivial and this is the trivializer section of this bundle. Therefore, we can talk about $\vf$ for elliptic curves, not necessarily written in the Weierstrass format. For any regular differential $1$-form $\alpha$ on $E$ there is a unique vector field in  $E$ such that $\alpha(\vf)=1$. This is called the dual of $\alpha$. The tangent space of $E$ is a trivial line bundle and $\vf$ is the trivializer of such a bundle.   

\begin{theo}
\label{04092025artan}
Let $E$ be an elliptic curve  over a perfect field of characteristic $p$ and $\vf$ be a vector field on $E$ with $\alpha(\vf)=1$. We have 
$$
\vf^p= {\rm HW}(E,\alpha)^\frac{1}{p} \vf,
$$
where  ${\rm HW}(E,\alpha)$ is the Hasse-Witt invariant. Moreover,  if $E,\alpha,\vf$ are defined over $\Ff_p$ then 
$$
 {\rm HW}(E,\alpha)\equiv_p p+1-\#E(\Ff_p).
 $$
\end{theo}
See \cite[12.4.1.2]{KatzMazur1985}, \cite[3.2.1]{Katz1973}. Knowing \cref{04092025artan} we can formulate the first density statement for vector fields.
\begin{theo}
Let $E$ be an elliptic curve  over $\Z$ and $\vf$ be a vector field on $E$ without any zeros. The density of primes such that $\vf^p=0$ on $E/\Ff_p$  is $\frac{1}{2}$ for CM elliptic curves and it is zero otherwise.   
\end{theo}
By  \cref{04092025artan} we know that we are looking for the density of primes such that $E/\Ff_p$ is supersingular. For the proof see \cite{Elkies1987} and the references therein.

\begin{theo}
\label{15072024urbanik}
 Let $E_i,\ i=1,2$ be two elliptic curves over rational numbers. The following are equivalent:
 \begin{enumerate}
  \item 
  $E_1$ is isogeneous to $E_2$ over $\Q$.
  \item
  For all except a finite number of primes $\#E_1(\Ff_p)=\#E_2(\Ff_p)$. 
 \end{enumerate}
\end{theo}
This has been proved in  \cite[IV 2.3]{Serre1968} and \cite[\S 5, Korollar 2]{Faltings1983}. Let $E_1,E_2$ be two elliptic curves as above and let $\vf_i,\ i=1,2$ be the vector fields in $E_i$ as in \eqref{23062024consumismo} all defined over $\sring:=\Z[\frac{1}{N}]$, where $N$ is the product of bad primes of $E_i$'s after choosing models over $\sring$. We consider their parallel extensions (see \cite[Section 2.5]{ho2020}) in $\T:=E_1\times E_2$ and use the same letter $\vf_i$.  

\begin{prop}
Consider vector fields in $\T$  of the form  $\vf:=r_1\vf_1+r_2\vf_2$ with non-zero $r_1,r_2\in\sring$.
\cref{02112023mainconj-vv} for $\vf$ implies \cref{15072024urbanik}. 
\end{prop}
We remark that \cref{02112023mainconj-vv} for such $\vf$'s  follows from the main theorem of \cite{bost01}.
 \begin{proof}
Using Hasse bound $|p+1-\#E(\Ff_p)|\leq 2\sqrt{p}$,  we observe that for big primes 
 $\#E_1(\Ff_p)=\#E_2(\Ff_p)$ and $\#E_1(\Ff_p)\equiv_p\#E_2(\Ff_p)$ are equivalent. Therefore, we can replace the second item of \cref{15072024urbanik} with $\#E_1(\Ff_p)\equiv_p\#E_2(\Ff_p)$.

Let $f: E_1\to E_2$ be an isogeny defined over $\Q$. We  take a model of $f$ over $\sring:=\Z[\frac{1}{N}]$, possibly with $N$ bigger. Let $r\in\sring,\ r\not=0$  be such that $f^*\alpha_2=r\alpha_1$. 
We have a vector field   $\vf:=\vf_1+r\vf_2$ which is tangent to the graph of isogeny and all its translations (for this one may take coordinate system $x$ and $y$ for $E_1$ and $E_2$ such that in these coordinates  $f$ is given $x\mapsto \tilde f(x)$). This implies that  $\vf^p$ is collinear with $\vf$. From another side  we have also
 \begin{eqnarray*}
\vf^p &=& \sum_{i=0}^p\binom{p}{i}(\vf_1)^i(r\vf_2)^{p-i}\\
&=& \vf_1^p+r^p\vf_2^p\\
&=& a_1\vf_1+a_2r\vf_2,\ \ a_i:=p+1-\#E_i(\Ff_p).   
 \end{eqnarray*}
In the first equality we have used $[\vf_1,\vf_2]=0$ and in the third equality we have used \cref{04092025artan} and Fermat's little theorem.
Since $\vf^p$ is collinear with $\vf$ and $r\not\equiv_p 0$ (we might exclude few primes which violates this property), we get $a_1\equiv_p a_2$.  
 
 Let us assume that $a_1\equiv_pa_2$ for all but a finite number of primes. We take arbitrary non-zero $r_1,r_2\in\sring$ and our hypothesis implies that $\vf^p=a_1\vf$. We apply 
 \cref{02112023mainconj-vv} and we get an algebraic curve in $\T$ passing through $(O,O)$ whose projection to $E_1$ and $E_2$ is not constant as it is the unique curve tangent to $\vf$ and passing through $(O,O)$. It is also defined over $\Q$ because of the uniqueness above and the fact that $(O,O)$ is defined over $\Q$. Note that we can compose the image and domain of an isogeny by multiplication  by a natural number map and this produces $\vf$'s with different $r_1,r_2$. 
\end{proof}


\section{Poincar\'e linearization theorem}
\cref{30102025guerranorio} can be considered as a kind of Poincar\'e linearization theorem in positive characteristic, see for instance \cite{casa87, IlyashenkoYakovenko}. This theorem is mainly stated in the complex context  for singularities of holomorphic foliations, and \cref{30102025guerranorio} motivated us to write a generalization of this  theorem in positive characteristic. It turns out that in characteristic $p$ the hypothesis of \cref{30102025shanghai} are quite different from the same theorem in characteristic zero.   

Let $\Ff_p[[x]]$ be the ring of formal power series in $x=(x_1,x_2,\ldots,x_n)$ and with coefficients in $\Ff_p$. We define $(\A^{n, \for}_{\Ff_p},0)$ to be the spectrum of this ring. This is supposed to mimic a small formal neighborhood of $0$ in $\C^n$. We are going to talk about vector fields in  $(\A^{n, \for}_{\Ff_p},0)$. These are given by $\vf:=\sum_{i=1}^n\vf_i(x)\frac{\partial }{\partial x_i}$ with $\vf_i\in\Ff_p[[x]]$. The linear part of $\vf$ is $\wf:=\sum_{i=1}^n\wf_{i}(x)\frac{\partial }{\partial x_i}$, where $\wf_{i}$ is the linear part of $\vf_i$. We write this as 
\begin{equation}
\label{31102025mauricio}
\wf=
\begin{bmatrix}
\frac{\partial}{\partial x_1} &
\frac{\partial}{\partial x_2}&
\ldots &
\frac{\partial}{\partial x_n}
\end{bmatrix}
A
\begin{bmatrix} 
x_1 \\x_2\\ \vdots\\ x_n
\end{bmatrix}, \hbox{ equivalently } \wf(x)=Ax,
\end{equation}
where $A$ is an $n\times n$ matrix with entries in $\Ff_p$ and $x=[x_1,x_2,\ldots,x_n]^{\tr}$. 
\begin{theo}[Poincar\'e linearization theorem in characteristic $p$]
\label{30102025shanghai}
Let $\vf$ be a vector field in $(\A^{n, \for}_{\Ff_p},0)$ with the linear part $\wf$ such that $A^{p-1}=I_{n\times n}$. There is an isomorphism $f: (\A^{n, \for}_{\Ff_p},0)\to (\A^{n, \for}_{\Ff_p},0)$ tangent to the identity and sending $\vf$ to $\wf$ if and only if $\vf^p=\vf$. 
\end{theo} 
\begin{proof}
Let us write $f$ in coordinates $x$, that is,  $f^*x_i=f_i(x)=x_i+\cdots\in\Ff_p[[x]]$. The fact that $f$ maps the vector field $\vf$ to $\wf$ is translated into 
$$
\begin{pmatrix}
 \frac{\partial f_1}{\partial x_1} & \frac{\partial f_1}{\partial x_2}&\cdots& \frac{\partial f_1}{\partial x_n}\\
 \frac{\partial f_2}{\partial x_1} & \frac{\partial f_2}{\partial x_2}&\cdots& \frac{\partial f_2}{\partial x_n}\\
 \vdots & \vdots & \ddots &\vdots\\
\frac{\partial f_n}{\partial x_1} & \frac{\partial f_n}{\partial x_2}&\cdots& \frac{\partial f_n}{\partial x_n}\\
\end{pmatrix}
\begin{pmatrix}
 \vf_1 \\ \vf_2 \\ \vdots \\ \vf_n
\end{pmatrix}=
\begin{pmatrix}
 \wf_1(f_1,f_2,\ldots,f_n) \\  \wf_2(f_1,f_2,\ldots,f_n)\\ \vdots \\ \wf_n(f_1,f_2,\ldots,f_n)
\end{pmatrix}.
$$
For $\wf$ linear we write this as $\vf f=Af$, where $f=[f_1,f_2,\cdots,f_n]^{\tr}$.

The direction $\Leftarrow$ is easy: If there is an $f$ such that $\vf(f)=Af$ then $\vf^{p}f=A^{p}f=A^{p-1}\vf(f)=\vf(f)$. Since $f$ is tangent to the identity, that is, $f_i=x_i+\cdots$, we have $\vf^{p}x=\vf x$, and hence, $\vf^p=\vf$. 

For the the direction $\Leftarrow$ we set:
$$
f:=-(\vf^{p-1}x+A\vf^{p-2}x+\cdots+A^{p-2}\vf x).
$$
It follows from $\vf^px=\vf x$ and $A^{p-1}=I_{n\times n}$ that $\vf(f)=Af$. 
The linear part of $f$ is $-(p-1)A^{p-1}=I_{n\times n}$. 
\end{proof}


\bibliography{biblio.bib}
\bibliographystyle{alpha}



\end{document}